\title{Invariants topologiques des Espaces non-commutatifs}
\theoremstyle{plain}
\newtheorem{prop}{Proposition}[section]
\newtheorem{cor}[prop]{Corollaire}
\newtheorem{lem}[prop]{Lemme}
\newtheorem{theo}[prop]{Théorème}
\newtheorem{df}[prop]{Définition}
\newtheorem{conj}[prop]{Conjecture}
\theoremstyle{definition}
\newtheorem{rema}[prop]{Remarque}
\newtheorem{ex}[prop]{Exemple}
\newtheorem{nota}[prop]{Notation}
\newenvironment{dedication}
    {\vspace{6ex}\begin{quotation}\begin{center}\begin{em}}
    {\par\end{em}\end{center}\end{quotation}}
\newcommand{\Z}{\mathbb{Z}}
\newcommand{\N}{\mathbb{N}}
\newcommand{\C}{\mathbb{C}}
\newcommand{\s}{\mathbb{S}}
\newcommand{\lef}{\mathbb{L}}
\newcommand{\R}{\mathbb{R}}
\newcommand{\M}{\mathcal{M}}
\newcommand{\K}{\mathcal{K}}
\newcommand{\A}{\mathbf{A}}
\newcommand{\ocal}{\mathcal{O}}
\newcommand{\lmo}{\longrightarrow}
\newcommand{\lmos}[1]{\stackrel{#1} {\longrightarrow}}
\newcommand{\mo}{\rightarrow}
\newcommand{\moi}{\hookrightarrow}
\def\dar[#1]{\ar@<2pt>[#1]\ar@<-2pt>[#1]}
\newcommand{\mbb}[1]{\mathbb{#1}}
\newcommand{\mbf}[1]{\mathbf{#1}}
\newcommand{\mcal}[1]{\mathcal{#1}}
\newcommand{\mrm}[1]{\mathrm{#1}}
\newcommand{\tel}{\otimes^{\mathbb{L}}}
\newcommand{\sml}{\wedge^{\mathbb{L}}}
\newcommand{\te}{\otimes}
\newcommand{\sm}{\wedge}
\newcommand{\uh}{\underline{\mrm{h}}}
\newcommand{\uk}{\underline{k}}
\newcommand{\und}[1]{\underline{#1}}
\newcommand{\map}{\mathrm{Map}}
\newcommand{\Hom}{\mathrm{Hom}}
\newcommand{\End}{\mathrm{End}}
\newcommand{\uaut}{\underline{\mathrm{Aut}}}
\newcommand{\ugl}{\underline{\mathrm{Gl}}}
\newcommand{\uend}{\underline{\mathrm{End}}}
\newcommand{\uma}{\underline{\mathrm{M}}}
\newcommand{\homi}{\underline{\mathrm{Hom}}}
\newcommand{\rhomi}{\mathbb{R}\underline{\mathrm{Hom}}}
\newcommand{\U}{\mbb{U}}
\newcommand{\V}{\mbb{V}}
\newcommand{\W}{\mbb{W}}
\newcommand{\sps}{Sp_{S^1}}
\newcommand{\spss}{Sp_{S^2}}
\newcommand{\spcon}{Sp^{\mathrm{con}}}
\newcommand{\sinf}{\Sigma^{\infty}}
\newcommand{\mota}{\mbb{M}_{a}}
\newcommand{\mloc}{\mbb{M}_{loc}(k)}
\newcommand{\unit}{\mathbbm{1}}
\newcommand{\dgcat}{dgCat_{k}}
\newcommand{\dgcattf}{dgCat^{\mrm{tf}}_{k}}
\newcommand{\dgcatc}{dgCat_{\C}}
\newcommand{\dgmor}{dgMor_{k}}
\newcommand{\dgmorc}{dgMor_{\C}}
\newcommand{\affk}{\mathrm{Aff}_k}
\newcommand{\schk}{\mrm{Sch}_k}
\newcommand{\affc}{\mathrm{Aff}_{\C}}
\newcommand{\afflissc}{\mathrm{Aff^{liss}_{\C}}}
\newcommand{\calgc}{\mrm{CAlg}_{\C}}
\newcommand{\schc}{\mrm{Sch}_{\C}}
\newcommand{\lissc}{\mathrm{Sch^{liss}_{\C}}}
\newcommand{\spec}{\mathrm{Spec}}
\newcommand{\hlamb}{H\Lambda-Mod_\s}
\newcommand{\kc}{\tilde{\mathbf{K}}}
\newcommand{\kn}{\mathbf{K}}
\newcommand{\ka}{\mathbf{k}}
\newcommand{\ukc}{\und{\tilde{\mathbf{K}}}}
\newcommand{\ukn}{\und{\mathbf{K}}}
\newcommand{\kh}{KH}
\newcommand{\ukh}{\und{KH}}
\newcommand{\kcst}{\tilde{\mathbf{K}}^{\mathrm{st}} }
\newcommand{\kst}{\mathbf{K}^{\mathrm{st}} }
\newcommand{\ktop}{\mathbf{K}^{\mathrm{top}} }
\newcommand{\kctop}{\tilde{\mathbf{K}}^{\mathrm{top}} }
\newcommand{\ktopu}{K_{\mathrm{top}} }
\newcommand{\hc}{\mathrm{HC}}
\newcommand{\hcn}{\mathrm{HC^-}}
\newcommand{\hh}{\mathrm{HH}}
\newcommand{\hp}{\mathrm{HP}}
\newcommand{\hpa}{\mathrm{HP^{alg}}}
\newcommand{\uhp}{\und{\mathrm{HP}}}
\newcommand{\uhh}{\und{\mathrm{HH}}}
\newcommand{\uhcn}{\und{\mathrm{HC}}^-}
\newcommand{\hdrna}{\mathrm{H_{DR}^{naive}}}
\newcommand{\hdran}{\mathrm{H_{DR}^{naive,an}}}
\newcommand{\hb}{\mathrm{H_{B}}}
\newcommand{\hbuu}{\mathrm{H_{B}}[u, u^{-1}]}
\newcommand{\hbs}{\mathrm{H_{\s,B}}}
\newcommand{\ao}{\mathbf{A}^1}
\newcommand{\po}{\mathbf{P}^1}
\newcommand{\gm}{\mathbf{G}_m}
\newcommand{\cuu}{\C[u^{\pm 1}]}
\newcommand{\re}[1]{\vert #1\vert}
\newcommand{\ret}[1]{\vert #1\vert^{\mrm{top}}}
\newcommand{\resp}[1]{\vert #1 \vert_{\s}}
\newcommand{\redel}[1]{\vert #1 \vert_{\del}}
\newcommand{\regam}[1]{\vert #1 \vert_{\gam}}
\newcommand{\resh}[1]{\vert #1 \vert_{S^2}}
\newcommand{\del}{\Delta}
\newcommand{\gam}{\Gamma}
\newcommand{\sh}{\stackrel{h}{\amalg}}
\newcommand{\ph}{\stackrel{h}{\times}}
\newcommand{\wh}{\stackrel{h}{\wedge}}
\newcommand{\cone}{\mathrm{cone}}
\newcommand{\cocone}{\mathrm{cocone}}
\newcommand{\chc}{\mathrm{Ch_c}}
\newcommand{\ch}{\mathrm{Ch}}
\newcommand{\chst}{\mathrm{Ch^{st}}}
\newcommand{\chutop}{\mathrm{Ch_{utop}}}
\newcommand{\chtop}{\mathrm{Ch^{top}}}
\newcommand{\chctop}{\mathrm{Ch^{top}_c}}
\newcommand{\bul}{\bullet}
\newcommand{\bu}{\mathbf{bu}}
\newcommand{\BU}{\mathbf{BU}}
\newcommand{\spr}{SPr(\affc)}
\newcommand{\sprliss}{SPr(\afflissc)}
\newcommand{\sprs}{SPr(\schc)}
\newcommand{\spret}{SPr(\affc)^{\mathrm{\acute{e}t}}}
\newcommand{\sprlisset}{SPr(\afflissc)^{\mathrm{\acute{e}t}}}
\newcommand{\sprset}{SPr(\schc)^{\mathrm{\acute{e}t}}}
\newcommand{\sprslisset}{SPr(\lissc)^{\mathrm{\acute{e}t}}}
\newcommand{\shpro}{Sh_{\mrm{pro}}(\schc)}
\newcommand{\shproliss}{Sh_{\mrm{pro}}(\lissc)}
\newcommand{\sprpro}{SPr(\schc)^{\mathrm{pro}}}
\newcommand{\sprproliss}{SPr(\lissc)^{\mathrm{pro}}}
\newcommand{\sppro}{Sp^{\mathrm{pro}}}
\newcommand{\spafk}{Sp(\affk)}
\newcommand{\spaf}{Sp(\affc)}
\newcommand{\spafliss}{Sp(\afflissc)}
\newcommand{\spretao}{SPr^{\mathrm{\acute{e}t}, \ao}}
\newcommand{\spetao}{Sp^{\mathrm{\acute{e}t}, \ao}}
\newcommand{\splissna}{Sp^{\mathrm{Nis, \ao}}_{\mathrm{Liss}}}
\newcommand{\shc}{\mathcal{SH}_{\C}} 
\newcommand{\m}[1]{\widehat{#1}} 
\newcommand{\mpa}[1]{\widehat{#1}_{pe}}
\newcommand{\lpe}{\mathrm{L_{pe}}}
\newcommand{\parf}{\mathrm{Parf}}
\newcommand{\uparf}{\und{\mathrm{Parf}}}
\newcommand{\pspa}{\mathrm{PsParf}}
\newcommand{\upspa}{\und{\mathrm{PsParf}}}
\newcommand{\proj}{\mathrm{Proj}}
\newcommand{\uproj}{\und{\mathrm{Proj}}}
\newcommand{\psproj}{\mathrm{PsProj}}
\newcommand{\upsproj}{\und{\mathrm{PsProj}}}
\newcommand{\vect}{\mathrm{Vect}}
\newcommand{\sspsp}{ssp_{\s}}
\newcommand{\n}{\und{n}}
\newcommand{\tc}{\mbb{T}}
\newcommand{\bcl}{\mathbf{B}}
\begin{document}

\thispagestyle{empty}

\renewcommand*{\thefootnote}{\fnsymbol{footnote}}

\vspace*{-3cm}

\begin{tabular}{p{8cm}p{5cm}}
\hspace*{2cm}\includegraphics[scale=.7]{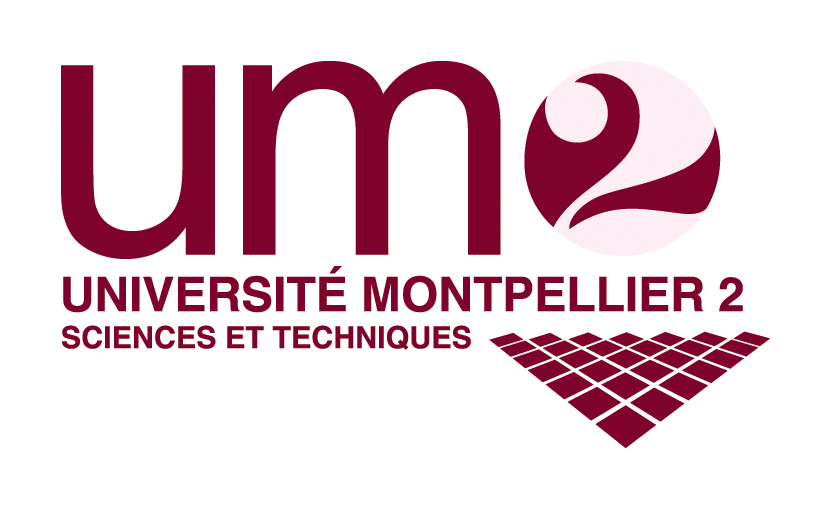}  & 
\vspace*{-2.7cm} \includegraphics[scale=.6]{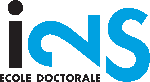} 
\end{tabular}

\vspace*{-.5cm}

\begin{center}
{\small
UNIVERSIT\'E MONTPELLIER II -- Sciences et Techniques du Languedoc.\\
\'Ecole doctorale I2S : Informations, Structures, Systèmes.}
\vspace*{0.3cm}

\rule{\textwidth}{.5pt}

\vspace*{0.3cm}

{\Large \textbf{TH\`ESE}} \\

\vspace*{0.5cm}

Présentée en vue d'obtenir le Grade de
 
\vspace*{0.5cm}

DOCTEUR DE L'UNIVERSIT\'E MONTPELLIER II \\
ET DE L'\'ECOLE DOCTORALE I2S\\
SPECIALIT\'E : MATH\'EMATIQUES

\vspace*{0.5cm}

Par \\
\vspace*{0.4cm}
\textbf{{\Large Anthony Blanc}} \\
\vspace*{0.4cm}
préparée à l'Institut de Mathématiques et de Modélisation de Montpellier.\\

\vspace*{0.4cm}

\rule{\textwidth}{.5pt}

\vspace*{.8cm}

\textbf{\LARGE{Invariants topologiques des Espaces non commutatifs.}}\\

\vspace*{.8cm}

Soutenue publiquement le 5 Juillet 2013, devant le Jury composé de

\vspace*{.4cm}

\end{center}

{\flushleft
\hspace*{-.5cm}
\begin{tabular}{lll}
M. Denis-Charles {\sc Cisinski} & \small{Professeur (UPS, Toulouse)} & Président du jury \\
M. Bernhard {\sc Keller} & \small{Professeur (Paris Diderot)} & Examinateur \\
M. Carlos {\sc Simpson}& \small{Directeur de Recherche (Nice Sophia-Antipolis)} & Examinateur \\
M. Bertrand {\sc Toën} & \small{Directeur de Recherche (UM2, Montpellier)} & Directeur de Thèse  \\
M. Michel {\sc Vaquié} & \small{Chargé de Recherche (UPS, Toulouse)} & Examinateur  \\
\end{tabular}
}
\begin{center}
Au vu des Rapports de
\end{center}

{\flushleft
\hspace*{-.5cm}
\begin{tabular}{ll}
M. Denis-Charles {\sc Cisinski} & \small{Professeur (UPS, Toulouse)} \\
M. Boris {\sc Tsygan} & \small{Professeur (Northwestern University, Evanston IL, USA)}  \\
\end{tabular}
}

\vspace*{0.5cm}

\rule{\textwidth}{.5pt}

\vspace*{.5cm}

{\small
\begin{tabular}{l}
Anthony Blanc\\
\texttt{anthony.blanc@math.univ-montp2.fr} \\
 I3M, Université Montpellier 2\\
Case Courrier 51\\
Place Eugène Bataillon\\
34095 Montpellier Cedex 5, France.
\end{tabular}
}

\renewcommand*{\thefootnote}{\alph{footnote}}

\newpage
\thispagestyle{empty}

\begin{abstract}
Dans cette thèse, on donne une définition de la K-théorie topologique des espaces non commutatifs de Kontsevich (c'est-à-dire des dg-catégories) définis sur les nombres complexes. L'introduction de ce nouvel invariant initie la recherche des invariants de nature topologique des espaces non-commutatifs, comme "simplifications" des invariants algébriques (K-théorie algébrique, homologie cyclique, périodique comme étudiés dans les travaux de Tsygan, Keller). La motivation principale vient de la théorie de Hodge non-commutative au sens de Katzarkov--Kontsevich--Pantev. 

En géométrie algébrique, la partie rationnelle de la structure de Hodge est donnée par la cohomologie de Betti rationnelle, qui est la cohomologie rationnelle de l'espace des points complexes du schéma. La recherche d'un espace associé à une dg-catégorie trouve une première réponse avec le champ (défini par Toën--Vaquié) classifiant les dg-modules parfaits sur cette dg-catégorie. La définition de la K-théorie topologique a pour ingrédient essentiel le foncteur de réalisation topologique des préfaisceaux en spectres sur le site des schémas de type fini sur les complexes. La partie connective de la K-théorie semi-topologique peut être définie comme la réalisation topologique du champ en monoïdes commutatifs des dg-modules parfaits. Cependant pour atteindre la K-théorie négative, on réalise le préfaisceau donné par la K-théorie algébrique non-connective. Un de nos résultats principaux énonce l'existence d'une équivalence naturelle entre ces deux définitions dans le cas connectif. On montre que la réalisation topologique du préfaisceau de K-théorie algébrique connective pour la dg-catégorie unité donne le spectre de K-théorie topologique usuel. Puis que c'est aussi vrai pour la K-théorie algébrique non-connective, en utilisant la propriété de restriction aux lisses de la réalisation topologique. En outre, cette propriété de restriction aux schémas lisses nécessite de montrer une généralisation de la descente propre cohomologique de Deligne, dans le cadre homotopique non-abélien.
La K-théorie topologique est alors définie en localisant par rapport à l'élément de Bott. Cette définition repose donc sur des résultats non-triviaux. On montre alors que le caractère de Chern de la K-théorie algébrique vers l'homologie périodique se factorise par la K-théorie topologique, donnant un candidat naturel pour la partie rationnelle d'une structure de Hodge non-commutative sur l'homologie périodique, ceci étant énoncé sous la forme de la conjecture du réseau. 

Notre premier résultat de comparaison concerne le cas d'un schéma lisse de type fini sur les complexes -- la conjecture du réseau est alors vraie pour les dg-catégories de complexes parfaits sur de tels schémas. On montre ensuite que cette conjecture est vraie dans le cas des algèbres associatives de dimension finie. 
\end{abstract}

\vspace*{1cm}

Mots clés : DG-catégories, K-théorie algébrique, K-théorie topologique, Théorie homotopique des schémas.

\newpage
\thispagestyle{empty}

\renewcommand{\abstractname}{Abstract}

\begin{center}
\rule{\textwidth}{.5pt}
\vspace*{.2cm}

\textbf{{\Large Topological Invariants of noncommutative Spaces.}}
\vspace*{.2cm}

\rule{\textwidth}{.5pt}
\end{center}

\vspace*{1cm}

\begin{abstract}
In this thesis, we give a definition of a topological K-theory of Kontsevich's non-commutative spaces (i.e. of dg-categories) defined over complex numbers. The introduction of this invariant initiates the quest for topological invariants of non-commutative spaces, which are considered as "simplifications" of algebraic ones like algebraic K-theory, cyclic homology, periodic homology as studied by Tsygan, Keller. The main motivation comes from non-commutative Hodge theory in the sense of Katzarkov--Kontsevich--Pantev. 

In algebraic geometry, the rational part of the Hodge structure is given by rational Betti cohomology, which is the rational cohomology of the underlying space of complex points. The existence of a space associated to a dg-category admits a first answer given by the stack (defined by Toën--Vaquié) classifying perfect dg-modules over this dg-category. The essential ingredient in the definition of the topological K-theory is the topological realization functor of spectral presheaves on the site of complex schemes of finite type. The connective part of the semi-topological K-theory can then be definied as the topological realization of the stack of perfect dg-modules over the space, together with its commutative monoid structure up to homotopy. But to deal with negative K-groups, we realize the presehaf given by non-connective algebraic K-theory. One of our main results relies the two previous definition in the connective case. We show that the topological realization of the presheaf of connective algebraic K-theory for the unit dg-category is equivalent to the usual topological K-theory spectrum. We show this is also true in the non-connective case, using a property of restriction to smooth schemes. This last property leads us to show a generalization of Deligne's proper cohomological descent to the homotopical non-abelian setting. This enables us to define topological K-theory by inverting the Bott element. We point out that the process of the definition involves non-trivial results. We then show that the Chern character from algebraic K-theory to periodic homology factorizes through topological K-theory, giving a natural candidate for the rational part of a non-commutative Hodge structure on the periodic homology of a smooth and proper dg-category. This last claim is written in the form of a conjecture : the lattice conjecture. 
Our first comparison result deals with the case of a smooth scheme of finite type over complex numbers -- we show the lattice conjecture holds for dg-categories of perfect complexes. We also show this conjecture is true in the case of finite dimensional associative algebras. 
\end{abstract}

\vspace*{1cm}

Key Words : DG-categories, Algebraic K-theory, Topological K-theory, Homotopy theory of schemes.

\newpage

\begin{dedication}
\large{à mon frère Nicolas !...}
\end{dedication}

\vspace*{5cm}

\begin{verse}
\textit{Où vont les courses folles \\
Où vont les courses folles \\
Contre la lumière \\
Soleil\\
Soleil\\
Creuset des larmes d'or\\
Dans la plaine la plaie pour la nuit brille encore \\
Pareil\\
Pareil\\
Aux chants des oiseaux morts\\
Aux sons des astres oubliés\\
A tous les coeurs dehors...\\
Soleil ! \\
Soleil !}\\

Bertrand Cantat, Wajdi Mouawad, \textit{Dithyrambe au Soleil ; Choeurs}.
\end{verse}

\newpage
\thispagestyle{empty}

\begin{center}
\rule{\textwidth}{.5pt}
\vspace*{.2cm}

\textbf{{\LARGE Invariants topologiques des Espaces non commutatifs.}}
\vspace*{.2cm}

\rule{\textwidth}{.5pt}
\end{center}

\newpage
\thispagestyle{empty}

\tableofcontents

\newpage
\thispagestyle{empty}

\setcounter{section}{-1}
\section{Introduction}
Étant donné un objet d'origine géométrique (variété $\mathscr{C}^\infty$ ou algébrique, espace topologique...), une des attitudes fécondes que les mathématiciens ont adoptées est de lui associer un objet de nature purement  <<algébrique>> comme un groupe, un anneau ou une catégorie. Après l'apparition des invariants numériques (la dimension, la caractéristique d'Euler, l'invariant de Hopf...), c'est au cours du développement de la topologie algébrique des années 30 qu'apparaissent les premières structures algébriques attachées à un espace avec le groupe fondamental de Poincaré, l'anneau de cohomologie, etc. Cette dualité espace/algèbre a été l'idée directrice d'une repensée de la géométrie et de la topologie au XXième siècle, notamment avec les travaux de Gelfand en topologie et de Grothendieck en géométrie algébrique. Ce dernier repense alors la notion d'espace en regardant la catégorie des faisceaux comme faisant office d'espace elle-même : les topos font leurs apparitions, étendant la dualité espace/algèbre en une dualité espace/catégorie. Ce point de vue est alors très fructueux, permettant l'étude systématique de toutes les variantes de catégories dérivées d'un schéma et des théories cohomologiques de saveurs possiblement arithmétiques apparaissant dans ce domaine. Mais les catégories dérivées apparaissent aussi dans d'autres types de géométries, analytique, différentielle, et même l'analyse microlocale et la théorie des $D$-modules. Le point de vue de la géométrie non-commutative \emph{à la} Connes prend le contrepied de la dualité de Gelfand : étant donnée une $C^*$-algèbre non nécessairement commutative, étudions-la comme l'algèbre des fonctions continues sur un espace <<non commutatif>> hypothétique. Le point de vue de Kontsevich prend le contrepied de l'étude d'une variété par le biais de sa catégorie dérivée : étant donné une catégorie triangulée, étudions-la comme la catégorie dérivée d'un espace <<non commutatif>> hypothétique. C'est dans ce cadre de la géométrie algébrique dîte <<non-commutative>> ou <<catégorique>> de Kontsevich que se place ce texte. Pour nous un espace non commutatif n'est rien d'autre qu'une catégorie triangulée (ou mieux : une catégorie différentielle graduée). Plusieurs invariants algébriques des espaces non commutatifs ont été très largement étudiés comme la K-théorie algébrique, les (co)homologies de Hochschild, cycliques, l'algèbre de Hall, (voir \cite{kedg}). Ce travail de thèse répond à la problématique de la recherche d'invariants de nature topologique associés aux espaces non commutatifs et notamment d'une K-théorie topologique dans ce contexte. 

En géométrie algébrique, la puissance de la généralisation apportée par la théorie des schémas offre la liberté de considérer des objets de nature a priori distincte sur la même scène, comme les anneaux de fonctions et les anneaux de nombres, et d'étudier ceux-ci avec les mêmes techniques géométriques. 
Le point de vue de Kontsevich avec la géométrie des catégories déploie une telle puissance d'unification : on peut considérer sur la même scène des objets provenant de géométries de nature a priori distincte, parmi lesquels on cite les exemples suivants.
\begin{itemize}
\item En géométrie algébrique, le cas des variétés algébriques semble être le plus étudié jusqu'à présent avec la catégorie dérivée quasi-cohérente (voir par exemple \cite{bondalorlovicm}, \cite{orlovdereq}). 
\item En géométrie symplectique, on peut citer la catégorie de Fukaya associée à une variété symplectique (voir par exemple \cite{fooo}, \cite{seidelfuk}, \cite{kontssymp}, \cite{abouwrap}). 
\item En théorie des représentations, la catégorie triangulée des complexes de représentations d'un carquois (voir par exemple \cite{kellerclus}, \cite{krauserep}, \cite{hrstilt}). 
\item En théorie des singularités, la catégorie des factorisations matricielles associée à une singularité isolée d'hypersurface apparues dans \cite{eisenbudmf}, et étudiées ensuite dans une autre perspective par Kontsevich pour décrire les catégories de $D$-branes de la théorie des cordes (\cite{orlovderivedcatsing}, \cite{tobimf}, \cite{efimovmf}). 
\item En analyse algébrique, les catégories de DQ-modules intervenant dans la théorie de la déformation par quantification des variétés de Poisson (\cite{kontsdefq}, \cite{shapiradef}, \cite{poleshapidefq}, \cite{petitdgaff}). 
\item En théorie de Hodge non-abélienne, les catégories dérivées de fibrés vectoriels plats, de fibrés vectoriels holomorphes et de système locaux sur une variété complexe (voir par exemple \cite{ldgcat}). 
\item En topologie algébrique, l'algèbre des chaînes sur un groupe de lacets (\cite{sat}), et on peut citer aussi en théorie de l'homotopie rationnelle les algèbres (commutatives) qui codent le type d'homotopie rationnel d'un espace simplement connexe. 
\end{itemize}
De ce point de vue, la géométrie des catégories est donc le point de rencontre de toutes les géométries. En particulier le point de rencontre de la géométrie algébrique et symplectique, ce qui permet de formuler des conjectures pour tenter de comprendre d'un point de vue catégorique le phénomène de \emph{symétrie miroir} jouant un rôle important en théorie des cordes (voir \cite{kontshom}, \cite{kontssoi}, \cite{kkp}). 
\\

L'étude des catégories dérivées s'est développée à travers leur structure triangulée. Il est maintenant compris que la structure de catégorie triangulée se comporte de manière trop pathologique dans différents contextes, et que celles-ci souffrent d'un défaut de stabilité par construction catégorique telle que le hom interne. Dans cette thèse nous travaillons avec la théorie homotopique des dg-catégories\footnote{a.k.a. catégories différentielles graduées.} qui a été initié par Tabuada (\cite{tabth}) et développée aussi par Toën (\cite{dgmor}) et Toën--Vaquié (\cite{modob}). Notons qu'il existe des variantes pour étudier les catégorie dérivées en les considérant comme des $A_\infty$-catégories ou des $\infty$-catégories stables linéaires. Ces trois approches sont supportées par l'existence d'une bonne théorie homotopique (à équivalence de Morita près) des dg-catégories, des $A_\infty$-catégories, et des $\infty$-catégories stables linéaires. Il est communément conjecturé qu'elles sont deux à deux équivalentes\footnote{On pourra consulter \cite[§6.2]{marco1} concernant le lien entre dg-catégories et $\infty$-catégories stables linéaires à équivalences Morita près.}. 

Dans le langage des dg-catégories, le passage du monde géométrique commutatif dans le monde catégorique est donné par un foncteur
$$
\lpe : \mrm{Vari\acute{e}t\acute{e}s}/k\lmo \mrm{dg-Cat\acute{e}gories}/k^{op}
$$
tel que pour toute variété algébrique $X$, la dg-catégorie $\lpe(X)$ est équivalente à la dg-catégorie formée des complexes parfaits de faisceaux quasi-cohérents. Cette situation soulève un ensemble de questions sur la possibilité d'étendre les propriétés et les invariants classiques que l'on connaît pour les variétés, aux espaces non commutatifs. Un certain nombre de ces questions a déjà trouvé des réponses complètes tandis que d'autres semblent être d'une difficulté bien supérieure. Les notions de lissité, de propreté, de type fini ont leur analogue non commutatif (tel que défini dans \cite{kontssoi}, \cite{modob}).
Les dg-catégories du type $\lpe(X)$ pour $X$ une variété vérifient une condition forte : elles ont un générateur compact (\cite[Thm.3.3.1]{bvgen}), chacune d'elles est donc équivalente à une dg-algèbre. De plus $\lpe(X)$ partage les mêmes propriétés que la variété $X$ (être propre, lisse...). Une idée directrice dans l'étude des espaces non commutatifs (dans le travail de Kontsevich) et dans leur théorie motivique peut s'exprimer par le slogan suivant. 
\\

 Slogan : \emph{Les espaces non commutatifs propres et lisses partagent toutes les bonnes propriétés des variétés algébriques propres et lisses}. 
\\

En d'autres termes, ce slogan reflète la vision qu'ont les experts du comportement des espaces non commutatifs à travers les exemples qui abondent et les calculs que l'ont sait mettre en œuvre. Par le théorème de Hochschild--Konstant--Rosenberg, la cohomologie de de Rham d'une variété lisse a pour analogue non commutatif l'homologie cyclique périodique (tel que défini par Tsygan \cite{tsyhom}) et la cohomologie de Hodge se traduit en l'homologie de Hochschild. Une structure importante que supporte la cohomologie de de Rham d'une variété propre et lisse est sa structure de Hodge, qui s'est révélé être un outil puissant en géométrie algébrique, intervenant dans la théorie motivique des variétés algébriques et dans la formulation mathématique de la symétrie miroir. Une définition de structure de Hodge non-commutative a été proposé dans l'article fondateur \cite[Déf.2.5]{kkp}, mais son existence dans le cas général d'un espace non commutatif propre et lisse fait l'objet des conjectures \cite[Conj.2.24]{kkp}, \cite[Conj.8.6]{kal} qui semblent relativement difficiles à l'heure actuelle. Son existence est supportée par la symétrie miroir dans des cas particuliers (voir \cite[§3]{kkp}) et bien sûr par le cas (commutatif) d'une variété algébrique lisse. 

Nous ne rentrons pas dans les détails techniques de la définition d'une structure de Hodge non-commutative (abrégé en shnc) (\cite[Déf.2.5]{kkp}). Contentons nous de dire de manière heuristique, qu'une telle structure peut être décrite par deux types de données : la <<partie de Rham>> (ou partie <<fibré vectoriel>>) et la <<partie Betti>>  qui correspondent à la filtration de Hodge et à la partie rationnelle respectivement dans le cas classique d'une variété algébrique lisse. La conjecture de dégénérescence (\cite[§2.2.4]{kkp}) donne une condition suffisante pour l'existence de la partie de Rham d'une shnc sur l'homologie périodique d'un espace non commutatif propre et lisse. Cette conjecture est supportée par l'existence d'une connexion de Gauss--Manin sur l'homologie cyclique périodique (\cite{tsygm}). Elle a été prouvé en imposant des hypothèses de finitude sur la dg-algèbre (\cite{kal}, \cite{shkly}) et dans le cas des dg-catégories de factorisations matricielles (\cite{tobimf}) qui est en lien direct avec le côté $B$ de la symétrie miroir. 

Ce travail de thèse concerne la partie Betti de la shnc hypothétique d'un espace non commutatif propre et lisse. Bondal et Toën ont proposé d'atteindre cette partie rationnelle en construisant une \emph{K-théorie topologique des espaces non commutatifs} \cite[§2.2.6]{kkp}.

\subsection{Description des résultats principaux}

L'objectif premier de cette thèse est de proposer une définition de K-théorie topologique des espaces non commutatifs définis sur $\C$ et de prouver que le caractère de Chern de la K-théorie algébrique vers l'homologie périodique se factorise de façon naturelle par la K-théorie topologique. La conséquence principale de ce résultat est l'existence d'un candidat naturel pour la structure rationnelle sur l'homologie périodique d'un espace non commutatif propre et lisse.

Si $T$ est un espace non commutatif et qu'on note $\kn$ le foncteur de K-théorie algébrique des espaces non commutatifs dans les spectres, alors $T$ détermine un préfaisceau en spectre
$$\ukn(T) : \spec(A)\longmapsto \kn(T\tel A).$$
La définition de la K-théorie topologique se construit en deux étapes : appliquer un certain foncteur de réalisation topologique au préfaisceau $\ukn(T)$ et inverser le générateur de Bott. Pour être en mesure d'inverser l'élément de Bott, nous devons d'abord calculer la K-théorie topologique du point (que l'on montre être équivalente au spectre $\bu$), et considérer la K-théorie semi-topologique comme un module sur l'anneau en spectre $\bu$.  

Plus précisément, si $\spr$ désigne la catégorie des préfaisceaux simpliciaux sur les schémas affines de type fini sur $\C$, et $SSet$ la catégorie des ensembles simpliciaux, la réalisation topologique est un foncteur 
\begin{align*}
\re{-} : \spr & \lmo SSet \\
 E & \longmapsto \re{E} 
\end{align*}
qui est l'extension naturelle du foncteur <<espace topologique des points complexes>> associé à un schéma de type fini sur $\C$. Ce foncteur admet une extension naturelle aux préfaisceaux en spectres et une version dérivée relativement à la théorie homotopique étale locale et $\ao$-localisé sur $\spr$. La notation $\re{-}$ réfère alors au foncteur dérivé. La réalisation topologique apparait dans les papiers \cite{simpsontop} et \cite[§3.3]{mv} au niveau de la théorie $\ao$-homotopique des schémas. Le foncteur de réalisation topologique (et ses propriétés de descente) est l'ingrédient clé dans la définition de la K-théorie topologique. 

\begin{df} \emph{(voir déf.\ref{defkst})} ---
Pour tout espace non commutatif $T$ sur $\C$, la K-théorie semi-topologique de $T$ est le spectre 
$$\kst(T)=\re{\ukn(T)}.$$
\end{df} 

La terminologie <<semi-topologique>> est empruntée à Friedlander--Walker, voir \cite{fwcomp} dans lequel les auteurs définissent une théorie équivalente pour les variétés algébriques complexes quasi-projectives. Un résultat de comparaison sera énoncé plus bas. Il existe en outre une version <<connective>> $\kcst(T)$ de la définition précédente obtenue en réalisant le préfaisceau de K-théorie algébrique connective. Ce spectre est digne d'intérêt puisque l'on ne sait pas a priori s'il est équivalent au revêtement connectif de $\kst(T)$. Cependant sous des hypothèses de lissité raisonnable sur $T$, on s'attend à ce que qu'ils soient équivalents (par exemple en montrant directement l'annulation de la K-théorie algébrique négative pour les espaces non commutatifs propres et lisses, voir la partie applications). 

Il existe un morphisme naturel $\kc(T)\lmo \kcst(T)$ donné par adjonction (où la notation tilde désigne la K-théorie connective). En particulier sur le $\kc_0$ et dans le cas d'une variété $X$, on donne une formule pour $\kcst_0(X)$ qui identifie ce groupe comme le quotient de $\kc_0(X)$ par la relation d'équivalence algébrique, c'est-à-dire que deux fibrés sont équivalents s'ils sont reliés par une courbe algébrique connexe, formule analogue à celle de Friedlander--Walker. 

Nos premiers résultats concernent le cas de la dg-catégorie ponctuelle $T=\unit$  (avec un seul objet et $\C$ comme algèbre d'endomorphismes). 

\begin{theo} \emph{(voir thm.\ref{bu})} --- Le spectre $\kcst(\unit)$ est équivalent au spectre $\bu$ de K-théorie topologique connective usuelle. 
\end{theo}

\begin{theo} \emph{(voir thm.\ref{annupoint})} --- Pour toute $\C$-algèbre commutative lisse $B$, on a $\kst_{-i}(B)=0$ pour tout $i>0$. En particulier on a une équivalence $\kst(\unit)\simeq \bu$. 
\end{theo}

Comme suggéré par Kontsevich, les espaces non commutatifs ont une théorie motivique, comme développé dans le travail de Tabuada (\cite{tabhkt}, \cite{tabgui}) et on doit être en mesure de comparer les motifs de variétés algébriques avec ceux associés aux espaces non commutatifs (voir aussi le travail récent de Robalo \cite{marco1}). En utilisant le théorème de Cisinski--Tabuada sur la coreprésentabilité de la K-théorie algébrique dans la catégorie motivique de Tabuada (\cite[Thm.7.16]{nck}), on donne un modèle de $\kst(T)$ qui est un module sur l'anneau en spectre $\bu$. Ceci nous permet alors de définir la K-théorie topologique ($\beta\in\pi_2\bu $ désigne un générateur de Bott). 

\begin{df} \emph{(voir déf.\ref{deftop})} ---
$\ktop(T)=\kst(T)[\beta^{-1}]$. 
\end{df} 

Cette définition fait écho au travail de Thomason dans \cite[Thm.4.11]{thomet} dans lequel la K-théorie topologique à coefficients finis des points complexes d'une variété est donnée par la K-théorie algébrique à coefficients finis après inversion de l'élément de Bott. Elle réfère aussi au travail de Friedlander--Walker \cite{fwcomp} dans lequel la K-théorie topologique des points complexes d'une variété projective lisse est donné par la K-théorie semi-topologique au sens de Friedlander--Walker après inversion de l'élément de Bott. Notons qu'on peut considérer une variante de cette définition en inversant le générateur de Bott dans la K-théorie semi-topologique connective, il en résulte une théorie que l'on nomme K-théorie topologique pseudo-connective et qu'on note $\kctop(T)$. 

Une question naturelle est de demander que dans le cas $T=\lpe(X)$ d'une variété raisonnable, la K-théorie topologique $\ktop(\lpe(X))$ redonne la K-théorie topologique des points complexes. C'est ce que nous prouvons en guise de premier exemple en utilisant la dualité de Spanier--Whitehead dans la catégorie homotopique stable des schémas de Morel--Voevodsky, comme prouvé par Riou dans le cas d'une variété lisse. 

\begin{prop} \emph{(voir prop.\ref{compvarlisse})} ---
Pour $X$ un $\C$-schéma lisse de type fini, on a une équivalence $\ktop(\lpe(X))\simeq \ktopu(sp(X))$, où le dernier objet désigne la K-théorie topologique usuelle des points complexes de $X$. 
\end{prop}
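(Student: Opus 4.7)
Le plan est d'exploiter le théorème de représentabilité de Riou pour la K-théorie algébrique dans la catégorie homotopique stable motivique $\shc$ de Morel--Voevodsky, combiné avec les propriétés monoïdales du foncteur de réalisation topologique $\re{-}$. Pour $X$ un $\C$-schéma lisse de type fini, la restriction aux schémas lisses du préfaisceau en spectres $\ukn(\lpe(X))$, c'est-à-dire $\spec A \mapsto \kn(X\times_\C \spec A)$, est représentée dans $\shc$ par le spectre de morphismes $\rhom(\sinf X_+, KGL)$, où $KGL$ désigne le spectre motivique de K-théorie. Puisque $X$ est lisse, $\sinf X_+$ est dualisable dans $\shc$ (c'est là qu'intervient le résultat de Riou via la dualité de Spanier--Whitehead), de sorte que ce spectre de morphismes s'identifie à $KGL \sm X^\vee$, où $X^\vee$ désigne le dual de Spanier--Whitehead de $X$.

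On applique ensuite $\re{-}$. Ce foncteur est symétrique monoïdal et se factorise à travers $\shc$ (modulo les localisations appropriées), envoyant $\sinf X_+$ sur le spectre topologique $\sinf sp(X)_+$ et commutant avec le passage au dual pour les objets dualisables. Par le théorème \ref{annupoint}, on a $\kst(\unit) \simeq \bu$, si bien qu'en inversant l'élément de Bott on obtient $\re{KGL}[\beta^{-1}] \simeq KU$, le spectre de K-théorie topologique complexe. En combinant ces ingrédients,
$$\ktop(\lpe(X)) \;=\; \re{KGL \sm X^\vee}[\beta^{-1}] \;\simeq\; KU \sm sp(X)^\vee \;\simeq\; F(sp(X)_+, KU) \;=\; \ktopu(sp(X)),$$
la dernière équivalence étant la dualité de Spanier--Whitehead topologique usuelle pour le CW-complexe fini $sp(X)$.

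La difficulté principale réside dans la mise en place rigoureuse du pont entre le monde motivique $\shc$ et le monde semi-topologique. D'une part, il faut que la représentabilité de la K-théorie \emph{non connective} par $KGL$ fournie par Riou soit compatible avec la réalisation $\re{-}$ au sens où celle-ci préserve les unité et coünité de la dualité de Spanier--Whitehead. D'autre part, c'est la propriété de restriction aux schémas lisses de $\re{-}$, elle-même conséquence du théorème de descente propre non-abélien homotopique généralisant celui de Deligne évoqué dans l'introduction, qui rend licite le calcul de $\re{\ukn(\lpe(X))}$ à partir de la restriction du préfaisceau aux affines lisses — c'est précisément sur ce site que le résultat motivique de Riou s'applique.
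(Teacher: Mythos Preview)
Your proposal is correct and follows essentially the same route as the paper's proof: both rely on Riou's dualizability of $\sinf X_+$ in the stable motivic category $\shc$, the monoidality of the topological realization (so that it commutes with internal homs/duals), the restriction to smooth schemes, and the identification $\resp{\ka}[\beta^{-1}]\simeq \BU$. The paper formulates things via the $T$-spectrum $\kh$ and the functor $\R\Omega^\infty_{S^2}$ rather than via $KGL\sm X^\vee$, but this is only a packaging difference.
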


Le résultat annoncé sur le caractère de Chern topologique est le suivant. Ici $\hcn(T)$ désigne l'homologie cyclique négative de l'espace non commutatif $T$ et $\hp(T)$ son homologie cyclique périodique. 

\begin{theo} \emph{(voir thm.\ref{carac})} ---
Le caractère de Chern algébrique $\kn(T)\lmo \hcn(T)$ se factorise par $\ktop(T)$ et on a alors un carré commutatif 
$$\xymatrix{\kn(T)\ar[r]^-{\ch} \ar[d]&\hcn(T) \ar[d] \\ \ktop(T)\ar[r]^-{\chtop}& \hp(T). }$$
\end{theo}

Il existe un intermédiaire comme candidat d'un <<espace>> associé à une dg-catégorie $T$ donnée. Le mot espace est à prendre en un sens plus large, puisqu'il s'agit d'un champ supérieur dérivé (au sens de \cite{hag2}) noté $\M_T$ défini par Toën--Vaquié dans \cite{modob}. Lorsque $T$ satisfait des conditions de finitude raisonnable, le champ $\M_T$ peut être considéré comme un espace géométrique lui-même raisonnable (localement géométrique) qui classifie les objets de $T$. Dans \cite{panditth}, ce champ est étudié dans le contexte des $\infty$-catégories stables, ses propriétés de descente et de géométricité. Le champ $\M_T$ admet une structure de monoïde commutatif à homotopie cohérente près donnée par la somme directe des dg-modules. Sa réalisation topologique est alors un monoïde commutatif à homotopie cohérente près dans les espaces, qui est de plus un groupe au sens homotopique. On peut donc par l'équivalence de Segal--Bousfield--Friedlander définir un spectre connectif à partir de celui-ci. 
Notre spectre de K-théorie semi-topologique connective $\kcst(T)$ ne semble pas directement relié au champ $\M_T$ lui-même, mais à une variante que je note $\M^T$. Le champ $\M_T$ classifie les dg-modules pseudo-parfaits et le champs $\M^T$ classifie les dg-modules parfaits. Lorsque $T$ est propre et lisse, ces deux champs sont équivalents. La proposition suivante fait le lien avec les définitions de $\ktop$ données dans \cite[§2.2.6]{kkp} \cite[Déf.8.2]{kal}. 

\begin{prop} \emph{(voir prop.\ref{kstmt})} --- Il existe une équivalence $\kcst(T)\simeq \re{\M^T}$. 
\end{prop}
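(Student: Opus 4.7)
Le plan suit trois étapes essentiellement formelles, une fois admise une description fonctorielle de la K-théorie connective comme complétion en groupes du champ des modules parfaits.

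Première étape --- description de $\ukc(T)$ en termes de $\M^T$. Par la construction à la Waldhausen de la K-théorie, ou plus précisément par le théorème de Toën--Vaquié identifiant la K-théorie connective de toute dg-catégorie avec la K-théorie du monoïde (commutatif à homotopie cohérente près) des dg-modules parfaits, on dispose, pour toute $\C$-algèbre commutative $B$, d'une équivalence naturelle entre $\kc(T\tel B)$ et le spectre connectif obtenu par complétion en groupes du $E_\infty$-espace $|\M^T(\spec B)|$ -- la structure de monoïde étant induite par la somme directe des dg-modules parfaits. Cette équivalence étant fonctorielle en $B$, elle fournit une équivalence de préfaisceaux en spectres connectifs
$$
\ukc(T) \;\simeq\; \bigl(\M^T\bigr)^{gp},
$$
où le membre de droite désigne la complétion en groupes appliquée terme à terme au préfaisceau en $E_\infty$-monoïdes sous-jacent à $\M^T$.

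Deuxième étape --- commutation de $\re{-}$ à la complétion en groupes. Le foncteur de réalisation topologique étant défini comme extension par colimites du foncteur <<espace des points complexes>>, c'est un adjoint à gauche; il commute donc aux colimites homotopiques, et en particulier à la construction de complétion en groupes (qui s'exprime comme une colimite de type bar). Il en résulte
$$
\kcst(T) \;=\; \re{\ukc(T)} \;\simeq\; \re{\M^T}^{gp}.
$$

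Troisième étape --- l'espace $\re{\M^T}$ est déjà group-like. C'est précisément le point rappelé dans le paragraphe précédant l'énoncé : la réalisation topologique du champ $\M^T$ est un monoïde commutatif à homotopie cohérente près qui est un groupe au sens homotopique (autrement dit, $\pi_0 \re{\M^T}$ est un groupe abélien). L'équivalence de Segal--Bousfield--Friedlander identifie donc $\re{\M^T}$ à un spectre connectif équivalent à sa propre complétion en groupes. En combinant avec l'étape 2, on obtient l'équivalence désirée $\kcst(T)\simeq \re{\M^T}$.

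Le principal obstacle réside dans l'étape 1. Il s'agit d'établir l'équivalence entre K-théorie connective et complétion en groupes du champ des modules parfaits avec assez de fonctorialité en $B$ pour induire une équivalence de préfaisceaux; ceci passe par une description explicite de la structure monoïdale de $\M^T$ par somme directe et par sa compatibilité au changement de base $T\mapsto T\tel B$ (via $\M^T(\spec B)\simeq |\M^{T\tel B}|$). Une difficulté secondaire, déjà mentionnée, est la vérification effective du caractère group-like de $\re{\M^T}$ (étape 3); bien que moralement claire, elle demande une analyse précise de $\pi_0$, par exemple par un argument de déformation algébrique reliant un fibré et son inverse formel sur une base géométrique dont la réalisation connecte les deux classes.
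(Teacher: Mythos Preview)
Votre plan identifie correctement que l'étape 1 est le point critique, mais c'est précisément là que se trouve une lacune réelle. L'équivalence $\ukc(T) \simeq (\M^T)^{gp}$ que vous invoquez \emph{n'est pas vraie au niveau des préfaisceaux}. Pour une algèbre commutative $A$, la catégorie de Waldhausen $\parf(T\tel A)$ possède des cofibrations non scindées, et le lemme \ref{cofsc} du papier (qui identifie $(NwB_WC)^+$ à $K^\gam(C)$) ne s'applique que lorsque toutes les cofibrations sont scindées. Concrètement, dans $K_0(\parf(T\tel A))$ on a l'identité $[B]=[A']+[C]$ pour toute suite exacte $A'\to B\to C$, alors que dans la complétion en groupes du monoïde $\pi_0\M^T(\spec A)$ on n'obtient que $[A'\oplus C]=[A']+[C]$. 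Il n'existe pas de théorème de Toën--Vaquié donnant cette identification au niveau algébrique.

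Ce que le papier établit à la place est beaucoup plus fin : la proposition \ref{mk} montre que le morphisme de $\del$-objets $\lambda_\bul : \M^T_\bul \to \K^T_\bul$ (de la construction $B_W$ vers la $S$-construction de Waldhausen) est une \emph{$\ao$-équivalence}. Autrement dit, les cofibrations de $\parf(T,-)$ se scindent à $\ao$-homotopie près, ce qui se démontre par récurrence sur le niveau simplicial en construisant explicitement des $\ao$-homotopies via le cône des morphismes. C'est seulement après réalisation topologique (qui envoie les $\ao$-équivalences sur des équivalences, théorème \ref{di}) que l'on récupère l'équivalence entre $\regam{\M^T_\bul}$ et $\regam{K^\gam(\parf(T,-))}$. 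Vos étapes 2 et 3 sont essentiellement correctes et correspondent aux propositions \ref{proprelplus}, \ref{proprelb} et \ref{mttsp} du papier ; mais elles ne suffisent pas sans cette étape $\ao$-homotopique qui constitue le cœur de l'argument.
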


\subsection{Applications, relations à d'autres travaux}

La première motivation de la définition de $\ktop(T)$ et de $\chtop$ est de donner un sens à la conjecture dite <<conjecture du réseau>>. 

\begin{conj} \emph{(du réseau)} --- Pour tout espace non commutatif $T$ propre et lisse, le morphisme naturel $\chtop\sm_\s H\C : \ktop(T)\sm_{\s} H\C\lmo \hp(T)$ est une équivalence. 
\end{conj}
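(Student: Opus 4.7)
Ma stratégie pour aborder la conjecture du réseau serait de procéder par dévissage, en exploitant l'additivité des deux membres. La première étape consisterait à vérifier que $T \mapsto \ktop(T) \sm_\s H\C$ et $T \mapsto \hp(T)$ définissent tous deux des invariants additifs des dg-catégories propres et lisses sur $\C$. Pour $\hp$ c'est un théorème classique de Keller. Pour $\ktop(-) \sm_\s H\C$, cela résulterait de la suite d'observations suivantes: la K-théorie algébrique non-connective $\kn$ est un invariant localisant par Schlichting; la réalisation topologique $\re{-}$ est un adjoint à gauche, donc préserve les coproduits homotopiques finis; la localisation à la Bott est exacte dans la catégorie des $\bu$-modules en spectres; et le smash avec $H\C$ est exact. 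Le caractère de Chern topologique $\chtop$ définit alors une transformation naturelle entre deux foncteurs additifs sur les dg-catégories propres et lisses.

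Par cette additivité, il suffirait de vérifier la conjecture sur une famille engendrant toutes les dg-catégories propres et lisses par décompositions semi-orthogonales successives. Le cas de base $T = \unit$ se traite ainsi: on a $\ktop(\unit) \simeq KU$ comme conséquence du calcul de $\kst(\unit) \simeq \bu$ et de l'inversion de Bott, et un calcul classique donne $KU \sm_\s H\C \simeq H\C[\beta^{\pm 1}]$, qui s'identifie à $\hp(\C) \simeq \C((u))$ via $\chtop$. Les deux cas établis dans cette thèse (dg-catégories $\lpe(X)$ pour $X$ lisse de type fini, et algèbres associatives de dimension finie) étendent considérablement cette classe. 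L'argument se ramènerait alors à prouver la conjecture pour toute dg-catégorie admettant une décomposition semi-orthogonale à facteurs dans ces classes déjà traitées, par un devissage direct sur les cofibres.

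L'obstacle principal est précisément qu'il n'est pas connu si toute dg-catégorie propre et lisse admet une telle décomposition en facteurs géométriques ou algébriques: c'est essentiellement une version non-commutative du problème de représentabilité géométrique à la Bondal-Orlov. Une approche alternative, probablement plus prometteuse à long terme, consisterait à exploiter la théorie des motifs non-commutatifs de Tabuada et le théorème de Cisinski-Tabuada sur la corepresentabilité de $\kn$ dans la catégorie motivique. Il faudrait alors établir une description corepresentable conjointe de $\ktop$ et $\hp$ compatible avec $\chtop$, puis vérifier la conjecture sur un modèle motivique universel. La difficulté technique centrale de cette approche est de démontrer que la réalisation topologique et la localisation à la Bott commutent avec la construction de la catégorie motivique universelle des espaces non commutatifs, ce qui ne découle d'aucune propriété formelle évidente puisque ces procédés ne sont pas de nature purement algébro-géométrique.
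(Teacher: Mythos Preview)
Il faut d'abord clarifier la nature de l'énoncé : il s'agit d'une \emph{conjecture}, et le papier ne contient aucune démonstration générale à laquelle comparer ta proposition. Le papier se contente d'établir deux cas particuliers --- $T = \lpe(X)$ pour $X$ lisse de type fini (Proposition~\ref{compvarlisse}) et $T = B$ pour $B$ une $\C$-algèbre associative de dimension finie (Proposition~\ref{cralgass}) --- par des arguments ad hoc qui n'ont rien à voir avec ton dévissage : pour les schémas lisses, on exploite la dualité de Riou dans $\shc$ pour identifier $\ktop(X)$ à $\ktopu(sp(X))$ puis on invoque le caractère de Chern topologique classique ; pour les algèbres de dimension finie, on se ramène au cas semi-simple par invariance de $\vect^B$ sous extension infinitésimale, combinée à l'invariance analogue de $\hp$ due à Goodwillie.

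Ta proposition n'est donc pas une preuve mais un programme de recherche, et tu le reconnais honnêtement en identifiant l'obstacle principal : on ne sait pas si toute dg-catégorie propre et lisse admet une décomposition semi-orthogonale en facteurs des deux types traités. Le papier fait d'ailleurs la même observation que toi juste après l'énoncé de la conjecture : la classe des $T$ la vérifiant est stable par colimites filtrantes, quotients, extensions et rétracts dans $Ho(\dgmorc)$. Ton dévissage est donc légitime, mais ne peut aboutir sans un résultat de structure sur les dg-catégories propres et lisses qui est lui-même ouvert.

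Un point de détail à corriger : tu écris $\hp(\C) \simeq \C((u))$, mais l'homologie périodique du point est $H\C[u,u^{-1}]$ (polynômes de Laurent), pas les séries formelles de Laurent. C'est cohérent avec $\BU \sm_\s H\C \simeq H\C[\beta^{\pm 1}]$ et l'identification $\chtop(\beta) = u$ établie dans le papier.

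Quant à ton approche alternative via les motifs non-commutatifs, elle est raisonnable comme direction mais reste trop vague pour être évaluée : tu décris la difficulté centrale sans proposer de piste pour la surmonter. En l'état, ta proposition constitue une discussion informée des stratégies possibles, mais pas une démonstration --- ce qui est approprié puisque l'énoncé demeure conjectural.
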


Sa validité implique que l'image de $\ktop(T)$ donne la partie rationnelle d'une structure de Hodge non-commutative sur $\hp(T)$ pour $T$ propre et lisse. Avec notre résultat de comparaison pour les variétés algébriques lisses (prop. \ref{compvarlisse}) nous vérifions que la conjecture du réseau est vrai pour une telle variété. Dans notre dernier paragraphe, nous montrons que (la variante pseudo-connective de) la conjecture du réseau est vraie pour une algèbre associative de dimension finie sur $\C$. Ce résultat s'obtient en montrant la propriété d'invariance par extension infinitésimale pour le champ des modules projectifs de rang fini sur l'algèbre considérée. De plus ce résultat implique la formule suivante pour les groupes d'homologie périodique d'une algèbre de dimension finie notée $B$. Notons $\vect^B$ le champ des modules projectifs de rang fini sur $B$. On note $\re{\vect^B}^{ST}$ la stabilisation de l'espace $\re{\vect^B}$ par rapport au $B$-module $B$. On a alors la 

\begin{prop} \emph{(voir prop.\ref{formhp1})} --- Le caractère de Chern $\kctop(B)\lmo \hp(B)$ induit un isomorphisme de $\C$-espaces vectoriels pour tout $i=0,1$, 
$$ colim_{k\geq 0} \pi_{i+2k} \re{\vect^B}^{ST}\te_\Z \C\simeq\hp_i(B)$$
où la colimite est induite par l'action de l'élément de Bott $\beta$ sur les groupes d'homotopie, $\pi_i\re{\vect^B}^{ST} \lmos{\times \beta} \pi_{i+2} \re{\vect^B}^{ST}$.
\end{prop}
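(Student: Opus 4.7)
La stratégie globale consiste à exploiter la variante pseudo-connective de la conjecture du réseau pour $B$, que l'on suppose déjà établie plus haut dans le dernier paragraphe. Cette équivalence $\kctop(B)\sm_\s H\C \simeq \hp(B)$, en passant aux groupes d'homotopie et en utilisant que l'homologie cyclique périodique est naturellement un $\C$-espace vectoriel, fournit des isomorphismes $\pi_i(\kctop(B))\te_\Z \C \simeq \hp_i(B)$ pour tout $i$. Toute la question est donc de réinterpréter le terme de gauche comme la colimite de Bott annoncée dans l'énoncé.

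Par la définition même de la K-théorie topologique pseudo-connective, $\kctop(B)=\kcst(B)[\beta^{-1}]$, on obtient immédiatement
\[
\pi_i(\kctop(B)) \simeq colim_{k\geq 0} \pi_{i+2k}\kcst(B),
\]
la colimite étant prise le long de la multiplication par $\beta$. Il suffit alors d'identifier les groupes $\pi_{i+2k}\kcst(B)$ aux groupes $\pi_{i+2k}\re{\vect^B}^{ST}$ d'une manière compatible avec l'action de $\beta$. Pour cela, je ferais appel à la proposition \ref{kstmt}, qui donne $\kcst(B)\simeq \re{\M^B}$, $\M^B$ désignant le champ classifiant les $B$-modules parfaits. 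Comme $B$ est de dimension finie, tout $B$-module parfait est quasi-isomorphe à un complexe borné de modules projectifs de type fini, ce qui via un argument de cofinalité à la Waldhausen permet de relier $\M^B$ au champ en monoïdes commutatifs $\vect^B$. Le $\pi_0$ de ce monoïde étant engendré par la classe $[B]$ -- tout projectif étant facteur direct d'un $B^n$ --, la complétion en groupe de $\re{\vect^B}$ se calcule précisément comme la stabilisation par $[B]$, c'est-à-dire $\re{\vect^B}^{ST}$, ce qui identifie l'espace de lacets infini sous-jacent de $\kcst(B)$ à $\re{\vect^B}^{ST}$.

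Le point délicat de cette démonstration me paraît être la justification rigoureuse de cette dernière étape, à savoir la coïncidence entre la complétion en groupe monoïdale calculant $\Omega^\infty\kcst(B)$ et la stabilisation par le module $B$ définissant $\re{\vect^B}^{ST}$. Il s'agit d'une variante topologique du théorème de Quillen comparant la K-théorie d'une catégorie exacte à la complétion en groupe du classifiant de ses projectifs, appliquée ici après réalisation topologique, et qui repose sur la propriété d'invariance par extension infinitésimale évoquée dans le texte. Il convient par ailleurs de vérifier que l'action de $\beta$ sur $\pi_{i+2k}\re{\vect^B}^{ST}$, héritée via la structure de $\bu$-module de $\kst$, coïncide bien avec celle qui définit $\kctop$. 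Une fois ces compatibilités établies, la combinaison des trois étapes produit la formule voulue.
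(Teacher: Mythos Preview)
Votre stratégie globale est la bonne et coïncide avec celle du texte : partir de la validité de la conjecture du réseau pseudo-connective pour $B$ (proposition~\ref{cralgass}), prendre les groupes d'homotopie, écrire $\pi_i\kctop(B)$ comme colimite le long de $\beta$ des $\pi_{i+2k}\kcst(B)$, puis identifier l'espace de lacets infini sous-jacent à $\kcst(B)$ avec $\re{\vect^B}^{ST}$. Deux points méritent cependant d'être rectifiés.

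D'abord, le détour par $\M^B$ et un argument de cofinalité à la Waldhausen est inutile : la formule~(\ref{kvect}), établie avant la proposition~\ref{cralgass} via la remarque~\ref{algebra} et le lemme~\ref{cofsc}, donne directement $\kcst(B)\simeq \mcal{B}\regam{\vect^B_\bul}^+$, donc $\Omega^\infty\kcst(B)\simeq \re{\vect^B}^+$.

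Ensuite, et c'est le point important, votre justification du pas délicat --- l'identification de la complétion en groupe $\re{\vect^B}^+$ avec la stabilisation $\re{\vect^B}^{ST}$ --- est inexacte. Vous invoquez une \og variante topologique du théorème de Quillen\fg{} qui \og repose sur la propriété d'invariance par extension infinitésimale\fg. Or le texte (remarque~\ref{stab}) dit explicitement le contraire : le monoïde $\pi_0\re{\vect^B}$ n'étant pas isomorphe à $\N$ en général, on ne peut pas appliquer le résultat de Quillen. L'argument effectivement utilisé est le critère d'inversion de Robalo \cite[Cor.~4.24]{marco1} dans les $\infty$-groupoïdes monoïdaux symétriques : on vérifie que $B$ est un objet \emph{symétrique} en exhibant une homotopie entre la permutation cyclique $(123)$ sur $B\oplus B\oplus B$ et l'identité, obtenue par transport via $\ugl_3(\C)\to\ugl_3(B)$. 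L'invariance par extension infinitésimale, quant à elle, sert à établir la conjecture du réseau elle-même (réduction au cas semi-simple), pas à calculer la complétion en groupe. Notez aussi que $\pi_0\re{\vect^B}$ n'est pas \emph{engendré} par $[B]$ : ce qui est vrai, et suffisant, est que tout élément en est un facteur direct d'une puissance.
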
 

Nous pensons que la conjecture est de plus vraie dans les cas suivants : un DM-champ propre et lisse sur $\C$, une dg-catégorie de factorisation matricielle $MF(X,W)$, avec $W:X\lmo \A^1$ une fonction ayant une singularité isolée, avec des hypothèses adéquates sur $W$. On notera que ce dernier exemple nécessite de travailler avec des dg-catégories $2$-périodiques, qui ont leur propre théorie homotopique (voir par exemple \cite{tobimf}), et de donner un sens au foncteur de réalisation dans ce contexte $2$-périodique. 

La définition de $\ktop(T)$ suggère d'autres conjectures sur la K-théorie des espaces non commutatifs propres et lisses. En \ref{conj} nous énonçons plusieurs conjectures sur la K-théorie à coefficients finis, et l'annulation des K-groupes semi-topologiques négatifs. 

Une autre piste d'investigation qui nous paraît intéressante est le comportement du morphisme $\chtop$ vis à vis de la connexion de Gauss--Manin supporté par l'homologie périodique d'une famille d'espaces non commutatifs paramétrée par une variété affine lisse (voir \cite{caract}, \cite{tsygm}). Nous pensons que le morphisme $\chtop$ est plat par rapport à cette connexion. On pourra consulter pour cela \cite[§2.2.5]{kkp} pour la relation avec la variation de structure de Hodge associée. 

Les applications suivantes demeurent plus vagues pour le moment mais il nous semble quand même intéressant de les citer. 

Une application de la définition de $\ktop$ et de son caractère de Chern est donnée par une définition nouvelle de la cohomologie de Deligne pour les espaces non commutatifs. Dans le cas classique, la cohomologie de Deligne paire d'une variété projective lisse est donnée par l'extension des classes entières de degré $(p,p)$ par sa $p$ième jacobienne intermédiaire. Ceci suggère la définition suivante de la cohomologie de Deligne d'un espace non commutatif $T$,
$$\mrm{H_{\mcal{D}}}(T):=\ktop(T)\times_{\hp(T)} \hcn(T),$$
qui est rendue possible par le théorème d'existence du morphisme $\chtop$.  On observe donc l'existence d'un caractère de Chern $\kn(T)\lmo \mrm{H_{\mcal{D}}}(T)$. Ce morphisme de Chern peut être pensé comme un mélange entre des classes caractéristiques usuelles et classes caractéristiques secondaires, et peut constituer un cadre pour l'étude de formules de type Grothendieck--Riemann--Roch pour un espace non commutatif associé à une déformation par quantification d'une structure symplectique et des formules d'indices \emph{à la} Bressler–Gorokhovsky–Nest–Tsygan. On cite aussi \cite{tabmarjac} concernant la théorie des jacobiennes intermédiaires pour les espaces non commutatifs. 

On peut citer comme autre application éventuelle l'utilisation de la K-théorie topologique pour répondre à une question de Freed posée dans \cite{freed}. Freed s'intéresse à une théorie de Chern--Simons qui associe une catégorie linéaire à une $1$-variété fermée, dont la réduction est sensée être donnée par un raffinement de l'homologie de Hochschild défini sur $\Z$. La K-théorie topologique telle que définie dans cette thèse doit répondre à ce problème. 

\paragraph*{Organisation du texte.}

Cette thèse est divisée en cinq sections. La section 0 est cette même introduction. La section 1 est constituée de préliminaires concernant la théorie homotopique des $\del$ et $\gam$-espaces (modèles de monoïdes à homotopie cohérente près), de la K-théorie algébrique des espaces non commutatifs, et du caractère de Chern algébrique. Cette dernière sous-partie sur le caractère algébrique utilise les résultats de Cisinski--Tabuada sur le motivateur $Mot_{\mrm{loc}}$ pour exhiber une structure de $\ka$-module sur la K-théorie algébrique de telle sorte que le morphisme de Chern $\ch$ est un morphisme de $\ka$-modules. La section 2 concerne la réalisation topologique des préfaisceaux simpliciaux et ses nombreuses propriétés. 
La section 3 donne les définitions et les premières propriétés de la K-théorie topologique ainsi que son lien avec le champ $\M^T$, et termine par des conjectures. La section 4 traite le premier exemple d'une variété lisse, que l'on espère généraliser rapidement à toutes les variétés en suivant une suggestion de DC Cisinski. Enfin elle contient aussi l'exemple des algèbres de dimension finie sur $\C$, ainsi que des conséquences dans le cas où l'algèbre est lisse.

\newpage

\subsection{Danksagung}

Mes premiers remerciements vont à mon directeur, Bertrand Toën. D'abord pour m'avoir fait confiance, lorsque j'étais en Master à Toulouse. Pour m'avoir soutenu pendant ces quatre années avec multe patience. Ce fut un réel plaisir de travailler en tant que thésard à ses côtés, dû à son attention et sa riche et vivante vision mathématique. Merci pour m'avoir simplement écouté quand rien n'était plus possible. 

Denis-Charles Cisinski et Boris Tsygan m'ont fait l'honneur de rapporter cette thèse, je leurs en suis très reconnaissant. Leurs rapports me sont très précieux. Merci à Denis-Charles pour la relecture attentive, les nombreuses remarques, corrections et améliorations suggérées. Merci à Boris pour ce moment de partage à Luminy pendant l'été 2012. 

Je remercie chaleureusement Bernhard Keller pour m'avoir soutenu en me connaissant seulement de loin, de m'avoir invité à Paris pour présenter mes travaux et de faire parti de ce jury. 

Merci à Michel Vaquié pour l'intérêt qu'il porte à mon travail, de m'avoir invité à Toulouse et de faire parti de ce jury. Je garde un souvenir très agréable des bains dans la calanque du Sugiton, à Luminy, lorque l'on s'échappait de la conférence à l'heure du déjeuner, en compagnie de Valentina. 

J'ai eu le plaisir de passer une semaine dans un des groupes de travail les plus formidables, le Talbot Workshop, en compagnie de Carlos Simpson. Je le remercie chaleureusement pour sa sympathie, son optimisme. C'est avec regret que j'ai du enlever son nom dans la liste du jury, auquel Carlos devait participer initialement. 

Des voyages, conférences, groupes de travail ont ponctué ces années de thèse d'une manière très motivante. Je voudrais exprimer mes remerciements à tous les mathématiciens et mathématiciennes rencontrés alors. Je remercie Benoit Fresse pour les rendez vous à Lille. Je remercie Gabriele Vezzosi pour son intérêt, les discussions enrichissantes ainsi que de m'avoir permis de présenter ces travaux à Paris. De même je remercie Georges Maltsiniotis pour son soutien. Je voudrais remercier Dimitri Kaledin pour l'intérêt qu'il porte à mon travail et les discussions qu'on a partagé. 

Le Talbot Workshop 2011 était une rencontre formidable, je voudrais remercier Hiro Lee Tanaka et Sheel Ganatra pour l'organisation et leur attention. Je suis très reconnaissant à toute l'équipe de UPenn ; Pranav, Aaron, Dragos, Tobias, David pour leur gentillesse et pour m'intégrer si naturellement parmi eux. 

Je remercie Tony Pantev et Ludmil Katzarkov pour leur attention à ce travail, ainsi que Ludmil pour me permettre de continuer mon travail de recherche à Vienne, dans son équipe. J'en suis à la fois honoré et très heureux. La conférence en juin 2013 à l'ESI était très enrichissante et un réel plaisir. 

Sans avoir rencontré Joseph Tapia je n'écrirais pas de remerciements pour cette thèse. Ces moments passés à suivre les cours de Joseph lorsque j'étais étudiant à Toulouse furent pour moi un réel plaisir. Il a éveillé en moi un certain goût pour les mathématiques et les constructions abstraites, tout en gardant un sens de l'auto-dérision. Je lui suis très reconnaissant de son soutien et de ces heures passées devant le tableau noir.   

Je salue les autres enseignants-chercheurs rencontrés à Toulouse qui m'ont encouragé, ainsi que les collègues et amis toulousains qui furent mes compagnons d'armes lors de nos études à Paul Sab. En particulier merci à Jean, Bertrand, Maxime, Victor NG. Je salue la mémoire d'Anthony Ceresuela pour être en partie responsable de mon goût pour les mathématiques. 

Je remercie la génération de thésard présente à mon arrivée à Montpellier, qui m'a acceuilli à bras ouverts. Ainsi que la génération suivante qui entretient une atmosphère vivante et aéroplane. En particulier je voudrais remercier Jonathan, Mickaël, Christophe, Tuong (Tutu) Huy, Florence pour leur attention pendant ces années. Merci aux chercheurs de Montpellier ; Alain Bruguières, Etienne Mann, Thierry Mignon, pour leur soutien et la participation aux groupes de travail. Merci à Eric pour ces moments sportifs !

Je veux exprimer ma plus grande gratitude à celui qui est mon camarade le plus proche, mon ami, mon collègue, Marco Robalo. Son soutien sans faille est sans égal, je lui en suis infiniment reconnaissant. Je remercie aussi les autres $\R$camarades ; Benjamin, James, pour leur soutien et les discussions enrichissantes. 

Je remercie mes parents et Rémi pour leur soutien total, leur compréhension durant toutes ces années. Je salue toute ma famille pour leur soutien et leur intérêt quand à comprendre en quoi consiste mon travail. Je remercie spécialement Josépha pour son soutien. Enfin un Merci Solaire à Nicolas pour ces moments de simplicité et de joie lors de nos pérégrinations.

\newpage

\newpage

\subsection{Notations, conventions}

\begin{itemize}
\item On choisit de régler les questions ensemblistes en choisissant des univers au sens de Grothendieck \cite{sga4-1}. En admettant l'axiome des univers, on choisit donc trois univers $\U\in\V\in \W$ tels que $\N\in \U$. 

\item En général nous considérons des objets (ensembles, ensembles simpliciaux, espaces topologiques, ...) $\V$-petits. On note $Set$ la catégorie des ensembles $\V$-petits, $SSet$ la catégorie des ensembles simpliciaux $\V$-petits, $Top$ la catégorie des espaces topologiques $\V$-petits, $Sp$ la catégorie des spectres symétriques $\V$-petits.
\item Si $C$ est une catégorie, on note $C^{op}$ la catégorie opposée à $C$. Si $D$ est une autre catégorie, on note $C^D$ la catégorie des foncteurs et des transformations naturelles de $D\lmo C$. 
\item Notre référence pour la notion de catégorie de modèles est le livre de Hovey (\cite{hoveymod}). Si $M$ est une catégorie de modèles, on note $Ho(M)$ la catégorie homotopique de $M$, c'est à dire la localisée de $M$ (au sens $1$-catégorique) par rapport aux équivalences faibles. On appellera simplement \emph{équivalences} d'une catégorie de modèles ses équivalences faibles. Si $M$ est une catégorie de modèles engendrée par cofibrations, et $C$ une catégorie, on utilisera à plusieurs endroits dans ce texte la structure de modèles projective sur la catégorie des foncteurs $M^C$. Son existence découle du théorème général d'existence de \cite[11.6]{hirs}. 
\item La catégorie $SSet$ est munie de sa structure de modèles standard (cf. par exemple \cite[Chap. 3]{hoveymod}). La catégorie $Sp$ des spectres symétriques est munie de la structure de modèles stable standard (cf. \cite{hss}), qui est Quillen équivalente à la structure usuelle des spectres usuels. La catégorie $Top$ est munie de sa structure de modèles standard (cf. par exemple \cite[2.4]{hoveymod}). 
\item A part à la remarque \ref{remcompw}, tous les spectres considérés sont des spectres symétriques. On fera parfois l'abus d'omettre le qualificatif 'symétrique' pour alléger la lecture, bien que nous aurons à faire à des spectres symétriques. 
\item Quand nous aurons à considérer des spectres symétriques en anneaux, par convention il s'agira toujours de spectre en anneau associatif et unitaire au sens strict. Si $A$ est un spectre symétrique en anneaux, l'expression $A$-modules signifie par convention $A$-modules à droite. La notation $A-Mod_\s$ désigne la catégorie des $A$-modules à droite. 
\item Si $C$ est une catégorie  et $x,y\in C$ deux objets de $C$, on note $\Hom_C(x,y)$ l'ensemble de morphismes de $x$ vers $y$ dans $C$. Si $C$ est enrichie dans une catégorie $V$ autre que celle des ensembles, on notera $\homi_C(x,y)$ l'objet des morphismes de $x$ vers $y$, en précisant si besoin la catégorie $V$ dans laquelle $C$ est enrichie. Si $C$ est de plus munie d'une structure de modèles compatible avec l'enrichissement, on notera $\rhomi_C$ son hom interne dérivé. Dans le cas particulier de $V=SSet$, on notera les hom internes $\map_C$ et $\R\map_C$. 
\item Dans une catégorie possédant un objet final on notera celui-ci par $\ast$.
\item On note $\del$ la catégorie simpliciale standard des ordinaux finis $[0], [1], [2], \hdots$ avec pour morphismes les applications croissantes. Si rien n'est précisé la notation $S^1$ réfère au modèle standard $S^1=\del^1/\partial \del^1$ du cercle simplicial, pointé par son zéro simplexe. 
\item Par convention, sauf mention du contraire, le mot schéma signifie schéma de type fini sur la base. Si $k$ un anneau commutatif noethérien, on note $\affk$ la catégorie des schémas affines de type fini sur $k$, on note $\schk$ la catégorie des schémas de type fini sur $k$. 

\item Si $C$ est une catégorie $\V$-petite et $V$ une catégorie localement $\V$-petite, on note $Pr(C, V)$ la catégorie des préfaisceaux sur $C$ à valeurs dans $V$. Dans le cas particulier de $V=SSet$, on note $Pr(C, V)=:SPr(C)$. Dans le cas particulier $V=Sp$, on note $Pr(C, Sp)=:Sp(C)$. 

\item \emph{Dans les cas particuliers où $V=SSet, Sp$ munies de leur structure de modèles standard, la catégorie $Pr(\affk, V)$ est munie par défaut de la structure de modèles projective pour laquelle les équivalences et les fibrations sont définies niveau par niveau (cf. \cite[11.6]{hirs}). \textbf{Sauf si mention du contraire, par les notations $SPr(\affk)$ et $\spafk$ on entend donc les structures de modèles projectives}.}
\item Si $M$ est une catégorie de modèles, et $F:I\lmo M$ un diagramme dans $M$ (avec $I$ une catégorie), on note $hocolim_I F$ la colimite homotopique de $F$, c'est-à-dire le foncteur dérivée gauche du foncteur $colim_I : M^I\lmo M$ pour la structure projective. On note $holim_I F$ la limite homotopique de $F$, c'est le foncteur dérivée droit du foncteur $lim_I : M^I\lmo M$ pour la structure injective. On notera $A\ph_C B$ les produits fibrés homotopiques (ou pullback homotopique) et $A\sh_C B$ les sommes amalgamées homotopiques (ou pushout homotopique). 
\item En présence d'un foncteur de Quillen à gauche $f : M\lmo N$ entre catégorie de modèles, on fera plusieurs fois l'abus de dire que $\lef f : Ho(M)\lmo Ho(N)$ commute aux colimites homotopiques pour signifier que pour toute catégorie $I$, et tout $I$-diagramme $F:I\lmo M$, le morphisme $hocolim_I \lef f(F)\lmo \lef f(hocolim_I F)$ est un isomorphisme dans $Ho(N)$. On pratiquera l'abus analogue pour les limites homotopiques. 
\item Dans ce texte, à part si il est précisé le contraire, les jeux de foncteurs adjoints écris horizontalement seront par convention écris de telle sorte que tout foncteur est adjoint à gauche du foncteur au dessous de lui.

\end{itemize}

 \newpage
 
\section{Préliminaires sur les monoïdes et la K-théorie}

\subsection{Monoïdes associatifs et commutatifs à homotopie près}\label{monoides}

On utilise dans ce travail des modèles particuliers pour la notion de monoïdes associatifs et commutatifs connus sous le nom de $\del$-espaces et de $\gam$-espaces respectivement. Dans cette section on fixe les notations les concernant et on rappelle les résultats essentiels, notamment l'équivalence entre la théorie homotopique des $\gam$-espaces très spéciaux (ie "group-like") et la théorie homotopique des spectres connectifs.

$\gam$ désigne le squelette de la catégorie des ensembles finis pointés (avec morphismes pointés) qui consiste en les objets $\n=\{0,\hdots, n\}$ avec $0$ pour point base. 

\begin{df}\label{defdel} Soit $M$ une catégorie de modèles. 
\begin{itemize}
\item Un $\del$-objet (resp. un $\gam$-objet) dans $M$ est un foncteur $\del^{op}\lmo M$ qui envoie $[0]$ sur $\ast$ (resp. un foncteur $\gam\lmo M$ qui envoie $\und{0}$ to $\ast$). Un morphisme de $\del$-objet (resp. de $\gam$-objet) dans $M$ est simplement un morphisme de foncteurs. On note $\del-M$ (resp. $\gam-M$) la catégorie des $\del$-objets (resp. des $\gam$-objets) dans $M$. Pour $E\in \del-M$ (resp. $F\in\gam-M$), on fixe les notations $E([n])=E_n$ et $F(\n)=F_n$. 

\item On dit qu'un $\del$-objet $E$ dans $M$ est \emph{spécial} si tous les morphismes de Segal de $E$ sont des équivalences, ie que pour tout $[n]\in\del$ le morphisme
$$ p_0^\ast \times \hdots \times p_{n-1}^\ast : E_n\lmo E_1^{\ph n} = E_1\ph \hdots \ph  E_1$$ 
est une équivalence dans $M$, avec $p_i : [1]\lmo [n]$, $p_i(0)=i$ et $p_i(1)=i+1$.

\item On dit qu'un $\gam$-objet $F$ dans $M$ est \emph{spécial} si pour tout $\n\in \gam$ le morphisme
$$ q^1_\ast \times \hdots \times q^n_\ast : F_n\lmo F_1^{\ph n}$$ 
est une équivalence dans $M$, avec $q^i : \n\lmo \und{1}$, $q^i(j)=1$ si $j=i$ et $q^i(j)=0$ si $j\neq i$.

\item Si $E$ est un $\del$-objet spécial dans $M$, on dit que $E$ est \emph{très spécial} si le morphisme
$$p_0^\ast \times d_1^\ast : E_2\lmo E_1\ph E_1$$ 
est une équivalence dans $M$, avec $d_1 : [1] \lmo [2]$ la face qui évite $1$ dans $[2]$.

\item Si $F$ est un $\gam$-objet spécial dans $M$, on dit que $F$ est \emph{très spécial} si le morphisme
$$q^1_\ast \times \mu_\ast : F_2\lmo F_1\ph F_1$$
est une équivalence dans $M$, où $\mu : \und{2}\lmo \und{1}$ est le morphisme défini par $\mu(1)=1$ et $\mu(2)=1$. 
\end{itemize}
\end{df}

\begin{rema} 

\begin{itemize}
\item Pour $M=SSet$, les $\del$-objets et les $\gam$-objets sont appelés $\del$-espaces et $\gam$-espaces (e.g. dans \cite{bf}). 

\item Les $\del$-objets sont des modèles particuliers pour la notion de monoïdes associatifs unitaires "à homotopie cohérente près" ou $A_\infty$-monoïdes, cette idée étant apparue pour la première fois dans l'article de Segal \cite{seg}. Un $\del$-objet spécial $E$ dans la catégorie $Set$ des ensembles $\U$-petits (avec la structure de modèles triviale) est la donnée d'ensembles $(E_n)_{n\geq 1}$ tel que les morphismes de Segal $p_0^* \times \hdots \times p_{n-1}^* : E_n\lmo E_1^n$ sont des isomorphismes pour tout $n\geq 0$. En particulier pour $n=2$ on obtient une loi de composition 
$$\xymatrix{E_2 \ar[d]_-{d_1^*} \ar[rr]_-{p_0^*\times p_1^*}  && E_1\times E_1 \ar@/_1pc/[ll]_-\sim \ar@/^1pc/[lld] \\ E_1 }$$
sur l'ensemble $E_1$. Les isomorphismes de Segal supérieurs ($n=3$ suffit) se traduisent par l'associavité de cette loi de composition et le morphisme de dégénérescence $s_0:E_0=\ast\lmo E_1$ donne l'unité de cette loi. La catégorie des $\del$-ensembles spéciaux est alors équivalente à la catégorie des monoïdes associatifs unitaires dans les ensembles via le foncteur évaluation en $1$. De même la catégorie des $\del$-objets très spéciaux est équivalente à la catégorie des groupes dans les ensembles. La définition de $\del$-objets consiste alors à remplacer les ensembles par des types d'homotopie voire des objets dans une catégorie de modèles fixée et la notion d'isomorphismes par la notion d'équivalences faibles dans cette catégorie de modèles. Si $E$ est un $\del$-objet spécial dans une catégorie de modèles, l'équivalence de Segal $p_0^*\times p_1^*$ n'a pas d'inverse dans $M$ mais seulement dans $Ho(M)$. De manière heuristique, la condition d'être spécial nous donne une "loi de composition à homotopie cohérente près"
$$\xymatrix{E_2 \ar[d]_-{d_1^*} \ar[rr]_-{p_0^*\times p_1^*}  && E_1\times E_1 \ar@{.>}@/_1pc/[ll] \ar@{.>}@/^1pc/[lld] \\ E_1 }$$
sur l'objet $E_1$. Les $\gam$-objets modélisent quand à eux les monoïdes commutatifs associatifs unitaires "à homotopie cohérente près" ou $E_\infty$-monoïdes. La commutativité est alors codée par le morphisme pointé $\und{2}\lmo \und{2}$ qui échange $1$ et $2$. 

\item Dans le cas d'une catégorie de modèles de la forme "préfaisceaux simpliciaux sur une catégorie" avec la structure globale, on peut remplacer les produits homotopiques  par des produits dans la définition des $\del$ et $\gam$-objets. Dans ce cas il existe une caractérisation simple des objets très spéciaux comme le montre le lemme suivant. 
\end{itemize}
\end{rema}

\begin{lem}\label{ek}
Soit $C$ une catégorie et $M=SPr(C)$ la catégorie de modèles globale des préfaisceaux simpliciaux sur $C$. Soit $E$ un $\del$-objet spécial dans $M$. Alors $E$ est très spécial si et seulement si le monoïde $\pi_0 E_1$ est un groupe. De même pour les $\gam$-objets. 
\end{lem}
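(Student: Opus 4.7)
The plan is to reduce the statement to the case $M = SSet$ and then apply the classical argument of Segal. Since the global (projective) model structure on $SPr(C)$ has weak equivalences, fibrations and finite products computed objectwise, evaluation at each $c \in C$ reduces the ``(very) special'' condition to the same condition on the ordinary $\del$-space $E(c)$; similarly, ``$\pi_0 E_1$ is a group'' means that each presheaf value $\pi_0(E_1(c))$ is a group. I may therefore assume $M = SSet$ throughout.

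For the direct implication, I apply $\pi_0$. Since $E$ is very special, both the Segal map $(p_0^*, p_1^*) : E_2 \to E_1 \times E_1$ and the shear map $\sigma := (p_0^*, d_1^*) : E_2 \to E_1 \times E_1$ induce bijections on $\pi_0$. Letting $m := d_1^* \circ (p_0^*, p_1^*)^{-1}$ denote the induced multiplication on $\pi_0 E_1$, the identity $\sigma = (\mathrm{pr}_1, m) \circ (p_0^*, p_1^*)$, valid in the homotopy category, shows that the self-map $([x], [y]) \mapsto ([x], [x][y])$ of $\pi_0 E_1 \times \pi_0 E_1$ is bijective. Surjectivity yields, for every $[x]$, an element $[y]$ with $[x][y] = 1$ (a right inverse), and injectivity yields left cancellation; combined with the unital associative monoid structure provided by the Segal equivalences at $n = 0$ and $n = 3$, this forces $\pi_0 E_1$ to be a group.

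For the converse, assume $\pi_0 E_1$ is a group. By two-out-of-three applied to the factorization $\sigma = (\mathrm{pr}_1, m) \circ (p_0^*, p_1^*)$, and since the Segal map is already a weak equivalence, it suffices to prove that $(\mathrm{pr}_1, m) : E_1 \times E_1 \to E_1 \times E_1$ is a weak equivalence. Viewed as a morphism over $E_1$ via the first projection on both sides, the standard homotopy-fibre comparison reduces this to showing that the induced fibre map, namely the left translation $L_x : E_1 \to E_1$, $y \mapsto m(x, y)$, is a weak equivalence for every $x \in E_1$. Here the group hypothesis enters: pick $x' \in E_1$ with $[x'] = [x]^{-1}$. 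The $n = 3$ Segal equivalence supplies an associativity homotopy $L_x \circ L_{x'} \simeq L_{m(x, x')}$, the unit $s_0 : E_0 = \ast \to E_1$ gives $L_1 \simeq \mathrm{id}_{E_1}$, and continuity of $m$ along a simplicial path inside the identity component from $m(x, x')$ to the unit yields $L_{m(x, x')} \simeq L_1$. Composing, $L_x \circ L_{x'} \simeq \mathrm{id}_{E_1}$, and symmetrically $L_{x'} \circ L_x \simeq \mathrm{id}_{E_1}$, so $L_x$ is a homotopy equivalence. The $\gam$-case follows by restriction along Segal's functor $\del^{op} \to \gam$, $[n] \mapsto \und{n}$, which sends (very) special $\gam$-objects to (very) special $\del$-objects with the same underlying $E_1$ and induced monoid structure on $\pi_0$.

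The main obstacle lies in the reverse direction, where one must rigorously manipulate the chain of homotopies $L_x \circ L_{x'} \simeq L_{m(x,x')} \simeq L_1 \simeq \mathrm{id}$ using only the data of the Segal equivalences. The essential task is to extract an explicit $3$-simplex of $E$ from the $n = 3$ Segal equivalence that witnesses associativity as a homotopy between honest maps $E_1 \times E_1 \times E_1 \to E_1$, and then concatenate it with a simplicial path from $m(x, x')$ to the unit to produce a genuine homotopy to the identity. This step is classical and essentially due to Segal, but the bookkeeping is where the homotopy-coherent nature of the multiplication really intervenes and must be treated with care.
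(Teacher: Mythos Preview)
Your proof is correct. The forward implication matches the paper's argument essentially verbatim.

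For the converse, however, you take a genuinely different route from the paper. You work at the space level and argue that the shear map is a weak equivalence by showing that each left translation $L_x : E_1 \to E_1$ is a homotopy equivalence, using the group hypothesis on $\pi_0$ to produce an inverse $L_{x'}$ and then assembling associativity and unit homotopies. This is the classical Segal-style argument, and as you rightly note, the delicate point is the bookkeeping of homotopy-coherent associativity.

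The paper instead bypasses all of this coherence by passing immediately to homotopy groups. It observes that the shear map induces an isomorphism on $\pi_0$ by the group hypothesis, and for $i\geq 1$ it invokes the Eckmann--Hilton argument: $\pi_i(E_1,\ast)$ is already a group (as a homotopy group), and the $\del$-structure endows it with a second compatible monoid law; these must coincide, so the $\del$-multiplication on $\pi_i(E_1,\ast)$ is automatically a group law, and hence the shear map $(a,b)\mapsto(a,ab)$ is bijective on $\pi_i$ for every $i$. This argument is shorter and entirely avoids manipulating homotopies between translations; in particular it does not need to extract associators from the $n=3$ Segal equivalence or chase paths in the identity component. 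Your approach, on the other hand, is more geometric and closer in spirit to how one would prove the analogous statement for topological monoids.
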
 

\begin{proof}
Supposons $E$ très spécial, alors le morphisme 
$$ p_0^\ast \times d_1^\ast : \pi_0 E_2\lmo \pi_0 E_1\times \pi_0 E_1$$ 
est un isomorphisme. Mais si on identifie $\pi_0E_2$ avec $\pi_0E_1\times \pi_0E_1$ via la condition d'être spécial, alors ce morphisme envoie $(a,b)$ sur $(a, ab)$. On en déduit que $\pi_0E_1$ est un groupe. 

Réciproquement, si $\pi_0E_1$ est un groupe, on doit montrer que le morphisme
$$ p_0^\ast \times d_1^\ast :  E_2\lmo E_1\times E_1$$ 
est une équivalence dans $M$. Premièrement ce morphisme induit un isomorphisme sur les $\pi_0$ car $\pi_0E_1$ est un groupe. Montrons ensuite qu'il induit un isomorphisme sur les $\pi_i(-, \ast)$ pour tout $i\geq 1$ et tout point base $\ast$. On remarque que pour tout $i\geq 1$, le préfaisceau de groupes $\pi_i(E_1, \ast)$ est muni d'une loi de monoïdes induite par la composée 
$$\xymatrix{\pi_i(E_1, \ast) \times \pi_i(E_1, \ast) \ar[r]^-\sim &\pi_i(E_1\times E_1 , \ast\times \ast) \ar[r]^-\sim &\pi_i(E_2, \ast) \ar[r]^-{\pi_i(d_1^\ast, \ast)} &\pi_i(E_1, \ast)}.$$
Le préfaisceau de groupes $\pi_i(E_1, \ast)$ est donc un monoïde dans la catégorie monoïdale (pour le produit cartésien) des préfaisceaux de groupes sur $C$, donc les deux lois coïncident, le monoïde $\pi_i(E_1, \ast)$ est un groupe et le morphisme $\pi_i(p_0^\ast \times d_1^\ast, \ast)$ est un isomorphisme pour tout $i\geq 1$. 
\end{proof}

\subsubsection{Structures de modèles et complétion en groupe}\label{strucdel}

On rappelle qu'on a au moins trois structures de modèles intéressantes sur $\del-M$ pour toute catégorie de modèles $M$ qui est combinatoire et propre à gauche (hypothèses nécessaires pour effectuer des localisations de Bousfield) : 

\begin{itemize}
\item La structure \emph{projective}, appelée stricte dans \cite{bf}, pour laquelle les équivalences et les fibrations sont définies niveau par niveau. On note simplement $\del-M$ cette catégorie de modèles. 

\item La structure \emph{spéciale}. C'est par définition la localisation de Bousfield gauche de la catégorie de modèles $\del-M$ par rapport à l'ensemble des morphismes
$$(\amalg_{i=0}^{n-1} h_{p_i}:h_{[1]}\amalg \cdots \amalg h_{[1]}\lmo h_{[n]})_{n\geq 1}\quad  \square \quad (\textrm{Cofibrations génératrices de $M$}),$$
où $\square$ désigne le produit boîte (voir \cite[Thm.3.3.2]{hoveymod}). On note $\del-M^{sp}$ cette catégorie de modèles. Par définition, les objets fibrants de $\del-M^{sp}$ sont les $\del$-objets spéciaux dans $M$ qui sont fibrant niveau par niveau. 

\item La structure \emph{très spéciale}. C'est par définition la localisation de Bousfield à gauche de la catégorie de modèles $\del-M^{sp}$ par rapport à l'ensemble de morphismes 
$$(h_{p_0} \amalg h_{d_1}  : h_{[1]} \amalg h_{[1]} \lmo h_{[2]}) \quad \square \quad (\textrm{Cofibrations génératrices de $M$}).$$
On note $\del-M^{tsp}$ cette catégorie de modèles. Par définition, les objets fibrants de $\del-M^{tsp}$ sont les $\del$-objets très spéciaux dans $M$ qui sont fibrant niveau par niveau. 
\end{itemize}

On a les foncteurs dérivés des foncteurs identités
$$\xymatrix{Ho(\del-M) \ar@<2pt>[r]^{\lef id} &  Ho(\del-M^{sp}) \ar@<2pt>[r]^-{\lef id'} \ar@<2pt>[l]^{\R id} & Ho(\del-M^{vsp})\ar@<2pt>[l]^{\R id'} }.$$
On note $mon:=\R id \lef id$ le foncteur monoïde homotopique libre et $(-)^+:=\R id'\lef id'$ la complétion en groupe homotopique.

\begin{rema}
La catégorie des $\gam$-objets dans une catégorie de modèles combinatoire propre à gauche admet des structures de modèles similaires à celles des $\del$-objets. On a donc des foncteurs monoïdes commutatifs homotopique libre et complétion en groupes commutatifs homotopique, 
$$\xymatrix{Ho(\gam-M)\ar[r]^-{com} & Ho(\gam-M^{sp}) \ar[r]^-{(-)^+} & Ho(\gam-M^{tsp})}.$$
Le foncteur de complétion en groupe commutatif a, par abus, la même notation que la complétion en groupe non nécessairement commutatif car on peut montrer qu'ils sont équivalents. Cependant nous n'aurons pas besoin de ce fait ici. 
\end{rema}

\begin{rema}\label{remseg}
On pose $M=SPr(C)$. Si $C=\ast$, on retrouve l'exemple familier des $\del$-espaces. Le théorème de Segal \cite[Prop 1.5]{seg} implique que la complétion en groupe $(-)^+$ a comme modèle la composée des foncteurs suivants,
$$\xymatrix{Ho(\del-M^{sp}) \ar[r]^-{\re{-} } & Ho(M_\ast) \ar[r]^-{\Omega_\bul} & Ho(\del-M^{vsp})  },$$
où $\re{-}$ est la réalisation des objets bisimpliciaux et pour un préfaisceau simplicial pointé fibrant niveau par niveau $(E, \ast)$, le niveau $n$ du $\del$-objet $\Omega_\bul E$, noté $\Omega_n E$ est défini par le carré cartésien
$$\xymatrix{ \Omega_n E \ar[r] \ar[d] & \homi_{SPr(C)} (\del^n, E) \ar[d] \\ \ast \ar[r] & E^{n+1}}$$
où le morphisme de droite est la restriction aux $n+1$ sommets de $\del^n$ et le morphisme en bas est donné par le point base de $E$. L'objet $\Omega_n E$ est donc le préfaisceau simplicial des morphismes $\del^n\lmo E$ qui envoie tous les sommets de $\del^n$ sur le point base de $E$. Le théorème de Segal montre en fait plus : le foncteur composé $(-)^+\circ mon$ est isomorphe au foncteur composé $\Omega_\bul \circ \re{-}$. 
\end{rema}

\begin{ex}\label{exespk}
Si $C$ est une catégorie de Waldhausen, on a un $\del$-espace, 
$$\K_\bul (C):=NwS_\bul C,$$
où $Nw$ désigne le nerf de la sous-catégorie des équivalences et $S_\bul$ est la $S$-construction de Waldhausen. Le niveau $1$ de ce $\del$-espace est équivalent à l'ensemble simplicial $NwC$ et ce $\del$-espace n'est en général pas spécial. On peut voir la K-théorie algébrique définie par Waldhausen (dans l'article fondateur \cite{wald}) comme un moyen canonique de transformer ce $\del$-espace en un $\del$-espace très spécial. L'espace de K-théorie algébrique de $C$ est par définition l'ensemble simplicial pointé
$$K(C):=(mon\K_\bul(C))^+_1\simeq \Omega\re{NwS_\bul C},$$
où $\Omega$ est l'ensemble simplicial des lacets pointés en l'objet nul, la dernière équivalence vient du théorème de Segal par la remarque (\ref{remseg}). Cette construction définit un foncteur, 
$$K : WCat\lmo SSet_{\ast},$$
où $WCat$ est la catégorie des grosses catégories de Waldhausen, $SSet_{\ast}$ celle des gros ensembles simpliciaux pointés. 

De plus on a la description standard du $K_0$ suivante. On vérifie que le foncteur $\pi_0$ commute au foncteur monoïde libre et complétion en groupe. C'est-à-dire que les carrés de catégories
$$\xymatrix{Ho(\del-SSet)\ar[r]^-{mon} \ar[d]^-{\pi_0} & Ho(\del-SSet) \ar[d]^-{\pi_0} \\ \del-Set \ar[r]^-{mon} & \del-Set }
\xymatrix{ Ho(\del-SSet^{sp})\ar[r]^-{(-)^+} \ar[d]^-{\pi_0} & Ho(\del-SSet^{sp}) \ar[d]^-{\pi_0} \\ \del-Set^{sp} \ar[r]^-{(-)^+} & \del-Set^{sp}  }$$
sont tous les deux commutatifs. Ceci se vérifie directement en utilisant Yoneda et le fait que $mon$, $(-)^+$, sont des adjoints à gauche. 

On en déduit un isomorphisme canonique d'ensembles
$$\pi_0 (mon \, \K_\bul (C))^+_1\simeq (mon \, \pi_0 \K_\bul (C))^+_1.$$
Le monoïde libre $mon \, \pi_0 \K_\bul (C)$ du $\del$-ensemble $\pi_0 \K_\bul (C)$ est le monoïde dans lequel on identifie $a\in \pi_0 NwC$ avec le produit de $a'$ et $a''$ chaque fois qu'on a une suite de cofibrations $a'\moi a \twoheadrightarrow a''$ dans $C$. Cette loi de composition est donc commutative et s'identifie avec la somme dans $C$. La complétion en groupe  $(mon \, \pi_0 \K_\bul (C))^+_1$ est donc le groupe abélien libre sur les classes d'équivalences d'objets de $C$ modulo la relation d'équivalence qui identifie $a$ avec la somme $a'\oplus a''$ chaque fois qu'on a une suite de cofibrations $a'\moi a \twoheadrightarrow a''$. On a donc montré que le groupe abélien $\pi_0K(C)$ est bien le groupe de Grothendieck $K_0(C)$. 
\end{ex}

On a un plongement des $\gam$-objets dans les $\del$-objets donné par le dual du foncteur 
$$\alpha : \del^{op} \lmo \gam,$$
définie sur les objets par $\alpha ([n])=\und{n}$. Et pour tout morphisme $f:[n]\lmo [m]$ dans $\del$, on définit $\alpha(f)=g : \und{m}\lmo \und{n}$ par 
$$g(i)=\left\lbrace
\begin{array}{ccc}
0 & \text{if  }& 0\leq i\leq f(0) \\ 
j & \text{if  }& f(j-1)< i\leq f(j) \\ 
0  & \text{if  }& f(n)<i \\ 
\end{array} 
\right.$$
On peut vérifier que $\alpha (p_i)=q^{i+1}$ pour $i=0,\hdots, n-1$, et que $\alpha (d_1)=\mu$. On en déduit que le foncteur pleinement fidèle
$$\alpha^\ast : \gam-M \lmo \del-M, $$
envoie les $\gam$-objets spéciaux (resp. très spéciaux) sur des $\del$-objets spéciaux (resp. très spéciaux) et on a donc un diagramme

$$\xymatrix{ Ho(\gam-M)   \ar[d]_-{\alpha^\ast} \ar[r]^-{com} &  Ho(\gam-M^{sp})  \ar[d]_-{\alpha^\ast} \ar[r]^-{(-)^+} & Ho(\gam-M^{vsp} )  \ar[d]_{\alpha^\ast} \\
 Ho(\del-M)  \ar[r]^-{mon} &  Ho(\del-M^{sp})  \ar[r]^-{(-)^+} &  Ho(\del-M^{vsp} )  }$$
Le carré de gauche n'est pas commutatif, mais on peut montrer que le carré de droite est commutatif à isomorphisme près comme nous l'avons dit plus haut, mais nous n'utiliserons pas ce fait.

\subsubsection{$\gam$-espaces très spéciaux et spectres connectifs}\label{groupsp}

On rappelle l'équivalence entre la théorie homotopique des $\gam$-espaces très spéciaux et celle des spectres connectifs. Cette équivalence apparaît d'abord dans le papier fondamental de Segal \cite{seg} et a été ensuite formalisé dans le langage des catégories de modèles de Quillen dans \cite{bf}. Le Théorème $5.8$ de \cite{bf} se généralise immédiatement aux préfaisceaux de $\gam$-objets et aux préfaisceaux en spectres sur une catégorie $C$. De plus, en suivant Schwede \cite[Exemple 2.39]{schw-sym}, on peut remplacer les spectres ordinaires par les spectres symétriques qui présente l'avantage de l'existence d'une bonne structure monoïdale. On a une paire de foncteurs adjoints 
$$\xymatrix{ \gam-SPr(C) \ar@<2pt>[r]^-{\mcal{B}} & \ar@<2pt>[l]^-{\mcal{A}} Sp(C) },$$
où $Sp(C)$ est la catégorie des préfaisceaux de spectres \emph{connectifs} sur $C$. La paire de foncteurs $(\mcal{B}, \mcal{A})$ peut être (grossièrement) définie de la manière suivante. Tout préfaisceau de $\gam$-espaces $E$ peut être étendu en un foncteur
$$E:Sp(C)\lmo Sp(C)$$ 
à travers une succession d'extension de Kan à gauche le long (de la version préfaisceautique) des inclusions $\gam\hookrightarrow Set \hookrightarrow SSet\hookrightarrow Sp$. Le foncteur $\mcal{B}$ peut alors être définie par 
$$\mcal{B}(E)=E(\s),$$
la valeur du foncteur $E$ en le spectre symétrique en sphère, qui est un spectre symétrique connectif pour chaque $\gam$-objet $E$. Le foncteur $\mcal{B}$ ainsi défini est identique au foncteur $B$ de Segal qui va des $\gam$-espaces spéciaux dans les spectres connectifs. De plus il préserve les équivalences entre tous $\gam$-objets, pas juste les cofibrants. De manière plus explicite, en suivant \cite[4.]{perm}, le foncteur $\mcal{B}$ peut se décrire de la manière suivante (dans le cas des $\gam$-espaces). Il existe un modèle du cercle simplicial $S^1$ vérifiant $S^1_n=\n$ (l'objet de $\gam$ associé à l'entier $n$), avec les faces et dégénérescences définies de manière adéquate. Il suffit en fait de prendre $S^1\simeq\del^1/\partial \del^1$. Le cercle $S^1$ est considéré comme pointé par son unique $0$-simplexe. Les termes du spectre en sphère $\s$ sont définis par $\s_n=S^n=(S^1)^{\sm n}$ sur lequel le groupe symétrique $\Sigma_n$ agit par permutation des facteurs. Pour tout $n\geq 0$ et tout $k\geq 0$, on a $S^n_k=S^1_k\sm\hdots\sm S^1_k=\und{k}\sm \hdots \sm \und{k}$, $n$ fois, où $\sm$ désigne le smash entre ensembles pointés. Le $n$-ième terme du spectre $\mcal{B}(E)$ est alors donné par 
$$\mcal{B}(E)_n=E(S^n)=d(k\mapsto E(S^n_k))$$ 
où $d:SSet^{\del^{op}} \lmo SSet$ est la réalisation des ensembles bisimpliciaux (par exemple la diagonale). Pour tout $n\geq 0$ et tout $k\geq 0$, le choix d'un $k$-simplexe $x\in S^1_k$ définit un morphisme 
$$S^{n-1}_k\lmo S^n_k$$
par $(i_1, \hdots, i_{k-1})\mapsto (x,i_1, \hdots, i_{k-1})$. En appliquant $E$ on a un morphisme $S^1_k\sm E(S^{n-1}_k) \lmo E(S^n_k)$, et en appliquant la réalisation on obtient de cette manière une collection de morphismes
$$S^1\sm E(S^{n-1})\lmo E(S^n)$$
qui sont les morphismes de transitions de $\mcal{B}(E)$.

Le foncteur $\mcal{A}$ est défini sur un objet $F\in Sp(C)$ comme étant le $\gam$-objet 
$$\n\mapsto\mcal{A}(F)_n=\map (\s^n, F),$$
où $\map$ désigne le préfaisceau simplicial des morphismes entre deux préfaisceaux en spectres symétriques. On remarque que le niveau $1$ de ce $\gam$-objet est donné par 
$$\mcal{A}(F)_1=\map(\s,F)\simeq \map (S^0,F_0) \simeq F_0$$
le niveau $0$ du préfaisceau en spectre $F$. 

\begin{theo}\label{bf}\emph{(\cite[Thm.5.8]{bf})} La paire de foncteurs adjoints $(\mcal{B},\mcal{A})$ est une adjonction de Quillen pour la structure de modèles très spéciale sur $\gam-SPr(C)$. De plus, cette adjonction induit une équivalence de catégories
$$\xymatrix{ Ho(\gam-SPr(C)^{tsp} ) \ar@<2pt>[r]^-{\lef \mcal{B}}  & Ho( \spcon(C)) \ar@<2pt>[l]^-{\R \mcal{A} } }.$$
entre la catégorie homotopique des $\gam$-objets très spéciaux et celle des préfaisceaux en spectres symétriques \textbf{connectifs}. La notation $Ho( \spcon(C))$ désignant la sous-catégorie pleine de $Ho(Sp(C))$ formée des préfaisceaux en spectres connectifs. 
\end{theo}

\subsection{Délaçage connectif canonique de l'espace de K-théorie}\label{delass}

On note $WCat$ la catégorie des catégories de Waldhausen $\U$-petites et des foncteurs exacts. Si $M$ est une catégorie et $k\geq 1$ un entier, on note $\del^k-M$ la catégorie des objets $k$-multisimpliciaux dans $M$, c'est-à-dire des foncteurs $(\del^{op})^k\lmo M$. Pour tout $k\geq 2$, on note 
$$B^{(k)} : \del-M\lmo \del^k-M$$
le foncteur défini par $(B^{(k)}E)_{n_1, \hdots, n_k}=E_{n_1\hdots n_k}$ où $n_1\hdots n_k$ est le produit des nombres $n_1,\hdots,n_k$. On pose $B^{(1)}=id$. 

On utilise les notations et les sorites de l'exemple \ref{exespk}. La $S$-construction de Waldhausen donne donc pour toute catégorie de Waldhausen $C$ un $\del$-espace
$$\K(C)=NwS_\bul C.$$
L'espace de K-théorie de $C$ est par définition l'ensemble simplicial pointé $K(C):=(mon \K_\bul (C))^+_1$. On a donc un foncteur "espace de K-théorie" 
$$K: WCat\lmo SSet_\ast.$$

\paragraph{Délassage donné par la somme.}\label{gamstru}

Toute catégorie de Waldhausen admet des sommes finies. En suivant Segal \cite{seg} (et Elmendorf-Mandell \cite{perm}) il est possible de construire à  partir d'une catégorie de Waldhausen $C$ un $\gam$-objet spécial dans $WCat$ qui encode cette loi de monoïde donné par la somme de $C$. Cette construction définit même un foncteur 
$$B_W : WCat \lmo \gam-WCat.$$
Si $C\in WCat$, on pose $B_WC_0=\ast$ et si $n\geq 1$, la catégorie $B_WC_n$ est la catégorie dont
\begin{itemize}
\item les objets sont les familles $\{a_S, \rho_{S,T} \}$ où $S,T$ parcourt les sous-ensembles de $n^+$ qui ne contiennent pas $0$ et $S\cap T=\varnothing$. Les $a_S$ sont des objets de $C$  et les $\rho_{S,T}$ des isomorphismes dans $C$ 
$$\rho_{S,T} : a_S\oplus a_T \simeq a_{S\cup T}$$
tel que $a_S=0$ et $\rho_{S,T}=id_{a_T}$ dès que $S=\varnothing$ et tel que les diagrammes suivants commutent pour tout triplet $S,T,U$ de sous-ensembles de $n^+$ deux à  deux disjoints : 
$$\xymatrix{ a_S\oplus a_T \ar[r]^-{\rho_{S,T} }  \ar[d]^-{\tau} & a_{S\cup T} \ar[d]^-{id} \\ a_T\oplus a_S \ar[r]^-{\rho_{T,S} } & a_{T\cup S} }$$

$$\xymatrix{ a_S\oplus a_T\oplus a_U \ar[r]^-{\rho_{S,T} \oplus id } \ar[d]^-{id\oplus \rho_{T,U} } & a_{S\cup T} \oplus a_U \ar[d]^-{\rho_{S\cup T, U} } \\ a_S\oplus a_{T\cup U} \ar[r]^-{\rho_{S,T\cup U} } & a_{S\cup T\cup U} } $$

\item un morphisme $f:\{a_S, \rho_{S,T} \} \lmo \{a'_S, \rho'_{S,T} \} $ est la donnée de morphismes $f_S:a_S\lmo a'_S$ tel que $f_\varnothing=id_0$ et qui commutent avec les isomorphismes $\rho_{S,T} $ et $\rho'_{S,T}$. 
\end{itemize}

On a $B_W C_0=\ast$ et $B_W C_1=C$. Pour tout $n^+\in \gam$ la catégorie $B_WC_n$ est naturellement munie d'une structure de catégorie de Waldhausen héritée de celle de $C$ en disant qu'un morphisme $f=\{f_S\}$ est une cofibration (resp. une équivalence) si chaque $f_S$ est une cofibration (resp. une équivalence). Etant donné que le foncteur d'espace de K-théorie commute aux produits finis, en prenant l'espace de K-théorie niveau par niveau on obtient un $\gam$-espace spécial :
$$K^\gam(C):=K(B_WC),$$
dont le niveau $1$ est l'espace de K-théorie $K(C)$. En fait ce $\gam$-espace est très spécial. En effet 
$$\pi_0(K^\gam(C)_1)\simeq \pi_0 K(C)=\pi_0 \Omega\re{NwS_\bul C}\simeq \pi_0 (mon \, NwS_\bul C)^+\simeq (mon \, \pi_0 NwS_\bul C)^+\simeq K_0(C).$$

\begin{df}\label{spk}
On définit le \textit{spectre de K-théorie connective de $C$} par :
$$\kc (C) :=\mcal{B} K^\gam(C),$$
où $\mcal{B}$ est le foncteur $\mcal{B} : Ho(\gam-SSet^{tsp}) \lmo Ho(\spcon)$ défini en \ref{groupsp}. 
\end{df} 

On résume la construction par le diagramme

$$\xymatrix{WCat\ar[r]^-{B_W} & \gam-WCat\ar[r]^-{K}  & \gam-SSet^{tsp} \ar[r]^-{\mcal{B}} & \spcon }.$$
On obtient donc un foncteur
$$\kc : WCat\lmo \spcon.$$

\begin{rema}\label{remcompw}
Le spectre de K-théorie connective de la définition \ref{spk} peut se comparer au spectre de K-théorie connective tel que défini par Waldhausen dans \cite{wald}. Cette comparaison n'est pas fondamentale dans le sens où toutes les preuves des théorèmes fondamentaux de Waldhausen concernant la K-théorie connective reste valable pour le spectre de la définition \ref{spk}. Cependant ces deux spectres sont isomorphes dans la catégorie homotopique stable des spectres (on ne compare pas les structures de spectres symétriques), et nous esquissons dans cette remarque une preuve. 

Le spectre de K-théorie de Waldausen est le spectre ayant pour $k$-ième termes $\re{NwS_\bul^{(k)} C}$, où $S_\bul^{(k)}$ désigne la $S$-construction itérée $k$ fois et $\re{-}$ désigne la réalisation des ensembles multisimpliciaux (par exemple la diagonale). Pour tout $n_1, \hdots, n_{k-1}\geq 0$, on a un foncteur exact 
\begin{equation}\label{fonctexs}
S_{n_{k-1}} S_{n_{k-2}} \hdots S_{n_1} C\lmo S_\bul S_{n_{k-1}} \hdots S_{n_1} C
\end{equation}
obtenu en considérant un objet de la première catégorie comme une suite de cofibrations de longueur $1$ dans la direction simpliciale (on considère la première catégorie comme constante dans la direction simpliciale). Ce foncteur exact induit un morphisme sur les réalisations 
$$S^1\sm \re{NwS_\bul^{(k)} C}\lmo \re{NwS_\bul^{(k+1)} C}$$
qui sont les morphismes de transitions du spectre de Waldhausen. D'après \cite[rem. suivant le cor.1.3.5]{wald}, les morphismes
$$\re{NwS_\bul^{(k)} C}\lmo \Omega\re{NwS_\bul^{(k+1)} C}$$
sont des équivalences pour tout $k\geq 1$. Son spectre est donc équivalent au spectre $\kc'(C)$ défini par 
$$\kc'(C)_n=\Omega\re{NwS_\bul^{(k+1)}} C\simeq \re{K(S_\bul^{(k)} C)}$$
pour tout $k\geq 0$ et avec les morphismes de transition induits par ceux du spectre précédent, c'est-à-dire induits par les foncteurs (\ref{fonctexs}). Il suffit donc de comparer les spectres $\kc(C)$ et $\kc'(C)$. On esquisse une construction d'un morphisme de spectre $\kc(C)\lmo \kc'(C)$. Au niveau $n$ il est donné par un morphisme 
$$\mcal{B}(K^\gam(C))_k\lmo \re{K(S^{sc,(k)}_\bul C)} \lmo \re{K(S^{(k)}_\bul C)}$$
où l'espace du milieu est un espace intermédiaire construit à partir de la $S$-construction scindée. 

On fixe quelques notations. Si $M$ est une catégorie et $k\geq 1$ un entier, on note $\del^k-M$ la catégorie des objets $k$-multisimpliciaux dans $M$, c'est-à-dire des foncteurs $(\del^{op})^n\lmo M$. Pour tout $k\geq 2$, on note 
$$B^{(k)} : \del-M\lmo \del^k-M$$
le foncteur défini par $(B^{(k)}E)_{k_1, \hdots, n_k}=E_{n_1\hdots n_k}$ où $n_1\hdots n_k$ est le produit des nombres $n_1,\hdots,n_k$. On pose $B^{(1)}=id$. Le même foncteur est défini pour les $\gam$-objets et on rappelle que pour tout entier positif $n_1$ et $n_2$, il existe un isomorphisme dans la catégorie $\gam$, 
\begin{equation}\label{prod}
\und{n_1n_2}\simeq \und{n_1}\sm \und{n_2}
\end{equation}
où $\sm$ est le smash des ensembles pointés. 

Il existe une version "scindée" de la $S$-construction où les cofibrations générales sont remplacées par les cofibrations scindées, c'est à  dire les suite de cofibrations de la forme 
$$a_1\moi a_1\oplus a_2\moi \cdots \moi a_1\oplus \cdots \oplus a_n.$$
On note $S^{sc}_\bul  : WCat \lmo \del-WCat$ le foncteur de $S$-construction scindée. Pour tout $[n]\in \del$, la catégorie $S^{sc}_n C$ a pour objets les suites de cofibrations scindée de longueur $n$ plus les choix de quotients habituels et pour morphismes les transformations naturelles de telles suites. Pour tout $[n]\in \del$, la catégorie $S^{sc}_n C$ a une structure de catégorie de Waldhausen en disant qu'un morphisme est une cofibration s'il est constitué de cofibrations dans $C$. La catégorie simpliciale $S^{sc}_\bul C$ a la propriété que pour tout entier positif $n_1$ et $n_2$ on a un morphisme 
$$S^{sc}_{n_2} S^{sc}_{n_1}C\lmo S^{sc}_{n_1 n_2}C$$
qui est une équivalence de catégories. En effet, la donnée d'une suite de cofibrations scindée est équivalente à  la donnée des quotients successifs. Ce morphisme envoie donc une suite de cofibrations scindée de longueur $n_2$ de suites de cofibrations scindées de longueur $n_1$ sur le $n_1n_2$-uplet d'objets qui caractérise ces suites. On en déduit que pour tout $n\geq 1$ on a une équivalence niveau par niveau dans $\del^n-WCat$,
\begin{align}
\xymatrix{ B^{(n)} S^{sc}_\bul C \ar[r]^\sim & S^{sc, (n)}_\bul C. }
\end{align}
On a un foncteur exact $\phi : B_W C \lmo S^{sc}_\bul C$ donné par
\begin{align*}
\phi :(B_WC)_n & \lmo S^{sc}_n C \\
(a_1, \hdots, a_n) &\longmapsto (a_1\moi a_1\oplus a_2\moi \cdots \moi a_1\oplus \cdots \oplus a_n).
\end{align*}
On vérifie que ce morphisme est bien simplicial en $n$. On le vérifie pour les dimensions $1$ et $2$ (la vérification en dimension quelconque est semblable) pour lesquelles on écrit les faces et les dégénérescences.  
\begin{center}
\begin{tabular}{c|c}
$B_W C$ & $S_\bul^{sc} C$\\ \hline
$d_0(a,b)=b$ & $d_0(x\moi y)=y/x$ \\
$d_1(a,b)=a\oplus b$ & $d_1(x\moi y)=y$\\
$d_2(a,b)=a$ & $d_2(x\moi y)=x$\\
$s_0(a)=(0,a)$ & $s_0(a)=(0\moi a)$ \\
$s_1(a)=(a,0)$ & $s_1(a)=id_a$
\end{tabular}
\end{center}
On vérifie alors que
\begin{align*}
 & \phi(d_0(a,b)) = \phi (b)=b=d_0(a\moi a\oplus b)=d_0(\phi (a,b)), \\
 & \phi(d_1(a,b)) = \phi(a\oplus b)=a\oplus b=d_1(a\moi a\oplus b)=d_1(\phi(a,b)),\\
 & \phi(d_2(a,b))= \phi(a)=a=d_2(a\moi a\oplus b)=d_2(\phi(a,b)) ,\\
 & \phi(s_0(a))=\phi (0,a)=(0\moi a)=s_0(a)=s_0(\phi(a)),\\
 & \phi(s_1(a))=\phi(a,0)=id_a=s_1(a)=s_1(\phi(a)).
\end{align*}

Pour tout $k\geq 1$, en appliquant le foncteur $B^{(k)}$ on obtient des foncteurs exacts, 
$$\xymatrix{ \phi_k : B^{(k)} B_W C\ar[r] &  B^{(k)} S^{sc}_\bul C \ar[r]^\sim& S^{sc,(k)}_\bul C }$$
dans $\del^k-WCat$. Le foncteur $\phi$ est une équivalence, et on en déduit que $\phi_k$ est une équivalence pour tout $k\geq 1$. En appliquant la K-théorie niveau par niveau et la réalisation on obtient une équivalence 
$$\xymatrix{\re{K(B^{(k)} B_W C)}\ar[r]^\sim & \re{K(S^{sc,(k)}_\bul C)} }.$$
Vu la définition du foncteur $\mcal{B}$ en \ref{groupsp} et les isomorphismes (\ref{prod}), on remarque que l'espace $\re{K(B^{(k)} B_W C)}$ est identique au niveau $k$ du spectre $\mcal{B}(K^\gam(C))$, c'est à dire du spectre $\kc(C)$. On a donc des morphismes $\kc(C)_k\lmo \re{K(S^{sc,(k)}_\bul C)}$. D'autre part, on a pour tout $k\geq 1$, un foncteur d'inclusion
$$S^{sc,(k)}_\bul C\hookrightarrow S^{(k)}_\bul C$$
des cofibrations scindées dans les cofibrations non nécessairement scindées. Un reformulation du théorème d'additivité de Waldhausen (\cite[Prop.1.3.2]{wald}) nous dit que le morphisme induit 
$$\xymatrix{K(S^{sc, (k)}_\bul C)\ar[r]^-{\sim} & K(S^{(k)}_\bul C) }$$
est une équivalence niveau par niveau dans $\del^k-SSet$, et donc induit une équivalence sur les réalisations 
$$\xymatrix{\re{K(S^{sc, (k)}_\bul C)}\ar[r]^-{\sim} & \re{K(S^{(k)}_\bul C)} }$$
On a donc construit (par composition) une collection de morphismes 
$$\xymatrix{\kc(C)_k\ar[r]^\sim & \kc'(C)_k}$$
qui sont des équivalences dans $SSet$. Cette collection de morphismes forme un morphisme de spectre "à homotopie près" dans le sens où les diagrammes obtenus avec les morphismes de transitions,
$$\xymatrix{ S^1\sm \mcal{B}(K^\gam(C))_k \ar[r] \ar[d] &  \mcal{B}(K^\gam(C))_{k+1} \ar[d] \\ S^1\sm \re{K(S^{(k)}_\bul C)} \ar[r] & \re{K(S^{(k+1)}_\bul C)}  }$$
sont commutatifs à homotopie près. Cela vient du fait que le passage de $B^{(k)} S^{sc}_\bul C$ à  $S^{sc,(k)}_\bul C$ constitue un morphisme de spectres, c'est-à-dire que pour tout $n_1,\hdots, n_k$, les diagrammes
$$\xymatrix{ S^{sc}_{n_{k-1}} \hdots S^{sc}_{n_1} C \ar[r] \ar[d] &  S^{sc}_{n_1\hdots n_{k-1}} \ar[d] \\S^{sc}_{n_k} S^{sc}_{n_{k-1}} \hdots S^{sc}_{n_1} C  \ar[r] & S^{sc}_{n_1\hdots n_k } }$$
sont commutatifs à isomorphismes près, où le morphisme vertical gauche est obtenu comme les morphismes (\ref{fonctexs}) et le morphisme vertical droit est induit par un morphisme $\und{n_1}\sm\hdots\sm \und{n_{k-1}}\lmo \und{n_1} \sm \hdots \sm\und{n_k}$ dans $\gam$ dépendant du choix d'un élément dans $\und{n_k}$. En effet le morphisme vertical gauche est le morphisme qui définit les morphismes de transitions du spectre $\kc'(C)$ et le morphisme vertical droit est le morphisme qui définit les morphismes de transitions du spectres $\kc(C)$ (vu la description du foncteur $\mcal{B}$ en \ref{groupsp}). On en déduit qu'il existe un isomorphisme 
$$\kc(C)\simeq \kc'(C)$$
dans la catégorie homotopique stable. 

\end{rema}

\paragraph{Cas des cofibrations scindées.}

Soit $C$ une catégorie de Waldhausen quelconque. On rappelle qu'on a un $\gam$-objet spécial $B_W C$ dans $WCat$ qui encode la structure de monoïde donné par la somme. En prenant le nerf des équivalences niveau par niveau, on a un $\gam$-espace spécial $NwB_W C$. On rappelle qu'on a aussi défini un $\gam$-espace de K-théorie noté $K^\gam(C)=K(B_W C)$, qui est un $\gam$-espace très spécial. On a en toute généralité un morphisme de $\gam$-espaces,
\begin{equation}\label{morphgam}
NwB_W C\lmo K^\gam (C).
\end{equation}
On définit ce morphisme de la manière suivante. Soit $n\geq 0$ un entier et $D=(B_W C)_n$ la catégorie de Waldhausen qui est le niveau $n$ de $B_W C$. On a $K^\gam (C)_n=K((B_W C)_n)=K(D)$. Il faut donc définir un morphisme $NwD\lmo K(D)=\Omega\re{NwS_\bul D}$, ou encore un morphisme $S^1\sm NwD\lmo \re{NwS_\bul D}$. Mais ce morphisme est simplement l'inclusion du $1$-squelette dans la réalisation 
$$\re{NwS_\bul D}\simeq coeq(\xymatrix{\coprod_{n\in \del} NwS_n D\times \del^n & \coprod_{(n_1\mo n_2)\in \del} NwS_{n_2} D\times \del^{n_1}  \dar[l]  )}.$$
On vérifie directement que la collection de morphismes $Nw(B_W C)_n\lmo K^\gam(C)_n$ définit bien un morphisme de $\gam$-espaces. Comme le $\gam$-espace $K^\gam(C)$ est très spécial on a donc un morphisme de $\gam$-espaces très spéciaux, 
\begin{equation}\label{kgam}
(NwB_W C)^+\lmo K^\gam(C).
\end{equation}

\begin{lem}\label{cofsc}
Soit $C$ une catégorie de Waldhausen telle que toute cofibration de $C$ est scindée (c'est-à-dire qu'elle admet une section). Alors le morphisme \emph{(\ref{kgam})} est une équivalence niveau par niveau. 
\end{lem}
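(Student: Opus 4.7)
L'argument procède par réduction au niveau $1$, où il s'identifiera à l'équivalence donnée par le théorème de Segal. Les deux $\gam$-espaces $(NwB_WC)^+$ et $K^\gam(C)$ sont très spéciaux : le premier par définition de la complétion en groupe, le second par la discussion précédant la définition \ref{spk}. Or un morphisme entre $\gam$-espaces très spéciaux est une équivalence niveau par niveau si et seulement si sa restriction au niveau $1$ est une équivalence d'ensembles simpliciaux, puisque les morphismes de Segal identifient le niveau $n$ au produit $n$-uple du niveau $1$. Il suffit donc de démontrer que le morphisme induit au niveau $1$ par (\ref{kgam}), à savoir le morphisme canonique $(NwC)^+ \lmo K(C) = \Omega\re{NwS_\bul C}$, est une équivalence sous l'hypothèse que toute cofibration de $C$ est scindée.

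Le théorème de Segal sous la forme de la remarque \ref{remseg} identifie le membre de gauche : pour le $\del$-espace spécial $\alpha^\ast NwB_W C$, sa complétion en groupe au niveau $1$ est le lacet de sa réalisation, d'où $(NwC)^+ \simeq \Omega\re{\alpha^\ast NwB_W C}$. Il reste donc à comparer les deux ensembles bisimpliciaux $\alpha^\ast NwB_W C$ et $NwS_\bul C$ après réalisation.

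On combine pour cela deux équivalences. D'une part, le morphisme $\phi : B_WC \lmo S^{sc}_\bul C$ rappelé dans la remarque \ref{remcompw} (qui envoie $(a_1,\ldots,a_n)$ sur $(a_1 \moi a_1\oplus a_2 \moi \cdots)$) est une équivalence niveau par niveau de catégories de Waldhausen sans aucune hypothèse sur $C$, puisqu'une suite de cofibrations scindée est déterminée par la donnée de ses quotients successifs. D'autre part, et c'est ici qu'intervient l'hypothèse sur $C$, l'inclusion $S^{sc}_\bul C\hookrightarrow S_\bul C$ est une équivalence niveau par niveau : étant donnée une suite $0\moi a_1\moi\cdots\moi a_n$ dans $S_n C$, on choisit récursivement un scindage de chaque cofibration $a_{i-1}\moi a_i$, ce qui fournit un isomorphisme entre cette suite et la suite scindée associée aux quotients $b_i = a_i/a_{i-1}$, montrant l'essentielle surjectivité de $S^{sc}_n C\hookrightarrow S_n C$ (qui est évidemment pleinement fidèle). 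En prenant les nerfs et les réalisations, on obtient la chaîne d'équivalences
$$\re{\alpha^\ast NwB_WC} \simeq \re{NwS^{sc}_\bul C} \simeq \re{NwS_\bul C},$$
d'où $(NwC)^+ \simeq K(C)$.

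Le principal point à vérifier avec soin est la compatibilité entre cette identification et le morphisme (\ref{kgam}) au niveau $1$ : il faut s'assurer que la composée des équivalences ci-dessus coïncide, à homotopie près, avec le morphisme canonique $(NwC)^+ \to K(C)$ induit par (\ref{morphgam}). Ceci se déroule directement en comparant l'inclusion du $1$-squelette (qui définit (\ref{morphgam})) avec la structure bisimpliciale sur $\alpha^\ast B_WC$ transportée à $S_\bul C$ via $\phi$ ; le reste de l'argument est purement formel.
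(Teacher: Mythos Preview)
Ta démonstration est correcte et suit essentiellement la même approche que celle du texte : réduction au niveau $1$ via la spécialité, puis comparaison $B_WC \to S^{sc}_\bul C \to S_\bul C$ via le morphisme $\phi$ et l'hypothèse de scindage, pour conclure par complétion en groupe et le théorème de Segal. Une nuance mineure : le texte n'invoque que la spécialité (et non la \emph{très} spécialité) des deux $\gam$-objets pour se ramener au niveau $1$, ce qui suffit puisque les morphismes de Segal identifient déjà le niveau $n$ au produit $n$-uple du niveau $1$.
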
 

\begin{proof} 
Comme la source et le but sont des $\gam$-espaces spéciaux, il suffit de vérifier que le morphisme induit une équivalence sur le niveau $1$. On note $f:(NwB_W C)^+_1\lmo K^\gam (C)_1$ ce morphisme. Le morphisme $\phi$ défini plus haut nous donne un morphisme de $\del$-espaces, 
$$NwB_W C\lmo NwS^{sc}_\bul C$$ 
qui est une équivalence niveau par niveau. Mais vu l'hypothèse sur les cofibrations de $C$, le morphisme canonique de $\del$-espaces $NwS_\bul^{sc}C\lmo NwS_\bul C$ est une équivalence. Le morphisme composé $NwB_W C\lmo NwS_\bul C$ est donc une équivalence. On en déduit une équivalence sur les complétions en groupes $(NwB_W C)^+\lmo (NwS_\bul C)^+$. 
Mais le $\del$-espace $(NwS_\bul C)^+$ a le même niveau $1$ que le $\gam$-espace $K^{\gam}(C)$, c'est-à-dire l'ensemble simplicial $K(C)$ (cf. exemple \ref{exespk}), et le morphisme $f$ est équivalent au morphisme $(NwB_W C)^+_1\lmo (NwS_\bul C)^+_1 $. On en déduit que $f$ est une équivalence et donc que le morphisme du lemme est une équivalence niveau par niveau.  
\end{proof}

\subsection{K-théorie algébrique des dg-catégories}

\subsubsection{Rappels sur la théorie homotopique des dg-catégories}

Tout le long de ce travail on fixe un anneau de base, associatif, commutatif, unitaire $k$. Vu que l'on ne va considérer que des schémas de type fini sur $k$, on suppose de plus que $k$ est un anneau noethérien. On note $C(k)$ la catégorie des complexes non-bornés de $k$-modules $\U$-petits. On prend la convention d'écrire nos complexes cohomologiquement, c'est-à-dire avec une différentielle qui augmente le degré de $1$. Par définition une $k$-dg-catégorie $\U$-petite est une catégorie $\U$-petite enrichie sur la catégorie monoïdale $C(k)$. Celles-ci forment une catégorie $\V$-petite notée $\dgcat$ dans laquelle les morphismes sont par définition les foncteurs $C(k)$-enrichis (ou dg-foncteurs). Dans toute la suite, l'expression "soit $T$ une dg-catégorie" signifie soit $T$ un objet de $\dgcat$. Nos références pour la théorie des dg-catégories sont \cite{kedg, ldgcat, dgmor, modob}. Si $T$ est une dg-catégorie, et $x,y$ deux objets de $T$, on note $T(x,y)$ le complexe des morphismes entre $x$ et $y$ dans $T$. On note $[T]$ sa catégorie homotopique. Il s'agit de la catégorie qui a les mêmes objets que $T$ mais les morphismes sont donnés par le $H^0$ des complexes de morphismes de $T$, avec la composition naturelle et les identités naturelles. On note $T^{op}$ la dg-catégorie opposée de $T$. On note $T^{op}-Mod$ la catégorie des $T^{op}$-dg-modules, c'est-à-dire des dg-foncteurs $T^{op}\lmo C(k)$, avec pour morphismes les transformations naturelles de degré $0$.
On munit la catégories $C(k)$ de la structure de modèles projective (voir \cite{hoveymod}), dans laquelle les équivalences et les fibrations sont définies point par point. On munit $T^{op}-Mod$ de la structure de modèles projective en suivant \cite{modob}. C'est alors une catégorie de modèles stable au sens de \cite[chapter 7]{hoveymod}. On note $\m{T}$ la dg-catégorie ($\V$-petite) des $T^{op}$-dg-modules \emph{cofibrants}. On a alors une équivalence de catégories triangulées $[\m{T}]\simeq D(T)$. Si $x$ est un objet de $T$, on note $\uh_x$ le $T^{op}$-dg-module représenté par $x$, c'est-à-dire $y\longmapsto T(y,x)$. Ceci définit le plongement de Yoneda 
$$\uh : T\lmo T^{op}-Mod.$$
C'est un dg-foncteur quasi-pleinement fidèle. L'évaluation en $x$ donne un foncteur
$$\xymatrix{ T^{op}-Mod\ar[r]^-{x^*} & C(k)}$$
qui commute aux limites, et a pour adjoint à gauche le foncteur noté $x_!$. Le foncteur $x^*$ est de Quillen à droite pour les structures de modèles projective de chaque côtés. On en déduit que $x_!$ est de Quillen à gauche. Par Yoneda on vérifie que $x_!(\unit)=\uh_x$, où $\unit$ est le complexe avec $k$ en degré $0$ et $0$ partout ailleurs. Comme $\unit$ est un objet cofibrant de $C(k)$, on en déduit que $\uh_x$ est un $k$-dg-module cofibrant pour tout objet $x$ de $T$. Le plongement de Yoneda se factorise donc en un dg-foncteur quasi-pleinement fidèle\footnote{Comme la dg-catégorie $\m{T}$ est seulement $\V$-petite, ce plongement de Yoneda n'est pas un morphisme dans $\dgcat$ mais dans la catégorie des dg-catégories $\V$-petites.},
$$\uh : T\lmo \m{T}.$$
On note $\mpa{T}$ la sous-dg-catégorie pleine de $\m{T}$ des dg-modules cofibrants parfaits, c'est-à-dire des dg-modules homotopiquement de présentation finie dans la catégorie de modèles $T^{op}-Mod$ au sens de \cite{modob}. C'est une $k$-dg-catégorie essentiellement $\U$-petite. On rappelle que $\mpa{T}$ est la plus petite sous-dg-catégorie de $\m{T}$ qui contient l'image essentielle du plongement de Yoneda et qui est stable par rétracts, shifts et pushouts homotopiques. Le plongement de Yoneda se factorise par $\mpa{T}$. Si on note $D_{pe}(T)$ la sous-catégorie pleine de $D(T)$ formée des $T^{op}$-dg-modules parfaits, on a une équivalence de catégories triangulées $[\mpa{T}]\simeq D_{pe}(T)$. 

On utilisera les deux structures de modèles suivantes (mises au jour par Tabuada \cite{tabth}) : 

\begin{itemize}
\item La structure de modèles standard (ou de Dwyer--Kan) pour laquelle les équivalences sont les quasi-équivalences. On note $Ho(\dgcat)$ sa catégorie homotopique. 

\item La structure de modèles Morita, notée $\dgmor$, pour laquelle les équivalences sont les équivalences de Morita dérivées. On rappelle qu'un dg-foncteur $f:T\lmo T'$ est une équivalence de Morita dérivée si le foncteur triangulé associé $\lef f_! : D_{pe}(T)\lmo D_{pe}(T')$ est une équivalence de catégories. Sa catégorie homotopique est donc notée $Ho(\dgmor)$. 
\end{itemize}

On rappelle la notion de suite exacte de dg-catégories. Une suite de morphismes 
\begin{equation}\label{suite}
\xymatrix{T'\ar[r]^-i & T\ar[r]^-p & T''} 
\end{equation}
dans $\dgcat$ est dite \emph{exacte}  si la suite de catégories triangulées 
$$\xymatrix{D_{pe}(T')\ar[r]^-{i_!} & D_{pe}(T)\ar[r]^-{p_!} &D_{pe}(T'')}$$ 
est exacte, c'est-à-dire que $i_!$ est pleinement fidèle et $p_!$ induit une équivalence à facteurs direct près entre $D_{pe}(T'')$ et le quotient de Verdier $D_{pe}(T)/D_{pe}(T')$. Une suite comme en (\ref{suite}) est dite \emph{strictement exacte scindée} si elle exacte et si de plus $i_!$ a un adjoint à droite noté $i^*$, $p_!$ a un adjoint à droite noté $p^*$ tels que $i^*i_!\simeq id_{\parf(T')}$ et $p_!p^*\simeq id_{\parf(T'')}$ via les morphismes d'adjonction. Une \emph{suite exacte scindée} dans $\dgcat$ est une suite dans $\dgcat$ qui est isomorphe dans $Ho(\dgmor)$ à une suite strictement exacte scindée.

\subsubsection{K-théorie connective}

On rappelle la définition de la K-théorie algébrique connective des dg-catégories telle qu'elle est étudiée dans \cite{tabhkt} (voir aussi \cite{kedg}). 

Soit $T\in \dgcat$ une dg-catégorie. On considère $\parf(T)$, la catégorie dont les objets sont les mêmes que la dg-catégorie $\mpa{T}$, mais on ne retient que les morphismes de degré $0$. On munit $\parf(T)$ d'une structure de catégorie de Waldhausen induite par la structure de modèles sur $T^{op}-Mod$. En d'autres termes, un morphisme est une équivalence (resp. une cofibrations) dans $\parf(T)$ si c'est une équivalence (resp. une cofibration) dans $T^{op}-Mod$. Les axiomes qui définissent une structure de catégorie de Waldhausen sont satisfaits car le pushout homotopique de deux dg-modules parfaits est encore un dg-module parfaits et l'axiome (weq2, gluing lemma) est satisfait par le lemme du cube \cite[lemma 5.2.6]{hoveymod} dans la catégorie de modèles $T^{op}-Mod$. De plus la catégorie de Waldhausen $\parf(T)$ satisfait l'axiome de saturation, l'axiome d'extension, a un objet cylindre qui satisfait l'axiome du cylindre. 

Soit $f:T\lmo T'$ un morphisme dans $\dgcat$. Alors $f$ induit une paire de Quillen, 
$$\xymatrix{ T^{op}-Mod \ar@<3pt>[r]^-{f_!}  &T'^{op}-Mod \ar@<3pt>[l]^-{f^*} }$$
où l'adjoint à droite est définie par $f^*(E)=E\circ f$ pour tout $E\in T'^{op}-Mod$. Le foncteur $f^*$ est clairement de Quillen à droite pour les structures projectives, donc $f_!$ est de Quillen à gauche. Il s'en suit que $f_!$ préserve les dg-modules parfaits et donc induit un foncteur exact
$$f_!:\parf(T)\lmo \parf(T').$$
Cette construction définit un foncteur faible $\dgcat\lmo WCat$ auquel on applique la procédure de strictification canonique pour obtenir un foncteur 
$$\parf : \dgcat \lmo WCat.$$

\begin{df} 
\emph{L'espace de K-théorie d'une dg-catégorie $T$} est par définition l'ensemble simplicial pointé $K(T):=K(\parf(T))$. Ceci définit un foncteur 
$$K : \dgcat\lmo SSet_*.$$
\emph{Le spectre de K-théorie connective d'une dg-catégorie $T$ }est par définition le spectre symétrique connectif $\kc(T):=\kc(\parf(T))$. Ceci définit un foncteur 
$$\kc:\dgcat\lmo Sp.$$
\end{df} 

\begin{rema}\label{algebra}
Soit $A$ une $k$-algèbre associative. On note $\proj(A)$ la catégorie de Waldhausen des $A$-modules à droites projectifs de type fini, dont les cofibrations sont les monomorphismes admissibles (c'est-à-dire les monomorphismes scindés à conoyau projectif de type fini) et les équivalences sont les isomorphismes. Si $A$ est commutative, c'est la catégorie des fibrés vectoriels sur $\spec(A)$. Alors le foncteur d'inclusion $\proj(A)\hookrightarrow\parf(A)$ induit une équivalence $K(\proj(A))\simeq K(\parf(A))$ dans $SSet$, et donc une équivalence $\kc(\proj(A))\simeq \kc(\parf(A))$ dans $Sp$. Cette équivalence peut s'obtenir en considérant la catégorie intermédiaire des complexes strictement parfaits sur $A$ notée $\parf^{\mrm{strict}} (A)\hookrightarrow \parf(A)$. Par le théorème de Gillet--Waldhausen \cite[Thm.1.11.7]{tt}, l'inclusion $\proj(A)\hookrightarrow \parf^{\mrm{strict}} (A)$ induit une équivalence en K-théorie. Pour l'autre inclusion, on peut considérer les dg-catégories associées $\mpa{A}^{\mrm{strict}}\hookrightarrow \mpa{A}$ des $A$-dg-modules strictement parfaits à l'intérieur des $A$-dg-modules parfaits. Ce foncteur est une équivalence de Morita dérivée et donc induit une équivalence en K-théorie par ce qui suit. 
\end{rema} 

Les propriétés fondamentales de la K-théorie connective sont rappelées dans la proposition suivante. Dans l'esprit de la théorie motivique de Tabuada, ces propriétés permettent de donner la propriété universelle de la K-théorie connective des dg-catégories : elle est coreprésentée par l'unité de la structure monoïdale dans la catégorie motivique additive de Tabuada. Ces résultats sont certainement bien connus, mais nous incluons quelques preuves par manque de références précises. 

\begin{prop}
\begin{enumerate}
\item Le foncteur $\kc : \dgcat\lmo Sp$ commute aux colimites homotopiques filtrantes de dg-catégories. 
\item Le foncteur $\kc$ envoie les équivalences Morita dérivées sur des équivalences de spectres. 
\item Pour toute suite exacte \textit{\textbf{scindée}} $\xymatrix{T'\ar[r]^-i & T\ar[r]^-p & T''}$, les foncteurs $i^*$ et $p^*$ préservent les parfaits et le morphisme induit 
$$(i_!,p^*):\kc(T')\oplus \kc(T'')\lmo \kc(T)$$ 
est un isomorphisme dans $Ho(Sp)$.
\end{enumerate}
\end{prop}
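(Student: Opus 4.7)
Le plan est de traiter les trois énoncés en s'appuyant systématiquement sur la construction explicite $\kc(T) = \mcal{B}\, K(B_W\,\parf(T))$ donnée dans la partie précédente, et sur les propriétés connues de la K-théorie de Waldhausen (théorèmes d'invariance par équivalence, d'approximation et d'additivité). Dans chacune des trois étapes, l'idée directrice est de ramener la question sur $\kc$ à une question sur la catégorie de Waldhausen $\parf(T)$, puis d'invoquer un théorème standard de Waldhausen.

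\emph{Pour (1)}, je commencerais par rappeler que les objets de $\mpa{T}$ sont précisément les objets homotopiquement de présentation finie de $T^{op}\text{-}Mod$ au sens de \cite{modob}. Ce sont donc, par définition, les objets compacts de la catégorie homotopique correspondante. Si $T = \mathrm{hocolim}_{i \in I} T_i$ est une colimite homotopique filtrante dans $\dgcat$, alors tout $T^{op}$-dg-module parfait est, à équivalence près, tiré en arrière d'un $T_i^{op}$-dg-module parfait, et les $\Hom$ se calculent comme colimites filtrantes des $\Hom$ au niveau $T_i$. On en déduit que le foncteur naturel $\mathrm{hocolim}_i \parf(T_i) \to \parf(T)$ est une équivalence de catégories de Waldhausen (niveau par niveau). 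Comme chacune des opérations successives $B_W$, $K$, $\mcal{B}$ est construite à partir de réalisations et de limites homotopiques qui commutent aux colimites filtrantes dans $SSet$ et dans $Sp$, on obtient l'équivalence souhaitée. Cette étape me paraît essentiellement routinière une fois la compacité des dg-modules parfaits rappelée.

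\emph{Pour (2)}, je me ramènerais au cas des dg-catégories triangulées/pré-triangulées. Pour toute dg-catégorie $T$, le plongement de Yoneda $T \to \mpa{T}$ est une équivalence Morita dérivée, et il suffit donc de montrer que $\kc(T) \to \kc(\mpa{T})$ est une équivalence ; mais comme $\parf(T)$ et $\parf(\mpa{T})$ ont les mêmes objets (un dg-module parfait sur $\mpa{T}$ est juste un objet de $\mpa{T}$ vu comme module représentable, à facteur direct près), il suffit d'appliquer le théorème de cofinalité de Waldhausen à la différence entre les parfaits et les rétractes de représentables. Cela étant fait, une équivalence Morita entre deux dg-catégories pré-triangulées $\mpa{T} \to \mpa{T'}$ induit par construction une équivalence des catégories triangulées $D_{pe}(T) \simeq D_{pe}(T')$, et donc une équivalence des catégories de Waldhausen $\parf(T) \simeq \parf(T')$ respectant cofibrations et équivalences. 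Le foncteur $K$ envoie alors cette équivalence sur une équivalence d'ensembles simpliciaux pointés (par le théorème d'approximation de Waldhausen, ou directement par fonctorialité sur les équivalences), puis $\mcal{B}$ envoie cette équivalence sur une équivalence de spectres.

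\emph{Pour (3)}, je commence par noter que la définition passe par $Ho(\dgmor)$ et l'invariance Morita démontrée en (2) permet de supposer la suite \emph{strictement} exacte scindée. Les adjoints $i^*$ et $p^*$ préservent alors les dg-modules parfaits (car leurs adjoints à gauche $i_!$ et $p_!$ les préservent et parce qu'ils commutent aux colimites homotopiques), donc s'étendent en foncteurs exacts de catégories de Waldhausen. L'unité et la coünité des adjonctions fournissent, sur $\parf(T)$, une suite de transformations naturelles $i_! i^* \to \mathrm{id}_{\parf(T)} \to p^* p_!$ dont chaque composante est une suite cofibre dans $\parf(T)$ : c'est précisément ce qu'encode l'exactitude scindée, les foncteurs $i^*i_!$ et $p_! p^*$ étant les identités. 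Le théorème d'additivité de Waldhausen appliqué à cette suite cofibre de foncteurs exacts donne alors l'égalité $[\mathrm{id}] = [i_! i^*] + [p^* p_!]$ sur $\kc(T)$, et la combinaison avec $i^*i_! \simeq \mathrm{id}$, $p_! p^* \simeq \mathrm{id}$ montre que le morphisme
\[
(i_!, p^*) : \kc(T') \oplus \kc(T'') \lmo \kc(T)
\]
admet pour inverse $(i^*, p_!)$ dans $Ho(Sp)$.

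L'étape la plus délicate est probablement la deuxième : justifier proprement que le passage entre dg-catégories Morita équivalentes et catégories de Waldhausen $\parf(-)$ préserve la K-théorie \emph{connective} demande de s'assurer que les différences entre parfaits et idempotents-complétés ne changent pas $\pi_i$ pour $i \geq 0$, ce qui nécessite soit une application soigneuse du théorème de cofinalité, soit un passage par la catégorie intermédiaire des complexes strictement parfaits comme à la remarque \ref{algebra}.
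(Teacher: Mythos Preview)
Ton approche pour (1) et (3) coïncide pour l'essentiel avec celle de l'article, qui utilise lui aussi l'additivité de Waldhausen pour (3) via la suite de cofibrations de foncteurs $i_! i^* \to \mathrm{id} \to p^* p_!$ et l'inverse $(i^*,p_!)$, et la commutation de la K-théorie de Waldhausen aux colimites filtrantes pour (1) --- l'article rendant cette dernière étape précise en invoquant \cite[Lem.~2.10]{modob} pour l'équivalence $\mathrm{colim}_i\,\mpa{(T_i)} \simeq \mpa{T}$, puis le point~(2) pour conclure.

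C'est dans (2) que ton argument a une vraie lacune. Tu écris qu'une équivalence de catégories triangulées $D_{pe}(T) \simeq D_{pe}(T')$ donne \emph{une équivalence des catégories de Waldhausen $\parf(T) \simeq \parf(T')$ respectant cofibrations et équivalences}. C'est faux en général : seules les catégories \emph{homotopiques} coïncident ; les catégories de Waldhausen $\parf(T)$ et $\parf(T')$ elles-mêmes, avec leurs cofibrations héritées de la structure de modèles, n'ont aucune raison d'être équivalentes, et ni la fonctorialité ni le théorème d'approximation de Waldhausen (dont les hypothèses App1/App2 ne découlent pas d'une équivalence triangulée) ne s'appliquent directement. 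Le détour par le cas pré-triangulé et la cofinalité est de plus inutile et partiellement circulaire : montrer $\kc(T) \simeq \kc(\mpa{T})$ est déjà une instance de l'invariance Morita que tu cherches à établir. L'article procède bien plus simplement : le foncteur exact $f_! : \parf(T) \to \parf(T')$ induit une équivalence sur les catégories homotopiques $Ho(\parf(T))\simeq D_{pe}(T)$, et comme $\parf(T)$ satisfait les axiomes de cylindre et de saturation, le théorème de Thomason \cite[Thm.~1.9.8]{tt} conclut immédiatement. Signalons enfin que ta justification de ce que $i^*$ et $p^*$ préservent les parfaits dans (3) (\emph{car leurs adjoints à gauche les préservent et parce qu'ils commutent aux colimites homotopiques}) n'est pas correcte : ce n'est pas une conséquence formelle de l'adjonction, mais de la structure scindée elle-même.
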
 

\begin{proof} 
2. C'est une conséquence directe du résultat de Thomason \cite[Thm. 1.9.8]{tt} une fois qu'on a remarqué que $Ho(\parf(T))\simeq D_{pe}(T)$. 

1. Le spectre de K-théorie connective est définie à partir de l'espace de K-théorie en appliquant des foncteurs qui commutent aux colimites, il suffit donc de le prouver pour l'espace de K-théorie. Soit $T=hocolim_{i\in I} T_i$ une colimite homotopique filtrante dans $\dgcat$. D'après \cite[Prop. 2.2]{modob}, cette colimite homotopique est équivalente à une brave colimite (non-homotopique). On a donc un isomorphisme $T\simeq colim_{i\in I} T_i$ dans $Ho(\dgcat)$. On a une suite de morphismes 
$$\xymatrix{colim_{i\in I} K(T_i)=colim_{i\in I} K(\parf(T_i)) \ar[r] & K(colim_{i\in I} \parf(T_i) ) \ar[r] &  K(\parf(T))=K(T) }$$
On sait d'après Waldhausen que la K-théorie commute aux colimites filtrantes de catégories de Waldhausen, donc le premier morphisme est un isomorphisme dans $Ho(SSet)$. D'après \cite[Lem. 2.10]{modob}, le morphisme naturel 
$$colim_{i\in I} \mpa{(T_i)}\lmo \mpa{T}$$ 
est une équivalence dans $\dgcat$. On en déduit que le foncteur exact 
$$colim_{i\in I} \parf (T_i)\lmo \parf(T)$$
induit une équivalence sur les catégories dérivées, et induit donc une équivalence en K-théorie par le résultat de Thomason. 
\\

3. Se déduit du théorème d'additivité de Waldhausen de la manière suivante. Vu le point 2. on peut supposer que la suite est strictement scindée. Soit $\xymatrix{T'\ar[r]^-i & T\ar[r]^-p  & T''}$ une suite strictement exacte scindée dans $\dgcat$. L'inverse à $i_!\oplus p^* : \parf(T')\times \parf(T'') \lmo\parf(T)$ en K-théorie est donné par le foncteur exact $(i^*,p_!):\parf(T)\lmo \parf(T')\times \parf(T'')$.  En effet pour tout $(E,F)\in \parf(T')\times \parf(T'')$ on a des isomorphismes
$$(i^*,p_!)\circ (i_!\oplus p^*) (E,F)=(i^*,p_!) (i_!(E)\oplus p^*(F))=(i^*i_!(E)\oplus i^*p^*(F), p_!i_!(E)\oplus p_!p^*(F))$$
Mais $i^*p^*\simeq 0$, $p_!i_!\simeq 0$, $i^*i_!\simeq id_{\parf(T')}$ et $p_!p^*\simeq id_{\parf(T'')}$. Donc $(i_!\oplus p^*) \circ (i^*,p_!)\simeq id$. Dans l'autre sens on a 
$$ (i_!\oplus p^*) \circ (i^*,p_!) (E) = (i_!\oplus p^*) (i^*(E), p_!(E))=i_!i^*(E)\oplus p^*p_!(E).$$
D'après la suite exacte scindée $\xymatrix{D_{pe}(T')\ar[r]^-{i_!} & D_{pe}(T)\ar[r]^-{p_!} &D_{pe}(T'')}$, on a une suite de cofibrations de foncteurs exacts
$$i_!i^*\lmo id_{\parf(T)}\lmo p^*p_!.$$
Or le théorème d'additivité de Waldhausen nous dit que l'égalité $i_!i^*+p^*p_!=id$ a lieu en K-théorie, ce qui implique que $(i^*,p_!)$ est bien l'inverse de $i_!\oplus p^*$. \end{proof}

\subsubsection{K-théorie non-connective}

On rappelle comment définir la K-théorie non-connective des dg-catégories en suivant Tabuada--Cinsinki \cite{nck}. Cette définition coïncide avec la construction de Schlichting par \cite[Prop 6.6]{nck}.

L'ingrédient principal est le foncteur de "complétion en somme dénombrable" ou "enveloppe flasque" : 
$$\mcal{F} : \dgcat\lmo \dgcat,$$
(voir \cite[Section 6]{nck}). Cet objet vient avec un morphisme $T\lmo \mcal{F}(T)$.  La propriété essentielle de la dg-catégorie $\mcal{F}(T)$ est qu'elle admet des sommes dénombrables et donc elle vérifie $\kc(\mcal{F}(T))=0$. 

On définit alors le foncteur de suspension
$$\mcal{S} : \dgcat\lmo \dgcat,$$
par $\mcal{S}(T):=\mcal{F}(T)/T$, la dg-catégorie quotient $Ho(\dgcat{k})$. 
La suite de spectres symétriques $(\kc(\mcal{S}^n(T))_{n\geq 0}$ forme un spectre dans $Sp$ (see \cite[Prop 7.2]{nck}) on définit le spectre de K-théorie non-connective comme le niveau $0$ du $\Omega$-spectre associé. 

\begin{df}\emph{(\cite[Prop 7.5]{nck})}
Le spectre de K-théorie algébrique non-connective d'une dg-catégorie $T$ est par définition le spectre symétrique
$$\kn(T):=hocolim_{n\geq 0} \kc(\mcal{S}^n (T))[-n].$$
Ceci définit un foncteur
$$\kn : \dgcat\lmo Sp.$$
\end{df}

Par définition, pour tout $T\in \dgcat$ on a un morphisme naturel $\kc(T)\lmo \kn(T)$. Les propriétés essentielles de la K-théorie algébrique non-connective sont listées dans la proposition suivante. 

\begin{prop}\label{propkn}
\begin{description}
\item[a.] Pour tout $T\in \dgcat$, le morphisme naturel $\kc(T)\lmo \kn(T)$ induit un isomorphisme sur les $\pi_i$ pour tout $i\geq 0$. Par conséquent $\kc(T)$ est le revêtement connectif du spectre $\kn(T)$. 
\item[b.] Le foncteur $\kn$ commute aux colimites homotopiques filtrantes dans $\dgcat$. 
\item[c.] Le foncteur $\kn$ envoie les équivalences Morita dérivées sur des équivalences dans $Sp$. 
\item[d.] Soit $\xymatrix{T'\ar[r]^-i & T\ar[r]^-p & T''}$  une suite exacte de dg-catégories. Alors la suite de morphismes de spectres,
$$\xymatrix{ \kn(T') \ar[r]^{i_!} & \kn(T) \ar[r]^{p_!} & \kn(T'')}$$
est une suite de fibration. 
\end{description}
\end{prop}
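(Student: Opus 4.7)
Le plan est de déduire les quatre points de la conjonction des propriétés de $\kc$ établies dans la proposition précédente et de la suite exacte canonique $T\lmo \mcal{F}(T)\lmo \mcal{S}(T)$, en exploitant l'annulation $\kc(\mcal{F}(T))\simeq 0$. Cette dernière résulte d'un argument de type swindle d'Eilenberg : l'existence de sommes dénombrables dans $\mcal{F}(T)$ permet d'exhiber une homotopie entre le foncteur identité et le foncteur nul sur $\parf(\mcal{F}(T))$, donc au niveau des spectres connectifs associés.

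Pour \textbf{a}, j'appliquerais le théorème de localisation de Waldhausen--Thomason à cette suite exacte : il fournit une longue suite exacte de K-groupes en degrés positifs se terminant à $K_0(\mcal{S}(T))$. Puisque les K-groupes de $\mcal{F}(T)$ s'annulent en tout degré positif, cette longue suite exacte se décompose en isomorphismes $K_{i+1}(\mcal{S}(T))\simeq K_i(T)$ pour tout $i\geq 0$. Par conséquent, les morphismes de transition $\kc(\mcal{S}^n(T))[-n]\lmo \kc(\mcal{S}^{n+1}(T))[-(n+1)]$ induisent, pour tout $i$ fixé, des isomorphismes sur $\pi_i$ dès que $i+n\geq 0$, ce qui donne l'énoncé de revêtement connectif en passant à la colimite.

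Les points \textbf{b} et \textbf{c} se déduiraient formellement. Il suffit de vérifier que $\mcal{F}$ et $\mcal{S}$ préservent les colimites homotopiques filtrantes et les équivalences Morita dérivées dans $Ho(\dgmor)$ (ce qui résulte de leur construction explicite dans \cite{nck}), puis d'exploiter les propriétés correspondantes de $\kc$ déjà établies, conjuguées à la commutation de la colimite filtrante avec la colimite séquentielle sur $n$ définissant $\kn$.

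L'obstacle principal est \textbf{d}, qui illustre la raison d'être même du passage au non-connectif. Donnée une suite exacte $T'\lmo T\lmo T''$, les foncteurs $\mcal{S}^n$ préservent l'exactitude, donnant pour tout $n\geq 0$ une suite exacte $\mcal{S}^n T'\lmo \mcal{S}^n T\lmo \mcal{S}^n T''$. Le théorème de Waldhausen--Thomason appliqué à chaque étage livre une suite $\kc(\mcal{S}^n T')\lmo \kc(\mcal{S}^n T)\lmo \kc(\mcal{S}^n T'')$ qui n'est fibrante qu'\emph{en degrés $\geq 1$}, avec un défaut à $\pi_0$ mesurant précisément le K-groupe négatif manquant. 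Le point clé est que, pour un $\pi_i$ donné, le défaut du rang $n$ est absorbé au rang $n+1$ dès que $i+n\geq 1$, et disparaît donc totalement à la colimite, transformant la suite partiellement fibrante en une vraie suite de fibration dans $Ho(Sp)$. L'étape technique essentielle -- et la source de difficulté -- est la compatibilité cohérente des suites de Thomason aux morphismes de transition $\mcal{S}^n\lmo \mcal{S}^{n+1}$, ce qui permet de former une suite de spectres niveau par niveau dont la colimite homotopique soit effectivement fibrante ; cette vérification est menée en détail dans \cite[\S 7]{nck}.
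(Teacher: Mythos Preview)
Your argument is correct and is essentially the approach of Cisinski--Tabuada in \cite[\S 7]{nck}, which you cite appropriately for the technical compatibility in \textbf{d}. The paper, however, takes a much shorter route: it recalls (just before the definition of $\kn$) that this construction coincides with Schlichting's by \cite[Prop.~6.6]{nck}, and then for \textbf{c} and \textbf{d} simply cites Schlichting's results \cite[12.3, Prop.~3]{schl} and \cite[12.1, Thm.~9]{schl} directly. For \textbf{a} the paper just says ``from the definition and the fact that $\kn(T)$ is an $\Omega$-spectrum'', and for \textbf{b} it argues via the corresponding property of $\kc$ and the commutation of connective covers with filtered colimits.

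What each approach buys: the paper's proof is a two-line proof by citation, leveraging the identification with an existing well-developed theory; your approach is more self-contained and makes transparent \emph{why} the non-connective delooping repairs the defect of $\kc$ on exact sequences (your description of the $\pi_0$ defect being ``absorbed'' at the next stage is exactly the mechanism). Your version would be preferable in a context where one wanted to avoid the comparison with Schlichting's framework, or to emphasise the internal logic of the $\mcal{S}^n$ construction.
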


\begin{proof} 
\begin{description}
\item[a.] Cela vient de la définition et du fait que $\kn(T)$ est un $\Omega$-spectre. 
\item[b.] Cela vient du fait correspond pour le foncteur $\kc$, de ce que le foncteur revêtement connectif commute aux colimites filtrantes, et de ce qu'un morphisme de spectre $E\lmo E'$ est une équivalence si et seulement si les morphismes $\widetilde{E}[n]\lmo \widetilde{E'}[n]$ sont des équivalences pour tout $n\geq 0$. 
\item[c.] \cite[12.3, Prop 3]{schl}.
\item[d.] \cite[12.1, Thm 9]{schl}. 
\end{description}
\end{proof}

\begin{nota}\label{notaksch}
En suivant Schlichting \cite[6.5]{schl}, on définit la K-théorie d'un schéma comme la K-théorie de sa catégorie des complexes parfaits. Si $X\in \schk$ est un schéma, on considère la dg-catégorie $\lpe(X)$ des complexes parfaits de $\ocal_X$-modules quasi-cohérents sur $X$. On suppose qu'on a un modèle fonctoriel
$$\lpe : \schk \lmo \dgcat.$$
On réfère à \cite[§8.3]{dgmor} pour une définition convenable de $\lpe(X)$. La K-théorie de $X\in\schk$ est par définition le spectre symétrique $\kn(X):=\kn(\lpe(X))$. Ceci définit un foncteur toujours noté
$$\kn : \schk\lmo \dgcat.$$
On rappelle que Schlichting a montré (voir \cite[Thm.5]{schl}) que pour tout schéma $X$ de type fini sur $k$ (qui est donc quasi-compact quasi-séparé) la K-théorie au sens de la définition précédente coïncide avec la K-théorie définie par Thomason--Trobaugh (\cite{tt}). 
\end{nota}

\subsection{Caractère de Chern algébrique}

\subsubsection{Homologies cycliques}

C'est dans le but de généraliser la cohomologie de de Rham aux algèbres associatives que Tsygan (voir \cite{tsyhom}) a définie l'homologie cyclique. Le parallèle est donné par le théorème dit HKR pour Hochschild--Konstant--Rosenberg, qui énonce l'existence d'un isomorphisme entre l'homologie cyclique périodique d'une algèbre commutative et la cohomologie de de Rham des formes différentielles algébriques de cette algèbre commutative. 

On fixe ici les notations concernant l'homologie cyclique. Les différentes versions de l'homologie cyclique peuvent toutes être définies à partir d'un même objet appelés le complexe mixte associé à une dg-catégorie comme définie par Keller dans \cite{kelcyclic}. Le complexe mixte est un foncteur
$$Mix=Mix(-\mid k): \dgcat\lmo \Lambda-Mod,$$
à valeurs dans la catégorie des $\Lambda$-dg-modules avec $\Lambda$ la $k$-dg-algèbre engendrée par un élément $B$ en degré $-1$ soumis à la relation $B^2=0$ et $d(B)=0$. Un $\Lambda$-dg-module est communément appelé un complexe mixte. Si $T\in \dgcat$ est une dg-catégorie (localement cofibrante), on définit un complexe précyclique de $k$-modules au sens de \cite[2.1]{kelinv}, noté $C(T)$ dont le $n$-ième niveau ($n\geq 0$) est donné par 
$$C(T)_n=\bigoplus_{x_0,x_1,\hdots,x_n\in T} T(x_n,x_0) \te_k T(x_{n-1},x_n)\te_k T(x_{n-2},x_{n-1})\te_k \hdots \te_k T(x_0,x_1)$$
où la somme est prise sur les suites d'objets de $T$. La différentielle $d:C(T)_n\lmo C(T)_{n-1}$ est défini par 
$$d(f_n\te f_{n-1}\te \hdots \te f_0) = f_{n-1}\te \hdots \te f_1\te f_0 f_n + \sum _{i=1}^n (-1)^n f_n\te \hdots\te f_{i-1}f_i \te \hdots \te f_0,$$
où les $f_i$ sont des éléments homogènes. L'opérateur cyclique $t:C(T)_n\lmo C(T)_n$ est défini par 
$$t(f_n\te \hdots \te f_0) = (-1)^{n+1} f_0\te f_n\te \hdots \te f_1.$$
Le complexe mixte associé au complexe précyclique $C(T)$ (au sens de \cite[2.1]{kelinv}) est le complexe mixte $Mix(T)$. Celui-ci est un invariant relatif et dépend de manière cruciale de l'anneau de base $k$. Ce n'est que par abus de notation qu'on omet de préciser l'anneau de base dans la notation. Par défaut tout les spectres d'homologie cyclique dans la suite de ce travail sont calculés relativement à l'anneau de base $k$. Keller a montré que le foncteur $Mix$ est un invariant localisant au sens de Tabuada \cite{tabth}. 

\begin{theo}\label{cyclic} \emph{(Keller \cite{kelcyclic})} --- Le foncteur $Mix$ commute aux colimites homotopiques filtrantes dans $Ho(\dgcat)$, envoie les équivalences Morita de dg-catégories sur des équivalences et envoie les suites exactes de dg-catégories sur des triangles distingués dans la catégorie dérivée des complexes mixtes. 
\end{theo} 

On note
$$H : C(\Z)\lmo Sp $$
le foncteur standard qui va des complexes (non bornés, noté cohomologiquement) de $\Z$-modules dans les spectres symétriques. Bien que l'on peut se référer à \cite[après le cor.B.1.8]{shischcat} pour sa construction, on rappelle une définition de $H$. Ce foncteur nous est utile pour passer des dg-modules aux spectres symétriques, notamment dans le cas de l'homogie cyclique. Si $E\in C(\Z)$ est un complexe et $n$ un entier, la notation $E[-n]$ désigne le complexe $E[-n]^k=E^{k-n}$, avec pour différentielle $(-1)^n$ fois la différentielle de $E$. Pour tout ensemble simplicial $K$ on note $\Z[K]$ le $\Z$-module simplicial libre engendré par $K$, et $\tilde{\Z}[K]$ le $\Z$-module simplicial réduit libre engendré par $K$. Le décalage $E[-n]$ est alors canoniquement isomorphe au produit tensoriel $E\te_\Z \tilde{\Z}[S^n]$, avec $S^n=(S^1)^{\sm n}$. Ce complexe décalé supporte donc une action naturelle du groupe symétrique $\Sigma_n$ par permutation des facteurs. 
On note 
$$\xymatrix{ N : Mod(\Z)^{\del^{op}} \lmo C(\Z)} $$
le foncteur de normalisation standard. Pour tout $M\in Mod(\Z)^{\del^{op}}$ le complexe $NM$ est concentré en degrés négatifs ou nul. On définit un foncteur 
$$W:C(\Z)\lmo Mod(\Z)^{\del^{op}}$$
en posant pour tout $E\in C(\Z)$, $W(E)_n=\homi(N\Z[\del^n], E)$, où $\homi$ est le $\Z$-module des morphismes dans $C(\Z)$. La restriction de $W$ à la sous-catégorie de $C(\Z)$ formée des complexes concentrés en degrés négatifs est l'inverse de $N$ dans l'équivalence de Dold--Kan. On note encore $\Z$ le complexe avec $\Z$ en degré $0$ et $0$ partout ailleurs. On a $W(\Z[-1])= \tilde{\Z}[S^1]$. Le foncteur $H$ associe à un complexe $E$ le spectre symétrique (de $\Z$-modules simpliciaux) $H(E)$ dont le $n$-ième terme est  
$$H(E)_n=W(E[-n])\simeq W(E\te_\Z \tilde{\Z}[S^n])$$
et les morphismes de transitions $S^1\sm H(E)_n \lmo H(E)_{n+1}$ définis par le composé
\begin{align*}
 \Z[S^1]\sm W(E[-n])  & \lmo \tilde{\Z}[S^1]\sm W(E[-n]) \\
 & \lmo W(\Z[-1])\sm W(E[-n])\\
 &\lmos{AW} W(\Z[-1]\te_\Z E[-n])\simeq W(E[-n-1]),
\end{align*}
où $AW$ désigne le morphisme d'Alexander--Whitney (voir \cite{richter}). On peut vérifier que les morphismes de transition sont bien équivariant par rapport à l'action symétrique, et donc que $H(E)$ est un spectre symétrique de $\Z$-modules simpliciaux. 

On considère les catégories $C(\Z)$ et $Sp$ comme des catégories monoïdales avec leur structures monoïdales usuelles données par le dg-produit tensoriel et le smash produit respectivement. Le foncteur $H$ ainsi défini est un foncteur lax monoïdal et induit donc un foncteur entre les monoïdes et les modules sur ces monoïdes. On a donc un anneau en spectre $Hk$, une $Hk$-algèbre en spectre $H\Lambda$ et le foncteur $H$ induit un foncteur,
$$H: \Lambda-Mod\lmo \hlamb$$ 
à valeurs dans la catégorie des $H\Lambda$-modules en spectres. On munit $\Lambda-Mod$ et $\hlamb$ de la structure de modèles donnée par \cite[sect 4]{ss-alg} où un morphisme est une équivalence si c'est une équivalence sur les dg-modules et spectres respectivement. Par définition, le foncteur $H$ préserve les équivalences. 

On note alors $\hh:=H\circ Mix$ le foncteur composé
$$\xymatrix{ \dgcat \ar[r]^-{Mix} \ar@/^2pc/[rr]^-{\hh} & \Lambda-Mod \ar[r]^-{H} & \hlamb }$$
et on l'appelle l'homologie de Hochschild sur $k$. C'est encore un invariant localisant (c'est-à-dire qu'il vérifie les conditions du Thm. \ref{cyclic}). C'est aussi un foncteur  monoïdal faible car $H$ et $Mix$ sont monoïdaux faibles. Dans la définition suivante on considère $Hk$ comme un $H\Lambda$-module par l'augmentation naturelle $H\Lambda\lmo Hk$. 

\begin{df} Soit $T\in \dgcat$. 
\begin{itemize}
\item Le \emph{spectre symétrique d'homologie cyclique de $T$}  ou l'\emph{homologie cyclique de $T$} est le spectre 
$$\hc (T) := \hh(T)\sml_{H\Lambda} Hk.$$
\item Le \emph{spectre symétrique d'homologie cyclique négative de $T$}  ou l'\emph{homologie cyclique négative de $T$} est le spectre 
$$\hcn (T):=\rhomi_{H\Lambda} (Hk,\hh(T)).$$
\item Le $Hk$-module $\hcn(T)$ est un module sur la $Hk$-algèbre $\rhomi_{H\Lambda} (Hk,Hk)\simeq Hk[u]$ avec $u$ un générateur de degré $-2$. On définit \emph{le spectre symétrique d'homologie périodique de $T$} ou \emph{l'homologie périodique de $T$} comme étant le spectre 
$$\hp(T):=\hcn(T)\wh_{Hk[u]} Hk[u,u^{-1}].$$
\end{itemize}
On obtient des foncteurs
$$\hcn : \dgcat\lmo Hk[u]-Mod_\s,$$
$$\hp : \dgcat\lmo Hk[u,u^{-1} ]-Mod_\s.$$
On a par définition un morphisme naturel de $Hk[u]$-modules $\hcn\lmo \hp$. 
\end{df}

\begin{rema}
La définition du foncteur d'homologie périodique $\hp$ donnée ici n'est pas la définition standard (comme dans \cite[p.210]{kasselcyclic}) mais \cite[Thm.6.1.24]{rosenbergk} énonce le fait que la définition classique coïncide avec la définition donnée ici pour n'importe quelle algèbre associative unitaire. 
\end{rema}

\begin{nota}\label{notapref}
Si $\mbf{E}:\dgcat\lmo V$ est un foncteur, on définit un foncteur,
$$\und{\mbf{E}} : \dgcat\lmo Pr(\affk, V)$$
par $\und{\mbf{E}}(T)(\spec(A))=\mbf{E}(T,A):=E(T\tel_k A):=E(QT\te_k A)$ où $Q:\dgcat\lmo \dgcat$ est un foncteur qui vérifie les propriétés suivantes : 
\begin{itemize}
\item Il existe une transformation naturelle en $T$, $Q(T)\lmo T$ qui est une équivalence. 
\item Pour tout $T\in \dgcat$, la dg-catégorie $Q(T)$ est plate\footnote{On dit qu'une dg-catégorie $T$ est plate si pour toute paire d'objets $x,y\in T$, le complexe $T(x,y)$ est un complexe constitué de $k$-modules plats.} sur $k$. 
\end{itemize}
Vu que $\dgcat$ est une catégorie de modèles pour laquelle les équivalences sont les quasi-équivalences, un tel foncteur $Q$ est par exemple donné par un foncteur de remplacement cofibrant dans $\dgcat$. Dans cette thèse, nous sommes principalement concerné par $k=\C$, on pourra dans ce cas là choisir simplement pour $Q$ le foncteur identité. 

On utilisera cette notation pour les invariants classiques de dg-catégories $\mbf{E}=\kn, \kc, \hh, \hc, \hcn, \hp$. 
\end{nota}

\subsubsection{Le caractère de Chern $\ka$-linéaire via la théorie de Cisinski--Tabuada}\label{caracalg}

Dans cette partie nous utilisons la catégorie motivique de Tabuada (voir \cite{tabhkt}, \cite{nck}) et les théorèmes de Tabuada et de Cisinski--Tabuada de coreprésentabilité de la K-théorie dans le but de construire un caractère de Chern $\ka$-linéaire, où $\ka$ est le préfaisceau spectral de K-théorie $\ka(\spec(A))=\kn(A)$. Le théorème de coreprésentabilité nous donne une structure naturelle d'anneau en spectre sur $\ka$ et de $\ka$-module sur $\ukn(T)$ pour toute dg-catégorie $T$, ce qui nous permettra de considérer la K-théorie semi-topologique comme un module et d'inverser l'élément de Bott. 

Le motivateur localisant universel $\mcal{M}_{loc}(k)$ de Tabuada (\cite[Déf.11.2]{tabhkt}) peut être décrit (par \cite[Prop.12.4]{tabhkt}) comme le dérivateur associé à la catégorie de modèles projective des préfaisceaux en spectres sur les dg-catégories de type fini, localisée suivant les équivalences Morita et les suites exactes courtes. Si $\mrm{Hodgcat}$ désigne le dérivateur associé à la catégorie de modèles $\dgmor$, il existe un morphisme de dérivateurs
$$\mcal{U}_l : \mrm{Hodgcat} \lmo \mcal{M}_{loc}(k) $$
appelé l'invariant localisant universel, qui est universel pour la propriété de commuter aux colimites homotopiques filtrantes de dg-catégories, d'envoyer les équivalences Morita sur des équivalences et les suites exactes courtes sur des triangles distingués (voir \cite[Thm.11.5]{tabhkt}). Dans \cite{tabcisym}, en modifiant sensiblement la construction de $\mcal{M}_{loc}(k)$, les deux auteurs construisent une structure monoïdale symétrique sur $\mcal{M}_{loc}(k)$ à partir du produit de convolution de Day et du produit tensoriel dérivé des dg-catégories, en utilisant le fait que celui-ci laisse invariants la classe des dg-catégories de type fini avec laquelle on construit le motivateur universel. Cette classe de dg-catégories de type fini doit en outre vérifier un certain nombre d'autres propriétés de platitude (voir \cite{tabcisym} 7.1, propriétés a) à f)). Dans cette partie on s'intéresse seulement à la catégorie homotopique $\mcal{M}_{loc}(k)(\ast)=Ho(\mloc)$. Nous énonçons alors une version simplement catégorique de la propriété universelle de Tabuada, qui sera suffisante pour notre propos. Il existe une variante de la construction de $\mcal{M}_{loc}(k)$ qui consiste à prendre les préfaisceaux sur les dg-catégories de type fini non pas à valeurs dans les catégorie $Sp$ des spectres symétriques, mais dans la catégorie $\spafk$ des préfaisceaux en spectres symétriques.

On note $\dgcattf\subseteq \dgcat$ une sous-catégorie de dg-catégories $\U$-petites qui vérifie les propriétés a) à f) de la section 7.1 de \cite{tabcisym}. Grossièrement, cela signifie que toute dg-catégorie de $\dgcattf$ est de type fini au sens de \cite[Déf.2.4]{modob}, localement plate sur $k$, et que la sous-catégorie $\dgcattf$ est stable par produit tensoriel et par le foncteur $Q$ de remplacement cofibrant fixé à la notation \ref{notapref}. On considère la catégorie $Pr(\dgcattf, \spafk)$ des préfaisceaux sur $\dgcattf$ à valeurs dans $\spafk$. On munit la catégorie $\spafk$ de la structure de modèles \emph{injective}, on réfère pour cela au théorème général d'existence \cite[Prop.A.2.8.2]{htt}. On munit alors la catégorie de préfaisceaux $Pr(\dgcattf, \spafk)$ de la structure de modèles projective. On note 
$$\uh : \dgcattf\lmo Pr(\dgcattf, \spafk)$$
le plongement de Yoneda donné par $\uh_T(T')(\spec(A))=\Hom_{\dgcat}(T',T\te_k A)$. Soit $\mcal{R}$ l'ensemble des morphismes de $Pr(\dgcattf, \spafk)$ qui sont de la forme 
\begin{enumerate}
\item $\uh_T\lmo \uh_{T'}$, avec $T\lmo T'$ une équivalence Morita. 
\item $\cone(\uh_i)\lmo \uh_{T''}$, pour $\xymatrix{T'\ar[r]^-i & T\ar[r]^-p & T''}$ une suite exacte de dg-catégories. 
\end{enumerate}
Alors $\mloc$ est définie comme la localisation de Bousfield à gauche $\spafk$-enrichi (voir \cite{barwenr} pour l'existence de localisation enrichie) : 
$$\mloc:=L_{\mcal{R}} Pr(\dgcattf, \spafk).$$
C'est donc une $\spafk$-catégorie de modèles au sens de \cite[Déf.4.2.18]{hoveymod}, et on note $\rhomi_{\mloc}$ son hom dérivé enrichi dans $\spafk$. Par \cite[7.5]{tabcisym}, c'est aussi une catégorie de modèles monoïdale au sens de \cite[Déf.4.2.6]{hoveymod}. On note par un smash $\sm$ le produit de sa structure monoïdale. L'unité est donné par l'objet $\uk$ définie par $\uk(T)=\Hom_{\dgcat}(T, k)$ et constant en tant que préfaisceau sur $\affk$. La proposition suivante découle des propriétés générales des localisations de Bousfield à gauche (voir \cite[Prop.3.3.18]{hirs}). 

\begin{prop}\label{propuni}
Le plongement de Yoneda $\uh:\dgcattf\lmo \mloc$ a les propriétés suivantes. 
\begin{enumerate}
\item Pour tout $T\in \dgcat$, $\uh_T$ est un objet cofibrant de $\mloc$. 
\item Le foncteur $\uh$ envoie les équivalences Morita dérivées sur des équivalences dans $\mloc$. 
\item Le foncteur $\uh$ envoie les suites exactes courtes de dg-catégories sur des suites de fibrations dans $\mloc$. 
\end{enumerate}
De plus, $\uh$ est universel pour ces 3 propriétés. Pour toute $\spafk$-catégorie de modèles $V$, le foncteur
$$\uh^* : \Hom_!(\mloc, V) \lmo \Hom_*(\dgcattf, V)$$
est une équivalence de catégories, où $\Hom_!$ est la catégories des foncteurs de Quillen à gauche $\spafk$-enrichis et $\Hom_*$ désigne la catégorie des foncteurs qui satisfont les propriétés 1 à 3 ci-dessus. 
\end{prop}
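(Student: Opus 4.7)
Mon plan consiste à traiter séparément les propriétés 1-3 (qui suivent formellement de la construction par localisation de Bousfield) et la propriété universelle 4 (qui demande de combiner deux propriétés universelles standards).

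Pour les propriétés 1-3, je procéderais comme suit. Pour la propriété 1, j'utiliserais que les préfaisceaux représentables sont cofibrants dans la structure projective sur $Pr(\dgcattf, \spafk)$, propriété conservée par passage à $\mloc$ puisque la localisation de Bousfield à gauche ne modifie pas les cofibrations. Pour la propriété 2, il suffit d'observer que pour toute équivalence Morita $T\lmo T'$, le morphisme $\uh_T\lmo \uh_{T'}$ est par définition dans $\mcal{R}$, donc devient une équivalence dans $\mloc$. Pour la propriété 3, étant donnée une suite exacte $\xymatrix{T'\ar[r]^-i & T\ar[r]^-p & T''}$, le morphisme $\cone(\uh_i)\lmo \uh_{T''}$ est dans $\mcal{R}$ et devient donc une équivalence dans $\mloc$ ; la stabilité de $\spafk$ (et donc celle de $\mloc$ en tant que localisation d'une catégorie de modèles stable $\spafk$-enrichie) assure que ceci équivaut à dire que $\uh_{T'}\lmo \uh_T\lmo \uh_{T''}$ est une suite de fibration dans $\mloc$.

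Pour la propriété universelle 4, ma stratégie est de factoriser le foncteur $\uh^*$ par la catégorie intermédiaire $\Hom_!(Pr(\dgcattf, \spafk), V)$, en combinant deux propriétés universelles. D'une part, la propriété universelle enrichie de la structure projective sur les préfaisceaux (essentiellement le lemme de Yoneda enrichi combiné à l'extension de Kan à gauche enrichie) donne une équivalence entre les foncteurs de Quillen à gauche $\spafk$-enrichis $Pr(\dgcattf, \spafk)\lmo V$ et les foncteurs $\spafk$-enrichis $\dgcattf\lmo V$, avec des conditions de cofibrance appropriées. D'autre part, la propriété universelle de la localisation de Bousfield à gauche (le théorème 3.3.18 de Hirschhorn cité dans l'énoncé) identifie $\Hom_!(\mloc, V)$ avec la sous-catégorie de $\Hom_!(Pr(\dgcattf, \spafk), V)$ formée des foncteurs dont le dérivé gauche envoie $\mcal{R}$ sur des équivalences.

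Il resterait alors à traduire cette dernière condition en les propriétés 2 et 3 pour le foncteur restreint $G := F\circ \uh$. Pour un morphisme de type (1), l'extension de Kan à gauche dérivée appliquée à $\uh_T\lmo \uh_{T'}$ donne $G(T)\lmo G(T')$, et l'équivalence voulue est exactement la propriété 2. Pour un morphisme de type (2), l'extension dérivée donne $\cone(G(i))\lmo G(T'')$, dont le caractère d'équivalence équivaut (par stabilité de $V$) à ce que $G(T')\lmo G(T)\lmo G(T'')$ soit une suite de fibration, c'est-à-dire la propriété 3. Le point le plus délicat que j'anticipe sera de contrôler rigoureusement le cadre $\spafk$-enrichi à chaque étape, et de vérifier que les hypothèses de platitude a)-f) sur $\dgcattf$ assurent que les extensions de Kan à gauche dérivées se calculent correctement à l'intérieur de $\dgcattf$ via le foncteur de remplacement cofibrant $Q$, de sorte que la correspondance reste bien fonctorielle et pleinement fidèle.
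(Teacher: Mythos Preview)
Your proposal is correct and follows essentially the same approach as the paper. The paper gives no detailed proof: it simply states before the proposition that it ``découle des propriétés générales des localisations de Bousfield à gauche (voir [Hirs, Prop.~3.3.18])''. Your argument is precisely the unpacking of this one-line justification --- representables are projectively cofibrant and cofibrations survive localization (property~1), morphisms in $\mcal{R}$ become equivalences by construction (properties~2 and~3), and the universal property factors through the projective presheaf category via enriched Yoneda/Kan extension and then Hirschhorn's universal property of left Bousfield localization --- so there is nothing to add.
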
 

\begin{rema}
Soit $E : \dgcat\lmo Sp$ un foncteur qui commute aux colimites homotopiques filtrantes, qui envoie les équivalences Morita sur des équivalences, et qui envoie les suites exactes sur des suites de fibrations ; alors $E$ se restreint en un foncteur $E : \dgcattf\lmo Sp$ qui appartient $\Hom_*(\dgcattf, \spafk)$. L'extension de $\und{E}$ à $\mloc$ est noté 
$$\und{E}_! : \mloc\lmo Sp.$$
Toute dg-catégorie $T$ s'écrit comme une colimite homotopique filtrante de dg-catégories appartenant à $\dgcattf$. Il existe donc un diagramme filtrant $\{T_\alpha\}$ de dg-catégories et une équivalence $T\simeq hocolim_\alpha T_\alpha$ dans $\dgcat$. D'autre part l'objet $\uh_T\in \mloc$ défini par 
$$\uh_T(T')(A)=\Hom_{\dgcat}(T',T\tel_k A)$$ 
représente l'objet $T$ dans $\mloc$. Comme le foncteur de Yoneda standard est équivalent au foncteur de Yoneda homotopique (voir \cite[Lem.4.2.2]{hag1}), on en déduit une équivalence $\uh_T\simeq colim_\alpha \uh_{T_\alpha}$ dans $\mloc$. On a donc des équivalences
$$\und{E}_!(\uh_T)\simeq hocolim_\alpha \und{E}_!(\uh_{T_\alpha})\simeq hocolim_\alpha \und{E}(T_\alpha)\simeq \und{E}(T).$$
L'extension $\und{E}_!$ coïncide donc à équivalence près avec $\und{E}$. Ceci se reformule de la manière suivante. On note $\Hom(\dgcattf, Sp)$ la catégorie de modèles projective des foncteurs, et $\Hom_! (\dgcat, Sp)$ la catégorie de modèles projective des foncteurs qui commutent aux colimites homotopiques filtrantes. Alors le foncteur d'inclusion $\dgcattf\lmo \dgcat$ induit une équivalence de catégories
\begin{equation}\label{inddg}
Ho(\Hom_! (\dgcat, Sp))\lmos{\sim} Ho(\Hom(\dgcattf, Sp)).
\end{equation}
Tout foncteur $E\in \Hom(\dgcattf, Sp)$ admet une unique extension à isomorphisme près dans $Ho(\Hom_! (\dgcat, Sp))$. 
Ainsi on a l'extension du foncteur homologie de Hochschild $\uhh$ toujours noté par abus $\uhh$ au lieu de $\uhh_!$.

\end{rema}

\begin{theo}\label{repk}\emph{(Cisinski--Tabuada \cite[Thm. 7.16]{nck})}
Pour tout $T\in \dgcattf$, il existe un isomorphisme canonique fonctoriel en $T$, 
$$\rhomi_{\mloc}(\und{k}, \uh_T)\simeq \ukn(T),$$
\end{theo}

Par la formule (\ref{inddg}), l'isomorphisme du théorème \ref{repk} se relève en un unique isomorphisme (à isomorphisme près) dans $Ho(\Hom_! (\dgcat, Sp))$. 

On considère l'objet $\uh_k$ de $\mloc$. Il est donné par $\uh_k(T)(\spec(A))=\Hom_{\dgcat} (T,A)$ pour tout $T\in \dgcat$ et tout $\spec(A)\in \affk$. L'objet $\uh_k$ a une structure de monoïde dans $\mloc$ car on a, par définition de la structure monoïdale de $\mloc$, un isomorphisme $\uh_k\sm \uh_k \simeq \uh_{k\tel k}$ dans $\mloc$. Pour tout $T\in \dgcattf$, on a aussi un $\uh_k$-module $\uh_T$ avec l'action de $\uh_k$ définie par l'isomorphisme naturel $\uh_T\sm \uh_k \simeq \uh_{T\te_k k} \simeq \uh_T$. Résumons la situation en listant les objets, nous avons : 
\begin{itemize}
\item Une catégorie de modèles monoïdale $\mcal{V}=\spafk$. 
\item Deux catégories de modèles monoïdales $\spafk$-enrichis $\mcal{M}=\mloc$ et $\mcal{N}=Pr(\affk, \hlamb)$. Les unités sont notés $\unit_{\mcal{M}}$ et $\unit_{\mcal{N}}$. Ce sont des objets cofibrants de $\mcal{M}$ et $\mcal{N}$ respectivement. On a par conséquent deux foncteurs monoïdaux $\mcal{V}\lmo \mcal{M}$ et $\mcal{V}\lmo \mcal{N}$ donnés par le produit avec l'unité. Leur adjoints à droite $\homi(\unit, -)$ (Hom $\mcal{V}$-enrichi) sont lax monoïdaux et envoie donc monoïdes sur monoïdes. 
\item Un foncteur de Quillen à gauche $\mcal{V}$-enrichi lax monoïdal $F : \mcal{M}\lmo \mcal{N}$ donné par $F=\und{\hh}$. (Dans notre cas $F(\unit_{\mcal{M}})$ est isomorphe à $\unit_{\mcal{N}}$).
\item Un monoïde (cofibrant) $a=\uh_k$ dans $\mcal{M}$ et un $a$-module (cofibrant) $m$ donné par $m=\uh_T$. 
\item Deux morphismes dans $\mcal{V}$ donnés par la fonctorialité de $\lef F$ : 
$$t:\rhomi_{\mcal{M}} (\unit_{\mcal{M}}, a)\lmo \rhomi_{\mcal{N}}(F(\unit_{\mcal{M}}), F(a)),$$
$$u : \rhomi_{\mcal{M}} (\unit_{\mcal{M}}, m)\lmo \rhomi_{\mcal{N}}(F(\unit_{\mcal{M}}), F(m)),$$
où $\rhomi_{\mcal{M}}$ et $\rhomi_{\mcal{N}}$ sont les homs $Ho(\mcal{V})$-enrichis. 
\end{itemize}

Les résultats généraux d'existence de structure de modèles sur les objets en monoïdes et en modules de \cite{ss-alg} requiert de vérifier l'axiome dit du monoïde qui n'est pas vérifié par la catégorie de modèles $\mloc$. Cependant d'après les résultats de Hovey \cite[Thm.3.3 et 2.1]{hoveymon}, les objets en monoïdes et en modules dans $\mloc$ jouissent de propriétés homotopiques suffisantes pour affirmer ce qui suit. Grossièrement, les monoïdes ne forment pas une catégorie de modèles au sens habituel, mais on peut quand même former la catégorie homotopique des monoïdes. Ceci peut se résumer en disant qu'étant donné un monoïde $M$, il existe un remplacement cofibrant fonctoriel $QM\lmos{\sim} M$ et si $M$ est cofibrant, il existe un remplacement fibrant fonctoriel $M\lmos{\sim} RM$. La catégorie homotopique des monoïdes est alors le quotient de la catégorie des monoïdes cofibrants-fibrants par la relation d'homotopie usuelle. 

\begin{itemize}
\item Les objets $\rhomi_{\mcal{M}} (\unit_{\mcal{M}}, a)$ et $\rhomi_{\mcal{N}}(F(\unit_{\mcal{M}}), F(a))$ sont des monoïdes.  
\item L'objet $\rhomi_{\mcal{M}} (\unit_{\mcal{M}}, m)$ est un $\rhomi_{\mcal{M}} (\unit_{\mcal{M}}, a)$-module. 
\item L'objet $\rhomi_{\mcal{N}}(F(\unit_{\mcal{M}}), F(m))$ est un $\rhomi_{\mcal{N}}(F(\unit_{\mcal{M}}), F(a))$-module et donc un $\rhomi_{\mcal{M}} (\unit_{\mcal{M}}, a)$-module dans $\mcal{V}$ via $t$. 
\item Le morphisme $u$ est un morphisme de $\rhomi_{\mcal{M}} (\unit_{\mcal{M}}, a)$-modules. 
\end{itemize}
En l'appliquant dans notre contexte on obtient, si $T\in \dgcattf$ est une dg-catégorie : 

\begin{itemize}
\item Un préfaisceau d'anneaux en spectres $\ka:=\ukn(\ast)\simeq \rhomi_{\mloc} (\uk, \uh_k)$. 
\item Un préfaisceau en $\ka$-modules $\ukn(T)\simeq \rhomi_{\mloc} (\uk, \uh_T)$.
\item Un préfaisceau en $\ka$-modules $\uhcn(T)=\rhomi (\uk, \uhh(T))\simeq \rhomi (\uhh(\uk), \uhh(\uh_T))$, où le $\rhomi$ est celui de la catégorie de modèles $Pr(\affk, \hlamb)$. 
\item Un morphisme de $\ka$-modules $\ukn(T)\lmo \uhcn(T)$. 
\end{itemize}
On note $\ka-Mod_\s$ la catégorie des objets en $\ka$-modules dans $\spafk$. 

\begin{df}\label{defchernlin}
Soit $T\in \dgcattf$. Le caractère de Chern algébrique associé à $T$ est le morphisme de $Ho(\ka-Mod_\s)$,
$$\ch_T : \ukn(T)\lmo \uhcn(T)$$ 
défini précédemment. En prenant des remplacements fibrants convenables, on montre que cela définit un morphisme dans $Ho(\ka-Mod_\s^{\dgcattf})$, 
$$\ch : \ukn\lmo \uhcn.$$
Par la formule (\ref{inddg}), le morphisme $\ch$ se relève en un unique morphisme (à isomorphisme près) dans $Ho(\ka-Mod_\s^{\dgcat})$. 
\end{df} 

\begin{rema}
Il existe une version additive noté $\mota(k)$ de la catégorie de modèles $\mloc$ où l'on remplace les suites exactes courtes de dg-catégories par les suites exactes courtes \emph{scindées}. La K-théorie connective est alors coreprésentable en tant que foncteur $\mota(k)\lmo \spafk$, et la même construction que précédemment nous donne un caractère de Chern connectif,
$$\chc : \ukc \lmo \uhcn$$
dans $Ho(\tilde{\ka}-Mod_\s^{\dgcat})$, où $\tilde{\ka}$ est le revêtement connectif de $\ka$. Par construction le morphisme composé 
$$\xymatrix{\ukc\ar[r] &  \ukn \ar[r]^-{\ch} & \uhcn}$$ 
est isomorphe à $\chc$ dans $Ho(Sp^{\dgcat})$. 
\end{rema}

\begin{rema}
Par définition le caractère de Chern de la définition \ref{defchernlin} coïncide à isomorphisme près dans $Ho(\spafk^{\dgcat})$ avec le caractère de Chern défini par Tabuada dans \cite[page 4]{tabpro}, et on sait donc qu'il coïncide avec le caractère de Chern classique par \cite[Thm.2.8]{tabpro}. 
\end{rema}

\newpage

\section{Réalisation topologique des préfaisceaux sur les nombres complexes}

La réalisation topologique d'un préfaisceau simplicial, définie disons sur les $\C$-schémas de type fini, est un espace topologique canoniquement associé à ce préfaisceau. C'est en quelque sorte l'analogue de l'espace étalé associé à un faisceau sur un espace topologique. Considérons le foncteur qui à un $\C$-schéma de type fini associe l'espace de ses points complexes munie de la topologie transcendante de $\C^N$. Alors ce foncteur s'étend de manière canonique à tous les préfaisceaux simpliciaux. Ce foncteur est appellé réalisation topologique, mais mérite aussi le nom de réalisation de Betti, étant donné que la cohomologie de Betti d'un schéma sur $\C$ est la cohomologie (singulière) de sa réalisation de Betti. 

Plusieurs définitions du foncteur de réalisation topologique ont déjà été mises au jour. La première référence est peut-être l'article de Simpson \cite{simpsontop}, où il montre que la réalisation topologique envoie les équivalences locales (appelées équivalences d'Illusie) sur des équivalences faibles d'espaces. Dans le papier fondateur de Morel--Voevodsky \cite{mv}, les auteurs définissent une réalisation topologique sur $\C$ au niveau de la catégorie $\ao$-homotopique des faisceaux simpliciaux. Dugger--Isaksen ont montré dans \cite{hyptop} qu'on peut définir un foncteur de réalisation topologique qui est un foncteur de Quillen à gauche pour la structure étale-locale sur les préfaisceaux. Je suivrai leur approche et utiliserai leurs résultats. On commence d'abord par définir le foncteur de réalisation au niveau de la structure globale des préfaisceaux, on rappelle ensuite qu'il s'étend en un foncteur de Quillen sur la structure $\ao$-étale. On énonce ensuite plusieurs propriétés de la réalisation qui seront utiles par la suite.

\subsection{Généralités sur la réalisation topologique}\label{genrel}

On rappelle que $\affc$ désgine la catégorie des $\C$-schémas affines de type fini. On note 
$$sp : \affc \lmo Top$$
le foncteur qui a un $\C$-schéma affine $X$ associe l'espace topologique sous-jacent à son espace analytique complexe associé (voir \cite{sga1} pour la définition du foncteur d'analytification). Donc si $X=\spec(A)$ est un $\C$-schéma affine de type fini avec une présentation $A=\C[X_1,\hdots, X_n] /\mathfrak{a}$, $\mathfrak{a}$ un idéal de l'algèbre de polynômes $\C[X_1,\hdots, X_n]$, alors cette présentation définit une immersion fermée $i:X\hookrightarrow \A^n$ et $sp(X)=i(X)(\C)$ avec la topologie induite par la topologie transcendante de $\A^n(\C)=\C^n$. Ce foncteur de réalisation est en fait défini pour tous les schémas de type fini non nécessairement affines, mais comme nous allons étendre aux préfaisceaux simpliciaux, cela n'a pas d'importance. Cela est plus commode pour nous de considérer les schémas affines, pour pouvoir au besoin tensoriser ces objets avec une dg-catégorie (voir \ref{notapref}). En composant le foncteur $sp$ avec le foncteur complexe singulier $S : Top\lmo SSet$, on obtient une réalisation topologique notée $ssp$, à valeurs dans les ensembles simpliciaux,
$$\xymatrix{ \affc \ar[r]^-{sp} \ar[dr]_-{ssp} & Top \ar[d]^-{S} \\ & SSet }$$

\begin{df} 
On note encore $ssp$ (resp. $sp$) et on appelle \emph{réalisation topologique} l'extension de Kan à gauche enrichie dans $SSet$ du foncteur $ssp : \affc\lmo SSet$ (resp. du foncteur $sp$) le long du plongement de Yoneda $h:\affc\hookrightarrow SPr(\affc)$. On obtient alors des foncteurs 
$$\xymatrix{ \affc \ar@{^{(}->}[r]^-h & SPr(\affc)\ar[r]^-{sp} \ar[dr]_-{ssp} & Top \ar[d]^-{S} \\ & & SSet  }$$
\end{df} 

On rappelle rapidement la notion d'extension de Kan enrichie. Soient $\mcal{V}$ une catégorie monoïdale, $F:C\lmo D$ un foncteur, avec $C$ quelconque et $D$ une catégorie $\mcal{V}$-enrichie, cocomplète au sens enrichi, $G:C\lmo C'$ un foncteur avec $C'$ une catégorie $\mcal{V}$-enrichie cocomplète au sens enrichi. L'extension de Kan à gauche $\mcal{V}$-enrichie de $F$ le long de $G$ est l'unique foncteur $\mcal{V}$-enrichi $\tilde{F} : C'\lmo D$  (défini à isomorphisme près) qui commute aux colimites $\mcal{V}$-enrichies et tel que le triangle 
$$\xymatrix{ C\ar[r]^-G \ar[dr]_-F & C' \ar[d]^-{\tilde{F}} \\ & D }$$
commute à isomorphisme près. Dans le cas qui nous intéresse, nous n'avons pas besoin de considérer des colimites enrichies générales, les colimites du type (\ref{colimenr}) de la remarque \ref{remcolim} plus bas suffisent. 

Les propriétés générales du foncteur $ssp$ sont dans la proposition suivante. On en déduit immédiatement les propriétés analogues pour le foncteur $sp$.

\begin{prop}\label{proprel}
\begin{enumerate}
\item Le foncteur $ssp : SPr(\affc) \lmo SSet$ commute aux colimites et possède comme adjoint à droite le foncteur qui a un ensemble simplicial $K$ associe le préfaisceau $R(K) : X\mapsto \map (ssp(X), K)$. La paire de foncteurs adjoints $(ssp, R)$ est une paire de Quillen pour la structure globale sur $SPr(\affc)$. 
\item Le foncteur $ssp$ commute aux produits homotopiques finis de préfaisceaux simpliciaux. En particulier c'est un foncteur monoïdal pour la structure monoïdale cartésienne sur $\spr$ et la structure monoïdale cartésienne sur $SSet$. 
\item Pour tout $K\in SSet$, et tout $E\in SPr(\affc)$, on a un isomorphisme canonique $ssp(K\times E)\simeq K\times ssp(E)$. Pour tout $K\in SSet_*$ et tout $E\in SPr(\affc)_*$ on a un isomorphisme canonique $ssp(K\sm E)\simeq K\sm ssp(E)$.
\item Soit $E\in \spr$, on note $\pi_0^{pr} E$ le préfaisceau d'ensembles obtenue en appliquant le $\pi_0$ niveau par niveau. Alors il existe un isomorphisme d'ensembles $\pi_0 ssp(E)\simeq \pi_0 ssp(\pi_0^{pr} E)$. 
\end{enumerate}
\end{prop}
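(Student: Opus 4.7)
Pour (1), le plan est d'utiliser la théorie générale des extensions de Kan enrichies : comme $ssp$ est définie par extension de Kan à gauche $SSet$-enrichie le long de Yoneda $\affc \hookrightarrow \spr$, elle est automatiquement cocontinue, et son adjoint à droite se lit sur la formule coend $ssp(E) = \int^{X \in \affc} E(X) \times ssp(X)$, donnant $R(K)(X) = \map(ssp(X), K)$. Pour l'adjonction de Quillen vis-à-vis de la structure projective globale, les fibrations et équivalences étant définies niveau par niveau, il suffit de vérifier que $\map(A, -)$ préserve fibrations et fibrations triviales d'ensembles simpliciaux pour $A = ssp(X)$ --- ce qui est automatique car tout ensemble simplicial est cofibrant dans $SSet$.

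Pour (2) et (3), mon approche est de se ramener aux représentables via la commutation aux colimites. Le foncteur $sp$ commute aux produits finis (l'analytification commute aux produits fibrés sur $\spec(\C)$, et la topologie transcendante sur $X(\C)\times Y(\C)$ coïncide avec celle de $(X\times Y)(\C)$), et $S : Top \lmo SSet$ commute aux produits. Il en résulte $ssp(h_X \times h_Y) \simeq ssp(h_X) \times ssp(h_Y)$ sur le nez. Pour $E, F \in \spr$ généraux, on les écrit comme colimites de représentables et on utilise la commutation des produits aux colimites dans $\spr$ et $SSet$ (fermeture cartésienne) pour obtenir $ssp(E \times F) \simeq ssp(E) \times ssp(F)$. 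Cet isomorphisme strict coïncide avec l'isomorphisme homotopique car dans $\spr$ (structure globale) comme dans $SSet$, le produit préserve les équivalences. Pour (3), l'isomorphisme $ssp(K \times E) \simeq K \times ssp(E)$ exprime la commutation de $ssp$ au tensoring simplicial, propriété immédiate de sa construction comme extension de Kan $SSet$-enrichie ; la version pointée avec $\sm$ s'obtient en réalisant le smash comme un pushout à partir du produit cartésien, et en invoquant encore la cocontinuité de $ssp$ (et le fait que $ssp$ du préfaisceau terminal $h_{\spec(\C)}$ est un point).

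Pour (4), l'observation clé est que le composé $\pi_0 \circ ssp : \spr \lmo Set$ commute aux colimites, car $ssp$ le fait par (1) et $\pi_0 : SSet \lmo Set$ aussi (adjoint à gauche de l'inclusion des ensembles vus comme simpliciaux discrets). En utilisant la décomposition canonique $E \simeq \int^{[n] \in \del^{op}} E_n \times \del^n$ (où $E_n$ désigne le préfaisceau d'ensembles des $n$-simplexes de $E$) combinée à (3) et à $\pi_0 \del^n = *$, il vient
\[
\pi_0 ssp(E) \simeq \mathrm{colim}_{[n] \in \del^{op}} \pi_0 ssp(E_n) \simeq \mathrm{coeq}\bigl(\pi_0 ssp(E_1) \rightrightarrows \pi_0 ssp(E_0)\bigr).
\]
D'autre part, $\pi_0^{pr} E = \mathrm{coeq}(E_1 \rightrightarrows E_0)$ dans les préfaisceaux d'ensembles, et l'application du foncteur $\pi_0 \circ ssp$ (qui commute aux coégalisateurs) donne le même coégalisateur, ce qui établit l'isomorphisme cherché via le morphisme canonique $E \to \pi_0^{pr} E$.

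L'obstacle principal que j'anticipe est la vérification soigneuse de (1) --- contrôler les détails de l'adjonction enrichie et la compatibilité à la structure de Quillen projective --- mais il s'agit essentiellement de comptabilité formelle. Les parties (2)--(4) en découlent alors de manière directe, la cocontinuité de $ssp$ étant l'outil central unifiant l'argument.
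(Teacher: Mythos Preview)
Your proposal is correct and, for points (1) and (2), follows essentially the same route as the paper: the adjoint description comes from the Kan extension formalism, the Quillen property from checking that $R$ preserves (trivial) fibrations levelwise, and the product-preservation from reduction to representables plus the fact that analytification commutes with finite products.

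For (3) and (4) your arguments differ in flavour from the paper's, though both are valid. The paper proves (3) by a pure Yoneda argument: one checks $\Hom_{SSet}(ssp(K\times E), K') \simeq \Hom_{SSet}(K\times ssp(E), K')$ for all $K'$ via the adjunction $(ssp,R)$ and the exponential law. Similarly for (4), the paper tests against an arbitrary set $A$: since $R(A)$ is a discrete presheaf (as $\map(ssp(X),A)$ is discrete), one has $\Hom_{\spr}(\pi_0^{pr}E, R(A)) \simeq \Hom_{\spr}(E, R(A))$, and the adjunction chain yields $\Hom_{Set}(\pi_0 ssp(\pi_0^{pr}E), A) \simeq \Hom_{Set}(\pi_0 ssp(E), A)$. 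Your approach instead unpacks the coend $E \simeq \int^{[n]} E_n \times \Delta^n$ and uses cocontinuity of $\pi_0\circ ssp$ together with $\pi_0\Delta^n = *$ to reduce both sides to the same coequalizer. The paper's method is slicker and more uniform (everything is adjunction), while yours is more hands-on and makes the role of the simplicial decomposition explicit; either is perfectly acceptable here.
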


\begin{proof} 
\begin{enumerate}
\item Le fait que $R$ est l'adjoint à droite vient de la définition de $ssp$. De par sa formule, le foncteur $R$ envoie fibrations triviales et fibrations sur fibrations triviales et fibrations respectivement. La paire $(ssp, R)$ est donc de Quillen. On peut aussi citer le résultat général \cite[Prop 1.1]{duguni}, duquel on déduit directement que $(ssp, R)$ est de Quillen. 
\item Comme tout préfaisceau simplicial est colimite de préfaisceaux de la forme $K\times h_X$ avec $h_X$ le préfaisceau représenté par le schéma $X$ et $K$ un ensemble simplicial (voir la remarque \ref{remcolim}), et que dans un topos les colimites commutent aux produits finis, il suffit (en utilisant le point 3) de le vérifier pour les préfaisceaux représentables. Cela découle alors du fait que le foncteur d'analytification commute aux produits finis (\cite[exposé XII]{sga1}) et que l'espace sous-jacent à un produit de deux espaces analytiques est canoniquement isomorphe au produit des espaces sous-jacents.
\item La preuve du cas pointé est semblable au cas non-pointé. On a des isomorphismes
\begin{align*}
\Hom_{SSet}(\re{K\times E}, K') & \simeq \Hom_{SPr(\C)}(K\times E, R(K')) \\
& \simeq  \Hom_{\spr}(E, R(K')^K) \\
& \simeq \Hom_{\spr}(E, R(K'^K)) \\
& \simeq \Hom_{SSet}(\re{E}, K'^K)\\
& \simeq \Hom_{SSet}(K\times \re{E}, K').
\end{align*}
Et donc, par le lemme de Yoneda, on en déduit $\re{K\times E}\simeq K\times \re{E}$. 
\item Soit $A\in Set$ un ensemble, on a des isomorphismes
\begin{align*}
\Hom_{Set}(\pi_0 ssp(\pi_0^{pr} E), A) & \simeq \Hom_{SSet}(ssp(\pi_0^{pr} E), A) \\
& \simeq  \Hom_{\spr}(\pi_0^{pr} E, R(A)) \\
& \simeq \Hom_{\spr}(E, R(A)) \\
& \simeq \Hom_{SSet}(ssp(E), A)\\
& \simeq \Hom_{Set}(\pi_0 ssp(E), A).
\end{align*}
On en déduit par Yoneda l'isomorphisme cherché.  
\end{enumerate}
\end{proof}

\begin{rema}\label{remcolim}
Tout préfaisceau en ensembles $F\in Pr(\affc)$ s'écrit comme colimites de préfaisceaux représentables. Plus précisément, le morphisme
\begin{equation}\label{yo}
\xymatrix{colim_{X\in \affc/F} h_X \ar[r]^-\sim & F}
\end{equation}
est un isomorphisme de faisceaux. Donc vu que le foncteur $ssp$ commute aux colimites, on en déduit la formule
\begin{equation}\label{formssp}
\xymatrix{ssp(F)\simeq colim_{X\in \affc/F} ssp(X)}.
\end{equation}
Nous utiliserons en \ref{pi0rel} la version homotopique de ce fait. Maintenant supposons que $E\in \spr$ est un préfaisceau simplicial. La formule (\ref{yo}) n'est plus valable. Ceci peut s'expliquer par le fait que l'objet $E$ est de nature homotopique et donc l'objet $\affc/E$ doit vraiment être considérer comme une $\infty$-catégorie pour que la formule prenne un sens et non pas comme une $1$-catégorie. Cet objectif est réalisé en choisissant une théorie des $\infty$-catégories, et en montrant que la formule (\ref{yo}) a un sens pourvu que la colimite soit comprise en un sens supérieur convenable. 

Une alternative est d'écrire $E$ comme une colimite un peu plus compliquée (voir \cite[page 8]{duguni}). En effet $E$ peut s'écrire comme le coégalisateur standard
\begin{equation}\label{colimenr}
\xymatrix{ E\simeq colim ( \coprod_{X\in \affc} E(X)\times h_X  & \coprod_{Y\lmo Z} E(Z)\times h_Y ) \ar@<3pt>[l] \ar@<-3pt>[l] }.
\end{equation}
Et donc
$$\xymatrix{ ssp(E)\simeq colim ( \coprod_{X\in \affc} E(X)\times ssp(X)  & \coprod_{Y\lmo Z} E(Z)\times ssp(Y) ) \ar@<3pt>[l] \ar@<-3pt>[l] }.$$
\end{rema}

\subsection{Réalisation topologique de la théorie $\ao$-homotopique des schémas}

On rappelle rapidement les résultats de Jardine (voir \cite{jardsimp}, \cite{dhi}) sur les structures de modèles locales sur les préfaisceaux simpliciaux sur un site de Grothendieck. Si $C$ est un tel site, dont nous notons $\tau$ sa topologie, il existe une structure de modèle dite structure de modèles $\tau$-locale sur $SPr(C)$ qui contrairement à la structure de modèles projective prend en compte la topologie $\tau$. Elle peut se définir comme la localisation de Bousfield à gauche de la structure de modèles projective $SPr(C)$ par rapport à l'ensemble des flèches de la forme 
$$hocolim_{\del^{op}} h_{U_\bul} \lmo h_X$$
dans $SPr(C)$, pour tout $X\in C$ et tout $\tau$-hyperrecouvrement $U_\bul\lmo X$ dans $C$. Cela revient à forcer les préfaisceaux simpliciaux fibrant sur $C$ à vérifier une condition de type descente homotopique par rapport à la topologie $\tau$. On note $SPr(C)^\tau$ la structure de modèles $\tau$-locale. On l'utilisera dans les cas particuliers où $C=\affc$ avec la topologie étale ou autre. Dans ce cas on l'appellera la structure étale locale. 

On munit maintenant $\affc$ d'une topologie de Grothendieck. Disons la topologie étale pour fixer les idées. Tout ce qu'on va énoncer dans cette partie reste vraie pour la topologie Nisnevich. En suivant \cite{mv}, on peut construire à partir de $\affc$ la théorie $\ao$-homotopique étale des schémas. L'idée conductrice (voir \cite{duguni}) est de mimer la situation topologique : la catégorie homotopique des espaces peut s'obtenir en localisant celle des variétés topologiques en forçant la droite réelle, par exemple, à être contractile. De la même manière, on force les préfaisceaux de la catégorie de modèles $SPr(\affc)$ à vérifier la descente étale et on force la droite affine $\ao$ à être contractile. Soit $\mcal{R}$ l'ensemble des morphismes de $SPr(\affc)$ de la forme : 
\begin{enumerate} 
\item $hocolim_{\del^{op}}h_{U_\bul} \lmo h_X$, pour $U_\bul\lmo X$ un hyperrecouvrement étale d'un schéma $X\in \affc$. 
\item une projection $h_X\times h_{\ao}\lmo h_X$, pour $X\in \affc$. 
\end{enumerate}

Un modèle pour la catégorie $\ao$-homotopique étale des schémas est donné par la localisation de Bousfield à gauche 
$$L_{\mcal{R}} SPr(\affc)=:\spretao.$$
On appellera cette structure de modèles la \emph{structure $\ao$-étale} sur les préfaisceaux. Les équivalences dans $\spretao$ sont appelées les $\ao$-équivalences. On note 
$$sph : \spretao\lmo Top$$
le foncteur de réalisation correspondant, et $ssph$ son analogue simplicial. On note $Rh$ l'adjoint à droite de $ssph$. Une propriété centrale de la réalisation topologique est le résultat suivant. 

\begin{theo}[Dugger--Isaksen \cite{hyptop} Thm. 5.2 ]\label{di} La paire $(sph, Rh)$ est une paire de Quillen
$$\xymatrix{ \spretao \ar@<3pt>[r]^-{sph}  & Top \ar@<3pt>[l]^-{Rh} },$$
pour la structure de modèles $\ao$-étale. Plus précisément, le foncteur $sp$ envoie les relations de type (1) et de type (2) sur des équivalences d'espaces. 
\end{theo}

\begin{rema}\label{remderrel}
\begin{itemize}
\item Il s'en suit que le foncteur de réalisation à valeurs dans les ensembles simpliciaux $ssph : \spretao\lmo SSet$ est aussi un foncteur de Quillen à gauche et conserve les relations de type (1) et (2) qui définissent $\spretao$. 
\item Une autre conséquence de \ref{di} est que les foncteurs dérivés 
$$\lef ssp:Ho(SPr(\affc)) \lmo Ho(SSet)$$
$$\textrm{et  }\lef ssph : Ho(\spretao)\lmo Ho(SSet)$$
vérifient que $\lef ssp(E)\simeq \lef ssph(E)$ pour tout préfaisceau simplicial $E$. En d'autres termes, le défaut pour le foncteur $ssp : \spretao\lmo SSet$ de préserver les $\ao$-équivalences vient de la condition d'être cofibrant seulement dans la catégorie de modèles globale $SPr(\affc)$. Donc si $Q$ désigne un foncteur de remplacement cofibrant dans $SPr(\affc)$, alors $\lef ssph (E)=ssp(QE)$. 
\item La preuve du Théorème \ref{di} repose sur le fait non-trivial qu'étant donné un hyperrecouvrement topologique ouvert $U_\bul \lmo X$ d'un espace, le morphisme $hocolim_{\del^{op}} U_\bul \lmo X$ est une équivalence d'espaces. La preuve de ce fait consiste à se ramener à un hyperrecouvrement borné et à effectuer une récurrence sur la dimension de l'hyperrecouvrement. Nous mimons cette preuve en \ref{sectrestliss} pour fournir une preuve d'un fait analogue pour les hyperrecouvrements propres d'espaces suffisamment gentils. 
\end{itemize}
\end{rema}

\begin{df} On appelle \emph{foncteur de réalisation topologique dérivée} et on note avec la même notation (vu la remarque \ref{remderrel}) $\re{-}$ les deux foncteurs $\lef ssp$ et $\lef ssph$. On a alors un triangle commutatif,
$$\xymatrix{ Ho(\spr) \ar[rd]_-{\re{-}} \ar[rr]^-{\R id} && Ho(\spretao) \ar[ld]^-{\re{-}} \\ & Ho(SSet) &  }$$
On notera $\ret{-}$ son analogue à valeurs dans $Ho(Top)$. 
\end{df} 

\begin{prop}\label{proprelder}
\begin{enumerate}
\item Le foncteur de réalisation topologique dérivée $\re{-}$ commute aux colimites homotopiques. Son adjoint à droite est noté $(-)_B$. Pour tout $K\in SSet$ et tout $X\in\affc$, on a $K_B(X)=\R\map(ssp(X), K)$. 
\item Le foncteur $\re{-}$ commute aux produits finis homotopiques. 
\item Pour tout $K\in SSet$, et tout $E\in \spretao$, on a un isomorphisme canonique $K\times \re{E}\simeq \re{K\times E}$ dans $Ho(SSet)$. Pour tout $K\in SSet_*$ et tout $E\in \spretao$ pointé, on a un isomorphisme canonique $K\sm \re{E}\simeq \re{K\sm E}$ dans $Ho(SSet_*)$. 
\end{enumerate}
\end{prop}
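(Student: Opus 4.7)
Mon plan de démonstration procède par dérivation des énoncés non dérivés de la Proposition \ref{proprel}, en s'appuyant sur les propriétés modèles-catégoriques de la structure projective sur $\spr$.

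Pour le point (1), je constate d'abord que $\re{-}=\lef ssp$ est le foncteur dérivé à gauche du foncteur de Quillen à gauche $ssp$, et commute donc aux colimites homotopiques par la théorie générale des adjoints de Quillen. Son adjoint à droite dérivé $\R R$ se décrit explicitement : pour $K\in SSet$, en choisissant un remplacement fibrant $K\lmo K^f$, le préfaisceau $K_B:=R(K^f)$ vérifie $K_B(X)=\map(ssp(X),K^f)$ pour tout $X\in \affc$. Comme tout ensemble simplicial est cofibrant et $K^f$ est fibrant, on a bien $\map(ssp(X),K^f)\simeq \R\map(ssp(X),K)$.

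Pour le point (2), je travaille dans la structure de modèles projective globale, ce qui est loisible par la Remarque \ref{remderrel}. L'argument clef est que cette structure est monoïdale pour le produit cartésien (axiome du pushout-produit), ce qui se vérifie directement sur les cofibrations génératrices $\partial \del^n\times h_X \to \del^n\times h_X$. Il en résulte que le produit fini de préfaisceaux cofibrants est cofibrant et modélise le produit homotopique ; en particulier pour $E_1, E_2$ cofibrants,
$$\re{E_1\times^h E_2}\simeq ssp(E_1\times E_2)\simeq ssp(E_1)\times ssp(E_2),$$
la seconde équivalence provenant de la Proposition \ref{proprel}(2). Comme le produit fini dans $SSet$ préserve les équivalences faibles (par exemple via la réalisation géométrique qui commute aux produits), il calcule le produit homotopique, et on conclut.

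Pour le point (3), le plus simple est d'appliquer (2) en considérant $K$ comme préfaisceau simplicial constant sur $\affc$. Je vérifie au préalable que la réalisation dérivée d'un tel préfaisceau constant est canoniquement équivalente à $K$ : cela résulte de l'expression de $K$ comme colimite (homotopique) sur sa catégorie des simplexes, combinée à la Proposition \ref{proprel}(3) non dérivée appliquée aux préfaisceaux du type $\del^n\times h_X$. L'énoncé pointé s'obtient à partir du non pointé en écrivant $K\sm E=(K\times E)/(K\vee E)$ comme colimite homotopique finie, via la commutation aux colimites homotopiques établie au point (1).

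L'étape la plus délicate est le point (2), où il est essentiel que la structure de modèles $\spr$ soit monoïdale cartésienne et que les produits finis dans $SSet$ préservent les équivalences faibles ; ces deux faits sont standards mais indispensables pour passer proprement du non dérivé au dérivé.
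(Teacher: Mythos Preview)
Your proposal is correct and follows essentially the same approach as the paper, which simply states ``Les preuves sont essentiellement les mêmes que pour la réalisation non dérivée. Donc nous les omettons.'' You have filled in the details the paper omits: the only point worth noting is that your verification of the cartesian monoidal structure on $\spr$ in (2) implicitly uses that $\affc$ has finite products (so that $h_X\times h_Y\simeq h_{X\times Y}$ and the pushout-product of generating cofibrations remains a cofibration), which is of course the case here.
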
 

\begin{proof} 
Les preuves sont essentiellement les mêmes que pour la réalisation non dérivée. Donc nous les omettons. 
\end{proof}

\subsection{Le $\pi_0$ de la réalisation}\label{pi0rel}

On donne ici une description relativement explicite de l'ensemble $\pi_0\re{E}$ pour un préfaisceau simplicial quelconque $E\in \spr$, qui sera utilisée plus bas. Cette description est basée sur la formule (\ref{yo}) donné par Yoneda.

\begin{lem}\label{pi0relens}
Soit $F\in Pr(\affc)$ un préfaisceau d'ensembles. Alors il existe un isomorphisme d'ensembles,
$$\pi_0 ssp(F)\simeq F(\C) / \sim $$
où $\sim$ désigne la relation d'équivalence définie par $[x]\sim [y]$ si il existe une courbe algébrique connexe $C$, un morphisme $f:C\lmo F$ dans $Ho(\spr)$, et deux points complexes $x',y'\in C(\C)$ tels que $f(x')=x$ et $f(y')=y$. 
\end{lem}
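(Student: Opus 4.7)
La preuve procède en utilisant la formule de Yoneda (\ref{yo}) et le fait que $ssp$ commute aux colimites (proposition \ref{proprel}.1) : on a $ssp(F)\simeq colim_{X\in \affc/F} ssp(X)$, d'où, en appliquant le foncteur $\pi_0 : SSet\lmo Set$ qui commute aux colimites,
$$\pi_0 ssp(F) \simeq colim_{X\in \affc/F}\pi_0 sp(X).$$
On définit alors $\phi:F(\C)\lmo \pi_0 ssp(F)$ en envoyant $x\in F(\C)$, vu comme morphisme $x:\spec(\C)\lmo F$, sur l'image dans la colimite de l'unique composante de $sp(\spec(\C))=\{\ast\}$. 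Pour la surjectivité, toute classe de la colimite est représentée par une paire $(X\lmo F,c)$ avec $X\in \affc$ et $c$ une composante connexe de $sp(X)$ ; $c$ étant un ouvert fermé non vide de $sp(X)$, il contient un point complexe $x_0\in X(\C)$, et la composition $\spec(\C)\lmos{x_0}X\lmo F$ fournit un antécédent de la classe de $(X,c)$ dans $F(\C)$.

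Pour l'implication $x\sim y\Rightarrow \phi(x)=\phi(y)$, on part d'une courbe algébrique connexe $C$ avec $f:C\lmo F$ et $x',y'\in C(\C)$ relevant respectivement $x$ et $y$. Comme $C$ est connexe comme schéma sur $\C$, l'espace analytique $sp(C)$ est connexe et, étant localement connexe par arcs, il est connexe par arcs. On choisit un recouvrement affine fini $C=\bigcup U_i$ (possible, $C$ étant de type fini) ; un chemin dans $sp(C)$ reliant $x'$ à $y'$ se décompose en portions contenues successivement dans des $sp(U_{i_0}), sp(U_{i_1}), \hdots, sp(U_{i_n})$, et on choisit des points intermédiaires $p_k\in sp(U_{i_k})\cap sp(U_{i_{k+1}})$ puis des voisinages affines $V_k\subseteq U_{i_k}\cap U_{i_{k+1}}$ de $p_k$. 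On obtient un zigzag dans $\affc/F$
$$\spec(\C)\lmo U_{i_0} \longleftarrow V_0 \lmo U_{i_1} \longleftarrow \cdots \lmo U_{i_n} \longleftarrow \spec(\C)$$
compatible avec les composantes, ce qui montre $\phi(x)=\phi(y)$ dans la colimite.

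Réciproquement, si $\phi(x)=\phi(y)$, l'égalité dans la colimite d'ensembles se traduit par un zigzag explicite dans $\affc/F$ reliant les deux objets $(\spec(\C),x)$ et $(\spec(\C),y)$, respectant les composantes à chaque étape. À chaque sommet intermédiaire $(X_k,c_k)$, les deux points complexes entrant et sortant se trouvent dans la même composante analytique $c_k$ ; on les relie par une courbe algébrique connexe $C_k\subseteq X_k$ contenue dans $c_k$, en invoquant le fait géométrique classique selon lequel deux points complexes d'une variété algébrique dans une même composante analytique peuvent être joints par une courbe algébrique connexe (une chaîne de courbes irréductibles) contenue dans la variété. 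On recolle ensuite les $C_k$ successifs le long de leurs points complexes communs par pushouts dans la catégorie des schémas : on obtient une courbe algébrique connexe $C$ (chaîne de courbes jointes par des points) munie d'un morphisme $C\lmo F$, bien défini par la propriété universelle du pushout puisque les compositions $C_k\lmo X_k\lmo F$ et $C_{k+1}\lmo X_{k+1}\lmo F$ coïncident aux points identifiés (ces points étant images d'un même point complexe d'un terme intermédiaire du zigzag). Les extrémités de $C$ fournissent alors les $\C$-points relevant $x$ et $y$, démontrant $x\sim y$.

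Le point le plus délicat est cette dernière étape : il faut invoquer le résultat géométrique classique sur la connexité par courbes algébriques des variétés complexes, et effectuer soigneusement le recollement des courbes $C_k$ dans la catégorie des schémas tout en préservant la compatibilité avec le morphisme vers $F$.
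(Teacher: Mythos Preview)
Your overall strategy matches the paper's: both use the colimit formula $\pi_0\,ssp(F)\simeq\mathrm{colim}_{X\in\affc/F}\pi_0 sp(X)$, establish surjectivity of $F(\C)\to\pi_0\,ssp(F)$, and reduce the analysis of the fibres to a Bertini-type fact about connecting complex points by algebraic curves. Your treatment of the direction ``curve $\Rightarrow$ same image'' via an affine cover of $C$ is in fact more careful than the paper, which leaves that implication implicit.

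There is, however, a genuine gap in your final gluing step. You invoke the universal property of the pushout $C=C_1\sqcup_{\spec\C}C_2\sqcup_{\spec\C}\cdots$ in the category of schemes to produce the map $C\to F$, but that universal property governs maps into \emph{schemes}, not into arbitrary presheaves. Concretely, the glued curve $C$ is affine, say $C=\spec R$, and a morphism $h_C\to F$ in $Ho(\spr)$ is just an element of $F(C)=F(\spec R)$; for a general presheaf there is no reason for the restriction map $F(C)\to F(C_1)\times_{F(\C)}F(C_2)$ to hit your compatible pair. A counterexample: take $F=h_{X_1}\sqcup_{h_{\spec\C}}h_{X_2}$ (pushout \emph{in presheaves}) with $X_i$ connected affine, and points $x\in X_1(\C)$, $y\in X_2(\C)$ away from the basepoint. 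Then $sp(F)=sp(X_1)\vee sp(X_2)$ is connected, but every element of $F(Z)$ for any affine $Z$ is either a map $Z\to X_1$ or a map $Z\to X_2$, so no single curve over $F$ can see both $x$ and $y$.

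The fix is painless once you read ``la relation d'équivalence définie par'' as the equivalence relation \emph{generated} by the single-curve condition (without this reading the quotient $F(\C)/\sim$ would not even make sense, as the same example shows the single-curve relation is not transitive). Your chain of curves $C_k$, whose consecutive endpoints land on the same element of $F(\C)$ by compatibility of the zigzag with the maps to $F$, then witnesses $x\sim y$ directly by transitivity; no gluing is needed. For comparison, the paper's proof is equally informal at this point: it asserts that the zigzag collapses to a single $Z\in\affc/F$ receiving both points with a path between their images, which fails in the counterexample above. The honest argument on either side is the zigzag-plus-transitivity one you essentially wrote down, completed as indicated.
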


\begin{proof} Comme il existe un isomorphisme $\pi_0 ssp(F)\simeq \pi_0 sp(F)$, où on rappelle que $sp$ désigne la réalisation à valeurs dans $Top$, il suffit de le prouver pour $\pi_0 sp(F)$. D'après la formule (\ref{yo}) de la remarque \ref{remcolim}, il existe un morphisme naturel $F(\C)\lmo sp(F)$ dans $Top$ et on obtient un morphisme $F(\C)\lmo \pi_0sp(F)$ en prenant le $\pi_0$. On a alors un diagramme commutatif d'ensembles
$$\xymatrix{ F(\C) \ar[r] \ar[d]_-\wr & \pi_0 sp(F) \ar[d]^-\wr \\ colim_{X\in \affc/F} X(\C) \ar[r] &  colim_{X\in \affc/F} \pi_0 sp(X) }$$
où la flèche verticale gauche s'obtient en prenant les points complexes dans la formule (\ref{yo}), la flèche verticale droite s'obtient en appliquant le foncteur $\pi_0 sp$ à la formule (\ref{yo}). Ces deux flèches sont donc des isomorphismes. Pour tout $X\in \affc$, l'application $X(\C)\lmo \pi_0sp(X)$ est surjective, on en déduit donc que le morphisme horizontal du haut est surjectif. Il reste donc à analyser quand est-ce que deux points complexes $a,b\in F(\C)$ ont même image par $F(\C)\lmo  \pi_0 sp(F)$. En passant à la flèche du bas, ces deux points $a$ et $b$ correspondent à deux schémas affines $(X,x)$ et $(Y,y)$ au dessus de $F$, chacun muni d'un point complexe. Par hypothèse ces deux éléments ont même image dans $colim_{X\in \affc/F} \pi_0sp(X)$. Donc il existe un schéma affine $Z\lmo F$ au dessus de $F$, deux morphismes $p:X\lmo Z$ et $q:Y\lmo Z$, et un chemin continu qui joint $p(x)$ et $q(y)$ dans l'espace $sp(Z)$. C'est un fait bien connu en géométrie algébrique que dans cette situation, en utilisant le théorème de Bertini, on peut en déduire qu'il existe une courbe algébrique connexe $g:C\lmo Z$ au dessus de $Z$, deux points complexes $x',y'\in C(\C)$ tel que $g(x')=p(x)$ et $g(y')=q(y)$. Par composition on obtient un morphisme $f:C\lmo F$ tel que $f(x')=a$ et $f(y')=b$. Ceci achève la preuve.
\end{proof}

\begin{prop}\label{pi0cor}
Soit $E\in \spr$ un préfaisceau simplicial. Alors il existe un isomorphisme d'ensembles, 
$$\pi_0 \re{E} \simeq \pi_0(E(\C))/\sim$$
où $\sim$ désigne la relation d'équivalence définie par $[x]\sim [y]$ si il existe une courbe algébrique connexe $C$, un morphisme $f:C\lmo E$ dans $Ho(\spr)$, et deux points complexes $x',y'\in C(\C)$ tels que $f(x')=x$ et $f(y')=y$. 
\end{prop}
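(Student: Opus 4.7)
La stratégie est de se ramener au cas d'un préfaisceau d'ensembles (Lemme \ref{pi0relens}) via la proposition \ref{proprel}(4), puis d'identifier les deux relations d'équivalence grâce à l'adjonction entre $\pi_0^{pr}$ et l'inclusion des préfaisceaux d'ensembles.

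Tout d'abord, je choisirais un remplacement cofibrant $QE\lmo E$ dans la structure projective globale $SPr(\affc)$, de sorte que $\re{E}\simeq ssp(QE)$ par définition du foncteur dérivé, et que $\pi_0^{pr}(QE)(X)=\pi_0(QE(X))\simeq \pi_0(E(X))$ pour tout $X\in \affc$ (les remplacements cofibrants dans la structure projective sont des équivalences niveau par niveau). Le préfaisceau $F:=\pi_0^{pr}(QE)$ est alors un préfaisceau d'ensembles, et la proposition \ref{proprel}(4) donne immédiatement
$$\pi_0\,\re{E}\;=\;\pi_0\, ssp(QE)\;\simeq\;\pi_0\, ssp(F).$$
En appliquant le lemme \ref{pi0relens} à $F$, on obtient
$$\pi_0\, ssp(F)\;\simeq\;F(\C)/\sim_F,$$
où $\sim_F$ est la relation d'équivalence définie via les courbes algébriques connexes et les morphismes dans $Ho(\spr)$ à valeurs dans $F$. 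Par construction, $F(\C)=\pi_0(QE(\C))\simeq \pi_0(E(\C))$.

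La seule étape non triviale consiste à vérifier que la relation $\sim_F$ coïncide avec la relation $\sim$ décrite dans l'énoncé (où les morphismes sont à valeurs dans $E$ plutôt que dans $F$). Pour cela, je remarquerais que pour toute courbe algébrique $C$, le préfaisceau représenté $h_C$ est cofibrant dans la structure projective et que tout préfaisceau d'ensembles (vu comme préfaisceau simplicial constant) est fibrant dans cette même structure, car valeurs dans des ensembles simpliciaux discrets donc Kan. Par conséquent, d'une part
$$[h_C,F]_{Ho(\spr)}\;=\;\Hom_{Pr(\affc)}(h_C,F)\;=\;F(C)\;=\;\pi_0(QE(C)),$$
et d'autre part, en appliquant la description standard du mapping space dérivé et le fait que $QE\lmo \hat{QE}$ est une équivalence niveau par niveau,
$$[h_C,E]_{Ho(\spr)}\;=\;[h_C,QE]_{Ho(\spr)}\;=\;\pi_0(\hat{QE}(C))\;=\;\pi_0(QE(C)).$$
Le morphisme naturel $E\lmo \pi_0^{pr}(E)$ induit donc une bijection entre les classes d'homotopie de morphismes $h_C\lmo E$ et les morphismes $h_C\lmo F$ dans $\spr$. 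De plus, cette bijection est compatible avec l'évaluation en les points complexes : si $f:h_C\lmo E$ correspond à $\bar f:h_C\lmo F$, et si $x'\in C(\C)$, alors $\bar f(x')\in F(\C)$ est précisément la classe de $f(x')\in \pi_0(E(\C))$. Ceci établit que $\sim_F$ et $\sim$ sont la même relation sur $\pi_0(E(\C))$, ce qui conclut.

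L'étape délicate à vérifier soigneusement est donc la dernière : la bijection entre $[h_C,E]_{Ho(\spr)}$ et $\Hom(h_C,\pi_0^{pr}E)$. Cette bijection repose crucialement sur la combinaison des propriétés de cofibrance des représentables, de fibrance des préfaisceaux d'ensembles, et de l'invariance des $\pi_0$ sous remplacement fibrant/cofibrant dans la structure projective.
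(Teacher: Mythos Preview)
Your approach is essentially the same as the paper's: reduce to the case of a presheaf of sets via Proposition~\ref{proprel}(4), then apply Lemma~\ref{pi0relens} to $F=\pi_0^{pr}E$. The paper's proof is considerably more terse and does not explicitly justify why the relation $\sim_F$ (morphisms $C\to F$) coincides with the relation $\sim$ (morphisms $C\to E$); your argument filling this gap via the bijection $[h_C,E]_{Ho(\spr)}\simeq F(C)$ is correct and a welcome clarification.
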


\begin{proof} 
On note $\pi_0^{pr}E$ le préfaisceau obtenue à partir de $E$ en appliquant le $\pi_0$ niveau par niveau. On applique le lemme \ref{pi0relens} au préfaisceau en ensembles $F=\pi_0^{pr}E$. On obtient un isomorphisme $\pi_0 ssp(\pi_0^{pr} E) \simeq \pi_0^{pr} E(\C) /\sim \simeq (\pi_0 E(\C))/\sim$. D'après \ref{proprel}, on a un isomorphisme $\pi_0 ssp(\pi_0^{pr} E)\simeq \pi_0 ssp(E)$. D'autre part, d'après la remarque \ref{remderrel}, on a un isomorphisme $\pi_0(ssp(E))\simeq \pi_0\re{E}$. Ce qui montre la formule. 
\end{proof}

\subsection{Réalisation de préfaisceaux structurés}\label{prestru}

\paragraph{Groupes stricts.}

Soit $SGp_\C$ (resp. $SGp$) la sous-catégorie de $\spr$ (resp. de $SSet$) formée des objets en groupes (au sens strict, par opposition aux $\del$-objets de la section \ref{monoides}). On note 
$$\mrm{B}:SGp\lmo SSet$$
le foncteur "espace classifiant". Pour tout $G\in SGp$, l'espace $\mrm{B}G$ est définie comme la colimite homotopique 
$$\mrm{B}G:=hocolim\, G^\bul$$
où $G^\bul$ est l'objet simplicial de $SSet$ définie par $[n]\mapsto G^n$, avec les faces et dégénérescences obtenues en utilisant le produit et le neutre de $G$. Il existe un remplacement cofibrant canonique de l'objet simplicial $G^\bul$, ce qui justifie que l'on considère $\mrm{B}$ comme un foncteur sur les catégories non localisées. Pour $G\in SGp_\C$, on pose $\mrm{B}G(\spec(A))=\mrm{B}(G(\spec(A))$. Parce que $\re{-}$ commute aux produits, la réalisation d'un préfaisceau de groupes simpliciaux est un groupe simplicial. On a alors un diagramme
$$\xymatrix{Ho(SGp_\C) \ar[r]^-{\mrm{B}} \ar[d]^-{\re{-}} & Ho(\spr) \ar[d]^-{\re{-}} \\ Ho(SGp) \ar[r]^-{\mrm{B}} & Ho(SSet)  }$$

\begin{prop}\label{gp}
Pour tout $G\in SGp_\C$, il existe un isomorphisme canonique $\mrm{B}\re{G}\simeq \re{\mrm{B}G}$ dans $Ho(SSet)$. 
\end{prop}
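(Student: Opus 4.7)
The plan is to unfold the definition of $\mrm{B}$ as a homotopy colimit of the standard simplicial bar diagram and then combine the two structural compatibility properties of the derived realization functor established in Proposition \ref{proprelder}: compatibility with homotopy colimits and with finite homotopy products.

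First, by the very definition recalled just before the statement, $\mrm{B}G = hocolim_{\del^{op}} G^\bul$ in $\spr$, where $G^\bul$ is the simplicial presheaf with $G^\bul_n = G^n$ and whose face and degeneracy maps are built entirely from the multiplication $m: G \times G \lmo G$, the unit $e: \ast \lmo G$ and the projections of the group object $G$. Applying $\re{-}$ and invoking Proposition \ref{proprelder}(1), I obtain a canonical isomorphism
$$\re{\mrm{B}G} \simeq hocolim_{\del^{op}} \re{G^\bul}$$
in $Ho(SSet)$, where $\re{G^\bul}$ denotes the simplicial diagram $[n] \mapsto \re{G^n}$.

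Second, by Proposition \ref{proprelder}(2), the derived realization commutes with finite homotopy products, so for each $n$ there is a canonical equivalence $\re{G^n} \simeq \re{G}^n$ in $Ho(SSet)$. These equivalences are compatible with faces and degeneracies because every structural map of $G^\bul$ is built from $m$, $e$, and projections, each of which is preserved (up to the monoidality isomorphisms of $\re{-}$) by the derived realization. Thus the simplicial diagram $\re{G^\bul}$ is canonically identified in $Ho(SSet^{\del^{op}})$ with the bar diagram $\re{G}^\bul$ of the simplicial group $\re{G}$. Taking the homotopy colimit of this identification and composing with the first equivalence yields
$$\re{\mrm{B}G} \simeq hocolim_{\del^{op}} \re{G^\bul} \simeq hocolim_{\del^{op}} \re{G}^\bul = \mrm{B}\re{G}.$$

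The only technical point requiring some care is promoting the levelwise equivalences $\re{G^n} \simeq \re{G}^n$ to an equivalence of simplicial objects rather than just a levelwise zigzag. I will handle this by choosing a cofibrant replacement of $G$ in $\spr$ that remains a strict group object (available via the transferred model structure on group objects in $\spr$), so that the strict products $G^n$ already compute the derived products $G^{\ph n}$ and the bar diagram $G^\bul$ is itself a suitable cofibrant simplicial object in $\spr^{\del^{op}}$; then $\re{G}$ inherits a strict simplicial group structure and all identifications above are genuine isomorphisms of simplicial diagrams, not just zigzags. This is the main obstacle, but it is purely a matter of choosing strictifications compatible with the monoidal structure of $\re{-}$, rather than a substantive homotopical issue.
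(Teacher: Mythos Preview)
Your proposal is correct and follows exactly the same approach as the paper, which simply records in one sentence that the result follows from $\re{-}$ commuting with homotopy colimits and finite homotopy products (Proposition \ref{proprelder}). Your additional care about strictifying the levelwise product comparisons into a genuine equivalence of simplicial diagrams is a point the paper leaves implicit.
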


\begin{proof}
Cela découle directement du fait que $\re{-}$ commute aux produits finis homotopiques et aux colimites homotopiques. 
\end{proof}

\paragraph{Spectres.}\label{realsp}

On étend le foncteur de réalisation $ssp :\spr \lmo SSet$ aux préfaisceaux en spectres symétriques en prenant simplement l'extension de Kan à gauche enrichi dans $Sp$ du foncteur composé
$$\xymatrix{ \spr\ar[r]^-{ssp} & SSet\ar[r]^-{\sinf(-)_+} & Sp }$$
le long du foncteur de suspension infinie $\sinf(-)_+ : \spr\lmo \spaf$. On note alors $\sspsp$ le foncteur obtenue. On obtient donc un diagramme commutatif
$$\xymatrix{ \spr \ar[r]^-{ssp} \ar[d]_-{\sinf(-)_+} & SSet\ar[r]^-{\sinf(-)_+} & Sp  \\ \spaf \ar@/_1pc/[rru]_-{\sspsp} }.$$
Son adjoint à droite envoie un spectre $E$ sur le préfaisceau $X\mapsto \homi_{Sp}(\sinf X_+, E)$. On montre de même que pour la réalisation des préfaisceaux simpliciaux les propriétés de commutation suivantes. 
\begin{prop}\label{proprelsp}
\begin{itemize}
\item Pour tout $F\in Sp$ et tout $E\in \spaf$ on a un isomorphisme canonique de spectres $F\sm \sspsp(E)\simeq \sspsp(F\sm E)$.
\item Pour tous $E,E'\in \spaf$, on a un isomorphisme canonique de spectres $\sspsp(E\sm E') \simeq \sspsp(E)\sm \sspsp(E')$, c'est à dire que le foncteur $\sspsp$ est monoïdal. 
\end{itemize}
\end{prop}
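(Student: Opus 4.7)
Le plan consiste à exploiter la propriété universelle de $\sspsp$ comme extension de Kan à gauche $Sp$-enrichie, combinée avec la compatibilité de $ssp$ aux produits finis (proposition \ref{proprel}).

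Pour le point (i), je remarque que $\sspsp$ commute par construction aux colimites $Sp$-enrichies ; or l'opération $F \wedge (-) : \spaf \to \spaf$ avec $F\in Sp$ fixé n'est rien d'autre que la $Sp$-tensorisation du module $\spaf$ sur $Sp$. L'isomorphisme naturel $\sspsp(F \wedge E) \simeq F \wedge \sspsp(E)$ en résulte formellement, la naturalité en $F$ et $E$ étant garantie par l'enrichissement. De manière équivalente, on peut le voir par la formule explicite de coend qui décrit $\sspsp(E)$ et par le fait que le smash avec un spectre commute aux colimites dans $\spaf$ comme dans $Sp$.

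Pour le point (ii), la stratégie est la réduction aux générateurs. Tout $E'\in\spaf$ s'écrit comme colimite $Sp$-enrichie d'objets de la forme $\sinf h_{X,+}$ pour $X\in\affc$. Les deux foncteurs $\sspsp(E\wedge -)$ et $\sspsp(-)\wedge\sspsp(E)$ commutent aux colimites et à la $Sp$-tensorisation : pour le premier grâce au point (i) combiné au fait que $\sspsp$ commute aux colimites, pour le second directement. On se ramène ainsi à vérifier la formule pour $E=\sinf h_{Y,+}$ et $E'=\sinf h_{X,+}$. Dans ce cas, en utilisant que le smash de suspensions de représentables s'identifie à la suspension du produit (convolution de Day) et que $\sinf(-)_+ : SSet\to Sp$ est symétrique monoïdale, on calcule
\[
\sspsp(\sinf h_{X,+} \wedge \sinf h_{Y,+}) \simeq \sinf sp(X\times Y)_+ \simeq \sinf sp(X)_+ \wedge \sinf sp(Y)_+ \simeq \sspsp(\sinf h_{X,+}) \wedge \sspsp(\sinf h_{Y,+}),
\]
l'étape centrale étant l'identification $sp(X\times Y)\simeq sp(X)\times sp(Y)$ fournie par la proposition \ref{proprel}(2) (i.e. compatibilité de l'analytification avec les produits finis de schémas affines).

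L'obstacle principal sera de rendre la réduction aux générateurs rigoureuse dans le contexte des colimites $Sp$-enrichies et, surtout, de vérifier la cohérence des isomorphismes, c'est-à-dire qu'ils s'assemblent en une structure de foncteur symétrique monoïdal (au sens lâche) plutôt qu'en une simple famille d'isomorphismes point par point. Cette cohérence doit découler formellement de ce que $\sspsp$ est l'extension de Kan à gauche $Sp$-enrichie d'un foncteur symétrique monoïdal lâche, mais sa vérification explicite passe par la compatibilité aux contraintes d'associativité et de symétrie du produit cartésien sur les représentables, héritée du foncteur d'analytification.
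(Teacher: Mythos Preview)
Votre proposition est correcte et suit essentiellement la même démarche que le papier, qui ne donne d'ailleurs pas de preuve détaillée mais se contente de renvoyer à la proposition~\ref{proprel} en indiquant que la première assertion sert à établir la seconde. Votre décomposition --- réduction aux générateurs via la commutation aux colimites et à la $Sp$-tensorisation, puis vérification sur les représentables grâce à la compatibilité de l'analytification aux produits finis --- est exactement ce que le papier sous-entend par <<~comme en~\ref{proprel}~>>.
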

La première est une conséquence de la deuxième, mais comme en \ref{proprel}, on l'utilise pour montrer la deuxième. 

On note $\spetao$ la structure de modèle $\ao$-étale sur la catégorie $\spaf$ des préfaisceaux en spectres symétriques sur $\affc$. Une conséquence de la proposition \ref{di} est que le foncteur associé 
$$ssph_\s:\spetao\lmo Sp$$ 
est de Quillen à gauche pour la structure de modèles $\ao$-étale. La remarque \ref{remderrel}  s'applique aussi pour les préfaisceaux en spectres : pour tout $E\in \spaf$ on a un isomorphisme canonique $\lef \sspsp (E)\simeq \lef ssph_\s (E)$. 
\begin{df} On appelle \emph{réalisation topologique spectrale} et on note $\resp{-}$ les foncteur dérivés $\lef \sspsp$ et $\lef ssph_\s$. On a donc un triangle commutatif,
$$\xymatrix{ Ho(\spaf) \ar[rd]_-{\resp{-}} \ar[rr]^-{\R id} && Ho(\spetao) \ar[ld]^-{\resp{-}} \\ & Ho(Sp) & }$$
On note $\hbs$ l'adjoint à droite de $\resp{-}$. Pour tout $E\in Ho(Sp)$, $\hbs(E)$ est le préfaisceau 
$$\hbs(E) : X\longmapsto \rhomi_{Ho(Sp)} (\resp{X}, E).$$
\end{df} 

La notation $\hbs$ est pour rappeler la cohomologie de Betti puisque si $E=H\C$ est le spectre d'Eilenberg--Mac-Lane associé à $\C$, alors le préfaisceau $\hbs(H\C)$ n'est rien d'autre que la cohomologie de Betti. Comme $\sspsp$ est un foncteur monoïdal on en déduit que le foncteur de réalisation topologique spectrale $\resp{-}$ est un foncteur monoïdal. 

\paragraph{$\del$ et $\gam$-espaces.}

On étend, d'autre part, le foncteur de réalisation aux $\del$-objets dans $\spr$. Bien entendu, la même remarque peut être formulée pour les $\gam$-objets. On applique les notations de \ref{strucdel} pour les structures de modèles sur les catégories de $\del$-objets. Soit $E\in \del-\spr$ un $\del$-préfaisceau. La réalisation topologique de $E$, notée $ssp_{\del}(E)$ est le $\del$-espace défini par $ssp_{\del}(E)_n=ssp(E_n)$. Ceci définit un foncteur 
$$ssp_{\del} : \del-\spr\lmo \del-SSet.$$
Ce foncteur a pour adjoint à droite le foncteur qui a un $\del$-espace $F$ associe le $\del$-préfaisceau définie par $R(F)_n=R(F_n)$ (où $R$ est l'adjoint à droite du foncteur $ssp:\spr\lmo SSet$). Le foncteur associé 
$$ssph_{\del}:\del-\spretao \lmo \del-SSet$$ 
est de Quillen à gauche pour la structure $\ao$-étale. 

\begin{df} 
On appelle \emph{réalisation des $\del$-préfaisceaux} et on note $\redel{-}$ les foncteurs dérivés $\lef ssp_{\del}$ et $\lef ssph_{\del}$. On a donc un triangle commutatif, 
$$\xymatrix{ Ho(\del-\spr) \ar[rd]_-{\redel{-}} \ar[rr]^-{\R id} && Ho(\del-\spretao) \ar[ld]^-{\redel{-}} \\ & Ho(\del-SSet) & }$$
On note $\regam{-}$ son $\gam$-analogue. 
\end{df} 
 
La réalisation $\redel{-}$ commute aux produits homotopiques finis, donc la réalisation d'un $\del$-objet spécial (resp. très spécial) est un $\del$-objet spécial (resp. très spécial). On a donc un diagramme 
$$\xymatrix{ Ho(\del-\spr) \ar[r]^-{\redel{-}}  \ar[d]^-{mon} & Ho(\del -SSet)\ar[d]^-{mon}\\ 
Ho(\del-\spr^{sp}) \ar[r]^-{\redel{-}}  \ar[d]^-{(-)^+} & Ho(\del -SSet^{sp})   \ar[d]^-{(-)^+} \\
Ho(\del-\spr^{tsp} )  \ar[r]^-{\redel{-}} & Ho(\del -SSet^{tsp}) }$$

\begin{prop}\label{proprelplus}
Pour tout $E\in Ho(\del-\spr)$ on a un isomorphisme canonique dans $Ho(\del-SSet)$, 
$$\redel{mon(E)} \simeq mon \redel{E}.$$
\end{prop}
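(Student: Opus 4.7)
La stratégie consiste à montrer que $\redel{-}$ est un foncteur de Quillen à gauche non seulement pour la structure projective globale, mais aussi pour la structure spéciale sur les $\del$-objets des deux côtés, puis à utiliser le fait que $mon$ s'identifie au remplacement fibrant dans la structure spéciale.

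Premièrement, je vérifierai que l'adjoint à droite $R_\del : \del-SSet \lmo \del-\spr$ de $\redel{-}$, défini niveau par niveau par $R_\del(F)_n := R(F_n)$ où $R$ est l'adjoint à droite de $ssp : \spr \lmo SSet$, préserve les objets fibrants de la structure spéciale. En effet, $R$ est un adjoint à droite (donc commute aux produits) et un foncteur de Quillen à droite pour la structure projective (donc préserve les équivalences entre objets fibrants). Par conséquent, si $F \in \del-SSet$ est spécial et fibrant niveau par niveau, le morphisme de Segal de $R_\del(F)$ s'identifie à l'image par $R$ d'une équivalence entre ensembles simpliciaux de Kan, ce qui reste une équivalence dans $\spr$. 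Par le critère standard concernant les foncteurs de Quillen sur les localisations de Bousfield à gauche, on en déduit que $\redel{-}$ induit un foncteur de Quillen à gauche $\del-\spr^{sp} \lmo \del-SSet^{sp}$. Parallèlement, en appliquant la proposition \ref{proprelder} niveau par niveau, le foncteur dérivé $\redel{-}$ commute aux produits homotopiques finis, et envoie donc tout $\del$-préfaisceau homotopiquement spécial sur un $\del$-espace homotopiquement spécial.

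Deuxièmement, soit $E \in \del-\spr$. Quitte à prendre un remplacement cofibrant, on peut supposer $E$ cofibrant. Soit $E \lmo R(E)$ un remplacement fibrant dans la structure spéciale $\del-\spr^{sp}$, de sorte que $R(E)$ représente $mon(E)$ dans $Ho(\del-\spr)$. Le morphisme $E \lmo R(E)$ étant une cofibration triviale dans $\del-\spr^{sp}$ et $\redel{-}$ étant de Quillen à gauche pour la structure spéciale, son image $\redel{E} \lmo \redel{R(E)}$ est une équivalence dans $\del-SSet^{sp}$. Par la première étape, $\redel{R(E)}$ est un $\del$-espace spécial, donc représente $mon(\redel{E})$ à équivalence près dans $Ho(\del-SSet^{sp})$. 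Enfin, puisque les équivalences de la structure spéciale entre deux objets spéciaux coïncident avec les équivalences niveau par niveau (propriété fondamentale des localisations de Bousfield à gauche), cet isomorphisme se relève en l'isomorphisme canonique cherché $\redel{mon(E)} \simeq mon(\redel{E})$ dans $Ho(\del-SSet)$.

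Le point délicat principal est la vérification que $R_\del$ préserve les objets spéciaux : ceci repose crucialement sur la double propriété de $R$ d'être à la fois un adjoint à droite (commutation aux limites) et un foncteur de Quillen à droite (préservation des équivalences entre objets fibrants). La naturalité de l'isomorphisme en $E$ découle alors de la fonctorialité des remplacements cofibrants et fibrants.
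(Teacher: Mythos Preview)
Your proof is correct and follows the same approach as the paper: the key point in both is that the right adjoint $R_\del$ sends special $\del$-espaces to special $\del$-préfaisceaux, which you verify exactly as the paper does (using that $R$ is right Quillen and commutes with products). The paper's proof is simply a one-line version of your argument, stating that it suffices for the right adjoints to commute and noting that $\R R(F)$ is special whenever $F$ is.
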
 

\begin{proof} 
Il suffit de montrer que les adjoints à droite commute. Mais cela vient du fait que si $F\in \del-SSet$ est spécial alors le $\del$-objet $\R R(F)=\R\map(\re{-}, F)$ est spécial. 
\end{proof}

\begin{prop}\label{proprelplus}
Pour tout $\del$-préfaisceau \emph{spécial} $E\in Ho(\del-\spr)$ on a un isomorphisme canonique dans $Ho(\del-SSet)$, 
$$\redel{E^+} \simeq \redel{E}^+.$$
\end{prop}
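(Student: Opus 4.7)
The plan is to invoke the explicit Segal model for group completion recalled in Remark \ref{remseg}: for a special $\del$-preheaf $E$ (fibrant level-wise), one has a canonical isomorphism in $Ho(\del-\spr^{tsp})$ of the form
$$E^+ \simeq \Omega_\bul \re{E}_{\mrm{bi}},$$
where $\re{E}_{\mrm{bi}}$ denotes the realization of $E$ viewed as a bisimplicial preasheaf (diagonal), and for a pointed preasheaf $(X,\ast)$, $\Omega_n X$ is the level-$n$ piece of the $\del$-object defined by the homotopy pullback
$$\xymatrix{\Omega_n X \ar[r]\ar[d] & \rhomi_{\spr}(\del^n, X) \ar[d] \\ \ast \ar[r] & X^{n+1}}.$$
The same description of $(-)^+$ holds in $Ho(\del-SSet^{tsp})$. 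It then suffices to check that $\redel{-}$ commutes with both ingredients: the bisimplicial realization and the functor $\Omega_\bul$.

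First I would observe that the bisimplicial realization $\re{-}_{\mrm{bi}}$ is a homotopy colimit indexed by $\del^{op}$ of the underlying $\del$-object. Since the derived topological realization $\re{-} : Ho(\spr)\to Ho(SSet)$ commutes with homotopy colimits (Proposition \ref{proprelder}.1), we obtain a canonical isomorphism
$$\re{\re{E}_{\mrm{bi}}} \simeq \re{\redel{E}}_{\mrm{bi}}$$
in $Ho(SSet)$, where the outer $\re{-}_{\mrm{bi}}$ on the right now denotes the bisimplicial realization of the $\del$-space $\redel{E}$.

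Second, $\Omega_n$ is defined by a finite homotopy limit (in fact a homotopy pullback). Since $\re{-}$ commutes with finite homotopy products (Proposition \ref{proprelder}.2) and with homotopy pullbacks over a point of pointed preasheaves, one deduces a canonical level-wise isomorphism
$$\re{\Omega_n X} \simeq \Omega_n \re{X}$$
for any pointed $X\in Ho(\spr)$. Applied level-wise to $X=\re{E}_{\mrm{bi}}$ and combined with the first step, this gives
$$\redel{E^+}_n \simeq \re{\Omega_n \re{E}_{\mrm{bi}}} \simeq \Omega_n \re{\redel{E}}_{\mrm{bi}} \simeq (\redel{E}^+)_n,$$
which is the desired natural isomorphism in $Ho(\del-SSet)$ (in fact in $Ho(\del-SSet^{tsp})$, since both sides are very special).

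The main technical point is justifying that $\re{-}$ truly commutes with the homotopy pullback defining $\Omega_n$; this reduces to commutation with finite homotopy products, which is Proposition \ref{proprelder}.2, together with the fact that the terminal preasheaf realizes to the point. Once these commutations are in place, the argument is a purely formal chase through the Segal model of $(-)^+$ recalled in Remark \ref{remseg}, so I expect no substantial obstacle beyond making sure the cofibrancy/fibrancy hypotheses on $E$ are arranged so that the displayed equivalences are honest equivalences rather than just maps in the homotopy category.
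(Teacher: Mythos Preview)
Your proposal has a genuine gap at the step where you claim $\re{\Omega_n X} \simeq \Omega_n \re{X}$. The object $\Omega_n X$ is defined by a homotopy \emph{pullback}, not a finite homotopy product, and Proposition \ref{proprelder}.2 only gives commutation with finite homotopy products. A left Quillen functor has no reason to preserve homotopy pullbacks, and in fact $\re{-}$ does not commute with $\Omega$ on arbitrary pointed presheaves. For a concrete counterexample, take $X=h_{\gm}$ pointed at $1$: since $h_{\gm}$ is level-wise discrete, the presheaf $\Omega h_{\gm}$ is equivalent to $\ast$, hence $\re{\Omega h_{\gm}}\simeq\ast$; but $\Omega\re{h_{\gm}}\simeq\Omega(\C^\times)\simeq\Omega S^1\simeq\Z$. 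So the reduction ``this reduces to commutation with finite homotopy products'' is simply false, and the Segal-model argument collapses at this point.

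The paper's proof sidesteps this difficulty completely by working with the right adjoints instead of an explicit model for $(-)^+$. Both $\redel{-}$ and $(-)^+$ are left Quillen functors (the latter being the identity between the special and very special model structures), so the commutation of the square of left adjoints is equivalent to the commutation of the square of right adjoints. The right adjoint of $(-)^+$ is the inclusion of very special objects into special ones, and the right adjoint of $\redel{-}$ is $F\mapsto\R R(F)=\R\map(\re{-},F)$ applied level-wise. Thus the whole statement reduces to checking that if $F\in\del-SSet$ is very special then $\R R(F)$ is a very special $\del$-presheaf, which is immediate since $\R\map(\re{-},-)$ preserves homotopy products. This adjunction argument is both shorter and avoids any appeal to preservation of homotopy pullbacks.
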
 

\begin{proof} 
Il suffit de montrer que les adjoints à droites commutent. Mais cela vient du fait que si $F\in \del-SSet$ est très spécial alors le $\del$-préfaisceau $\R R(F)=\R\map(\re{-}, F)$ est très spécial. 
\end{proof}

Comparons maintenant la réalisation des $\gam$-préfaisceaux et des préfaisceaux en spectres (connectifs). On a un diagramme,

$$\xymatrix{Ho(\gam-\spr) \ar[r]^-{\regam{-}} \ar[d]^-{\mcal{B}} & Ho(\gam-SSet) \ar[d]^-{\mcal{B}}  \\ Ho(\spaf) \ar[r]^-{\resp{-}} & Ho(Sp) }$$

\begin{prop}\label{proprelb}
Pour tout $E\in Ho(\gam-\spr)$, on a un isomorphisme canonique dans $Ho(Sp)$, 
$$\mcal{B} \regam{E} \simeq \resp{\mcal{B}E}.$$
\end{prop}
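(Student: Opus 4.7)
Le plan est d'utiliser les descriptions explicites du foncteur $\mcal{B}$ (fin de la section \ref{groupsp}) et de la réalisation spectrale $\resp{-}$ (section \ref{realsp}) pour ramener l'énoncé à la commutation de $\re{-}$ aux colimites homotopiques et au smash.

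Par définition, pour tout $\gam$-préfaisceau $E$, le $n$-ième terme de $\mcal{B}(E)$ est le préfaisceau simplicial $\mcal{B}(E)_n = d(k \mapsto E(\und{k^n}))$, où $\und{k^n} = \und{k}^{\sm n}$ dans $\gam$ et $d$ désigne la diagonale d'un objet bisimplicial. De même, puisque $\regam{E}_m = \re{E_m}$ par définition, on a $\mcal{B}(\regam{E})_n = d(k \mapsto \re{E_{k^n}})$. D'autre part, la réalisation $\sspsp$ d'un préfaisceau en spectres $F$ se décrit niveau par niveau par $\sspsp(F)_n = ssp(F_n)$ grâce à la compatibilité au smash avec $S^1$ (proposition \ref{proprelsp}). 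Il en résulte que $\resp{\mcal{B}(E)}_n = \re{\mcal{B}(E)_n} = \re{d(k \mapsto E(\und{k^n}))}$.

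D'après la proposition \ref{proprelder}, le foncteur dérivé $\re{-}$ commute aux colimites homotopiques, et en particulier aux diagonales d'objets bisimpliciaux. On obtient donc une équivalence naturelle de préfaisceaux simpliciaux $\re{d(k \mapsto E(\und{k^n}))} \simeq d(k \mapsto \re{E(\und{k^n})}) = \mcal{B}(\regam{E})_n$, niveau par niveau en $n$.

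Pour conclure il faut vérifier que ces équivalences s'assemblent en un isomorphisme de spectres symétriques. Les morphismes de transition du spectre $\mcal{B}(E)$ sont construits, comme rappelé en \ref{groupsp}, à partir des morphismes $S^{n-1}_k \lmo S^n_k$ induits par le choix d'un simplexe de $S^1_k$, suivis de l'évaluation du $\gam$-objet et de la prise de diagonale. Comme $\re{-}$ commute au smash avec des ensembles simpliciaux (proposition \ref{proprelder}, point 3), ces morphismes sont envoyés de manière compatible sur les morphismes de transition de $\mcal{B}(\regam{E})$, et on obtient de même la $\Sigma_n$-équivariance au niveau $n$. Ceci fournit l'isomorphisme cherché dans $Ho(Sp)$.

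Le principal obstacle technique est d'assurer la validité de ces manipulations au niveau dérivé. Pour cela on choisira un remplacement cofibrant $QE \lmo E$ dans la structure de modèles projective sur $\gam-\spr$, de sorte que $(QE)_m$ soit cofibrant dans $\spr$ pour tout $m$ et que $\regam{E}$ soit bien calculé comme $\re{(QE)_\bullet}$. On utilisera de plus le fait, rappelé juste avant le théorème \ref{bf}, que $\mcal{B}$ préserve \emph{toutes} les équivalences (et pas seulement entre cofibrants), ce qui évite d'avoir à raffiner le remplacement cofibrant en un remplacement convenable dans $\spaf$ et assure que $\mcal{B}(QE)\simeq \mcal{B}(E)$ dans $Ho(\spaf)$.
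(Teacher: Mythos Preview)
Your proposal is correct and follows the same idea as the paper's proof, which simply observes that $\mcal{B}$ is built from homotopy colimits and smash products and that $\resp{-}$ commutes with both; you have essentially unfolded this one-line justification into an explicit level-by-level verification.
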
 

\begin{proof} 
Cela vient du fait que le foncteur $\mcal{B}$ peut de par sa définition être exprimé comme une colimite homotopique, et de ce que le foncteur $\resp{-}$ commute à de telles colimites. 
\end{proof}

On rappelle (cf. \ref{strucdel}) l'existence d'un foncteur 
$$\alpha^*: Ho(\gam-\spr) \lmo Ho(\del-\spr).$$
On remarque alors que par définition de $\alpha^*$, pour tout $E\in \gam-\spr$, on a un isomorphisme canonique 
$$\redel{\alpha^* E} \simeq \alpha^*\regam{E}.$$

\subsection{Restriction aux schémas lisses}\label{sectrestliss}

Dans cette partie on montre que l'on ne change pas la réalisation topologique d'un préfaisceau simplicial en le restreignant aux schémas lisses. C'est une conséquence d'un résultat de Suslin--Voevodsky concernant la topologie cdh (\cite{sv1, sv2}) : cette topologie donne lieu à des catégories de faisceaux équivalentes suivant qu'on démarre avec tous les schémas ou avec les schémas lisses. Ce fait sera utiliser dans la suite pour comparer la définition de la K-théorie topologique d'une variété lisse "au sens dg" avec la définition classique, ainsi que pour montrer que la K-théorie semi-topologique négative ("au sens dg") d'une algèbre commutative lisse est triviale. 

On rappelle qu'on note $\affc$ la catégorie des $\C$-schémas affines de type fini et $\afflissc$ la catégorie des $\C$-schémas affines lisses de type fini. On note $l:\afflissc\hookrightarrow \affc$ le foncteur d'inclusion. La restriction sur les préfaisceaux simpliciaux est notée $l^*:\spr\lmo \sprliss$. Le théorème qu'on établit ici est alors le suivant. 

\begin{theo}\label{restliss}
Soit $F\in SPr(\affc)$ un préfaisceau simplicial. Alors il existe un isomorphisme naturel $\re{l^* F} \simeq \re{F}$ dans $Ho(SSet)$. 
\end{theo}

La preuve occupe les deux sous-parties suivantes. Elle est basée sur la topologie propre sur les schémas. L'idée est d'utiliser l'existence d'une équivalence de Quillen entre la catégorie de modèles $\sprpro$ des préfaisceaux simpliciaux avec la structure propre locale et la catégorie de modèles $\sprproliss$ des préfaisceaux simpliciaux sur les lisses avec le même type de structure de modèles. Nous montrons aussi que le foncteur de réalisation topologique se comporte bien par rapport à la topologie propre, plus précisément qu'il reste de Quillen à gauche pour la structure propre locale. Pour cela nous sommes amener à montrer un résultat purement topologique : l'analogue du théorème de Dugger--Isaksen \cite[Thm. 1.2]{hyptop} pour les hyperrecouvrement propres d'espaces. 

Comme nous allons utiliser l'existence d'une résolution des singularités sur $\C$ pour obtenir des recouvrements propres par des schémas lisses, il est plus commode de travailler avec des préfaisceaux définis sur tous les schémas non-nécessairement affines. On note $\schc$ la catégorie des $\C$-schémas de type fini et $\lissc$ la catégorie des $\C$-schémas lisses de type fini. On note
$$\xymatrix{ \afflissc\ar@{^{(}->}[r]^-{l} \ar@{^{(}->}[d]_k & \affc \ar@{^{(}->}[d]^-j \\ \lissc\ar@{^{(}->}[r]^-i & \schc }$$
les inclusions. On a des foncteurs de restriction/extension entre catégories de préfaisceaux simpliciaux munies de la structure étale locale,
$$\xymatrix{ \sprset \ar@<-3pt>[r]_-{i^*} \ar@<3pt>[d]^-{j^*} & \sprslisset \ar@<-3pt>[l]_-{i_!}   \ar@<3pt>[d]^-{k^*} \\ \spret \ar@<-3pt>[r]_-{l^*} \ar@<3pt>[u]^-{j_!}  & \sprlisset \ar@<3pt>[u]^-{k_!} \ar@<-3pt>[l]_-{l_!} }$$
Toutes ces paires de foncteurs adjoints sont de Quillen par le résultat général sur les changements de sites \cite[Prop. 7.2]{dhi}. Le foncteur de restriction $j^*$ conserve les équivalences étale locales et c'est une équivalence de Quillen. En effet cela peut être déduit du fait analogue pour les faisceaux en utilisant la structure de modèle projective locale sur les faisceaux simpliciaux de \cite[Thm.2.1]{blande}. Le théorème \ref{restliss} peut donc s'énoncer de manière équivalente avec la catégorie $\sprs$. En effet pour tout $F\in \spr$, on a une équivalence $\re{\lef j_! F} \simeq \re{F}$ par définition de la réalisation et car $\lef j_!$ est un adjoint à gauche. D'autre part si $E\in \sprs$, le morphisme de counité $\lef j_! j^* E\lmo E$ est une équivalence étale locale. Le morphisme induit $\re{\lef j_! j^* E} \lmo \re{E}$ est donc une équivalence par le théorème \ref{di}. D'autre part on a $\re{\lef j_! j^* E}\simeq \re{j^*E}$. On a donc une équivalence $\re{j^*E}\simeq \re{E}$. Par conséquent on travaillera avec la catégorie de préfaisceaux $\sprs$. 

On munit donc $\schc$ de la topologie propre, c'est-à-dire qu'une famille est couvrante si chaque morphisme de la famille est un morphisme propre et que cette famille est conjointement surjective. On définit la topologie propre sur la catégorie $\lissc$ en disant qu'un crible $S\subseteq h_X$ d'un objet $X$ de $\lissc$ est couvrant s'il contient un crible engendré par un recouvrement propre de $X$ dans $\schc$. On rappelle que la catégorie $\lissc$ n'admet pas de produits fibrés en général. 

\begin{rema}\label{hiro}
Par le théorème d'Hironaka de résolution des singularités, tout crible couvrant propre $S$ d'un schéma $X$ admet un raffinement par un crible couvrant propre $T\subseteq S$ engendré par des morphismes de source des schémas lisses. En effet pour tout morphisme $Z\lmo Y$ dans $S$, par Hironaka, il existe un schéma lisse $Z'$ et un morphisme propre surjectif $Z'\lmo Z$.
\end{rema}

\begin{nota} 
On note $\sprpro$ la catégorie de modèles propre locale, c'est-à-dire la localisation de Bousfield à gauche de la structure projective $\sprs$ le long des morphismes $$hocolim_{\del^{op}} h_{Y_\bul} \lmo h_X$$ 
pour tout hyperrecouvrement propre $Y_\bul \lmo X$ dans $\schc$. Pour établir une preuve du théorème \ref{restliss}, nous aurons besoin du résultat suivant (l'analogue du théorème \ref{di} pour la topologie propre). 
\end{nota}

\begin{prop}\label{quipro}
Pour tout hyperrecouvrement propre $Y_\bul\lmo X$ d'un schéma $X\in \schc$, le morphisme induit,
$$hocolim_{\del^{op} }  \re{Y_\bul}\lmo \re{X}$$
est un isomorphisme dans $Ho(SSet)$. On en déduit que le foncteur
$$ssp : \sprpro \lmo SSet$$
est de Quillen à gauche. 
\end{prop}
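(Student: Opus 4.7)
Le plan est de ramener l'énoncé à une assertion purement topologique, puis de la prouver par récurrence en suivant la stratégie de \cite{hyptop} adaptée au cas propre.

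Pour la réduction, comme le foncteur de réalisation dérivée $\re{-}$ commute aux colimites homotopiques (prop. \ref{proprelder}), et comme pour tout $X\in \schc$ on a une équivalence canonique $\re{X}\simeq ssp(X)=S(sp(X))$, il suffit de montrer que pour tout hyperrecouvrement propre $Y_\bul\lmo X$, le morphisme d'espaces topologiques induit
$$hocolim_{\del^{op}} sp(Y_\bul)\lmo sp(X)$$
est une équivalence faible. On a ainsi transporté la question dans le cadre purement topologique des espaces analytiques complexes sous-jacents.

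Pour cet énoncé topologique, on procède comme Dugger--Isaksen. D'abord, tout hyperrecouvrement propre est colimite filtrante de ses cosquelettes $\mathrm{cosk}_n Y_\bul$, qui sont bornés ; comme les colimites homotopiques commutent aux colimites filtrantes, on peut supposer $Y_\bul$ borné. On raisonne ensuite par récurrence sur la dimension $n$ du cosquelette, l'étape de récurrence exploitant la décomposition standard d'un hyperrecouvrement borné en ses composantes non-dégénérées, combinée au fait que pour tout $n$ le morphisme $Y_n\lmo (\mathrm{cosk}_{n-1}Y_\bul)_n$ est une surjection propre. Le cas de base se ramène alors à l'énoncé suivant : pour $f:\mcal{Y}\lmo \mcal{X}$ une surjection propre entre espaces analytiques complexes sous-jacents à des $\C$-schémas de type fini, le morphisme d'augmentation
$$hocolim_{\del^{op}}(\cdots \mcal{Y}\times_{\mcal{X}}\mcal{Y}\rightrightarrows \mcal{Y})\lmo \mcal{X}$$
est une équivalence faible. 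C'est la difficulté majeure. L'idée est d'utiliser la triangulabilité des espaces analytiques complexes (Lojasiewicz) pour les voir comme des CW-complexes finis, puis, après subdivision suffisamment fine du CW-complexe but, de construire des sections locales à homotopie près de $f$, ce qui fournira l'inverse homotopique cherché par un argument de type fibration.

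Pour la seconde assertion, la catégorie $\sprpro$ est par définition la localisation de Bousfield à gauche de $\sprs$ par rapport à l'ensemble des morphismes $hocolim_{\del^{op}} h_{Y_\bul}\lmo h_X$ associés aux hyperrecouvrements propres. La paire de Quillen $(ssp, R)$ sur $\sprs$ (héritée de la structure projective globale, cf. prop. \ref{proprel}) se descend donc en une adjonction de Quillen sur $\sprpro$ par la propriété universelle de la localisation de Bousfield, dès que $\lef ssp$ envoie ces morphismes localisants sur des équivalences dans $SSet$. Or c'est exactement ce qu'affirme la première partie de la proposition, compte tenu de la commutation de $\re{-}$ aux colimites homotopiques.
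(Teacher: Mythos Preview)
Ta réduction à l'énoncé topologique et la récurrence sur la dimension de l'hyperrecouvrement sont correctes dans les grandes lignes et coïncident avec la stratégie du papier. En revanche, deux points méritent attention.

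D'abord, un détail sur la réduction au cas borné : un hyperrecouvrement n'est pas ``colimite filtrante de ses cosquelettes'' (les $cosk_n B_\bul$ forment une tour inverse, pas un système filtrant), et $hocolim_{\del^{op}}$ ne commute pas aux limites inverses. L'argument correct (celui du papier) consiste à observer que pour chaque $k$ le morphisme $B_\bul\lmo cosk^A_{k+1}B_\bul$ induit un isomorphisme sur les $(k+1)$-squelettes, donc sur les $\pi_i$ des $hocolim$ pour $i\leq k$ ; si le cas borné donne $hocolim_{\del^{op}} cosk^A_{k+1}B_\bul\simeq A$, on conclut sur $\pi_k$ pour $k$ arbitraire.

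Surtout, ta proposition pour le cas de base (nerf d'une surjection propre) ne peut pas marcher telle quelle. Tu suggères d'utiliser la triangulabilité pour ``construire des sections locales à homotopie près'' d'une surjection propre $f:\mcal{Y}\lmo \mcal{X}$. Mais une surjection propre entre variétés complexes n'admet en général \emph{aucune} section, même localement, même à homotopie près : pense à un éclatement $\tilde{X}\lmo X$, où la fibre au-dessus du centre est un espace projectif alors que les fibres voisines sont des points. L'argument de Dugger--Isaksen pour les recouvrements ouverts repose de façon essentielle sur l'existence de sections locales (une inclusion ouverte en a trivialement) ; c'est précisément ce qui fait défaut ici et qui rend le cas propre substantiellement plus difficile.

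Le papier procède tout autrement pour ce cas de base : il passe aux préfaisceaux simpliciaux sur l'espace simplicial $B_\bul$ (au sens de Deligne), exprime les sections globales dérivées d'un préfaisceau constant via le résultat de Toën \cite[Thm.~2.13]{galhom}, puis invoque le théorème de changement de base propre de Lurie \cite[Cor.~7.3.1.18]{htt} pour calculer les fibres de $\R\pi_\ast(K)$. Ceci ramène la question au cas $A=\ast$, où un lemme purement simplicial (le nerf d'un espace non vide au-dessus d'un point a une $hocolim$ contractile) conclut. C'est l'usage du changement de base propre qui remplace, dans le cas propre, l'argument de sections locales ; c'est l'idée clé qui manque dans ta proposition.
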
 

La preuve de la proposition \ref{quipro} est relativement non triviale et basée sur le fait purement topologique qu'un hyperrecouvrement propre induit une équivalence quand on passe à la colimite homotopique. Elle sera donné en \ref{hyppro}. On utilise la proposition \ref{quipro} dans la partie suivante pour montrer notre résultat de restriction. 
 
 \subsubsection{La propriété de restriction aux lisses}

Le foncteur $i:\lissc\hookrightarrow \schc$ donne lieu à un foncteur continu entre les sites propres associés et à un jeu d'adjonction entre catégories de faisceaux propres d'ensembles, 
$$\xymatrix{\shpro \ar[r]^-{i^\ast} & \shproliss \ar@<7pt>[l]^-{e} \ar@<-10pt>[l]_-{i_!}   }.$$
Le foncteur $i^*$ est la restriction aux lisses. Pour tout faisceau $F\in \shproliss$, le faisceau $eF$ est (a priori) le faisceautisé du préfaisceau $X\longmapsto \Hom(i_\ast h_X, F)$. 

Par \cite[Prop. 7.2]{dhi}, la paire de foncteurs adjoints 
$$\xymatrix{\sprpro \ar@<-3pt>[r]_-{i^\ast} & \sprproliss \ar@<-3pt>[l]_-{i_!}   },$$
est une paire de Quillen.

La preuve de \ref{restliss} est basé sur le résultat suivant qui est une variante de \cite[Thm. 6.2]{sv2}. 

\begin{prop}\label{eqpreliss}
 La paire de Quillen $(i_!, i^*)$ est une équivalence de Quillen et induit donc une équivalence de catégories, 
$$\xymatrix{Ho(\sprpro) \ar@<-3pt>[r]_-{i^*} & Ho(\sprproliss) \ar@<-3pt>[l]_-{\lef i_!}   }.$$
\end{prop}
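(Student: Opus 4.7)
Le plan est de vérifier que l'unité et la counité de l'adjonction dérivée $(\lef i_!, \R i^*)$ sont des équivalences dans les catégories homotopiques. L'ingrédient central est la résolution des singularités d'Hironaka (déjà rappelée en \ref{hiro}): pour tout $X\in\schc$ il existe un morphisme propre surjectif $Y\to X$ avec $Y$ lisse. En itérant cette construction (on prend, à chaque étape de la construction de Čech--Verdier, une résolution lisse du $(n+1)$-ième terme formé à partir des termes précédents), on obtient que tout $X\in\schc$ admet un hyperrecouvrement propre $Y_\bullet\to X$ avec $Y_n\in\lissc$ pour tout $n\geq 0$.

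D'abord je vérifierais que la counité $\lef i_! \R i^* F \lmo F$ est une équivalence pour tout $F$ fibrant dans $\sprpro$. Par définition de la structure propre locale, un tel $F$ vérifie la descente homotopique par rapport à tout hyperrecouvrement propre, donc $F(X)\simeq \holim_{\del} F(Y_\bullet)$ pour un hyperrecouvrement lisse $Y_\bullet\to X$ fourni par Hironaka. Comme $\lef i_!$ commute aux colimites homotopiques et que l'objet $\lef i_! \R i^* F$ vérifie encore la descente propre (la topologie propre sur $\lissc$ étant définie par restriction depuis celle sur $\schc$, de sorte que les hyperrecouvrements propres lisses forment une classe cofinale), le morphisme de counité évalué en $X$ se ramène à l'évaluer sur chaque $Y_n\in\lissc$. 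Mais sur les schémas lisses, le morphisme $\lef i_! \R i^* F\lmo F$ est l'identité à équivalence près, d'où l'équivalence voulue.

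Ensuite, pour l'unité $E\lmo \R i^* \lef i_! E$ avec $E$ cofibrant dans $\sprproliss$, la commutation aux colimites homotopiques permet de se ramener au cas des représentables $E=h_U$ avec $U\in\lissc$. L'inclusion $i:\lissc\hookrightarrow\schc$ étant pleinement fidèle, le morphisme $h_U\lmo i^* i_! h_U$ est un isomorphisme au niveau non-dérivé, et il suffit de comparer les remplacements fibrants propres locaux des deux côtés. Concrètement, évalué en $V\in\lissc$, le terme de droite s'identifie à $\holim_{\del}\Hom(Z_\bullet, U)$ pour un hyperrecouvrement propre $Z_\bullet\to V$ dans $\lissc$, et ce complexe calcule le même $\pi_0$ et les mêmes groupes d'homotopie supérieurs que $\Hom(V,U)$ par descente propre (qui s'applique aux préfaisceaux représentables par un schéma lisse, en utilisant la pleine fidélité de $i$ pour se ramener au calcul dans $\schc$).

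La difficulté principale sera de rendre rigoureux le pas de descente sur le site $\lissc$, qui ne possède pas de produits fibrés généraux: les hyperrecouvrements propres de $\lissc$ doivent être traités via leur raffinement par des morphismes de source lisse dans $\schc$ (voir remarque \ref{hiro}), et il faut vérifier soigneusement que la localisation de Bousfield définissant $\sprproliss$ est bien compatible avec l'extension de Kan $\lef i_!$. C'est précisément l'analogue homotopique non-abélien de \cite[Thm.~6.2]{sv2}, dont on pourra adapter la stratégie de preuve.
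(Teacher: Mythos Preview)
Your approach is genuinely different from the paper's, and although the underlying idea is sound, there is a real gap.

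The paper does \emph{not} argue directly at the homotopy level. Instead it reduces everything to ordinary sheaves of sets: using Blander's projective local model structure on simplicial sheaves \cite[Thm.~2.1--2.2]{blande}, the Quillen equivalence $\sprpro \simeq SSh(\schc)^{\mrm{pro}}$ (and likewise over $\lissc$) identifies the model structures with categories of simplicial objects in the respective topoi. Since these model structures depend only on the underlying topoi, it suffices to show that $i^*:\shpro\to\shproliss$ is an equivalence of ordinary categories, which is a direct application of the comparison lemma \cite[Exposé III, Thm.~4.1]{sga4-1}. No explicit descent computation is performed.

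The gap in your argument is the claim that $\lef i_! \R i^* F$ ``vérifie encore la descente propre''. The functor $\lef i_!$ is left Quillen, hence preserves cofibrant objects, but there is no reason for it to preserve fibrant ones; the left Kan extension of a presheaf satisfying descent on $\lissc$ need not satisfy descent on $\schc$. Your parenthetical justification (cofinality of smooth hypercovers among proper hypercovers) is exactly the input to the SGA4 comparison lemma, but it does not by itself let you evaluate the counit pointwise on a smooth hypercover of $X$ unless you already know both sides are local. The same issue arises in your unit argument: you would need representable presheaves $h_U$ to satisfy proper descent, which is false in general (proper descent for morphisms of schemes is a nontrivial condition, not automatic from representability).

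You recognise this at the end (``La difficulté principale sera de rendre rigoureux le pas de descente\ldots''), and you propose to adapt \cite[Thm.~6.2]{sv2}. That is precisely the 1-categorical statement the paper invokes via SGA4; once you are willing to use it, the cleanest route is to pass to sheaves of sets first, as the paper does, rather than to weave it into a homotopical counit/unit computation.
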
  

\begin{proof} 
On se ramène à montrer que les catégories de faisceaux d'ensembles sont équivalentes. En effet il existe une structure de modèles projective locale sur les faisceaux simpliciaux $SSh(\schc)^{\mrm{pro}}$ (\cite[Thm.2.1]{blande}) tel que la paire de foncteurs de Quillen formée par l'inclusion des faisceaux simpliciaux dans les préfaisceaux simpliciaux et du foncteur de faisceautisation est une équivalence de Quillen,
$$\xymatrix{ SSh(\schc)^{\mrm{pro}} \ar@{^{(}->}@<-3pt>[r] & \sprpro \ar@<-3pt>[l]_{\und{a}}  }$$
d'après \cite[Thm.2.2]{blande}. De plus la catégorie $SSh(\schc)^{\mrm{pro}}$ est la catégorie des objets simpliciaux dans le topos $\shpro$, et sa structure de modèles ne dépend que du topos $\shpro$. Il en est de même pour la catégorie de modèles $SSh(\lissc)^{\mrm{pro}}$. 

A partir de là, il suffit de montrer que la paire de foncteurs adjoints 
$$\xymatrix{\shpro \ar@<3pt>[r]^-{i^\ast} & \shproliss \ar@<3pt>[l]^-{e} }$$
est une équivalence de catégories. Mais ceci est une conséquence directe de \cite[Exposé III, Thm.4.1]{sga4-1}. 

\end{proof}

\begin{lem}\label{preserve}
Le foncteur de restriction $i^* : \sprpro\lmo \sprproliss$ préserve toutes les équivalences locales.
\end{lem}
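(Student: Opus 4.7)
The plan is to characterize local equivalences for the proper topology via sheafification, and to exploit the equivalence of topoi already used in Proposition \ref{eqpreliss}. Concretely, I would reduce to a statement about simplicial sheaves, where $i^*$ is tautologically an equivalence.

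First I would observe that $i^*$ sends simplicial sheaves on $(\schc, \text{propre})$ to simplicial sheaves on $(\lissc, \text{propre})$: by the very definition of the proper topology on $\lissc$, every proper hyperrecouvrement of a lisse scheme in $\lissc$ is a proper hyperrecouvrement in $\schc$, so hyperdescent restricts cleanly. Second, I would invoke the model-theoretic result of Blander already cited in the proof of \ref{eqpreliss}: the sheafification functor $a : \sprpro \lmo SSh(\schc)^{\mrm{pro}}$, and its analogue $a'$ on $\lissc$, are left Quillen equivalences onto the corresponding projective local model structures on simplicial sheaves. In particular a morphism $f$ of $\sprpro$ is a local equivalence if and only if $af$ is a weak equivalence of simplicial sheaves, and similarly on $\lissc$.

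The key intermediate step is the commutation $a' \circ i^* \simeq i^* \circ a$. The natural transformation $a'(i^*E) \lmo i^*(aE)$ is well defined because $i^*(aE)$ is already a sheaf on $(\lissc,\text{propre})$ by the first observation. That this map is an isomorphism relies on remark \ref{hiro} (Hironaka's resolution): for $U \in \lissc$, every proper covering sieve of $U$ in $\schc$ admits a refinement by a sieve generated by morphisms with lisse sources, so that the filtered colimits computing $(aE)(U)$ and $(a'i^*E)(U)$ can be indexed over the same cofinal system of hyperrecouvrements. I expect this step to be the main technical point of the proof.

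Finally, Proposition \ref{eqpreliss} (or rather the topos equivalence underlying it) asserts that $i^*$ is an equivalence of categories between $\shpro$ and $\shproliss$, and hence between the associated projective local model structures on simplicial sheaves; in particular it preserves all weak equivalences of simplicial sheaves. Combining the three steps, if $f$ is a local equivalence in $\sprpro$, then $af$ is a weak equivalence of simplicial sheaves, hence $i^*(af) \simeq a'(i^*f)$ is a weak equivalence of simplicial sheaves, hence $i^*f$ is a local equivalence in $\sprproliss$, as claimed.
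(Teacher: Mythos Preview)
Your proposal is correct and follows essentially the same route as the paper: reduce to simplicial sheaves via Blander's Quillen equivalence, then use the equivalence of topoi $i^*:\shpro\simeq\shproliss$ established in Proposition~\ref{eqpreliss}. The only difference is one of emphasis: you isolate the commutation $a'\circ i^*\simeq i^*\circ a$ as an explicit intermediate step (justified via remark~\ref{hiro}), whereas the paper absorbs this into the observation that local equivalences are detected by homotopy sheaves and that the homotopy-sheaf construction in a category of simplicial objects in a topos is purely categorical, hence transported by the topos equivalence $i^*$.
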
 
\begin{proof} 
Vu le résultat de Blander rappellé au cours de la preuve précédente, il suffit de le montrer pour les catégories de modèles de faisceaux simpliciaux associés. 
Ceci est alors une conséquence du fait montré au cours de la preuve précédente que la restriction $i^*$ induit une équivalence  de catégories $i^*:\shpro\lmos{\sim} \shproliss$. En effet les équivalences locales sont caractérisées comme étant les morphismes qui induisent des isomorphismes sur tous les faisceaux d'homotopie et en tout point base ; de plus, la construction des faisceaux d'homotopie dans une catégorie d'objets simpliciaux dans un topos se réalise de façon purement catégorique et ne dépend que des topos sous-jacents. 
\end{proof}

\begin{proof} de \ref{restliss}  --- Comme le triangle de foncteurs
$$\xymatrix{ \lissc \ar[rr]^-i \ar[dr]_-{ssp} & & \schc \ar[dl]^-{ssp} \\ & SSet }$$
est commutatif, on en déduit que le triangle d'extensions de Kan à gauche
$$\xymatrix{ Ho(SPr(\lissc)) \ar[rr]^-{\lef i_!} \ar[dr]_-{\re{-}} & & Ho(SPr(\schc)) \ar[dl]^-{\re{-}} \\ & Ho(SSet) }$$
est commutatif. Cela veut dire que pour tout $G\in SPr(\lissc)$, on a un isomorphisme $\re{\lef i_! G}\simeq \re{G}$ dans $Ho(SSet)$. Donc si $G=\R i^* F$ pour $F\in SPr(\schc)$, on a des isomorphismes
\begin{align*}
\re{i^* F} & \simeq  \re{\R i^* F} &&  \textrm{(par le Lemme \ref{preserve})} \\
& \simeq  \re{\lef i_! \R i^* F} &&  \textrm{(par ce qui précède)} \\
& \simeq \re{F}  && \textrm{(par la proposition \ref{eqpreliss} et la proposition \ref{quipro})} \\
\end{align*}
\end{proof}

\subsubsection{Hyperdescente propre des espaces topologiques}\label{hyppro}
 
 \begin{proof} de la proposition \ref{quipro} --- La première assertion implique la deuxième sur le fait que $ssp$ est de Quillen à gauche pour la structure propre locale par les résultats généraux sur les localisations de Bousfield à gauche. 
 
 Notons alors $A=\ret{X}$ et $B_\bul=\ret{Y_\bul}$. Le morphisme d'espace simpliciaux $B_\bul \lmo A$ est un hyperrecouvrement propre d'espaces topologiques, c'est-à-dire un hyperrecouvrement pour la topologie sur $Top$ avec pour familles couvrantes les familles conjointement surjectives d'applications continues propres. Par conséquent pour prouver la proposition \ref{quipro} il suffit de prouver que pour tout hyperrecouvrement propre $B_\bul \lmo A$ d'espaces topologiques suffisamment gentils, le morphisme induit $hocolim_{\del^{op}} B_\bul\lmo A$ est une équivalence faible d'espaces. Par 'suffisamment gentils' on entend séparés localement compacts qui ont le type d'homotopie de CW-complexes. Ces hypothèses sont satisfaites par les espaces topologiques qui sont la réalisation de schémas de type fini (voir \cite{hironakatri}). La preuve est alors complétée par la proposition \ref{hypdescpro}. 
\end{proof}

\begin{prop}\label{hypdescpro}
 Soit $B_\bul\lmo A$ un hyperrecouvrement propre d'espaces topologiques avec $(B_n)_{\geq 0}$ et $A$ étant des espaces séparés localement compacts qui ont le type d'homotopie de CW-complexes. Alors le morphisme induit $hocolim_{\del^{op}} B_\bul\lmo A$ est une équivalence dans $Top$. 
\end{prop}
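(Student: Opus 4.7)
Ma proposition est de suivre l'approche de Dugger--Isaksen \cite{hyptop} pour les hyperrecouvrements ouverts (dont la version topologique sous-tend le théorème \ref{di}), en l'adaptant au cadre propre. Le plan comporte trois étapes : d'abord la réduction au cas d'un hyperrecouvrement borné, puis une récurrence sur la borne via la filtration squelettique de $hocolim_{\del^{op}} B_\bul$, et enfin le traitement du cas de base donné par le nerf de \v{C}ech d'une application propre surjective.

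Pour la réduction, je commencerais par observer que tout hyperrecouvrement propre $B_\bul \lmo A$ s'écrit comme la limite homotopique de ses coskelettes tronqués $\mrm{cosk}_N B_\bul \lmo A$, chacun étant un hyperrecouvrement propre borné ; un argument standard de cofinalité, analogue à celui employé par Dugger--Isaksen, permettrait alors de se ramener au cas d'un hyperrecouvrement borné par un entier $N$.

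Pour l'étape de récurrence sur $N$, je considérerais la filtration squelettique $\mrm{sk}_0\subseteq \mrm{sk}_1\subseteq \cdots \subseteq hocolim_{\del^{op}} B_\bul$. Pour chaque $n\geq 1$, le passage de $\mrm{sk}_{n-1}$ à $\mrm{sk}_n$ s'exprime comme un pushout homotopique faisant intervenir $B_n$ et le $n$-ième espace d'accouplement $M_n(B_\bul)$, construit à partir des $B_k$ pour $k<n$. La condition d'être un hyperrecouvrement propre signifie précisément que le morphisme naturel $B_n \lmo M_n(B_\bul)$ est une application propre surjective (au-dessus de $A$) pour tout $n\geq 0$, et l'hypothèse de récurrence, combinée avec le cas de base appliqué à ces morphismes, permettra de conclure que chaque étape de la filtration reste équivalente à $A$ dans $Top$.

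Le cœur de la preuve résidera donc dans le cas de base : pour une application continue propre surjective $p: B\lmo A$ entre espaces séparés localement compacts de type d'homotopie CW, le morphisme naturel $hocolim_{\del^{op}} \mcal{C}(p)_\bul \lmo A$ -- où $\mcal{C}(p)_\bul$ désigne le nerf de \v{C}ech de $p$ -- doit être une équivalence. Ce fait est l'analogue propre de la descente ouverte classique, et peut être établi via un argument de type Vietoris--Begle, ou en observant qu'une application propre surjective entre tels espaces est une application quotient effective dont le nerf de \v{C}ech en calcule le type d'homotopie (en comparant les complexes singuliers et en utilisant la formule de Künneth propre). L'obstacle principal sera précisément la justification rigoureuse de ce cas de base, et en particulier le contrôle des hypothèses de gentillesse (locale compacité, type d'homotopie CW) le long des produits fibrés itérés $B\times_A \cdots \times_A B$ qui interviennent dans le nerf de \v{C}ech, car les produits fibrés d'espaces ayant le type d'homotopie de CW-complexes n'ont pas automatiquement cette propriété.
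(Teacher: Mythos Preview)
Ton plan général (réduction aux hyperrecouvrements bornés, puis au nerf de \v{C}ech d'une surjection propre) est bien celui du papier, mais deux étapes présentent des lacunes réelles.

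\textbf{L'étape de récurrence.} Ta proposition d'utiliser la filtration squelettique de $hocolim_{\del^{op}} B_\bul$ ne fonctionne pas telle quelle. D'abord, le pushout reliant $sk_{n-1}$ à $sk_n$ fait intervenir l'objet de \emph{latching} $L_n B_\bul$ et non l'objet de matching $M_n B_\bul$ ; la condition d'hyperrecouvrement porte sur ce dernier et ne dit rien de directement exploitable sur les attachements squelettiques. Ensuite et surtout, ton affirmation que ``chaque étape de la filtration reste équivalente à $A$'' est fausse dès le départ : $sk_0 = B_0$, qui se surjecte proprement sur $A$ mais n'en est pas une équivalence. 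Le papier procède autrement : pour passer de la dimension $n+1$ à la dimension $\leq n$, il pose $C_\bul := cosk_n^A B_\bul$, forme l'objet bisimplicial $E_{\bul\bul}$ donné par le nerf (dans la direction horizontale) du morphisme $B_\bul \to C_\bul$, et montre que $B_\bul$ est un \emph{rétract} de la diagonale $D_\bul := d(E_{\bul\bul})$ au-dessus de $A$. Chaque ligne de $E_{\bul\bul}$ étant un nerf de \v{C}ech (traité par le cas de base) et l'objet simplicial obtenu après réalisation des lignes étant $C_\bul$ (de dimension $\leq n$, traité par récurrence), on obtient $hocolim_{\del^{op}} D_\bul \simeq A$, d'où le résultat pour $B_\bul$ par rétraction. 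C'est exactement l'argument de Dugger--Isaksen ; la filtration squelettique n'y intervient pas.

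\textbf{Le cas de base.} Tes pistes (Vietoris--Begle, quotient effectif) restent vagues, et tu identifies toi-même l'obstacle : le type d'homotopie CW n'est pas préservé par produit fibré. Le papier contourne ce problème en ne cherchant \emph{pas} à contrôler l'homotopie des produits fibrés itérés $B_0\times_A\cdots\times_A B_0$. Il travaille avec les préfaisceaux simpliciaux sur l'espace simplicial $B_\bul$ (construction de Deligne) et calcule la fibre de $\R\pi_\ast(K)$ en un point $a\in A$ via le théorème de changement de base propre de Lurie \cite[Cor.~7.3.1.18]{htt}, lequel ne requiert que la séparation et la compacité locale --- propriétés qui, elles, passent bien aux produits fibrés. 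Ceci ramène au cas $A=\ast$, où un lemme purement simplicial (le nerf de $X\to\ast$ est simplicialement contractile dès que $X$ est non vide) conclut. L'ingrédient clé qui manque à ta proposition est donc ce théorème de changement de base propre ; sans lui, je ne vois pas comment rendre rigoureux le cas de base sous les seules hypothèses de l'énoncé.
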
 

\begin{proof} 
La preuve est organisée en plusieurs étapes qui reproduisent les étapes de la preuve par Dugger--Isaksen de l'analogue 'ouvert' de cette assertion (c'est-à-dire là où l'on remplace la topologie propre par la topologie ouverte usuelle), voir \cite[Thm. 4.3]{hyptop}. On résume ces étapes avant de donner le corps de la preuve. 
\begin{enumerate}
\item On réduit l'assertion au cas des hyperrecouvrements bornés au sens de \cite[Déf. 4.10]{dhi}. 
\item On se ramène au cas des hyperrecouvrements propres qui sont des 'nerfs' de morphismes propres surjectifs en utilisant le même argument que dans \cite[Lem. 4.2]{hyptop}. 
\item On prouve l'assertion pour le nerf d'un morphisme propre surjectif en utilisant le théorème de changement de bases propre de Lurie \cite[Corollary 7.3.1.18]{htt}, ce qui ramène la preuve au cas du point $A=\ast$. C'est dans cette étape qu'on utilise l'hypothèse faîte sur la nature des espaces. 
\item Un lemme purement simplicial donne le résultat pour le point. 
\end{enumerate}

Nous introduisons d'abord un peu de terminologie et de notations concernant les hyperrecouvrements. Soit $\mcal{C}$ une catégorie complète et cocomplète. Pour tout $[n]\in \del$ et tout objet simplicial $C_\bul$ dans $\mcal{C}$ on note $sk_n C_\bul$ son $n$-squelette et $cosk_n C_\bul$ son $n$-cosquelette. On note $\mcal{C}^{\del^{op}}_{\leq n}$ la catégorie des objets simpliciaux $n$-tronqués dans $\mcal{C}$. On a un foncteur naturel $\mcal{C}^{\del^{op}}\lmo \mcal{C}^{\del^{op}}_{\leq n}$. Le $n$-squelette et le $n$-cosquelette sont les adjoints à gauche et droite respectivement à ce foncteur naturel : 

$$\xymatrix{ \mcal{C}^{\del^{op}}_{\leq n}  \ar@<7pt>[r]^-{sk_n} \ar@<-7pt>[r]_-{cosk_n} & \mcal{C}^{\del^{op}}  \ar[l]  }.$$

On fera l'abus de langage qui consiste, étant donné un objet simplicial $C_\bul$ dans $\mcal{C}$, à considérer les objets $sk_n C_\bul$ et $cosk_n C_\bul$ comme des objets de $\mcal{C}^{\del^{op}}$. Si $\mcal{C}=Top$, on a $(cosk_n C_\bul)_i\simeq \map(sk_n \del^i, C_\bul)$, où $\map$ est l'ensemble simplicial des morphismes dans $Top^{\del^{op}}$. Il existe une version augmentée de ces objets, si $C_\bul\lmo D$ est un objet simplicial augmenté vers un objet simplicial constant $D$, alors on note $sk_n^D C_\bul$ et $cosk_n^D C_\bul$ le foncteur squelette et cosquelette relativement à la catégorie $(\mcal{C}\downarrow D)^{\del^{op}}$ des objets simpliciaux sur $D$. On note
$$M_n C=lim_{(\del^{op} \downarrow n)\setminus id} C_\bul$$
le $n$-ième matching objet de $C_\bul$, où $(\del^{op} \downarrow n)\setminus id$ est la catégorie des morphismes $[n]$ dans $\del^{op}$ privée du morphisme identité de $[n]$. Il y a une version augmentée des matching objets. Si $C_\bul\lmo D$ est un objet simplicial augmenté de $\mcal{C}$ vers un objet simplicial constant $D$, alors on peut calculer la limite en regardant $C$ comme un foncteur de $(\del^{op} \downarrow n)\setminus id$ vers la catégorie $\mcal{C}\downarrow D$ des objets sur $D$ dans $\mcal{C}$. On note $M^D_n C_\bul$ cette version augmentée. On a des morphismes naturels $C_n\lmo M_n C_\bul$ et $C_n\lmo M^D_n C_\bul$. 

Supposons que $\mcal{C}$ est munie d'une topologie de Grothendieck de façon que l'on peut parler d'hyperrecouvrement dans $\mcal{C}$, pour nous ce sera $Top$ avec la topologie propre. On remarque que si $C_\bul\lmo D$ est un hyperrecouvrement de $D$ dans $\mcal{C}$, alors pour tout $n\geq 0$, le morphisme $cosk_n^D C_\bul \lmo D$ est un hyperrecouvrement. Un hyperrecouvrement $C_\bul\lmo D$ est dit \emph{borné} s'il existe un entier $N\geq 0$ tel que les morphismes $C_n\lmo M_n^D C_\bul$ sont des isomorphismes pour tout $n> N$. Le minimum des $N\geq 0$ avec cette propriété est appellé la \emph{dimension} de l'hyperrecouvrement. Un hyperrecouvrement est borné  de dimension $\leq N$ si et seulement si le morphisme unité $C_\bul \lmo cosk^D_N C_\bul$ est un isomorphisme. Si $f:C\lmo D$ est un morphisme dans $\mcal{C}$, on en déduit un morphisme d'objets simpliciaux constant $f:C\lmo D$ dans $\mcal{C}^{\del^{op}}$. L'objet simplicial augmenté $cosk^D_0 C_\bul \lmo D$ est appelé le nerf $f$. On a $(cosk^D_0 C_\bul)_i=C\times_D\cdots \times_D C$, $n+1$ fois. Les faces et les dégénérescences sont les projections et les diagonales respectivement. Si $f:C\lmo D$ est un recouvrement dans $\mcal{C}$ alors le nerf de $f$ est un hyperrecouvrement de $D$ de dimension $0$, et tous les hyperrecouvrements de dimension $0$ sont de cette forme. 

Première étape : on réduit au cas des hyperrecouvrements bornés. On observe que pour tout entier $k\geq 0$, l'hyperrecouvrement $cosk_{k+1}^A B_\bul\lmo A$ est borné et le morphisme unité $B_\bul\lmo cosk_{k+1}^A B_\bul$ induit un isomorphisme sur les $(k+1)$-squelettes. On a un triangle d'espaces,
$$\xymatrix{ hocolim_{\del^{op}} B_\bul \ar[r] \ar[rd] & hocolim_{\del^{op}} cosk^A_{k+1} B_\bul \ar[d] \\ & A }$$
Par le lemme \ref{sque} plus bas, le morphisme horizontal supérieur dans ce triangle induit un isomorphisme sur le groupe d'homotopie $\pi_k$ en tout point base. Supposons l'assertion prouvée pour les hyperrecouvrements bornés. Alors le morphisme vertical droite du triangle induit un isomorphisme sur le $\pi_k$ en tout point base, car l'hyperrecouvrement $cosk^A_{k+1} B_\bul\lmo A$ est borné. On en déduit que le morphisme $hocolim_{\del^{op}} B_\bul \lmo A$ induit un isomorphisme sur le $\pi_k$ en tout point base, et donc que c'est une équivalence car $k$ est arbitraire. 

\begin{lem}\label{sque}
Soit $C_\bul\lmo D_\bul$ un morphisme dans $Top^{\del^{op}}$ qui induit un isomorphisme sur les $(k+1)$-squelettes. Alors le morphisme induit
$$\pi_i (hocolim_{\del^{op}} C_\bul) \lmo \pi_i (hocolim_{\del^{op}} D_\bul)$$
est un isomorphisme pour tout $0\leq i\leq k$ et tout point base. 
\end{lem}

\begin{proof} de \ref{sque} --- On se ramène au cas d'un morphisme dans $SSet^{\del^{op}}$, c'est-à-dire les ensembles bisimpliciaux. En effet on note 
$$\xymatrix{ SSet \ar@<2pt>[r]^-{Re} & Top \ar@<2pt>[l]^-{S}  }$$
l'équivalence de Quillen standard entre les espaces topologiques et les ensembles simpliciaux, où $S$ est le foncteur complexe singulier. Cette paire de Quillen induit une adjonction sur les objets simpliciaux en prenant appliquant simplement $Re$ et $S$ niveau par niveau. Pour tout $C_\bul\in Top^{\del^{op}}$, le morphisme de counité $Re(S(C_\bul))\lmo C_\bul$ est une équivalence niveau par niveau. On en déduit que le morphisme induit 
$$hocolim_{\del^{op}} Re(S( C_\bul)) \lmo hocolim_{\del^{op}} C_\bul$$ 
est une équivalence. D'autre part on a une équivalence naturelle 
$$hocolim_{\del^{op}} Re( S (C_\bul)) \simeq Re( hocolim_{\del^{op}}S( C_\bul)).$$
Par conséquent on a une équivalence 
$$Re( hocolim_{\del^{op}}S C_\bul)\simeq hocolim_{\del^{op}} C_\bul.$$
Pour tout entier $i\geq 0$ et tout point base, on a alors un isomorphisme canonique de groupes 
$$\pi_i(hocolim_{\del^{op}}S C_\bul) \simeq \pi_i Re( hocolim_{\del^{op}}S C_\bul)\simeq \pi_i (hocolim_{\del^{op}} C_\bul).$$
Il suffit donc de prouver l'assertion pour un morphisme $C_\bul\lmo D_\bul$ dans $SSet^{\del^{op}}$ qui induit une équivalence sur les $(k+1)$-squelettes . Dans ce cas on a une équivalence naturelle $hocolim_{\del^{op}} C_\bul \simeq d(C_\bul)$ où $d:SSet^{\del^{op}}\lmo SSet$ est le foncteur diagonal. On a donc une équivalence $d(sk_{k+1} C_\bul) \lmos{\sim} d(sk_{k+1} D_\bul)$. D'autre part le foncteur $sk_{k+1}$ commutent au foncteur $d$, et on obtient donc une équivalence $sk_{k+1} d(C_\bul) \lmos{\sim} sk_{k+1} d(D_\bul)$. A partir de là on peut montrer que le morphisme $d(C_\bul)\lmo d(D_\bul)$ induit donc un isomorphisme sur les $\pi_i$ pour tout $0\leq i\leq k$ pour tout point base. Ceci achève la preuve de \ref{sque}. 
\end{proof}

Deuxième étape : on effectue une récurrence sur la dimension de l'hyperrecouvrement pour se ramener au cas des hyperrecouvrements de dimension $0$, c'est-à-dire des nerfs de morphismes propres surjectifs. Soit $n\geq 0$ un entier. Supposons que l'assertion de la proposition \ref{hypdescpro} soit vraie pour les hyperresouvrements de dimension $\leq n$ et soit $B_\bul\lmo A$ un hyperrecouvrement propre borné de dimension $n+1$. On considère le morphisme unité $B_\bul \lmo cosk_n^A B_\bul=:C_\bul$. Alors $C_\bul$ est un hyperrecouvrement borné de dimension $\leq n$. Considérons l'espace bisimplicial qui est le nerf du morphisme $B_\bul\lmo C_\bul$, 
$$E_{\bul\bul} :=( \xymatrix{B_\bul & B_\bul \times_{C_\bul} B_\bul \ar@<2pt>[l] \ar@<-2pt>[l]  & B_\bul \times_{C_\bul} B_\bul \times_{C_\bul} B_\bul \ar@<3pt>[l] \ar[l] \ar@<-3pt>[l] \cdots } ).$$
En regardant $C_\bul$ comme constant dans une des directions simpliciales, on a un morphisme $E_{\bul\bul}\lmo C_\bul$. La $k$-ième ligne de ce morphisme est le nerf du morphisme $B_k\lmo C_k$. Considérons la diagonale $D_\bul :=d(E_{\bul\bul})$. Alors par des résultats généraux de théorie de l'homotopie (voir e.g. \cite{hirs}), l'espace $hocolim_{\del^{op}} D_\bul$ est équivalent à l'espace obtenue en prenant la colimite homotopique de chaque ligne de $E_{\bul\bul}$, puis en prenant la colimite homotopique l'espace simplicial obtenu. Mais par hypothèse de récurrence, la $k$-ième ligne de $E_{\bul\bul}$ étant un hyperrecouvrement de dimension $0$ de $C_k$, sa colimite homotopique est équivalente à $C_k$. L'espace simplicial obtenu est alors $C_\bul$, qui est de dimension $\leq n$, et donc par hypothèse de récurrence, $hocolim_{\del^{op}} C_\bul \simeq A$. On a donc prouvé que $hocolim_{\del^{op}} D_\bul \simeq A$. 

Montrons maintenant que $B_\bul$ est un rétract de $D_\bul$ sur $A$, et donc que l'application continue $hocolim_{\del^{op}} B_\bul \lmo A$ est une équivalence, étant un rétract de $hocolim_{\del^{op}} D_\bul \lmo A$ qui est une équivalence. On a un morphisme naturel $D_\bul\lmo B_\bul$ donné par la dégénérescence horizontale $E_{0,k} \lmo E_{k,k}$. On cherche alors un morphisme $D_\bul \lmo B_\bul$. Pour ça il est suffisant de définir un morphisme $sk^A_{n+1}D_\bul \lmo sk^A_{n+1}B_\bul$, car alors le morphisme obtenu par adjonction $D_\bul \lmo cosk^A_{n+1} sk^A_{n+1} B_\bul \simeq B_\bul$ nous donne le morphisme cherché. On observe que par définition de $C_\bul$, le morphisme $B_k\lmo C_k$ est un isomorphisme pour $0\leq k\leq n$ et le morphisme $sk^A_n B_\bul \lmo sk^A_n D_\bul$ est un isomorphisme. Soit $[0]\lmo [n+1]$ n'importe quelle coface, elle nous donne alors un morphisme de face 
$E_{n+1, n+1} \lmo E_{0,n+1}$ qui définit un morphisme $sk^A_{n+1}D_\bul \lmo sk^A_{n+1}B_\bul$. On vérifie immédiatement que le morphisme composé $B_\bul \lmo D_\bul \lmo B_\bul$ est l'identité, ce qui prouve que $B_\bul$ est un rétract de $D_\bul$. Comme expliqué plus haut on a alors $hocolim_{\del^{op}} B_\bul \simeq A$. Il suffit donc de prouver l'assertion de la proposition \ref{hypdescpro} pour les hyperrecouvrements de dimension $0$. 

Troisième étape : dimension $0$. Soit $\pi : B_\bul\lmo A$ un nerf d'un morphisme propre surjectif $B_0\lmo A$, avec les $B_0$ et $A$ qui sont des espaces séparés localement compacts et du type d'homotopie d'un CW. On a donc $B_n\simeq B_0\times_A\cdots\times_A B_0$, $(n+1)$ fois. On va utiliser un théorème de changement de base propre pour les préfaisceaux simpliciaux. On va donc travailler avec des préfaisceaux simpliciaux sur l'espace simplicial $B_\bul$ et étudier leur comportement par rapport à $\pi$. Deligne a défini dans \cite{deligneIII} une notion de faisceaux sur un espace simplicial, en construisant un site à partir de celui-ci et en prenant les faisceaux sur ce site. Sa construction se généralise aux préfaisceaux simpliciaux. En effet soit $\tilde{B}_\bul$ la catégorie qui a pour objets les paires $([n], V)$ avec $[n]\in \del$ et $V\subseteq B_n$ une partie ouverte. Un morphisme $([n], V)\lmo ([m], V')$ est la donnée d'un morphisme $a:[n]\lmo [m]$ dans $\del$ et d'une application continue $V'\lmo V$ tel que le carré
$$\xymatrix{V' \ar[r] \ar[d] & V\ar[d] \\ B_m \ar[r]^-{B(a)} & B_n   }$$
commute. La composition et les identités sont définies de manière naturelle. La catégorie $\tilde{B}_\bul$ est naturellement munie de la topologie ouverte induite par la topologie de chaque $B_n$ et $\del$ est considéré comme discret. On peut alors former la catégorie $SPr(B_\bul)$ des préfaisceaux simpliciaux sur le site $\tilde{B}_\bul$. Un objet de $F$ de $SPr(B_\bul)$ est la donnée pour chaque entier $n\geq 0$, d'un préfaisceau simplicial $F_n$ sur $B_n$, et pour tout morphisme $a:[n]\lmo [m]$ dans $\del$, d'un morphisme de préfaisceau $u_a : F_n\lmo B(a)_\ast  F_m$, tel que $u_{id_{[n]}} =id_{F_n} $ et pour tous $a:[n]\lmo [m]$ et $b:[m]\lmo [k]$ dans $\del$, $u_{ba} =B(a)_\ast u_b u_a$. 

Le morphisme d'espaces simpliciaux $\pi : B_\bul\lmo A$ donne un foncteur entre sites $\pi : \tilde{B}_\bul\lmo \tilde{A}$. Considérons le diagramme de catégories,
$$\xymatrix{\tilde{B}_\bul \ar[rr]^-{\pi} \ar[rd]_-q && \tilde{A} \ar[ld]^-p \\ & \ast  &  }$$
où $\ast$ la catégorie ponctuelle. Alors en passant aux préfaisceaux simpliciaux on obtient des jeux d'adjonctions, 
$$\xymatrix{SPr(B_\bul) \ar@<2pt>[rr]^-{\pi_\ast}  \ar@<2pt>[rd]^-{q_\ast} && SPr(A) \ar@<2pt>[ll]^-{\pi^{-1}} \ar@<2pt>[ld]^-{p_\ast} \\ & SSet \ar@<2pt>[lu]^-{cst} \ar@<2pt>[ru]^-{cst} & }$$
où $cst(K)$ est le préfaisceau simplicial constant de valeur $K$ pour tout $K\in SSet$. Les foncteurs $p_\ast$ et $q_\ast$ sont connus sous le nom de section globale et sont adjoints à droite de $cst$. On munit $SPr(B_\bul)$ et $SPr(A)$ de la structure locale relativement à la topologie des espaces en jeu. On choisit ici la localisation de Bousfield à gauche de la structure de modèles injective sur les préfaisceaux (ce qui est réellement important ici est le fait que les équivalences sont les équivalences locales, mais nous arons besoin de calculer une limite homotopique, qui se définie à partir de la structure injective). Les foncteurs $\pi_\ast$, $p_\ast$ et $q_\ast$ sont alors de Quillen à droite. Pour tout $K\in SSet$ on a $\pi^{-1}\circ cst(K)\simeq cst(K)$ et des isomorphismes $\lef \pi^{-1}\simeq  \pi^{-1}$ et $\lef cst\simeq cst$. On a donc un isomorphisme canonique $\R p_\ast \R\pi_\ast \simeq \R q_\ast$. 

Le résultat de Toën \cite[Thm. 2.13]{galhom} implique que pour tout préfaisceau simplicial constant $K\in SPr(A)$ on a un isomorphisme canonique 
$$\R p_\ast (K)\simeq \R\map (S A, K)$$ 
dans $Ho(SSet)$. Cela utilise l'hypothèse selon laquelle $A$ a le type d'homotopie d'un CW-complexe. Nous calculons ensuite les sections globales dérivées $\R q_\ast (K)$ d'une préfaisceau simplicial constant $K$ sur $B_\bul$. Le site $\tilde{B}_\bul$ vient avec un foncteur $\alpha : \tilde{B}_\bul\lmo \del$ où $\del$ est comme un site discret. Le foncteur $\alpha$ est simplement la projection $\alpha ([n], V)=[n]$. On note $\beta :\del\lmo \ast$ le foncteur vers la catégorie ponctuelle. On a alors un diagramme 
$$\xymatrix{SPr(B_\bul) \ar[r]^-{\alpha_\ast} \ar[rd]_-{q_\ast} & SPr(\del) \ar[d]^-{\beta_\ast} \\ & SSet }$$
Ces foncteurs sont de Quillen à droite ($SPr(\del)$ est munie de la structure injective). Le foncteur dérivé $\R \beta_\ast$ est isomorphe à $holim_\del$. Pour tout préfaisceau simplicial constant $K\in SPr(B_\bul)$, on a $\R\alpha_\ast (K)=( \R q_{n\ast} K)_{n\geq 0}$ où $q_n:\tilde{B}_n\lmo \ast$. En utilisant \cite[Thm. 2.13]{galhom} (vu que les $B_n$ ont le type d'homotopie de CW-complexes), on obtient un isomorphisme $\R\alpha_\ast (K)\simeq (\R\map(SB_n, K))_{n\geq 0}$ dans $Ho(SPr(\del))$. Et donc un isomorphisme canonique dans $Ho(SSet)$
$$\R q_\ast (K)\simeq holim_{\del^{op}} \R\map(SB_\bul, K)\simeq \R\map(hocolim_{\del^{op}} SB_\bul, K).$$
Supposons le lemme suivant établi.

\begin{lem}\label{pbc}
Soit $K\in SPr(B_\bul)$ un préfaisceau simplicial constant, avec $K$ un ensemble simplicial tronqué (c'est-à-dire isomorphe à un de ses squelettes). Alors le morphisme unité 
$$K\lmo \R\pi_\ast \pi^{-1} (K)\simeq \R\pi_\ast(K)$$
est un isomorphisme dans $Ho(SPr(A))$ (où $K$ désigne le même préfaisceau simplicial constant sur $A$). 
\end{lem}

En appliquant l'isomorphisme $\R p_\ast \R\pi_\ast \simeq \R q_\ast$ à un préfaisceau simplicial constant tronqué $K$ sur $B_\bul$, on obtient un isomorphisme canonique pour tout ensemble simplicial $K$,
$$\R\map(SA, K)\simeq \R\map(hocolim_{\del^{op}} SB_\bul, K).$$
Ce qui implique que le morphisme $hocolim_{\del^{op}} SB_\bul\lmo SA$ est un isomorphisme dans $Ho(SSet)$. En appliquant le foncteur $Re$ (qui commute aux colimites homotopiques), on en déduit que le morphisme $hocolim_{\del^{op}} B_\bul\lmo A$ est un isomorphisme dans $Ho(Top)$, ce qui prouve notre assertion dans le cas des hyperrecouvrements de dimension $0$. 

Pour résumer, on est simplement ramener à prouver \ref{pbc}. 

\begin{proof} de \ref{pbc} --- Il suffit pour cela de montrer que le morphisme unité  $K\lmo \R\pi_\ast \pi^{-1} (K)\simeq \R\pi_\ast(K)$ induit un isomorphisme sur la fibre en un point quelconque $a\in A$. Pour tout $a\in A$ on a un carré cartésien d'espaces simpliciaux 
$$\xymatrix{B_\bul^a\ar[r]^-{\pi^a} \ar[d]_-\phi & \ast \ar[d]^-a \\ B_\bul \ar[r]^-{\pi}  & A }$$
On prétend que le théorème de changement de base propre est vrai pour ce carré cartésien, c'est le contenu du lemme suivant.
  
\begin{lem}\label{pbcreal}
Pour tout préfaisceau simplicial constant tronqué $K\in SPr(B_\bul)$, le morphisme canonique 
$$a^{-1} \R \pi_\ast (K)\lmo \R\pi^a_\ast \phi^{-1} (K)\simeq\R\pi^a_\ast (K) $$
est un isomorphisme dans $Ho(SSet)$. 
\end{lem}

Supposons pour le moment que le lemme \ref{pbcreal} est établi, alors $K\lmo \R\pi^a_\ast (K)$ est un isomorphisme dans $Ho(SSet)$, ce qui est exactement l'énoncé de \ref{hypdescpro} pour $A=\ast$ et $B_\bul$ un hyperrecouvrement de dimension $0$. En effet on a un isomorphisme $ \R\pi^a_\ast (K)\simeq holim_{\del^{op}} \R\map(SB_\bul^a, K)$. 

Pour prouver \ref{pbcreal}, on calcule la fibre $a^{-1} \R \pi_\ast (K)$. Pour toute partie ouverte $U\subseteq A$ on a un carré cartésien d'espaces simpliciaux 
$$\xymatrix{B_\bul^U\ar[r]^-{\pi^U} \ar[d]_-{\phi^U} & U \ar[d]^-i \\ B_\bul \ar[r]^-{\pi}  & A }$$
On a alors des isomorphismes $\R\Gamma (U, \R\pi_\ast^U (K))\simeq \R\Gamma(B_\bul^U, K)\simeq holim_{\del^{op}} \R\map(SB_\bul^U, K)$ dans $Ho(SSet)$. Le fibre 
 $a^{-1} \R \pi_\ast (K)$ est isomorphe à la colimite filtrante standard 
$$colim_{a\in U\subseteq A} \R\Gamma (U, \R\pi_\ast^U (K)) \simeq colim_{a\in U\subseteq A} holim_{\del^{op}} \R\map(SB_\bul^U, K).$$
Maintenant l'hypothèse selon laquelle $K$ est tronqué implique que cette limite homotopique est isomorphe à un limite homotopique finie. En effet si $K$ est $n$-tronqué, alors l'ensemble simplicial $\R\map(SB_\bul^U, K)$ est aussi $n$-tronqué et on a calcule cette limite en restreignant les indices à la sous-catégorie de $\del^{op}$ donné par les simplexes de $\del$ de dimension $\leq n+1$. La colimite filtrante et la limite homotopique finie commute et on a 
$$a^{-1} \R \pi_\ast (K) \simeq holim_{\del^{op}} colim_{a\in U\subseteq A}  \R\map(SB_\bul^U, K).$$
On applique alors le théorème de changement de bases propre de Lurie \cite[Cor. 7.3.1.18]{htt} au carré cartésien d'espaces séparés localement compacts, 
$$\xymatrix{ B_n^a\ar[r]^-{\pi_n^a} \ar[d]_-{\phi_n} & \ast \ar[d]^-a \\ B_n \ar[r]^-{\pi_n}  & A  }$$
Lurie démontre son résultat dans le contexte de la descente et non de l'hyperdescente ; cependant d'après \cite[Cor. 7.2.1.12]{htt} un préfaisceau simplicial constant tronqué a la condition d'hyperdescente si et seulement s'il a la condition de descente ordinaire. On obtient un isomorphisme $a^{-1} \R\pi_{n\ast } (K)\simeq \R\pi_{n\ast}^a(K)$. Par le même argument que précédemment $a^{-1} \R\pi_{n\ast } (K)\simeq colim_{a\in U\subseteq A}  \R\map(SB_n^U, K)$, ce qui prouve que $a^{-1} \R \pi_\ast (K)\simeq \R\pi^a_\ast (K)$. Ceci achève donc la preuve de \ref{pbcreal}.

Pour montrer \ref{pbc}, il reste seulement à montrer que $K\lmo \R\pi^a_\ast (K)$ est un isomorphisme dans $Ho(SSet)$ pour tout $K$ tronqué. Vu ce qui a été dit précédemment, c'est équivalent à dire que $hocolim_{\del^{op}} B_\bul^a \lmo \ast$ est une équivalence d'espaces. C'est donné par le lemme suivant. 

\begin{lem}\label{puresimp}
Soit $X$ un espace topologique non vide (resp. un ensemble simplicial non vide). Alors le nerf $X_\bul \lmo \ast$ du morphisme $p:X\lmo \ast$ induit une équivalence $hocolim_{\del^{op}} X_\bul \lmo \ast$ dans $Top$ (resp. dans $SSet$). 
\end{lem}

\begin{proof} de \ref{puresimp} --
L'assertion dans $SSet$ implique celle dans $Top$. En effet si $X\in Top$ on a vu dans la preuve de \ref{sque} que $hocolim_{\del^{op}} X_\bul$ et $hocolim_{\del^{op}} SX_\bul$ ont les mêmes groupes d'homotopie. 

Soit $X\in SSet$. Le morphisme $X_\bul \lmo \ast$ est une équivalence d'homotopie simpliciale dans $SSet^{\del^{op}}$. En effet soit $x:\ast \lmo X$ un point. On définit une homotopie $h:\del^1\times X_\bul\lmo X_\bul$ entre $id_{X_\bul}$ et $xp$. On définit $h_n:\del([n], [1]) \times X_n\lmo X_n$ par la formule suivante. Soit $a:[n]\lmo [1]$ un morphisme dans $\del$, il est essentiellement donné par un entier $0\leq m\leq n$. On pose $h_n(a, (x_0, \hdots, x_n))=(x_0, \hdots, x_m, x,\hdots, x)$. On a alors une homotopie qui vérifie $h(0,-)=id_{X_\bul}$ et $h(1,-)=xp$. 

On rappelle que le foncteur de réalisation
$$\re{-} : SSet^{\del^{op}} \lmo SSet$$
est définie par la formule suivante
$$\re{Y_\bul}:=coeq(\xymatrix{\bigsqcup_{n\in \del} \del^n\times Y_n& \bigsqcup_{p\mo q\in \del} \del^p\times Y_q \dar[l] } ).$$
Il a la propriété d'envoyer les équivalences d'homotopie simpliciale sur des équivalences d'homotopie simpliciale. On en déduit que $\re{X_\bul}$ est contractile. En outre on a un isomorphisme $hocolim_{\del^{op}} X_\bul \simeq \re{X_\bul}$ dans $Ho(SSet)$ (voir \cite{hirs}) et donc $hocolim_{\del^{op}} X_\bul$ est contractile. 
\end{proof}

Ceci achève la preuve du lemme \ref{pbc} et donc de la proposition \ref{hypdescpro}. 
\end{proof}
\end{proof}

\newpage

\section{K-théorie topologique des espaces non commutatifs}

\subsection{K-théorie semi-topologique et topologique}\label{kstktop}

En \ref{caracalg}, on a défini un préfaisceau d'anneaux en spectres symétriques 
$$\ka : \affc^{op}\lmo Sp$$
tel que pour tout $\spec(A)\in \affc$, on a un isomorphisme canonique $\ka(\spec(A))\simeq\kn(A)$ dans $Ho(Sp)$. Pour toute $\C$-dg-catégorie $T\in\dgcatc$, on a défini un préfaisceau de $\ka$-modules en spectres symétriques
$$\ukn(T) : \affc^{op} \lmo Sp$$
tel que pour tout $\spec(A)\in \affc$, on a un isomorphisme canonique $\ukn(T)(\spec(A))\simeq \kn(T\tel_\C A)$ dans $Ho(Sp)$. On a donc $\ukn(\unit)\simeq \ka$ où $\unit$ désigne la $\C$-dg-catégorie ponctuelle avec un seul objet et l'anneau $\C$ comme endomorphismes. On rappelle qu'on note $\ka-Mod_\s$ la catégorie des objets en $\ka$-modules dans $\spaf$. On a donc $\ukn(T)\in \ka-Mod_\s$. 

En \ref{realsp} on a vu que les foncteurs de réalisation spectrale (non dérivé et dérivé), 
$$\sspsp : \spaf\lmo Sp,$$
$$\resp{-} : Ho(\spaf)\lmo Ho(Sp)$$ 
sont des foncteurs monoïdaux. Par l'existence d'une catégorie homotopique des objets en monoïdes et en modules (voir \cite[Thm.3.3 et Thm.2.1]{hoveymon}), on en déduit que $\resp{\ka}$ est un anneau en spectres symétriques et qu'on obtient un foncteur de réalisation spectrale pour les $\ka$-modules à valeurs dans les spectres en $\resp{\ka}$-modules, 
$$\resp{-} : Ho(\ka-Mod_\s)\lmo Ho(\resp{\ka}-Mod_\s).$$
On a une version connective de ces spectres de K-théorie, noté $\tilde{\ka}$ et $\ukc(T)$. De même on obtient un foncteur de réalisation spectrale à valeurs pour les $\tilde{\ka}$-modules, 
$$\resp{-} : Ho(\tilde{\ka}-Mod_\s)\lmo Ho(\resp{\tilde{\ka}}-Mod_\s).$$

\begin{df}\label{defkst}
La \emph{K-théorie semi-topologique} (resp. la \emph{K-théorie semi-topologique connective}) d'une $\C$-dg-catégorie $T\in \dgcatc$ est par définition le spectre symétrique en $\resp{\ka}$-modules (resp. en $\resp{\tilde{\ka}}$-modules),
$$\kst(T):=\resp{\ukn (T)} \qquad (\textrm{resp.  } \kcst(T):=\resp{\ukc (T)}).$$
Par l'existence d'un foncteur de remplacement cofibrant fonctoriel dans les catégories de modèles concernées, on a donc deux foncteurs
$$\kst : \dgcatc\lmo \resp{\ka}-Mod_\s,$$
$$\kcst : \dgcatc\lmo \resp{\tilde{\ka}}-Mod_\s.$$
On notera $\kst_i(T):=\pi_i\kst(T)$ pour tout $i\in \Z$. 
\end{df}

\begin{rema}
Il nous semble nécessaire d'inclure la définition de $\kcst$ étant donné qu'il nous est a priori inconnu que $\kcst(T)$ est le revêtement connectif de $\kst(T)$. 
\end{rema}

\begin{rema}\label{fibtop}
On remarque l'existence d'un morphisme reliant la K-théorie algébrique et la K-théorie semi-topologique. Si $T\in \dgcatc$, on a le morphisme d'unité de la paire de foncteurs adjoints $(\resp{-}, \hbs)$,
$$\ukn(T)\lmo \hbs(\resp{\ukn(T)}).$$
En prenant les sections globales, c'est-à-dire la valeur sur $\spec(\C)$, on obtient un morphisme dans $Ho(Sp)$, 
$$\eta_T:\kn(T)\lmo \rhomi_{Ho(Sp)}(\sinf(\spec(\C))_+,\kst(T))\simeq \kst(T).$$
Ceci définit un morphisme $\eta:\kn\lmo \kst$ dans $Ho(Sp^{\dgcatc})$. On a une version connective $\tilde{\eta}:\kc\lmo \kcst$. Pour un schéma $X\in \schc$, en prenant le $\pi_0$ on a un morphisme 
$$\kc_0(X)\lmo \kcst_0(X).$$
Vu la formule \ref{pi0cor}, ce morphisme est le passage au quotient par la relation d'équivalence sur les fibrés vectoriels algébriques sur $X$ qui identifie deux fibrés quand on peut les relier par une courbe algébrique connexe. On en déduit un morphisme $\kcst_0(X)\lmo \ktopu^0(sp(X))$ vers le groupe de Grothendieck des fibrés vectoriels topologiques sur l'espace $sp(X)$. Nous allons voir plus bas que ce dernier morphisme est une bijection dans le cas d'un schéma propre lisse de type fini sur $\C$. 
\end{rema}

\begin{rema}\label{restlisskst}
En appliquant la version spectrale du théorème \ref{restliss}, on voit qu'on peut calculer les réalisations topologiques de la définition \ref{defkst} en restreignant d'abord les préfaisceaux de K-théorie algébrique aux schémas lisses.
\end{rema}

Les deux résultats suivant sont décisifs pour la définition de la K-théorie topologique des espaces non commutatifs.

On désigne par $bu$ le spectre de K-théorie topologique complexe connective usuel. C'est-à-dire que pour tout espace topologique compact $X$, si $\ktopu^0(X)$ est le groupe de Grothendieck des fibrés vectoriels topologiques complexes sur $X$, on a un isomorphisme 
$\ktopu^0(X)\simeq\pi_0 \map_{Ho(Sp)} (\sinf SX_+, bu)$. Un modèle de $bu$ en tant que spectre symétrique sera donné plus bas.

\begin{theo}\label{bu}
Il existe un isomorphisme canonique $\kcst(\unit)\simeq bu$ dans $Ho(Sp)$. 
\end{theo}

Une preuve de ce théorème est donnée plus bas en \ref{ktoppoint}. Pour le moment nous l'utilisons pour donner la définition de la K-théorie topologique.

\begin{theo}\label{annupoint}
Pour toute algèbre commutative lisse $B\in \calgc$, le morphisme naturel 
$$\kcst(B)\lmo \kst(B)$$ 
est un isomorphisme dans $Ho(Sp)$. En particulier par le théorème \ref{bu} on a un isomorphisme $\kst(\unit)\simeq \bu$ dans $Ho(Sp)$. 
\end{theo}

\begin{proof} de \ref{annupoint} --- C'est un fait connu que la K-théorie algébrique négative d'une algèbre commutative lisse est nulle (cf. \cite[Rem.7]{schl}). Par conséquent, le morphisme de préfaisceau en spectres $\ukc(B)\lmo \ukn(B)$ est une équivalence sur les schémas affines lisses. Par le théorème \ref{restliss}, on en déduit que ce morphisme induit une équivalence sur les réalisations spectrales $\kcst(B)\simeq \kst(B)$. 
\end{proof}

\begin{nota} 
Ces deux théorèmes se reformule en disant que l'on a des isomorphismes 
$$\resp{\tilde{\ka}}\simeq \resp{\ka}\simeq bu$$
dans $Ho(Sp)$. On notera donc $\bu$ l'anneau en spectres symétriques $\resp{\ka}$. On a donc des foncteurs
$$\kcst, \kst : \dgcatc\lmo \bu-Mod_\s.$$
\end{nota}

\begin{rema}\label{uniqbu}
C'est un fait classique que le spectre symétrique $bu$ qui représente la K-théorie topologique admet un modèle dans les spectres symétriques en anneaux commutatifs (associatifs et unitaires), avec l'addition correspondant à la somme des fibrés et la multiplication au produit tensoriel. Un moyen d'exprimer cela est de considérer $bu$ comme le spectre associé au $\gam$-espace spécial qui est la réalisation topologique du champ des fibrés vectoriels (voir plus en \ref{ktoppoint}) et de voir ce $\gam$-objet comme un $\gam$-anneau commutatif dont la loi est induite par le produit tensoriel. Cette structure d'anneaux commutatif sur $bu$ est de plus \emph{unique} par \cite[Cor.1.4]{bakerrichter} au sens que pour tout spectre en anneau commutatif $A$ et tout morphisme de spectres en anneaux $f:A\lmo bu$ qui induit un isomorphisme sur tous les groupes d'homotopie, alors il existe un morphisme $g : A\lmo bu$ dans la catégorie homotopique des spectres en anneaux commutatifs tel que $g$ est isomorphe à $f$.  

Ici nous n'avons pas la possibilité de considérer $\bu$ comme un spectre symétriques en anneau commutatif, ni $\ka$ comme un anneau commutatif car la structure d'anneau nous est donnée par celle des endomorphismes de l'unité dans $\mbb{M}_{loc} (\C)$ (voir la partie \ref{caracalg}). Nous ne parlerons donc que d'anneaux associatifs (unitaires), tout en sachant que notre spectre $\bu$ est équivalent dans $Sp$ à un spectre symétrique en anneau commutatif et que cette structure est unique. Remarquons que l'on ressent ici la limitation imposée par le language des catégories de modèles et des structures algébriques strictes, et qu'il serait beaucoup plus comfortable de travailler avec les $\infty$-catégories monoïdales de Lurie. 
\end{rema}

\begin{nota}\label{choixbeta}
 Par le théorème de périodicité de Bott, on sait que le groupe abélien $\pi_2\bu$ est libre de rang $1$. Pour définir la K-théorie topologique, on choisit un générateur de Bott $\beta\in \kst_2(\unit)=\pi_2 \bu$ parmi les deux possibles. Il sera plus commode de choisir d'abord un générateur $\alpha$ de la partie non triviale de $\kn_0(\po)$ et de prendre $\beta=\eta(\alpha)$ où $\eta:\kn_0(\po)\lmo \kst_0(\po)$ est le morphisme canonique. Alors $\beta$ nous donne un générateur de $\kst_2(\unit)$ par le morphisme canonique $\kst_0(\po)\lmo \ktopu(S^2) \simeq \ktopu^0(\ast)\oplus \beta \ktopu^{-2}(\ast)$. 
\end{nota}

\begin{nota}
On rappelle qu'étant donné un spectre symétrique en anneau $A$, un entier $k$, et un élément $a\in\pi_k A$, on peut définir l'anneau en spectre localisé $A[a^{-1}]$ par rapport à l'élément $a$. Il est munit d'un morphisme d'anneau $i_a:A\lmo A[a^{-1}]$ et vérifie que pour tout anneau en spectre $B$, et tout morphisme d'anneau $A\lmo B$, l'ensemble simplicial $\map_{A-Alg} (A[a^{-1}], B)$ est non vide si et seulement si la multiplication par $a$ est inversible dans le $\pi_*(A)$-module $\pi_*(B)$. Cette propriété caractérise l'objet $A[a^{-1}]$ à équivalence près. Ceci est par exemple une conséquence de la version non-commutative de \cite[Prop.1.2.9.1]{hag2} appliqué à la catégorie de modèles monoïdale $Sp$ des spectres symétriques. On en déduit alors par \cite[Cor.1.2.9.3]{hag2} que le foncteur obtenue par composition avec $i_a$, 
$$i_a^* : Ho( A[a^{-1}]-Mod_\s) \lmo Ho(A-Mod_\s)$$
est pleinement fidèle et son image essentielle est formée des $A$-modules $M$ tel que la multiplication par $a$ est inversible dans le $\pi_*A$-module $\pi_*M$. Si $M$ est un $A$-module et $a\in\pi_kA$ un élément, le localisé $M[a^{-1}]$ est alors défini comme étant le $A[a^{-1}]$-module $M\sm_A A[a^{-1}]$. 
\end{nota}

\begin{rema}\label{remloccomass}
Soit $A$ un spectre symétrique en anneau commutatif  et $a\in \pi_k A$. Notons $A_{ass}[a^{-1}]$ la localisation de $A$ par rapport à $a$ au sens des anneaux associatifs et $A_{com}[a^{-1}]$ la localisation de $A$ par rapport à $a$ au sens des anneaux commutatifs. Alors par la propriété universelle il existe un morphisme d'anneaux $ c: A_{ass}[a^{-1}]\lmo A_{com}[a^{-1}]$. Ce morphisme est une équivalence car il existe un diagramme commutatif,
$$\xymatrix{Ho(A_{ass}[a^{-1}]-Mod_r) \ar@{^{(}->}[r]  \ar[d]^-{c^*} & Ho(A-Mod_r) \ar[d]^-{\wr} \\ Ho( A_{com}[a^{-1}]-Mod) \ar@{^{(}->}[r] & Ho(A-Mod)  }$$
où l'indice $r$ désigne la catégorie des modules à droite. Comme $A$ est commutatif le morphisme des $A$-modules à droite dans les $A$-modules est une équivalence. Alors les deux catégories de modules $Ho(A_{ass}[a^{-1}]-Mod_r)$ et $Ho( A_{com}[a^{-1}]-Mod)$ s'identifie toutes les deux à la sous-catégorie de $Ho(A-Mod)$ constituée des $A$-modules pour lesquels la multiplication par $a$ est une équivalence. On en déduit que $c^*$ est une équivalence de catégories et donc que $c$ est une équivalence. 
\end{rema}

\begin{rema}
On peut donc considérer le spectre en anneau $\bu$ et l'élément de Bott $\beta\in \pi_2\bu$ et sa localisation $\bu[\beta^{-1}]$. A priori cette localisation est calculé au sens des anneaux associatifs. Cependant par la remarque \ref{uniqbu}, on sait qu'il existe un morphisme d'anneaux $\bu\lmos{\sim} bu$ avec $bu$ un modèle dans les anneaux commutatifs, qui est une équivalence. On en déduit par la remarque \ref{remloccomass} qu'il n'y pas d'ambiguité sur la nature de l'anneau localisé $\bu[\beta^{-1}]$ : il est équivalent à la localisation commutative de $bu$ et est donc équivalent à la colimite standard
$$\bu[\beta^{-1}]\simeq \xymatrix{  colim (bu \ar[r]^-{\cup \beta}  & bu \ar[r]^-{\cup \beta} & \cdots )}$$
où $\cup \beta$ désigne le morphisme produit par $\beta$. La structure d'anneaux de $\bu[\beta^{-1}]$ est donc bien la structure usuelle, notamment on a un isomorphisme $\bu[\beta^{-1}]\sm_\s H\C \simeq H\cuu$ dans la catégorie homotopique des $H\C$-algèbres. On notera $\BU:=\bu[\beta^{-1}]$.

\end{rema}

\begin{df}\label{deftop}
Le \emph{spectre de K-théorie topologique d'une $\C$-dg-catégorie $T\in \dgcatc$} est par définition le spectre symétrique,
$$\ktop(T):=\kst(T)[\beta^{-1}].$$
Ceci définit donc un foncteur
$$\ktop : \dgcatc\lmo \BU-Mod_\s.$$
On note $\ktop_i(T):=\pi_i\ktop(T)$ pour tout $i\in\Z$. 
\end{df}

\begin{rema}
En suivant la remarque \ref{fibtop}, on compose le morphisme unité $\kn\lmo \kst$ avec le morphisme de structure $\kst\lmo \ktop$ et on obtient de cette manière un morphisme toujours noté $\eta : \kn\lmo \ktop$ de la K-théorie algébrique vers la K-théorie topologique. 
\end{rema}

La K-théorie topologique hérite des propriétés de la K-théorie algébrique de la proposition \ref{propkn}. 

\begin{prop} 
\begin{description}
\item[a.] La K-théorie topologique commute aux colimites homotopiques filtrantes de dg-catégories. 
\item[b.] La K-théorie topologique envoie les équivalences Morita dérivée sur des équivalences. 
\item[c.] Pour toute suite exacte $\xymatrix{T'\ar[r]^-i & T\ar[r]^-p & T''}$, la suite induite
$$\ktop(T')\lmo \ktop(T)\lmo \ktop(T'')$$
est un triangle distingué dans $Ho(Sp)$. 
\end{description}
\end{prop}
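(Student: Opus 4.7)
Le plan est de ramener chacune des trois propriétés à la propriété correspondante pour la K-théorie algébrique non-connective (prop. \ref{propkn}), en la transportant à travers les trois étapes successives de la construction : le passage au préfaisceau $\ukn$, la réalisation topologique $\resp{-}$, puis l'inversion de l'élément de Bott $\beta$. Une observation-clé qui servira aux trois points est que chacune de ces trois opérations est un foncteur exact sur la catégorie homotopique stable : $\ukn$ est un foncteur qui préserve équivalences, hocolimites filtrantes et suites de fibration \textbf{niveau par niveau} grâce aux propriétés du foncteur $Q$ de remplacement cofibrant fixé à la notation \ref{notapref} (les dg-catégories $Q(T)$ étant plates, le produit tensoriel $-\tel_\C A$ préserve les équivalences, les hocolimites filtrantes, et les suites exactes) ; $\resp{-}$ est de Quillen à gauche vers une catégorie de modèles stable donc commute aux hocolimites et préserve les suites de cofibration, qui coïncident avec les suites de fibration dans $Sp$ ; enfin la localisation $(-)[\beta^{-1}]=-\sm_{\bu}\BU$ est un smash avec un spectre fixe, donc commute aux hocolimites et envoie suites de fibration sur suites de fibration.

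Pour (a), à partir de la commutation de $\kn$ aux hocolimites filtrantes (prop.\ref{propkn}.b), on déduit que le préfaisceau $\ukn$ commute aux hocolimites filtrantes de dg-catégories niveau par niveau ; comme les équivalences dans $\spaf$ sont définies niveau par niveau, cette commutation a lieu dans $Ho(\spaf)$. Ensuite $\resp{-}$ commute aux hocolimites (étant de Quillen à gauche), puis l'inversion de $\beta$ aussi, donnant la conclusion.

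Pour (b), si $T\lmo T'$ est une équivalence Morita, alors pour toute $\C$-algèbre $A$ le morphisme $QT\te_\C A\lmo QT'\te_\C A$ est encore une équivalence Morita (la platitude de $QT$ et $QT'$ assurant que le produit tensoriel dérivé se calcule par le produit ordinaire), donc $\ukn(T)\lmo \ukn(T')$ est une équivalence niveau par niveau, puis une équivalence après $\resp{-}$ et après inversion de $\beta$.

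Pour (c), étant donné une suite exacte $T'\lmo T\lmo T''$ de dg-catégories, on vérifie d'abord que pour toute $\C$-algèbre $A$, la suite $QT'\te_\C A\lmo QT\te_\C A\lmo QT''\te_\C A$ reste exacte (utiliser la platitude des $QT_i$ et l'invariance de l'exactitude par produit tensoriel avec des dg-catégories plates) ; appliquer $\kn$ niveau par niveau donne par \ref{propkn}.d une suite de fibration $\ukn(T')\lmo \ukn(T)\lmo \ukn(T'')$ dans $\spaf$ (fibrations calculées niveau par niveau). Comme $\resp{-}$ est un foncteur de Quillen à gauche entre catégories de modèles \emph{stables}, il préserve les suites de cofibration donc les triangles distingués ; le smash avec $\BU$ préserve aussi les triangles distingués. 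On obtient le triangle distingué voulu dans $Ho(Sp)$. Le point délicat est l'argument intermédiaire au niveau des préfaisceaux en spectres : il faut justifier soigneusement que la suite de fibration calculée niveau par niveau est aussi une suite de fibration (ou de cofibration) dans la catégorie de modèles stable $\spaf$, ce qui découle du fait que suites de fibration et de cofibration coïncident dans une catégorie de modèles stable et se calculent niveau par niveau dans le cas projectif. C'est essentiellement la seule subtilité de la preuve ; le reste est une vérification de compatibilité formelle.
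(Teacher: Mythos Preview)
Your proposal is correct and follows exactly the approach the paper intends. In fact, the paper gives no explicit proof of this proposition at all: it simply states, just before the proposition, that ``La K-théorie topologique hérite des propriétés de la K-théorie algébrique de la proposition \ref{propkn}'', and leaves the verification to the reader. Your write-up is precisely the natural elaboration of that one sentence --- transporting each of the three properties of $\kn$ through the three steps $\ukn$, $\resp{-}$, and $(-)[\beta^{-1}]$, each of which is exact in the appropriate sense.
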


\begin{nota}\label{notaktopsch}
On peut définir la K-théorie topologique des $\C$-schémas en utilisant notre nouvelle définition. On utilise les notations \ref{notaksch}. Si $X\in\schc$ est un $\C$-schéma de type fini, la K-théorie topologique de $X$ au sens dg est par définition le spectre symétrique $\ktop(X):=\ktop(\lpe(X))$. Ceci définit un foncteur toujours noté
$$\ktop : \schc\lmo Sp.$$
\end{nota}

\subsection{K-théorie semi-topologique du point}\label{ktoppoint}

On donne un modèle particulier pour le spectre $bu$, en tant que spectre symétrique. Pour toute algèbre commutative $A\in \calgc$, on note $\proj(A)$ la catégorie de Waldhausen des $A$-modules projectif de type fini, c'est-à-dire des fibrés vectoriels de rang fini sur $\spec(A)$ (voir la remarque \ref{algebra}). Les équivalences sont les isomorphismes et les cofibrations sont les monomorphismes admissibles. Ceci définit un pseudo-foncteur $\affc^{op} \lmo WCat$, puisque étant donné un morphisme d'algèbres $A\lmo B$, on a un foncteur exact 
\begin{align*}
\proj(A) & \lmo \proj(B) \\
E & \longmapsto E\te_A B
\end{align*}
et ceux-ci vérifient l'associativité seulement à isomorphisme canonique près. On note $\proj$ la strictification canonique de ce pseudo-foncteur. On note $\vect=Nw\proj$ le préfaisceau simplicial nerf des isomorphismes dans $\proj(A)$. La somme directe des modules définit une structure de monoïdes commutatif 'à homotopie cohérente près' sur $\vect$. Plus précisément, en utilisant la construction $B_W$ donnée en \ref{delass}, on a un $\gam$-préfaisceau simplicial 
$$\vect_\bul :=NiB_W \vect \in \gam-\spr$$
tel que pour tout entier $n\geq 0$, on ait une équivalence niveau par niveau $\vect_{(n)}\simeq \vect^n$. On définit le spectre symétrique $bu$ comme étant le spectre symétrique connectif, 
$$bu:=\mcal{B}\regam{\vect_\bul}^+.$$

\begin{rema}
On utilise les notations de \ref{prestru} concernant le classifiant des objets en groupes. On note $Gl_n : \affc^{op}\lmo SSet$ le préfaisceau simplicial discret en groupes linéaires. On note $\coprod_{n\geq 0} \mrm{B}Gl_n$ le $\gam$-préfaisceau simplicial avec la structure de monoïdes donnée par la somme par blocs des matrices. Alors il est connu qu'il existe une équivalence étale locale de $\gam$-préfaisceau simpliciaux,
$$\vect_\bul \simeq \coprod_{n\geq 0} \mrm{B}Gl_n$$
Alors par le théorème \ref{di}, les propositions \ref{proprelder} et \ref{gp}, on en déduit des équivalences
$$\re{\vect_\bul}^+\simeq \re{\coprod_{n\geq 0} \mrm{B}Gl_n}^+ \simeq (\coprod_{n\geq 0} \re{\mrm{B}Gl_n})^+ \simeq (\coprod_{n\geq 0} \mrm{B}\re{Gl_n})^+\simeq  (\coprod_{n\geq 0} \mrm{B} Gl_n(\C))^+$$
où $Gl_n(\C)$ désigne maintenant le groupe topologique. Celui-ci a le même type d'homotopie que le groupe unitaire $U_n(\C)$, on a donc une équivalence 
$$\re{\vect_\bul}^+\simeq (\coprod_{n\geq 0} \mrm{B} U_n(\C))^+.$$
Mais la complétion en groupe de $\coprod_{n\geq 0} \mrm{B} U_n(\C)$ est bien connue, voir par exemple \cite[App.Q]{fmfilt}, on a 
$$(\coprod_{n\geq 0} \mrm{B} U_n(\C))^+\simeq \mrm{B}U_\infty \times \Z,$$ 
où $BU_\infty$ est la colimite des $\mrm{B}U_n(\C)$ par rapport aux inclusions naturelle $\mrm{B}U_n(\C)\hookrightarrow \mrm{B}U_{n+1}(\C)$, avec la structure de $\gam$-objets toujours donné par la somme par blocs des matrices et la structure usuelle pour la composante $\Z$. Par conséquent, par le théorème \ref{bf}, on en déduit une équivalence de spectres $\mcal{B}\regam{\vect_\bul}^+ \simeq \mcal{B} (BU_\infty \times \Z)$, ce qui est la définition usuelle du spectre $bu$. 
\end{rema}

\begin{proof} du théorème \ref{bu} --- On a une chaîne d'isomorphismes naturels dans $Ho(Sp)$, 
\begin{align*}
\kcst(\ast)& =\resp{\ukc(\ast)} \\
& \simeq \resp{\kc(\vect)} \qquad &&\textrm{(par la remarque \ref{algebra})} \\
& = \resp{\mcal{B} K^\gam(\vect)} \qquad & &\textrm{(par définition du spectre de K-théorie, cf. \ref{delass})} \\
&\simeq \mcal{B} \regam{ K^\gam(\vect)} \qquad &&\textrm{(par la proposition \ref{proprelb}) } \\
&\simeq \mcal{B} \regam{(\vect_\bul)^+} \qquad &&\textrm{(par le lemme \ref{cofsc}, vu que les cofibrations de $\vect$ sont scindées) } \\
&\simeq \mcal{B} \regam{\vect_\bul}^+ \qquad &&\textrm{(par la proposition \ref{proprelplus})} \\
& =bu. 
\end{align*}
\end{proof}

\subsection{K-théorie semi-topologique et les champs de dg-modules}

La K-théorie semi-topologique, comme initiée par Toën (voir \cite{sat}, \cite{kal}, \cite{kkp}) a été d'abord définie comme la réalisation topologique d'un certain champ des dg-modules pseudo-parfaits associé à une dg-catégorie. Il existe en fait deux champs associés à un dg-catégorie $T$. Il s'agit du champ $\M_T$ des dg-modules pseudo-parfaits et du champ $\M^T$ des dg-modules parfaits. Ce dernier est pertinent pour notre étude la K-théorie topologique vu que la K-théorie algébrique se calcule avec la catégorie des dg-modules parfaits. D'autre part, le champ $\M_T$ donne lieu à une théorie duale à la K-théorie topologique qui mérite le nom de K-cohomologie topologique. Le champ $\M_T$ est étudié par Toën--Vaquié dans \cite{modob}, où il est prouvé sa locale géométricité et ses propriétés de finitude. Dans cette partie on exprime le lien entre la K-théorie semi-topologique connective et le champ $\M^T$, ainsi que le lien entre la K-cohomologie topologique et le champ $\M_T$. Ce résultat est basé sur l'existence d'une $\ao$-équivalence entre le champ $\M^T$ et la $S$-construction de la catégorie des dg-modules parfaits. 
\\

Pour toute $\C$-dg-catégorie $T$ on définit des préfaisceaux de catégorie de Waldhausen. On note 
\begin{itemize}
\item $\uparf(T) : \spec(A)\longmapsto \parf(T\tel_\C A)=\parf(T,A)$. 
\item $\upspa(T) : \spec(A) \longmapsto \pspa(T\tel_\C A)=\pspa(T,A)$, où cette dernière est la catégorie des $T\tel_\C A$-dg-modules qui sont parfaits relativement à $A$, encore appellés dg-modules pseudo-parfaits (voir \cite[Déf.2.7]{modob}). 
\end{itemize}
Ces foncteurs stricts sont obtenus comme strictification canonique de pseudo-foncteurs pour lesquels la fonctorialité est induite par l'image directe. De plus pour obtenir des foncteurs stricts, rappellons qu'on a fixé un foncteur de remplacement cofibrant pour les dg-catégories à la notation \ref{notapref}. 

Ces deux préfaisceaux de catégories de Waldhausen donne lieu à des champs, 
\begin{itemize}
\item $\M^T =Nw\uparf(T) : \spec(A)\longmapsto Nw\parf(T,A)$. 
\item $\M_T=Nw\upspa(T) : \spec(A)\longmapsto Nw\pspa(T,A)$. 
\end{itemize}
où $Nw$ désigne toujours le nerf des équivalences (c'est-à-dire des quasi-isomorphismes) dans les catégories de Waldhausen concernées. La somme directe des dg-modules induit une structure de monoïde commutatif à homotopie cohérente près sur ces champs. On applique le foncteur $B_W$ défini en \ref{gamstru} et on obtient des $\gam$-objets spéciaux dans $\spr$, 
$$\M^T_\bul = NwB_W\uparf(T), \qquad
\M_T^\bul =NwB_W\upspa(T).$$

\subsubsection{Les cofibrations se scindent à $\ao$-homotopie près}

Soit $T\in \dgcatc$ une dg-catégorie sur $\C$. Tout ce qui va être dit dans ce paragraphe est en fait valable pour le champ des pseudo-parfaits $\M_T$ en remplaçant la K-théorie des parfaits par celle des pseudo-parfaits. Pour fixer les idées et parce qu'on s'intéresse dans ce travail à la K-théorie des parfaits on écrit ce qui suit avec le champ $\M^T$. 

En utilisant les notations de l'exemple \ref{exespk}, on a un $\del$-préfaisceau simplicial donné par 
$$\K^T_\bul:=\K_\bul (\uparf(T))=NwS_\bul \uparf(T)$$

On pratique l'abus de notation qui consiste à considérer $\M^T$ comme un $\del$-objet de façon naturelle, c'est à dire en appliquant de foncteur $\alpha^*$ défini en \ref{strucdel}. On rappelle qu'on a défini en \ref{delass}, un morphisme de $\del$-objet noté $\phi$ qui relie la construction $B_W$ à la $S$-construction. La même construction que pour $\phi$ nous donne un morphisme de $\del$-objets
$$\lambda_\bul  : \M^T_\bul\lmo \K^T_\bul.$$

\begin{prop}\label{mk} 
Le morphisme $\lambda_\bul  : \M^T_\bul \lmo \K^T_\bul$ est une $\ao$-équivalence dans $\del-\spretao$. 
\end{prop}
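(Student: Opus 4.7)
The plan is to factor $\lambda_\bul$ through an intermediate $\del$-object obtained from the \emph{split} $S$-construction, exactly as was done for the morphism $\phi$ in Section \ref{delass}. Namely, one defines $S^{sc}_\bul\uparf(T)$ to be the sub-simplicial object of $S_\bul\uparf(T)$ consisting, at level $n$, of the strictly split filtrations $a_1\moi a_1\oplus a_2\moi \hdots \moi a_1\oplus\cdots\oplus a_n$, taken together with the canonical choices of successive cofibers. One then factors
$$\xymatrix{ \M^T_\bul \ar[r]^-{\phi^T_\bul} & NwS^{sc}_\bul\uparf(T) \ar[r]^-{\iota_\bul} & \K^T_\bul=NwS_\bul\uparf(T). }$$
I will show that $\phi^T_\bul$ is a levelwise equivalence of $\del$-preshaves, and that $\iota_\bul$ is a levelwise $\ao$-equivalence; since $\ao$-étale equivalences of $\del$-preshaves can be tested levelwise, this will conclude the proof.

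First, for $\phi^T_\bul$: at each level $n$ and each affine $A$, the functor $(B_W\uparf(T\tel A))_n\lmo S^{sc}_n\uparf(T\tel A)$ that sends $(a_1,\hdots,a_n)$ to the tautological split filtration is a fibrewise equivalence of categories, with quasi-inverse given by taking successive subquotients. Taking $Nw$ and checking simpliciality exactly as in the proof of Lemma \ref{cofsc} yields a levelwise equivalence in $\spr$, hence a fortiori in $\spretao$.

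Next, for $\iota_\bul$: I have to show that, for every fixed $n\geq 1$, the inclusion
$$\iota_n(A):NwS^{sc}_n\parf(T,A)\lmo NwS_n\parf(T,A)$$
is an $\ao$-equivalence of simplicial preshaves on $\affc$. The strategy is the standard "scaling of extension classes" homotopy. It suffices to treat $n=2$, the higher levels being deduced by induction on $n$ using the Segal decomposition of $S^{sc}_\bul$ and $S_\bul$ along the inner faces, and the fact that both preshaves are special in a suitable sense. For $n=2$, an object of $NwS_2\parf(T,A)$ is a cofibration sequence $E=(A\moi B\twoheadrightarrow C)$ of perfect $T\tel A$-dg-modules, classified up to equivalence by a point of $\R\map_{T,A}(C,A[1])$; the subsimplicial preshaf $NwS^{sc}_2\parf(T,A)$ is exactly the preimage of the zero class. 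The affine line acts on the classifying preshaf by scaling extension classes, giving an $\ao$-homotopy between the identity of $NwS_2\parf(T,-)$ and the composition with a retraction to $NwS^{sc}_2\parf(T,-)$. Concretely, one constructs over $\ao\times\spec(A)$ a family of $T\tel A[t]$-dg-modules whose fibre at $t=1$ is the original extension $B$ and whose fibre at $t=0$ is the split extension $A\oplus C$, defined by twisting the differential by $t$ times a cocycle representing $\xi$; this yields a morphism in $\spretao$ exhibiting $\iota_2$ as an $\ao$-equivalence.

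The main obstacle will be to make the scaling homotopy strictly functorial, so that it assembles into a genuine $\ao$-homotopy of the $\del$-object $\iota_\bul$ and not merely levelwise at $n=2$. The clean way to achieve this is to replace $\uparf(T)$ by an equivalent strict model in which extensions are represented by actual cocycles (so that $t\cdot\xi$ makes sense globally), and to use the fact that the localisation at $\ao$ kills the discrepancy between any two such strict lifts. Alternatively, one can argue by induction on $n$, using that at each level $n$ the map $\iota_n$ fits into homotopy pullbacks
$$\xymatrix{ NwS^{sc}_n\uparf(T)\ar[r] \ar[d] & NwS^{sc}_{n-1}\uparf(T)\times Nw\uparf(T)\ar[d] \\ NwS_n\uparf(T)\ar[r] & NwS_{n-1}\uparf(T)\times Nw\uparf(T) }$$
after $\ao$-localisation, and applying the base case together with the inductive hypothesis; compatibility with faces and degeneracies is then automatic from the definition of $\phi^T_\bul$ at each level.
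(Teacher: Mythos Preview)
Your outline---factor through $S^{sc}_\bul$, treat $n=2$ by a scaling homotopy, then induct---is close in spirit to the paper's argument, but both of the key steps have genuine gaps that the paper resolves differently.

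\textbf{The case $n=2$.} Your construction ``twist the differential by $t\cdot\xi$'' does not define a morphism of presheaves on $NwS_2\uparf(T)$: a morphism of cofibration sequences need not respect the graded splittings, so the cocycle $\xi$ is not functorial in the object of $S_2$. Passing to a strict model does not obviously help, for the same reason. The paper avoids this by replacing $\K^T_n$ with the globally equivalent presheaf $X_n=Nw\bigl(\parf(T,-)^{[n-1]}\bigr)$ of \emph{all} chains of morphisms; the $\ao$-homotopy is then the trivially functorial formula $h_A(f,\,i:x\to y)=(fi:x\to y)$, connecting $id_{X_2}$ to the endofunctor $Z(i)=(0:x\to y)$. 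The extra work is to identify $Z$ with $\lambda_2\circ\mu_2$ up to global equivalence, which the paper does by conjugating with the shift auto-equivalence $t(i:x\to y)=(y\to\cone(i))$.

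\textbf{The induction.} Your assertion that $NwS_\bul\uparf(T)$ is ``special in a suitable sense'' is precisely the content of the proposition, so it cannot be used. And your proposed square is \emph{not} a homotopy pullback: the pullback of $NwS_n\to NwS_{n-1}\times Nw\uparf(T)$ along $\iota_{n-1}\times id$ is the presheaf of filtrations whose first $n-1$ steps are split, which is strictly larger than $NwS^{sc}_n$. The paper instead proves (following \cite[Lem.~4.2]{dhall}) that the map
\[
X_n\lmo X_{n-1}\underset{X_1}{\ph} X_2,\qquad (a_1\to\cdots\to a_n)\longmapsto\bigl((a_1\to\cdots\to a_{n-2}\to a_n),\,(a_{n-1}/a_{n-2}\to a_n/a_{n-2})\bigr)
\]
is a global equivalence; note the fiber product is over $X_1$, not a bare product, and the map involves passage to cofibers. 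The analogous decomposition for $\M^T_\bul$ is trivial, and the square built from $\lambda_{n-1}$, $\lambda_2$ and these decompositions then gives the inductive step by $2$-out-of-$3$.
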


Ce résultat est central dans notre étude de la K-théorie topologique des dg-catégories. Il nous permet de montrer plus bas le lien entre $\kcst(T)$ et $\M^T$ pour une dg-catégorie $T$. Il peut se résumer en disant que bien que les cofibrations ne sont pas forcément scindées dans le préfaisceau de catégories de Waldhausen $\parf(T,-)$, si on s'autorise à le regarder à $\ao$-homotopie près, alors c'est le cas. Ceci fait le lien avec la K-théorie qui est précisément l'invariant pour lequel les cofibrations se scindent par le théorème d'additivité de Waldhausen (cf. \cite[Thm.1.4.2]{wald}). La preuve de la proposition \ref{mk} occupe le reste de cette partie. 

\begin{proof} 
On introduit d'abord quelques notations. Soit $n\geq 1$ un entier. On désigne par $[n]$ la catégorie associé l'ensemble ordonné $\{1<2<\cdots <n\}$. On pose 
\begin{align*}
M_n :\affc^{op}  & \lmo Cat \\
\spec(A) & \longmapsto M_n(A)=\parf(T,A)^{[n-1]}.
\end{align*}
C'est le préfaisceau en catégories des suites de $(n-1)$ morphismes composables dans $\parf(T,-)$. Un morphisme de $a_1\mo \cdots \mo a_n$ vers $b_1\mo \cdots \mo b_n$ dans $\parf(T,A)^{[n-1]}$ est par définition la donnée de carrés commutatifs dans $\parf(T,A)$, 
$$\xymatrix{a_1\ar[r] \ar[d]  & a_2 \ar[r] \ar[d] & \cdots \ar[r] & a_n \ar[d]  \\ b_1\ar[r]  & b_2 \ar[r] & \cdots \ar[r] & b_n}$$
On a $M_1=\parf(T,-)$. Pour tout $n\geq 1$ et tout $A\in \calgc$, la catégorie $M_n(A)$ est naturellement munie d'une structure de modèles projective. Soit $X_n=NwM_n$ le préfaisceau simplicial qui classifie les suites de $(n-1)$ morphismes composables à quasi-isomorphisme près. Pour tout $n\geq 1$, on a une inclusion naturelle, 
$$\K^T_n \hookrightarrow X_n.$$
Le fait que tout morphisme de $\parf(T,A)$ se factorise par une cofibration suivi d'un quasi-isomorphisme implique que ce dernier morphisme est une équivalence globale dans $\spr$. Il suffit donc de prouver que le morphisme composé noté encore $\lambda_n : \M^T_n \lmo X_n$ est une $\ao$-équivalence pour tout $n\geq 1$. L'avantage maintenant est que l'on se moque de savoir si les morphismes considérés sont des cofibrations ou non. On procède par récurrence sur $n$. On a besoin du niveau $2$ et $n-1$ pour traiter le niveau $n$. 
Au niveau $1$ on a un isomorphisme naturel $\M^T_1 =X_1=Nw\parf(T,-)$. Au niveau $2$ le morphisme $\lambda_2$ agit sur les $0$-simplexes par 
$$\lambda_2 (a,b)=(a\mo a\oplus b).$$
On définit alors un inverse $\ao$-homotopique explicite à $\lambda_2$ noté $\mu_2$ défini sur les $0$-simplexes par 
$$\mu_2(i:x\mo y):=(x,\cone(i)).$$
On définit $\mu_2$ sur les morphismes de manière évidente et ceci nous donne un morphisme de préfaisceaux simpliciaux $\mu_2 :  X_2\lmo \M^T_2$. On a alors
$$\mu_2 \circ \lambda_2 (a,b)=\mu_2(a\mo a\oplus b)=(a,\cone(a\mo a\oplus b))\simeq (a,b),$$
où le dernier morphisme est un quasi-isomorphisme. On a donc pour chaque $A\in \calgc$ une homotopie $\mu_2 \circ \lambda_2\Rightarrow id$ entre endomorphisme de $\M^T_2(A)$. Dans l'autre direction on a 
$$\lambda_2\circ \mu_2(i:x\mo y)=\lambda_2(x,\cone(i))=(x\mo x\oplus \cone(i)).$$
On définit alors une $\ao$-homotopie $h:\ao\times X_2\lmo X_2$ sur toute algèbre $A\in \calgc$ par
\begin{align*}
h_A :  A\times X_2(A)  & \lmo X_2(A) \\
(f,i:x\mo y) & \longmapsto (f i:x\mo y).
\end{align*}
Le morphisme $h$ est une $\ao$-homotopie entre $id_{X_2}$ et l'endomorphisme $Z$ de $X_2$ défini par 
$$Z(i:x\mo y)=(0:x\mo y).$$
L'endomorphisme $Z$ est en fait conjugué par une autoéquivalence (globale) de $X_2$ au morphisme $\lambda_2\circ \mu_2$. Cette auto-équivalence est donné par le décalage des morphismes
\begin{align*}
t : X_2 & \lmo X_2\\
 (i:x\mo y)  & \longmapsto (y\mo \cone(i)),
\end{align*}
Le morphisme $t$ vérifie $t^{(3)} (i)=i[1]$, c'est donc une autoéquivalence de $X_2$. L'inverse de $t$ est donné par
$$t^{-1} (i:x\mo y)=\cocone(i)\mo x, $$
où le dernier morphisme est donné par la définition du cocône. On a alors 
\begin{align*}
tZt^{-1} (f) & = tZ(\cocone(i) \mo x) \\
& = t(0: \cocone(i)\mo x )\\
&= x\mo \cone(0:\cocone(i) \mo x). \\
\end{align*}
Le dg-module $\cone(0:\cocone(i) \mo x)$ est canoniquement quasi-isomorphe à $x\oplus \cone(i)$ avec la différentielle somme. On en déduit l'existence d'un quasi-isomorphisme $tZt^{-1} \simeq \lambda_2\circ \mu_2$. Pour résumer, le morphisme $h$ est une $\ao$-homotopie $id_{X_2}\Rightarrow Z$, on a $tZt^{-1} \simeq \lambda_2\circ \mu_2$ et une homotopie $\mu_2 \circ \lambda_2\Rightarrow id$ ce qui implique que $\lambda_2$ est une $\ao$-équivalence. 

Soit maintenant $n\geq 2$. On utilise la notion de produit fibrés de catégories de modèles comme définie dans \cite{dhall}. On considère le foncteur 
\begin{align*} 
F:M^{(n)}  & \lmo M^{(n-1)} \underset{M^{(1)}}{\ph} M^{(2)} \\
(a_1\mo a_2\mo \cdots\mo a_n) & \longmapsto ((a_1\mo \cdots\mo a_{n-2} \mo a_n), (a_{n-1}/a_{n-2} \mo a_n/a_{n-2}), a_{n-1}/a_{n-2}, id, id)
\end{align*}
où $a_{n-2} \mo a_n$ est le morphisme composé $a_{n-2} \mo a_{n-1} \mo a_n$, et la notation quotient désigne la cofibre homotopique (ou le cone). 
\begin{lem}\label{lemrec}
Le foncteur $F$ vérifie les deux hypothèse de \cite[Lem. 4.2]{dhall}. On en déduit que le morphisme induit par $F$,  
$$q_n:X_n \lmo X_n \underset{X_1}{\ph} X_2$$
est une équivalence globale dans $\spr$ (le produit fibré homotopique étant calculé dans la structure de modèles globale $\spr$).  
\end{lem}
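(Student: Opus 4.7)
The plan is to apply \cite[Lem.~4.2]{dhall} pointwise on each affine scheme $\spec(A)\in\affc$, which reduces the global claim to showing that for every $A\in\calgc$ the functor of model categories
$$F_A : M_n(A) \lmo M_{n-1}(A) \underset{M_1(A)}{\ph} M_2(A)$$
satisfies the two hypotheses of the cited lemma. Heuristically these are: (i) essential surjectivity up to quasi-isomorphism, and (ii) a homotopy-fully-faithful condition, which together imply that $Nw(F_A)$ induces a weak equivalence on the nerves of the subcategories of weak equivalences, hence a levelwise equivalence $q_n$ in $\spr$.

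The first hypothesis is the crucial one, and I would verify it by giving an explicit construction mirroring the inverse $\mu_2$ built in the case $n=2$. Given compatible data $((a_1\to\cdots\to a_{n-2}\to c),\,(b_{n-1}\to b_n),\,\sigma)$, with $\sigma$ a quasi-isomorphism $c/a_{n-2}\simeq b_{n-1}$ and a quasi-isomorphism $c\simeq b_n$, the natural candidate for the missing object is the homotopy pullback
$$a_{n-1} \;:=\; a_n \stackrel{h}{\times}_{b_n} b_{n-1}.$$
An octahedral argument in the derived category $D_{pe}(T\tel_\C A)$ then shows that the cofiber of $a_{n-2}\to a_{n-1}$ is canonically quasi-isomorphic to $b_{n-1}$ and that the induced map on cofibers coincides, up to quasi-isomorphism, with the prescribed map $b_{n-1}\to b_n$. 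This produces a preimage under $F_A$, up to levelwise quasi-isomorphism, of any object of the target.

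The second hypothesis amounts to verifying that the homotopy fibers of $F_A$ over a fixed point of the target are weakly contractible. For fixed $(a_1\to\cdots\to a_{n-2}\to a_n)$ and $(b_{n-1}\to b_n)$ together with compatibility data, the space of factorizations $a_{n-2}\to a_{n-1}\to a_n$ equipped with a quasi-isomorphism $a_{n-1}/a_{n-2}\simeq b_{n-1}$ compatible with the rest is contractible because it is identified with the space of lifts to the homotopy pullback used above; the universal property of $\stackrel{h}{\times}$ gives the required contractibility.

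The main obstacle will be less conceptual than technical: one must match the set-up precisely with the formalism of \cite[Lem.~4.2]{dhall}, which uses homotopy fiber products of model categories in the sense of Dugger--Hall. This requires a careful choice of cofibrant/fibrant replacements in $\parf(T,A)$ so that the cones, cofibers, and homotopy pullbacks above compute the correct homotopical objects, and requires checking that the construction $A\mapsto F_A$ is sufficiently functorial in $A$ so that the pointwise equivalences assemble into an equivalence of simplicial presheaves. Once these bookkeeping issues are handled, both hypotheses of \cite[Lem.~4.2]{dhall} hold and the conclusion that $q_n$ is a global equivalence in $\spr$ follows formally.
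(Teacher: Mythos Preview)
Your approach is essentially the same as the paper's: both apply \cite[Lem.~4.2]{dhall} pointwise in $A$ and then use that global equivalences in $\spr$ are detected levelwise. The paper's proof simply asserts that To\"en's verification for $q_3$ (given right after the proof of Lemma~4.2 in \cite{dhall}) generalizes directly to arbitrary $n$ and still makes sense for categories of perfect objects, whereas you actually spell out what that verification amounts to --- the homotopy pullback $a_{n-1}:=a_n\times^h_{b_n}b_{n-1}$ for essential surjectivity and the universal property for contractibility of fibers. Your sketch is thus a faithful unpacking of what the paper leaves implicit, and the technical caveats you flag (matching the formalism of homotopy fiber products of model categories, functoriality in $A$) are exactly the points the paper covers with the remark that ``the same proof makes sense with perfect objects'' and that the lemma ``can be applied level by level''.
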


\begin{proof}  du lemme \ref{lemrec} --- La preuve par B. Toën du fait que $q_3$ est une équivalence (\cite{dhall} juste après la preuve du lemme 4.2) se généralise directement aux suites de morphismes de longueur arbitraire\footnote{On note qu'il y a un décalage d'indice entre nos notations et le papier \cite{dhall}.}. La différence majeure est que l'on travaille avec un préfaisceau de catégories d'objets parfaits et non pas avec une catégorie de modèles stable. La même preuve fait sens avec les objets parfaits. De plus, \cite[Lem. 4.2]{dhall} peut s'appliquer niveau par niveau, et dans la structure de modèles globale sur $\spr$, un carré est homotopiquement cartésien si et seulement si il est niveau par niveau homotopiquement cartésien dans $SSet$. 
\end{proof}

On a donc un carré de préfaisceaux simpliciaux,
$$\xymatrix@R=1cm @C=3cm { \M^T_n \ar[r]^-{\lambda_n } \ar[d]^-{p} & X_n \ar[d]^-{q_n} \\ \M^T_{n-1} \ph \M^T_{2} \ar[r]^-{\lambda_n \ph  \lambda_2 } & X_{n-1} \underset{X_1}{\ph} X_1 }$$
où $p$ est le morphisme $p(a_1, \hdots, a_n)=((a_1,\hdots, a_{n-2},a_{n-1}\oplus a_n), (a_{n-1}, a_n))$. Ce dernier est une équivalence par sa définition même. Ce carré est commutatif à homotopie (globale) près. Les morphismes $\lambda_{n-1}$ and $\lambda_2$ sont des $\ao$-équivalences d'homotopie par hypothèse de récurrence et par ce qui précède. Les $\ao$-équivalences d'homotopie étant stable par produit fibrés homotopiques, le morphisme $\lambda_2 \ph  \lambda_2$ est une équivalence d'homotopie. On conclut donc par la propriété '2 parmi 3' que le morphisme $\lambda_n$ est une $\ao$-équivalence d'homotopie. Ceci achève la preuve de la proposition \ref{mk}. 
\end{proof}

\subsubsection{Lien avec les champs de dg-modules parfaits et pseudo-parfaits}\label{kstchamp}

\begin{prop}\label{mttsp}
Soit $T\in \dgcatc$ une dg-catégorie sur $\C$. Alors le $\gam$-espace spécial $\regam{\M^T_\bul}$ est très spécial. 
\end{prop}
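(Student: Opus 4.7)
Mon approche consiste à réduire l'énoncé au critère classique qu'un $\gam$-espace spécial est très spécial si et seulement si le monoïde $\pi_0$ de son niveau $1$ est un groupe, puis à exhiber un inverse explicite pour chaque classe à l'aide d'une $\ao$-famille de cônes.

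D'abord, le $\gam$-préfaisceau $\M^T_\bul=NwB_W\uparf(T)$ est spécial par construction de $B_W$ (voir la partie \ref{gamstru}). Comme la version $\gam$-enrichie du foncteur $\regam{-}$ commute aux produits homotopiques finis (à l'instar de \ref{proprelder}), le $\gam$-espace $\regam{\M^T_\bul}$ est également spécial. L'analogue dans $SSet$ du lemme \ref{ek} (dont la démonstration se transpose verbatim puisqu'elle repose uniquement sur le caractère groupal de $\pi_0$ de l'application de cisaillement) ramène le problème à montrer que le monoïde $\pi_0\re{\M^T}$, muni de la loi $[E]+[F]=[E\oplus F]$ induite par la structure de $\gam$-objet, est un groupe.

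Pour prouver que chaque classe $[c]\in \pi_0\re{\M^T}$ admet un inverse, je montre que $[c[1]]$ convient. Par la proposition \ref{pi0cor}, il suffit de connecter les classes $[c\oplus c[1]]$ et $[0]$ par une courbe algébrique connexe munie d'un morphisme vers $\M^T$. Considérons sur $\ao=\spec \C[t]$ le dg-module
$$E:=\cone\left(t\cdot \mrm{id}_c\,:\,c\te_\C \C[t]\lmo c\te_\C \C[t]\right)$$
vu comme dg-module sur $T\tel_\C \C[t]$. C'est un dg-module parfait car $c\te_\C \C[t]$ l'est par extension des scalaires et le cône d'un morphisme de parfaits est parfait. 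Cet objet fournit donc un morphisme $f:\ao\lmo \M^T$ dans $\spr$. Évalué en le point complexe $t=1$, le morphisme $t\cdot \mrm{id}_c$ devient un quasi-isomorphisme et $f(1)=[\cone(\mrm{id}_c)]=[0]$ ; évalué en $t=0$, il s'annule et $f(0)=[\cone(0:c\lmo c)]=[c\oplus c[1]]$. La proposition \ref{pi0cor} entraîne donc $[c\oplus c[1]]=[0]$ dans $\pi_0\re{\M^T}$, ce qui achève la preuve.

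L'aspect qui pourrait demander le plus de soin est la transposition du critère \ref{ek} au cadre des $\gam$-espaces simpliciaux : dans $SSet$ la spécialité assure que $\pi_0 F_1$ hérite d'une structure de monoïde strict et que la flèche $(q^1_\ast,\mu_\ast):F_2\lmo F_1\times F_1$ est une équivalence si et seulement si l'application de cisaillement $(a,b)\mapsto (a,a+b)$ est une bijection de $\pi_0F_1\times \pi_0F_1$ sur lui-même, ce qui équivaut à ce que $\pi_0F_1$ soit un groupe. Le reste de l'argument, reposant sur la famille de cônes $\cone(t\cdot\mrm{id})$ qui est un procédé élémentaire et classique, ne présente pas de difficulté réelle.
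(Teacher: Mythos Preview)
Your proof is correct and follows essentially the same route as the paper's own proof: reduce to showing that $\pi_0\re{\M^T}$ is a group via the criterion of Lemma~\ref{ek}, then use the $\ao$-family $\cone(t\cdot\mathrm{id}_c)$ to connect $[c\oplus c[1]]$ to $[0]$ (the paper writes this as $\delta_A(f)=\cone(E\lmos{\times f} E)$, which is exactly your construction). Your additional remarks justifying the specialness of $\regam{\M^T_\bul}$ and the applicability of Lemma~\ref{ek} in $SSet=SPr(\ast)$ are sound and make explicit what the paper leaves implicit.
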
 

\begin{proof} 
On utilise la formule \ref{pi0cor}. On a un isomorphisme d'ensembles $\pi_0\re{\M^T_1}\simeq \pi_0\M^T(\C)/\sim$ où deux classes de dg-modules parfaits $[E]$ et $[E']$ sont équivalentes s'il existe une courbe algébrique connexe qui les relie dans $\M^T(\C)$. Soit $E$ un $T^{op}$-dg-module parfait. Soit 
$$\delta : \ao\lmo \M^T$$
le morphisme tel que pour tout $A\in \calgc$, 
$$\delta_A(f)=\cone(E\lmos{\times f} E).$$
Alors on a $\delta_A(0)=\cone(0:E\lmo E)=E\oplus E[1]$ et $\delta_A(1)=\cone(id_E)$ qui canoniquement quasi-isomorphe à $0$. On a donc montré que l'identité $[E\oplus E[1]]=[0]$ a lieu dans le monoïde $\pi_0\re{\M^T_1}$, c'est donc un groupe. 
\end{proof}

\begin{theo}\label{kstmt}
Soit $T\in \dgcatc$. Alors il existe un isomorphisme naturel,
$$\kcst(T)\simeq \mcal{B} \regam{\M^T_\bul}$$ 
dans $Ho(Sp)$. 
\end{theo} 

\begin{proof} 
On a une chaîne d'équivalences 
$$\kcst(T) = \resp{\ukc(T)} =\resp{\mcal{B} K^\gam(\parf(T,-))} \simeq \mcal{B}\regam{K^\gam(\parf(T,-))}$$ 
où la dernière équivalence vient de la proposition \ref{proprelb}. On note $K^\gam(\parf(T,-))=:K^\gam (T,-)$. On considère le morphisme de $\gam$-préfaisceaux simpliciaux, 
$$\sigma : \M^T_\bul \lmo K^\gam(T,-)$$
qui est un cas particulier du morphisme général (\ref{morphgam}) défini en \ref{delass}. On veut alors montrer que $\sigma$ induit une équivalence sur les réalisations topologiques dérivées. Comme ce sont des $\gam$-objets spéciaux, il suffit de vérifier que c'est une équivalence au niveau $1$. On a alors un diagramme commutatif dans $Ho(SSet)$, où l'on fait l'abus de notation en omettant les indices $\del$ et $\gam$ du foncteur de réalisation,

$$\xymatrix{\re{\M^T_1} \ar[r]^-{\re{\sigma}_1} \ar[dd]_-\wr &\re{K^\gam(T,-)_1} \ar[rd]^-\sim \\ & &  \re{K(T,-)} \\ \re{\M^T_\bul}^+_1  \ar[r]^-{\re{\lambda}^+_1} & \re{\K^T_\bul}^+_1 \ar[ur]^-\sim}$$
où le morphisme de $\del$-objets $\lambda$ est celui de la proposition \ref{mk}. En vertu de cette même proposition et du théorème \ref{di}, le morphisme induit $\re{\lambda}^+_1$ est une équivalence dans $SSet$. Le morphisme vertical gauche est une équivalence par la proposition \ref{mttsp}. D'autre part les $\del$-objets $\re{K^\gam(T,-)}$ et $\re{\K^T_\bul}^+$ ont même niveau $1$, qui est équivalent à $\re{K(T,-)}$. On déduit que le morphisme $\re{\sigma}_1$ est une équivalence dans $SSet$ et donc que $\re{\sigma}$ est une équivalence. Par conséquent on a un isomorphisme naturel $\kcst(T)\simeq \mcal{B}\regam{\M^T_\bul }$ dans $Ho(Sp)$. 
\end{proof}

\begin{theo}\label{mtps}
Soit $T\in \dgcatc$. Alors le $\gam$-espace spécial $\regam{\M_T^\bul}$ est très spécial et il existe un isomorphisme naturel, 
$$\resp{\kc(\upspa(T))}\simeq \mcal{B} \regam{\M_T^\bul}.$$
\end{theo} 

\begin{proof} 
Les preuves de \ref{mk}, \ref{mttsp}, et \ref{kstmt} fonctionnent si l'on remplace les parfaits par les pseudo-parfaits. 
\end{proof}

\subsection{Caractère de Chern topologique}

Dans cette partie on donne la construction du caractère de Chern topologique. Soit $T\in\dgcatc$ une $\C$-dg-catégorie. 
On rappelle qu'on a défini en \ref{defchernlin}, pour toute $\C$-dg-catégorie $T$, un morphisme de Chern algébrique de préfaisceaux de spectres en $\ka$-modules, 
$$\ch_T : \ukn(T)\lmo \uhcn(T). $$
En composant ce morphisme avec le morphisme canonique de $\ka$-modules $\uhcn(T)\lmo \uhp(T)$, on obtient un morphisme de $\ka$-modules, 
$$\ukn(T)\lmo \uhp(T).$$
On applique alors le foncteur de réalisation topologique spectrale (qui est un foncteur monoïdal) pour obtenir un morphisme de $\bu$-modules, 
$$\kst(T)=\resp{\ukn(T)}\lmo \resp{\uhp(T)}.$$
On utilise maintenant une formule de type Künneth pour l'homologie périodique. Le préfaisceau $\uhp(T)$ est donné par $\spec(A)\longmapsto \hp(T\tel_\C A)$. Le théorème de Kassel \cite[Thm 2.3]{kasselcyclic} ainsi que \cite[Prop 2.4]{kasselcyclic} implique l'existence pour toute $\C$-algèbre commutative lisse $A$ d'un morphisme naturel, 
$$\hp(T)\sml_{H\cuu}\hp(A)\lmo \hp(T\tel_\C A), $$
qui est une équivalence dans $Sp$. Ceci implique que le morphisme de préfaisceaux de spectres 
$$\hp(T)\sml_{H\cuu} \uhp(\ast) \lmo \uhp(T)$$
est une équivalence sur les schémas affines lisses dans $\spaf$. Le théorème \ref{restliss} implique alors que le morphisme induit sur les réalisations topologiques spectrales, 
$$\hp(T)\sml\resp{\uhp(\ast)} \simeq\resp{\hp(T)\sml_{H\cuu} \uhp(\ast)} \lmo \resp{\uhp(T)}$$
est une équivalence dans $Sp$. On en déduit un isomorphisme 
$$\resp{\uhp(T)}\simeq \hp(T)\sml_{H\cuu} \resp{\uhp(\ast)}$$
dans $Ho(Sp)$. En composant on a donc un morphisme 
\begin{equation}\label{mor1}
\kst(T)\lmo \hp(T)\sml_{H\cuu}\resp{\uhp(\ast)}
\end{equation}
qui est une transformation naturelle entre objet de $Ho(Sp^{\dgcatc})$. Par conséquent pour obtenir un morphisme à valeurs dans $\hp(T)$, on doit choisir un morphisme $\resp{\uhp(\ast)}\lmo H\cuu$. Par adjonction cela revient à choisir un morphisme $\uhp(\ast)\lmo (H\cuu)_{\s,B}$. Le préfaisceau de spectres $(H\cuu)_{\s,B}$ est donné par 
$$X\longmapsto \rhomi_{Ho(Sp)} (\resp{X}, H\cuu) \simeq \rhomi_{Ho(Sp)} (\resp{X}, H\C)\sml_{H\C} H\cuu,$$
qui est la cohomologie de Betti $2$-périodique du schéma affine $X$. On note $\hbuu$ ce préfaisceau. D'autre part on note $\hb$ la cohomologie de Betti usuelle, c'est-à-dire la préfaisceau de spectres en $H\C$-modules
$$X\longmapsto \rhomi_{Ho(Sp)} (\resp{X}, H\C),$$
dont les groupes d'homotopie stables sont les espaces vectoriels de cohomologie de Betti. On note $\hpa$ le préfaisceau $\uhp(\ast) : \spec(A)\mapsto \hp(A)$. On cherche donc un morphisme $\hpa\lmo \hbuu$. On considère le morphisme d'antisymétrisation standard, 
$$\hpa\lmo \hdrna$$
qui va de l'homologie périodique vers la cohomologie de de Rham naïve. Par cohomologie de de Rham naïve on entend le préfaisceau en spectres $X\mapsto \hdrna(X)$, tel que $\hdrna(X)$ est le spectre associé au complexe de de Rham algébrique de $X$, c'est-à-dire le complexe de $\C$-espace vectoriels des formes différentielles algébriques partout définies sur le schéma affine $X$. On note $\hdran$ l'analogue de $\hdrna$ construit à partir des formes analytiques. L'inclusion des formes algébriques dans les formes analytiques induit un morphisme $\hdrna\lmo \hdran$. Le morphisme évident de $\C$ dans le complexe de de Rham analytique induit un morphisme $\hb\lmo \hdran$ qui est une équivalence sur les schémas affines lisses\footnote{Ceci par le fait classique que pour une variété complexe lisse, le complexe de faisceaux des formes analytiques est une résolution injective du faisceau constant $\und{\C}$.}. On a donc des morphismes de préfaisceaux en spectres,
$$\hpa\lmo \hdrna \lmo \hdran\longleftarrow \hb,$$
et le dernier est une équivalence sur les lisses\footnote{En fait les deux autres morphismes de la chaîne sont aussi des équivalences sur les lisses par le théorème HKR et le théorème de Grothendieck respectivement. Mais nous n'avons pas besoin de ce fait.}.

\begin{nota}
On note $\sppro$ la structure de modèles propre locale sur la catégorie $\spaf$ des préfaisceaux en spectres symétriques. 
\end{nota}

Par la remarque \ref{hiro}, le morphisme $\hb \lmo \hdran$ est donc une équivalence propre locale, c'est à dire un isomorphisme dans $Ho(\sppro)$. On obtient donc un morphisme bien défini $\hpa\lmo \hb$ dans la catégorie $Ho(\sppro)$ et donc un morphisme 
$$\hpa\lmo \hbuu$$
dans $Ho(\sppro)$. On rappelle alors la proposition \ref{quipro} selon laquelle le foncteur de réalisation topologique reste de Quillen à gauche pour la stucture propre locale. Elle se généralise directement à la réalisation spectrale et on a une paire de Quillen, 
$$\xymatrix{\sppro \ar@<3pt>[r]^-{\resp{-}} & Sp \ar@<3pt>[l]^-{H_{\s,B}}  }.$$
Le morphisme $\hpa\lmo \hbuu = \hbs (H\cuu)$ donne par adjonction le morphisme annoncé
$$\mcal{P} : \resp{\hpa}=\resp{\uhp(\ast)}\lmo H\cuu$$
dans $Ho(Sp)$. En composant le morphisme (\ref{mor1}) avec ce qu'on vient d'établir on obtient donc un morphisme
$$\ch^{\mrm{st}}_T : \kst(T)\lmo \hp(T)$$
défini comme le composé
$$\xymatrix{  \kst(T)\ar[d]_-{\resp{\ch_T}}  \ar[rr]^-{\ch^{\mrm{st}}_T} &&  \hp(T) \\
 \resp{\uhp(T)} \ar[r]^-\sim & \hp(T)\sml_{H\cuu} \resp{\uhp(\ast)} \ar[r]^-{id\sml \mcal{P}} & \hp(T)\sml_{H\cuu} H\cuu \ar[u]_-\wr }$$
Par les considérations faîtes au début de \ref{kstktop}, la réalisation topologique spectrale peut se restreindre en un foncteur de Quillen à gauche sur les catégories de modules, 
$$\resp{-} : \ka-Mod_\s\lmo \bu-Mod_\s.$$
On en déduit que tous les morphismes en jeu dans le rectangle précédent sont des morphismes de $\bu$-modules et qu'on obtient de cette manière un morphisme 
$$\chst : \kst\lmo \hp$$
dans la catégorie $Ho(\bu-Mod_\s^{\dgcatc})$. Par abus de notations on notera $\chst : \kst(T)\lmo \hp(T)$ en omettant l'indice $T$ dans la notation. On note que pour toute $\C$-dg-catégorie $T$ on a un carré commutatif 
\begin{equation}\label{carst}
\xymatrix{\kn(T) \ar[r]^-{\ch} \ar[d]_-\eta & \hp(T) \ar[d]^-{id} \\ \kst(T) \ar[r]^-{\chst}  & \hp(T)  }
\end{equation} 
dans $Ho(\bu-Mod_\s)$, où $\eta$ est le morphisme naturel défini à la remarque \ref{fibtop} et le composé 
$$\xymatrix{\hp(T)\ar[r] & \resp{\uhp(T)}\ar[r] & \hp(T)\sml_{H\cuu} \resp{\uhp(\ast)} \ar[r]^-{id\sml \mcal{P}} &\hp(T)}$$
est égal à l'identité dans $\End_{Ho(Sp)}(\hp(T))$. 

Maintenant il suffit de vérifier que l'image $\chst(\beta)$ de la classe de Bott est inversible dans l'anneau $\hp(\ast)=H\cuu$. On utilise les notations \ref{choixbeta}. On suit alors la classe de Bott dans la K-théorie de $\po$. Comme cas particulier du carré (\ref{carst}), on a un carré commutatif de groupes abéliens
$$\xymatrix{\kn_0(\po) \ar[r]^-{\ch} \ar[d]_-{\eta} & \hp_0(\po) \ar[d]^-{id} \\ \kst_0(\po) \ar[r]^-{\chst} & \hp_0(\po) }$$
On rappelle que $\eta(\alpha)=\beta$ et on choisit par exemple le générateur $u\in\hp_0(\po)$ tel que $\ch(\alpha)=u$. On vérifie alors $\chst(\beta)=\chst(\eta(\alpha))=\ch(\alpha)=u$. 
Par la propriété universelle du localisé, on obtient pour toute $\C$-dg-catégorie $T$ un morphisme 
$$\ch^{\mrm{top}}_T : \ktop(T)\lmo \hp(T).$$
Ceci définit un morphisme 
$$\chtop : \ktop\lmo\hp$$
dans $Ho(\BU-Mod_\s^{\dgcatc})$.

\begin{theo}\label{carac}
Il existe un morphisme $\chtop : \ktop\lmo\hp$ appelé le \emph{caractère de Chern topologique} tel que le carré
$$\xymatrix{ \kn \ar[r]^-{ \ch} \ar[d] & \hcn\ar[d] \\  \ktop \ar[r]^-{ \chtop} &\hp  }$$
est commutatif dans $Ho(Sp^{\dgcatc})$. 
\end{theo}

\begin{proof} 
Découle directement du fait que le carré (\ref{carst}) est commutatif. 
\end{proof}

\subsection{Conjectures}\label{conj}

Les conjectures suivantes sont toutes des analogues non commutatifs de résultats connus pour les variétés algébriques propres et lisses. Elles sont donc appuyées par le slogan de l'introduction selon lequel les dg-catégories propres et lisses se comportent de la même façon que les variétés propres et lisses. Ce slogan se voit attribuer lui-même un sens précis lorsque l'on s'intéresse à la théorie motivique de chaque côté du foncteur $\lpe$. 

On énonce d'abord la conjecture reliée à la partie Betti de la structure de Hodge non-commutative hypothétique sur $\hp(T)$ pour $T$ propre et lisse. On rappelle qu'une dg-catégorie $T$ est propre si les complexes de morphismes de $T$ sont des complexes parfaits et si la catégorie triangulée $[\m{T}]$ a un générateur compact. Une dg-catégorie $T$ est lisse si le $T^{op}\tel T$-dg-module $(x,y)\mapsto T(x,y)$ est parfait. Il est montré dans \cite[Cor.2.13]{modob} qu'une dg-catégorie propre et lisse est de type fini, et possède donc un générateur compact. Une dg-catégorie propre et lisse est donc quasi-équivalente à une dg-algèbre propre et lisse. Ils suffient donc de montrer les conjectures suivantes pour $T$ une dg-algèbre propre et lisse. 

\begin{conj}\label{conjres} \emph{(dite conjecture du réseau)} --- 
Soit $T$ une dg-catégorie propre et lisse sur $\C$. Alors le morphisme 
$$\chtop\sm_\s H\C : \ktop(T)\sm_{\s} H\C \lmo \hp(T)$$
est une équivalence. 
\end{conj}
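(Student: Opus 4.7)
The plan is to attack this by identifying a sufficiently rich class $\mcal{C}$ of smooth and proper dg-categories on which the conjecture can be verified, and then showing that $\mcal{C}$ is closed under operations which allow one to exhaust all smooth and proper dg-catégories. First I would observe that both $\ktop(-)\sm_\s H\C$ and $\hp(-)$ are localizing invariants of dg-categories valued in $H\C$-modules (by the fibration sequence in Prop.~\ref{propkn}(d), the theorem of Keller \ref{cyclic}, the exactness of $\resp{-}$, and the fact that $-\sm_\s H\C$ preserves cofibre sequences), and that $\chtop\sm H\C$ is a natural transformation between them. In particular the full subcategory $\mcal{C}\subset \dgcatc$ of smooth and proper dg-catégories where $\chtop\sm H\C$ is an equivalence is closed under Morita equivalences, finite direct sums, retracts, and semi-orthogonal decompositions (the last because both sides send such decompositions to split cofibre sequences, by additivity).

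Next I would populate $\mcal{C}$ with building blocks. By Prop.~\ref{compvarlisse} combined with the author's first comparison result, the conjecture already holds for $\lpe(X)$ whenever $X$ is smooth and proper (the smooth projective case follows at once; for the DM-stack case one descends through a smooth projective atlas). The case of finite-dimensional algebras handled in Prop.~\ref{formhp1} gives another family of building blocks, and matrix factorization dg-catégories $MF(X,W)$ for isolated hypersurface singularities should enter through an orthogonal decomposition à la Orlov once one has extended the entire formalism to the $2$-periodic setting. A further closure property I would try to establish is stability of $\mcal{C}$ under the derived tensor product $\tel_\C$ : since $\hp$ satisfies a Künneth-type formula (as already used in the construction of $\chtop$) and $\ktop$ inherits a lax monoidal structure from the monoidal structure on $\resp{-}$, the morphism $\chtop\sm H\C$ is compatible with tensor products of smooth and proper dg-catégories, and hence $\mcal{C}$ is stable under $\tel_\C$.

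Combining these ingredients, the strategy would be to exploit the expected generation result: every smooth and proper $\C$-dg-catégorie should admit, possibly after passing to a Morita equivalent model, a filtration whose associated graded pieces belong to the list above (smooth projective varieties, finite-dimensional algebras, matrix factorizations), via Orlov-type semi-orthogonal decompositions. Each step would then plug into the closure properties of $\mcal{C}$ to conclude. In parallel, one should use the $\hp(\C)$-linearity established during the construction of $\chtop$ and the expected flatness of $\chtop$ with respect to the Gauss--Manin connection (see the remark in the introduction and \cite{tsygm}) to propagate the conjecture along smooth families, reducing any given smooth and proper dg-algèbre to a deformation of one arising from geometric input.

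The main obstacle I expect is precisely the lack of a general decomposition theorem for smooth and proper dg-catégories: unlike in the commutative case where resolution of singularities and dévissage give a handle on the entire category of schemes, there is currently no structure theorem producing a semi-orthogonal decomposition of an arbitrary proper smooth dg-algebra into geometric pieces. A subsidiary obstacle is the compatibility with the Gauss--Manin connection, which requires controlling $\chtop$ at the level of the full sheaf of $\hp$ on families over a smooth base, and thus controlling $\resp{-}$ applied to $\uhp$ globally in families; this is where the restriction-to-smooth-schemes property (Thm.~\ref{restliss}) and the Kassel Künneth formula used above would need to be upgraded to a relative statement.
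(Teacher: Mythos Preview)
The statement you are attempting is labelled \emph{Conjecture} in the paper, and the paper does not contain a proof of it. What follows the statement in the text is only a one-line remark: ``On remarquera que la classe des dg-catégories qui vérifie cette conjecture est stable par un certain nombre d'opérations : colimites filtrantes, quotient, extension et rétract dans la catégorie homotopique $Ho(\dgmorc)$.'' The paper then verifies the conjecture in two special cases (Prop.~\ref{compvarlisse} for $\lpe(X)$ with $X$ smooth, and Prop.~\ref{cralgass} for finite-dimensional associative algebras), but makes no attempt at the general statement. So there is no ``paper's own proof'' to compare your proposal against.

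Your proposal is therefore not a proof but a research programme, and you are honest about this: you correctly isolate the genuine obstruction, namely the absence of any structure theorem expressing an arbitrary smooth and proper dg-category as built from geometric pieces via semi-orthogonal decompositions, retracts, or deformations. Without such a generation result the closure properties you list (which are correct, and indeed match the remark the paper makes) do not suffice. A few of your intermediate claims also overreach the paper's content: the DM-stack case and the matrix-factorization case are only mentioned in the introduction as expected applications, not established; the Künneth formula for $\ktop$ is not proved in the paper; and the Gauss--Manin compatibility is explicitly flagged by the author as a further open direction rather than an available tool. So even as a strategy, several of the steps you invoke are themselves open.

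In short: there is nothing wrong with the shape of your outline as a long-term plan, but it is not a proof, and the paper does not claim to have one either.
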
 

On remarquera que la classe des dg-catégories qui vérifie cette conjecture est stable par un certain nombre d'opérations : colimites filtrantes, quotient, extension et rétract dans la catégorie homotopique $Ho(\dgmorc)$. 

La conjecture suivante est une généralisation du théorème \ref{annupoint} pour les dg-catégories propres et lisses.

\begin{conj} Soit $T$ une dg-catégorie propre et lisse sur $\C$. Alors $\kst_i(T)=0$ pour tout $i<0$. 
\end{conj}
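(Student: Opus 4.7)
The plan is to reduce the conjecture to the vanishing of negative algebraic K-theory of $T\tel_\C A$ for $A$ a smooth commutative $\C$-algebra, and then invoke the restriction-to-smooth-schemes property of the topological realization established in Theorem~\ref{restliss}. More precisely, I would first show that if, for every smooth affine $\spec(A)\in\afflissc$, the natural map $\ukc(T)(\spec(A))=\kc(T\tel_\C A)\lmo \kn(T\tel_\C A)=\ukn(T)(\spec(A))$ is an equivalence of spectra, then the morphism of presheaves $\ukc(T)\lmo \ukn(T)$ induces an equivalence after restriction along $l:\afflissc\hookrightarrow \affc$. Applying the spectral version of Theorem~\ref{restliss} then yields an equivalence $\kcst(T)\simeq \kst(T)$ in $Ho(Sp)$, and since $\kcst(T)$ is by construction the realization of a presheaf of connective spectra (hence connective, as $\mcal{B}$ preserves connectivity and colimits of connective spectra are connective), we obtain $\kst_i(T)=0$ for all $i<0$.

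Thus the real work is to prove that negative algebraic K-theory of $T\tel_\C A$ vanishes when $T$ is proper and smooth and $A$ is smooth commutative. The key observation is that the tensor product $T\tel_\C A$ is again smooth as a dg-category (smoothness being stable under base change, and $\lpe(\spec(A))$ being smooth for $A$ smooth commutative), and also proper in an appropriate relative sense. So the conjecture reduces to the statement that smooth proper dg-categories over a noetherian base have vanishing negative K-theory. One would then attempt this by imitating the classical Quillen--Bass fundamental sequence for regular schemes: use the localization/Bass exact sequences of Proposition~\ref{propkn}(d) together with a noncommutative analogue of the projective bundle formula and $\ao$-homotopy invariance, which are expected for smooth proper $T$. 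A possible concrete strategy: fix a compact generator of $T$ to reduce to a proper smooth dg-algebra $B$; then write $\kn_{-i}(B\tel_\C A)$ as an iterated cone involving $\kn(B\tel_\C A[t])$ and $\kn(B\tel_\C A[t,t^{-1}])$; and use the fact that $\kc$-theory of smooth proper dg-algebras is $\ao$-homotopy invariant (a consequence of the Keller--Thomason localization together with HKR-type finiteness) to kill the negative terms inductively.

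Alternatively, one might bypass the algebraic question entirely and argue directly at the level of realizations. The identification of Theorem~\ref{kstmt} gives $\kcst(T)\simeq \mcal{B}\regam{\M^T_\bul}$, so one only needs to produce a map $\kst(T)\lmo \kcst(T)$ splitting the natural comparison. For $T$ smooth and proper the stack $\M^T$ is locally geometric and of finite presentation (Toën--Vaquié), hence its realization is a reasonable space with a well-behaved group completion; exhibiting the absence of negative homotopy should then follow from the geometricity and the fact that $\re{\M^T_1}$ is already a monoid-like space whose naive group completion is a group (Proposition~\ref{mttsp}), so its delooping is connective.

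The hard part, in either strategy, is the first one: proving vanishing of $\kn_{<0}$ for smooth proper dg-categories over a smooth base. This is essentially an open problem in noncommutative motivic theory, closely tied to Weibel's conjecture in the commutative case and requiring substantial input from the theory of noncommutative resolutions. I would expect that the most tractable partial case is $i=-1$, where one can hope to adapt Bass--Weibel--Vorst style arguments using the semiorthogonal decomposition of $\lpe(\po\tel T)$ to kill the $K_{-1}$ summand and proceed by Noetherian induction on the ambient affine base $\spec(A)$; the higher vanishings $\kst_{-i}$, $i\geq 2$, seem to genuinely require the smooth-proper hypothesis in an essential way via the existence of a Serre functor, and are likely the deepest obstruction.
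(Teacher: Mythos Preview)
The statement is labeled as a \emph{conjecture} in the paper and is not proved there; there is no proof to compare against. What the paper does provide, immediately after the statement, is exactly the remark that the problem reduces to the vanishing of negative algebraic K-theory for smooth proper dg-algebras, and it records a partial result of Cisinski (vanishing of $\kn_{<0}$ for smooth proper dg-algebras with cohomology concentrated in non-negative degrees, via Schlichting's theorem on noetherian abelian categories).

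Your first strategy is the correct reduction and matches the paper's viewpoint: it is precisely the argument of Théorème~\ref{annupoint} run with $T$ in place of the unit. If $\kn_i(T\tel_\C A)=0$ for all $i<0$ and all smooth commutative $A$, then $l^*\ukc(T)\lmo l^*\ukn(T)$ is an equivalence in $\spafliss$, and the spectral version of Théorème~\ref{restliss} gives $\kcst(T)\simeq\kst(T)$, which is connective. You are also right that the missing input is the vanishing of $\kn_{<0}$ for smooth proper dg-categories over a smooth base, and that this is open in general; your sketch of a Bass--type induction is plausible but not a proof.

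Your alternative route via $\M^T$ does not work as written. The identification of Théorème~\ref{kstmt} only concerns $\kcst(T)$; geometricity of $\M^T$ and Proposition~\ref{mttsp} tell you that $\regam{\M^T_\bul}$ is very special, which is already the statement that $\kcst(T)$ is connective. They give no handle on the non-connective realization $\resp{\ukn(T)}$, and there is no reason to expect a splitting $\kst(T)\lmo\kcst(T)$ of the comparison map without first knowing the vanishing you are trying to prove. So the second strategy collapses back to the first.
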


Denis--Charles Cisinski a récemment montré l'annulation de la K-théorie algébrique négative des dg-algèbres propres et lisses dont la cohomologie est concentrée en degrés positifs ou nul, ceci en se basant sur la preuve par Schlichting de l'annulation de la K-théorie négative d'une catégorie abélienne noethérienne. 

La conjecture suivante est inspiré du résultat de Thomason cité dans l'introduction \cite[Thm.4.11]{thomet}. On rappelle qu'étant donné un spectre $E$ et un entier $n$, on peut, vu que la catégorie homotopique $Ho(Sp)$ est additive donner un sens à la réduction modulo $n$ du spectre $E$ noté $E/n$. On énonce cette conjecture avec $T$ quelconque -- précisons que l'on s'attend au plus à des hypothèses très générales sur $T$ (comme il m'a été suggéré par DC Cisinski). Cette conjecture est reliée à la propriété de $\ao$-invariance de la K-théorie algébrique à coefficients de torsion et au théorème de rigidité de Gabber. 

\begin{conj}\label{coefffinis}
Soit $T$ une dg-catégorie sur $\C$ et soit $n>0$ un entier. Alors le morphisme $\kn(T)/n\lmo \kst(T)/n$ est une équivalence. 
\end{conj}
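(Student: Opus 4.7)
Le plan consiste à montrer que le préfaisceau $\ukn(T)/n$ est déjà $\ao$-étale-local sur les schémas affines lisses sur $\C$, de sorte que le morphisme unité $\ukn(T)/n \lmo \hbs(\kst(T)/n)$ évalué en $\spec(\C)$ soit une équivalence. Par le théorème \ref{restliss} (appliqué à la version spectrale, voir la remarque \ref{restlisskst}), il suffit de vérifier cette locality après restriction à $\afflissc$.

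Premièrement, il faudrait établir une version à coefficients finis du théorème de rigidité de Suslin--Gabber pour les dg-catégories : pour toute $\C$-algèbre locale strictement Hensélienne $R$ de corps résiduel $\C$, le morphisme naturel $\kn(T)/n\lmo \kn(T\tel_\C R)/n$ est une équivalence. Dans le cas commutatif $T=\unit$ c'est le théorème classique de Gabber, et pour $T$ général la stratégie serait de se ramener au cas d'une dg-algèbre, puis d'utiliser la théorie motivique de Cisinski--Tabuada (cf. proposition \ref{repk}) pour transporter la rigidité depuis l'unité via la structure de $\ka/n$-module sur $\ukn(T)/n$. Ceci fournirait la « constance étale » du préfaisceau $\ukn(T)/n$ sur les traits henséliens. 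Deuxièmement, il faudrait établir l'$\ao$-invariance à coefficients finis du préfaisceau $\ukn(T)/n$ sur les schémas affines lisses, ce qui se ramène à un énoncé de type Weibel--Quillen, et utilise encore la rigidité pour se débarrasser des conoyaux dans les suites fondamentales.

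Une fois ces deux points acquis, le préfaisceau $\ukn(T)/n$ restreint à $\afflissc$ est déjà $\ao$-invariant et vérifie la descente étale (la descente étale découlant des deux ingrédients précédents via l'argument standard de suite spectrale de descente de Thomason, les fibres géométriques du morphisme de réalisation entre les côtés algébrique et étale-local étant contractiles par rigidité). Par conséquent, le morphisme canonique
$$\ukn(T)/n \lmo \lef ssph_{\s}(\ukn(T)/n) = \kst(T)/n$$
est déjà un isomorphisme dans $Ho(\spetao)$ entre le préfaisceau et sa version $\ao$-étale-fibrante, de sorte que l'évaluation en $\spec(\C)$ donne l'équivalence $\kn(T)/n \lmo \kst(T)/n$ souhaitée.

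L'obstacle principal sera la première étape : étendre la rigidité de Suslin--Gabber à une dg-catégorie quelconque. Le cas des algèbres associatives est classique (via la rigidité pour les $K$-théories de Karoubi--Villamayor des algèbres hensélisés), mais le passage aux dg-catégories propres mais non commutatives requiert soit un argument motivique via \cite[Thm.7.16]{nck} pour factoriser toute la structure par l'unité $\uk$ de $\mloc$, soit une généralisation directe de la preuve de Gabber utilisant les traces et les transferts dans le formalisme des dg-modules parfaits. Pour $T$ non propre, il faudra probablement passer par une présentation comme colimite filtrante de sous-dg-catégories de type fini et utiliser le point b. de la proposition \ref{propkn}.
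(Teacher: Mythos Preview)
L'énoncé en question est une \emph{conjecture} dans l'article, pas un théorème démontré. Le texte la présente explicitement comme ouverte, en précisant qu'elle est « reliée à la propriété de $\ao$-invariance de la K-théorie algébrique à coefficients de torsion et au théorème de rigidité de Gabber », sans offrir de démonstration. Il n'y a donc pas de preuve de l'auteur à laquelle comparer ton plan.

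Ton esquisse identifie correctement les deux ingrédients que l'article lui-même mentionne comme heuristique : rigidité à la Gabber et $\ao$-invariance à coefficients finis. La stratégie générale --- montrer que $\ukn(T)/n$ restreint aux lisses est déjà fibrant pour la structure $\ao$-étale, puis conclure via la restriction aux lisses --- est une voie naturelle. Mais tu as aussi correctement repéré le point bloquant : étendre la rigidité de Suslin--Gabber aux dg-catégories quelconques n'est pas connu, et l'argument motivique que tu suggères via la coreprésentabilité dans $\mloc$ ne fournit pas directement une réduction au cas de l'unité (la structure de $\ka$-module ne transporte pas automatiquement les propriétés de rigidité fibre à fibre). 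De même, l'$\ao$-invariance de $\ukn(T)/n$ pour $T$ arbitraire est elle-même un énoncé conjectural dans cette généralité. Ton plan est donc cohérent comme programme, mais les étapes cruciales restent des problèmes ouverts --- ce qui est exactement pourquoi l'auteur formule ceci comme une conjecture et non comme un théorème.
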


\newpage

\section{Exemples de validité de la conjecture du réseau}

\subsection{Cas d'un $\C$-schéma lisse de type fini}\label{comp}

Le but est de comparer la K-théorie topologique d'un $\C$-schéma de type fini au sens dg avec la K-théorie topologique usuelle de ses points complexes. On compare aussi les deux caractères de Chern associés. 

\begin{nota}
Pour tout espace topologique $Y$, on note $\ktopu(Y):=\rhomi_{Ho(Sp)} (\sinf SY_+, \BU)$ le spectre de K-théorie topologique usuelle (non-connective) de $Y$. Si $X\in\schc$ est un $\C$-schéma de type fini on peut alors considérer sa K-théorie topologique des points complexes $\ktopu(sp(X))$. 
\end{nota}

\begin{nota}
Pour tout schéma $X\in\schc$, on note $\hp(X):=\hp(\lpe(X))$ son homologie périodique et $\hb(X)=\rhomi_{Ho(Sp)} (\resp{X}, H\C)$ sa cohomologie de Betti. Pour $X$ lisse, on a un isomorphisme $\hp(X)\lmo \hb(X)$ donné par la composé du morphisme d'antisymétrisation suivi de l'isomorphisme usuel entre cohomologie de de Rham et cohomologie de Betti. 
\end{nota}

\begin{nota}
Pour tout espace topologique $Y$, le caractère de Chern topologique usuel 
$$\chutop : \ktopu(Y) \lmo \hb(Y)$$ 
est définie par $\chutop=\rhomi_{Ho(Sp)}(\sinf SY_+,\ch^{\mrm{top}}_\unit))$, où $\ch^{\mrm{top}}_\unit:\BU\lmo H\cuu$ est le caractère de Chern topologique du point (Thm. \ref{carac}) qui on le rappelle est un morphisme d'anneaux en spectres. Ceci est justifié parce qu'on sait que $\ch^{\mrm{top}}_\unit(\beta)=u$ et que cette propriété caractérise uniquement le caractère de Chern topologique classique dans l'ensemble des classes d'homotopie de morphismes d'anneaux $[\BU,H\cuu]$. En effet ce dernier ensemble est en bijection avec l'ensemble $[\BU\sm H\C, H\cuu]$. L'équivalence standard entre les anneaux en spectres et les dg-algèbres transforme l'anneau en spectre $\BU\sm H\C$ en la dg-algèbre polynomiale $\C[\beta,\beta^{-1}]$ avec $\beta$ de degré $2$. L'image de $\beta$ ne peut donc être qu'un multiple non nul de $u$ ; le caractère de Chern classique correspondant à $\chutop(\beta)=u$. 
\end{nota}

\begin{prop}\label{compvarlisse}
Soit $X$ un $\C$-schéma lisse de type fini. Alors il existe un isomorphisme canonique 
$$t:\ktop(X)\lmos{\sim}\ktopu(sp(X))$$
dans $Ho(Sp)$. De plus le carré, 
\begin{equation}\label{utop}
\xymatrix{ \ktop(X) \ar[r]^-{\chtop} \ar[d]_-t & \hp(X) \ar[d]  \\ \ktopu(sp(X)) \ar[r]^-{\chutop} & \hbuu(X) }
\end{equation}
est commutatif dans $Ho(Sp)$. Ceci a pour conséquence que la conjecture du réseau est vrai pour une dg-catégorie de la forme $T=\lpe(X)$ avec $X$ un $\C$-schéma lisse de type fini. 
\end{prop}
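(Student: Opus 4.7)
L'idée centrale, suggérée par l'énoncé, est d'exploiter la représentabilité de la K-théorie algébrique dans la catégorie homotopique stable des schémas de Morel--Voevodsky $\shc$ par l'objet $BGL$, combinée avec la dualité de Spanier--Whitehead forte dont bénéficient les suspensions infinies des schémas lisses d'après Riou. Plus précisément, pour tout schéma affine lisse $\spec(A)\in \afflissc$ on dispose d'un isomorphisme naturel
$$\ukn^{\mrm{liss}}(\lpe(X))(\spec(A))\simeq \R\homi_{\shc}(\sinf X_+\sm \sinf\spec(A)_+, BGL),$$
et la dualité de Riou fournit un dual fort $D(\sinf X_+)$, permettant de réécrire l'expression précédente comme $\R\homi_{\shc}(\sinf \spec(A)_+, D(\sinf X_+)\sm BGL)$. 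Le préfaisceau de spectres $\uknli(\lpe(X))$ est donc de la forme $\spec(A)\mapsto \R\homi_{\shc}(\sinf\spec(A)_+, F)$ avec $F=D(\sinf X_+)\sm BGL$. Le théorème \ref{restliss} de restriction aux schémas lisses assure alors que l'on peut calculer $\kst(X)$ comme $\resp{\uknli(\lpe(X))}$.

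La deuxième étape est de montrer que le foncteur de réalisation topologique s'étend à $\shc$ en un foncteur vers $Ho(Sp)$ et vérifie $\re{BGL}\simeq \bu[\beta^{-1}]$, par exemple en utilisant la description de $BGL$ comme spectre associé au $\gam$-préfaisceau $\vect_\bul$, la proposition \ref{proprelb}, et le théorème \ref{bu} (après inversion de $\beta$). Comme la réalisation $\resp{-}$ est monoïdale par \ref{proprelsp}, elle commute avec le smash dérivé dans $\shc$, et envoie le dual fort $D(\sinf X_+)$ sur le dual de Spanier--Whitehead topologique de $\sinf sp(X)_+$. En combinant ces faits on obtient, après inversion de Bott,
$$\ktop(X)\simeq \R\homi_{Ho(Sp)}(\sinf sp(X)_+, \bu[\beta^{-1}])=\ktopu(sp(X)),$$
ce qui fournit l'isomorphisme $t$ cherché.

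Pour la commutativité du carré (\ref{utop}), on commence par observer que par construction les deux caractères de Chern $\chtop_\unit : \bu[\beta^{-1}]\lmo H\cuu$ et $\chutop_\ast : \bu[\beta^{-1}]\lmo H\cuu$ sont des morphismes d'anneaux en spectres envoyant $\beta$ sur $u$; par l'argument d'unicité rappelé juste avant la proposition, ils coïncident dans $Ho(Sp)$. La commutativité du carré s'en déduit par naturalité, puisque $\chtop_X$ et $\chutop_{sp(X)}$ sont tous les deux obtenus à partir de leurs analogues ponctuels par application du foncteur $\R\homi_{Ho(Sp)}(\sinf sp(X)_+, -)$ (et de son analogue dans $\shc$) à travers l'isomorphisme $t$. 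L'étape délicate ici est de vérifier que le caractère $\chtop$ défini comme une construction à plusieurs étages (réalisation spectrale d'un caractère de Chern algébrique $\ka$-linéaire, puis inversion de Bott) se comporte bien vis-à-vis de la représentation de la K-théorie par $BGL$; ceci devrait découler de l'existence d'un caractère de Chern au niveau de $\shc$ faisant intervenir le motif mixte $H\C$ et de sa compatibilité avec la réalisation de Betti.

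La conséquence pour la conjecture du réseau s'en déduit de la manière suivante. Pour $X$ lisse (et propre, afin que $\hp(X)$ coïncide avec la cohomologie de Betti $2$-périodique $\hbuu(sp(X))$ par le théorème HKR et le théorème de comparaison de Grothendieck), le morphisme $\chutop\sm_\s H\C$ identifie la K-théorie topologique rationalisée avec la cohomologie de Betti $2$-périodique de $sp(X)$ à coefficients dans $\C$ (c'est une forme du théorème d'Atiyah--Hirzebruch rationnel pour un CW-complexe fini). L'isomorphisme $t$ et la commutativité du carré donnent alors l'énoncé de la conjecture pour $T=\lpe(X)$. Le point technique principal sera de s'assurer que toutes les identifications (représentabilité, dualité, réalisation, caractère de Chern) sont compatibles entre elles au niveau des structures de modules sur $\bu$ et $H\cuu$.
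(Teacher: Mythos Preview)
Your plan follows essentially the same route as the paper: representability of K-theory in the Morel--Voevodsky stable category $\shc$, Riou's strong dualizability of $\sinf X_+$ for $X$ lisse, and monoidality of the topological realization functor. Two points of comparison are worth noting.

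First, the paper packages the Bott-inversion step more carefully. You write the presheaf $\uknli(\lpe(X))$ as $\spec(A)\mapsto \rhomi_{\shc}(\sinf\spec(A)_+,F)$ with $F\in\shc$; but this mapping object is a priori a motivic spectrum, not an $S^1$-spectrum, and Riou's duality lives genuinely in the $T$-stable category $\shc$, not at the $S^1$-level. The paper resolves this by introducing the $T$-spectrum $\kh$ (homotopy-invariant K-theory, à la Cisinski) whose $T$-structure is multiplication by the class $b\in\kn_1(\gm)$ mapping to $\beta$ under realization; it then realizes into $S^2$-spectra in $S^1$-spectra, and observes that $\R\Omega_{S^2}^\infty\resh{\ukh(X)}\simeq\resp{\ukn(X)}[\beta^{-1}]$. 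This makes precise the step where you say ``après inversion de Bott''. Your sketch is compatible with this but glosses over exactly where $\beta^{-1}$ enters.

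Second, your parenthetical ``(et propre, afin que $\hp(X)$ coïncide avec la cohomologie de Betti $2$-périodique)'' is unnecessary: both the HKR isomorphism and Grothendieck's comparison theorem hold for smooth schemas of finite type over $\C$ without properness, so the vertical map $\hp(X)\to\hbuu(X)$ is already an isomorphism for $X$ lisse. The paper indeed concludes the lattice conjecture for $\lpe(X)$ with $X$ lisse alone. For the commutativity of the square, the paper does not invoke the abstract uniqueness of ring maps $\BU\to H\cuu$ but decomposes the square into two pieces, one commuting by functoriality of $\resp{-}$ and one by the very definition of $\chtop$ via the map $\mcal{P}$; your uniqueness argument is a reasonable alternative.
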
 

La preuve occupe la suite de cette partie. On commence d'abord par quelques notations et sorites. On rappelle (notation \ref{notaksch}) qu'on a un foncteur de K-théorie des schémas
$$\kn : \schc\lmo Sp.$$
On définit alors pour tout schéma $X$ un préfaisceau de K-théorie par 
$$\ukn(X):=(\spec(A)\longmapsto \kn(X\times \spec(A)).$$
Ceci définit un foncteur 
$$\ukn : \schc\lmo \spaf.$$
On suppose qu'on a un foncteur complexes parfaits
$$\lpe : \schc\lmo \dgcatc$$
tel que pour tout $X\in \schc$ et tout $A\in \calgc$, il existe un morphisme
$$\lpe(X)\tel_\C A\lmo \lpe(X\times \spec(A))$$
dans $\dgcatc$ qui est une équivalence Morita dérivée. On en déduit (prop. \ref{propkn}) une équivalence 
$$\kn(X\times \spec(A))\simeq \kn(\lpe(X)\tel_\C A)$$ 
dans $Sp$. Ceci se traduit par un isomorphisme de préfaisceaux
$$\ukn(X)\simeq \ukn(\lpe(X))$$
dans $Ho(\spaf)$. Vu le remarque \ref{restlisskst}, comme nous sommes intéressé ici par la K-théorie topologique, on peut restreindre nos préfaisceaux aux schémas affines lisses $\afflissc\hookrightarrow \affc$. Dans la suite de cette partie, on considère $\ukn(X)$ comme un objet de $\spafliss$.

On note $\splissna$ la structure de modèles $\ao$-Nisnevish locale sur la catégorie $\spafliss$. C'est une catégorie de modèles monoïdale et on note $\sm$ son produit. On note $\shc$ la catégorie homotopique stable des $\C$-schémas lisse de Morel--Voevodsky. On peut la définir de la manière suivante. La lettre $T:=S^1\sm\gm$ désignant la sphère de Tate dans $\splissna$, la catégorie $\shc$ est définie ici comme étant la catégorie homotopique des $T$-spectres symétriques dans $\splissna$,
$$\shc:=Ho(Sp_T \splissna).$$
C'est une catégorie monoïdale symétrique fermée. Nous allons utiliser le résultat de Riou qui dit que tout objet de $\shc$ de la forme $\sinf_{T, S^1} X_+$ pour $X$ un $\C$-schéma lisse de type fini est un objet (fortement) dualisable dans la catégorie monoïdale $\shc$ (voir \cite{rioudual}).

On choisit une présentation du schéma $\gm=\C\te_{\Z}\Z[t, t^{-1}]$. La fonction inversible $t$ donne une classe $b$ dans le groupe $\kn_1(\gm)$ et on choisit $t$ de telle sorte que le morphisme canonique
$$\kn_1(\gm)\lmo \kn_0(\po)\simeq \Z\oplus \alpha\Z$$ 
envoie $b$ sur la classe $\alpha$  qu'on a choisit précédemment. 
On note $\kh$ l'objet de $\shc$ dont le $S^1$-spectre sous-jacent est le préfaisceau $\kn \in \spafliss$, muni de la structure de $T$-spectre induite par le morphisme $T=S^1\sm \gm \lmo \kn$ qui correspond à la classe $b\in\kn_1(\gm)$. Plus précisément, pour tout $n\geq 0$ la classe $b$ détermine un morphisme
$$\xymatrix{T\sml \kn \ar[r]^-{b\sml id} &  \kn\sml\kn \ar[r] & \kn}$$
où le dernier morphisme est donné par la structure d'anneau de $\kn$ dans $Ho(\spafliss)$. En prenant des remplacements cofibrants-fibrants et en appliquant \cite[Proposition 2.3]{cisdes} on obtient un $T$-spectre $\kh$ avec $\kh(Y)_n=\kn(Y)$ pour tout $n\geq 0$. Un modèle fibrant de $\kh$ dans $\shc$ représente la K-théorie invariante par homotopie de Weibel, comme montré par Cisinski (\cite[Théorème 2.20]{cisdes}). Pour tout $\C$-schéma lisse de type fini $X$ on note 
$$\ukh(X):=\rhomi_{\shc} (\Sigma^\infty_{T, S^1} X_+ , \kh).$$
Par Yoneda dans $\shc$, celui-ci est globalement équivalent au préfaisceau de $T$-spectres
$$Y\longmapsto \kh(X\times Y),$$
dans $\spafliss$. On a une équivalence $\ukh(\unit)\simeq\kh$. La réalisation topologique de $T$ est donné par 
$$ssp(T)\simeq ssp(S^1\sm \gm) \simeq S^1\sm ssp(\gm)\simeq S^1\sm S^1=S^2.$$
On précise notre notation des $S^1$-spectres symétriques par $Sp=\sps$ et on note $\spss\sps$ la catégorie de modèles des $S^2$-spectres symétriques dans les $S^1$-spectres symétriques. Le foncteur espace de lacets infini $\Omega^\infty_{S^2} : \spss\sps\lmo \sps$ est une équivalence de Quillen. Il existe un foncteur de réalisation topologique au niveau de la catégorie des $T$-spectres $\shc$ ; il est simplement défini niveau par niveau. On le note 
$$\resh{-} : \shc\lmo Ho(\spss\sps).$$
On a un carré 
$$\xymatrix{ \shc \ar[r]^-{\resh{-}} \ar[d]^-{\R\Omega_T^\infty} & Ho(\spss\sps) \ar[d]^-{\R\Omega_{S^2}^\infty} \\ Ho(\splissna) \ar[r]^-{\resp{-} } & Ho(Sp_{S^1} )  }$$
où $\R\Omega_T^\infty$ désigne le foncteur dérivé d'espace de lacets infini relativement aux $T$-spectres. Ce carré n'est pas commutatif. La structure de $S^2$-spectre de $\resh{\ukh(X)}$ correspond à la multiplication par la classe de Bott $\beta\in \pi_2\resp{\ukn(\unit)}$ dans le $S^1$-spectre $\resp{\ukn(X)}$. Ceci car la structure de $T$-spectre de $\ukh(X)$ est donné par la multiplication par la classe $b$ et qu'on a de plus une équivalence $ssp(T)\simeq S^2$ tel que le triangle
$$\xymatrix{  \sinf(S^2)_+\ar[rd]^-{\beta} \ar[d]^-{\wr} & \\  \resp{T} \ar[r]^-{\re{b}} & \resp{\ukn(\unit)}\simeq \bu  }$$
est commutatif par le choix de $b$. Par conséquent, appliquer le foncteur $\R\Omega_{S^2}^\infty$ à $\resh{\ukh(X)}$ correspond à inverser l'action de $\beta$ dans le niveau $0$ qui est égal à $\resp{\ukn(X)}$. On a donc un morphisme canonique 
$$\resp{\ukn(X)}[\beta^{-1}] \lmo \R\Omega_{S^2}^\infty\resh{\ukh(X)},$$
qui est un isomorphisme dans $Ho(Sp_{S^1})$ car le $S^1$-spectre sous-jacent à $\ukh(X)$ est équivalent à $\ukn(X)$.

\begin{proof} de la proposition \ref{compvarlisse} ---
On a des isomorphismes naturels dans $Ho(\sps)$, 
\begin{align*}
\ktop(X) & =\resp{\ukn(X)}[\beta^{-1}] \\
& \simeq \R\Omega_{S^2}^\infty\resh{\ukh(X)} \\
& = \R\Omega_{S^2}^\infty\resh{\rhomi_{\shc} (\Sigma^\infty_{T, S^1} X_+ , \kh) }
\end{align*}

 D'après Riou \cite[Théorème 1.4+Théorème 2.2]{rioudual}, l'objet $\Sigma^\infty_{T, S^1} X_+$ est fortement dualisable dans $\shc$. Le foncteur $\resh{-}$ étant monoïdal, et comme un foncteur monoïdal commute au foncteur de dualité, on en déduit des isomorphismes naturels dans $Ho(\sps)$, 
\begin{align*}
\ktop(X) & \simeq \R\Omega_{S^2}^\infty\rhomi_{Ho(\spss\sps)} (\resh{\Sigma^\infty_{T, S^1} X_+} , \resh{\kh}) \\
& \simeq \R\Omega_{S^2}^\infty\rhomi_{Ho(\spss\sps)} (\Sigma^\infty_{S^2, S^1} \re{X}_+ , \resh{\kh}) \\
& \simeq \rhomi_{Ho(\sps)} (\Sigma^\infty_{S^1} \re{X}_+ , \R\Omega_{S^2}^\infty\resh{\kh}) \\
\end{align*}
On a en outre par ce qui précède des isomorphismes dans $Ho(\sps)$,
$$\R\Omega_{S^2}^\infty\resh{\kh}\simeq \ktop(\unit)=\BU.$$
On en déduit l'isomorphisme naturel cherché 
$$\ktop(X)\simeq \rhomi_{Ho(\sps)} (\Sigma^\infty_{S^1} \re{X}_+ , \BU)=\ktopu(sp(X)).$$

Il reste à montrer que les deux caractères de Chern coïncident à homotopie près. Le carré (\ref{utop}) se décompose en 
$$\xymatrix@=5pc{\ktop(X)\simeq \resp{\rhomi (\resp{X}, \kn)}[\beta^{-1}] \ar[r]^-{\resp{\rhomi(X,\ch)}} \ar[d] & \resp{\rhomi (\resp{X}, \hpa)} \ar[d] \\
\ktopu(sp(X))=\rhomi (\resp{X}, \resp{\kn}[\beta^{-1}])  \ar[r]^-{\rhomi(\resp{X}, \resp{\ch})} \ar@/_1pc/[dr]_-{\rhomi(\resp{X}, \chtop)} & \rhomi(\resp{X}, \resp{\hpa}) \ar[d]^-{\rhomi(\resp{X}, \mcal{P})} \\
 & \rhomi(\resp{X}, H\cuu)=\hbuu(X)  }$$ 
Dans ce diagramme, le carré supérieur est commutatif par fonctorialité de $\resp{-}$. De plus le triangle inférieur est commutatif car il est commutatif pour $X=\spec(\C)$ par définition du caractère de Chern topologique $\chtop$. 

La conjecture du réseau est alors vrai pour $T=\lpe(X)$ avec $X$ lisse sur $\C$ car c'est un fait connu qu'elle est vrai pour le caractère de Chern usuel $\chutop$ pour un espace qui le type d'homotopie d'un CW-complexe, ce qui est vérifié par les espaces topologiques du type $sp(X)$ pour tout schéma (lisse) de type fini $X$.  
\end{proof}

\subsection{Algèbres de dimension finie et le champ des modules de dimension finie}

L'anneau de base est le corps $\C$ des complexes. Par défaut, toutes les algèbres considérées dans cette sous-section sont des $\C$-algèbres unitaires et associatives. L'homologie périodique d'une telle algèbre est par convention l'homologie périodique relativement au corps de base $\C$. On note $\kctop(T)$ le spectre de K-théorie topologique <<pseudo-connective>> d'une $\C$-dg-catégorie $T$, c'est-à-dire que 
$$\kctop(T):=\kcst(T)[\beta^{-1}].$$
On souhaite donner une preuve du fait suivant concernant la K-théorie topologique d'une $\C$-algèbre de dimension finie (en tant que $\C$-espace vectoriel). 

\begin{prop}\label{cralgass}
Soit $B$ une $\C$-algèbre associative de dimension finie sur $\C$. Alors la conjecture du réseau \ref{conjres} est vraie pour $B$. Plus précisemment le morphisme canonique 
$$\chctop\sm_\s H\C : \kctop(B)\sm_\s H\C \lmo \hp(B)$$ 
est un isomorphisme dans $Ho(Sp)$. 
\end{prop}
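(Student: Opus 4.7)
Ma stratégie repose sur une propriété d'\emph{invariance par extension infinitésimale} du champ $\vect^B$ des $B$-modules projectifs de type fini, qui permet de ramener la conjecture au cas d'une algèbre semi-simple, puis au cas du point.

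Premièrement, en suivant la preuve du théorème \ref{bu}, j'identifierai le spectre $\kcst(B)$ avec $\mcal{B}\regam{\vect^B_\bul}^+$, où $\vect^B_\bul = NiB_W \vect^B$ est le $\gam$-préfaisceau spécial associé. Ceci utilise la remarque \ref{algebra} (l'inclusion $\proj(B\te_\C A)\hookrightarrow \parf(B\te_\C A)$ induit une équivalence en K-théorie), le lemme \ref{cofsc} (les cofibrations de $\proj$ sont scindées) ainsi que la proposition \ref{proprelb}. L'étape-clé sera d'établir la propriété suivante : si $R\twoheadrightarrow R'$ est une surjection d'anneaux à noyau nilpotent, alors le foncteur $\proj(R)\lmo \proj(R')$ est une équivalence de catégories de Waldhausen, par relèvement des idempotents et rigidité formelle des modules projectifs de type fini.

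Deuxièmement, soit $J$ le radical de Jacobson de $B$, qui est nilpotent puisque $B$ est de dimension finie. Pour toute $\C$-algèbre commutative $A$, la surjection $B\te_\C A\twoheadrightarrow (B/J)\te_\C A$ a pour noyau $J\te_\C A$ qui est nilpotent. L'étape précédente fournira donc des équivalences $\vect^B(A)\simeq \vect^{B/J}(A)$, fonctorielles en $A$ et compatibles à la structure de $\gam$-objet donnée par $B_W$ (puisque le relèvement préserve les sommes directes). Il en résultera une équivalence $\vect^B_\bul\simeq \vect^{B/J}_\bul$ dans $\gam-\spr$, puis $\kcst(B)\simeq \kcst(B/J)$. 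Par la décomposition de Wedderburn $B/J\simeq \prod_{i=1}^r M_{n_i}(\C)$ et l'invariance Morita, on obtiendra $\vect^{B/J}_\bul\simeq \prod_{i=1}^r \vect_\bul$, puis $\kcst(B)\simeq \bu^{\oplus r}$ via le théorème \ref{bu} et la commutation de $\mcal{B}\regam{-}^+$ aux produits finis. Après inversion de Bott et tensorisation par $H\C$, il viendra $\kctop(B)\sm_\s H\C\simeq (H\cuu)^{\oplus r}$.

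Troisièmement, du côté de l'homologie périodique, j'utiliserai le théorème classique de Goodwillie sur l'invariance de $\hp$ par extensions nilpotentes, qui donne $\hp(B)\simeq \hp(B/J)$, conjointement à l'invariance Morita et au calcul standard $\hp(\C)\simeq H\cuu$, pour conclure $\hp(B)\simeq (H\cuu)^{\oplus r}$. La compatibilité au caractère de Chern topologique se vérifiera par la fonctorialité du carré (\ref{carst}) et la définition même de $\chtop$ envoyant $\beta$ sur $u$ : on se ramènera au cas $B=\C$, où l'équivalence cherchée $\BU\sm_\s H\C\simeq H\cuu$ est précisément la construction du caractère de Chern topologique du point. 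L'obstacle principal sera d'établir rigoureusement l'invariance par extension infinitésimale non au seul niveau des classes d'isomorphismes d'objets, mais au niveau du $\gam$-préfaisceau simplicial $\vect^B_\bul$ tout entier, ce qui demande de contrôler le relèvement des idempotents de façon compatible aux morphismes, équivalences et sommes directes, afin d'obtenir une équivalence dans $\gam-\spr$ et non seulement point par point.
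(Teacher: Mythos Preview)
Your overall strategy --- réduction au cas semi-simple via le radical de Jacobson, puis invariance de $\hp$ par Goodwillie --- est exactement celle de l'article. Mais l'étape-clé que tu proposes est fausse telle que formulée, et l'obstacle que tu identifies à la fin n'est pas le bon.

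Tu affirmes que pour une surjection $R\twoheadrightarrow R'$ à noyau nilpotent, le foncteur $\proj(R)\to\proj(R')$ est une \emph{équivalence de catégories de Waldhausen}, et donc que $\vect^B_\bul\to\vect^{B/J}_\bul$ est une équivalence \emph{globale} dans $\gam\text{-}\spr$. C'est faux : le relèvement des idempotents donne bien une bijection sur les classes d'isomorphisme, mais le foncteur n'est pas pleinement fidèle. Déjà pour $B=\C[\epsilon]/(\epsilon^2)$ et $B/J=\C$, le groupe $\uaut_B(B)(A)=(A[\epsilon]/(\epsilon^2))^\times$ s'envoie sur $A^\times$ avec noyau $1+\epsilon A$, qui n'est pas trivial. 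Plus généralement, le noyau de $\uaut(M)\to\uaut(M_0)$ est du type $1+\uend(M)\cdot I$, qui est un groupe unipotent non trivial dès que $I\neq 0$. Il n'y a donc \emph{aucune} équivalence niveau par niveau entre $\vect^B$ et $\vect^{B/J}$, et le problème n'est pas une question de fonctorialité du relèvement.

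Ce que l'article établit, et c'est le point crucial que ta proposition manque, est que le morphisme $\vect^B\to\vect^{B/J}$ est seulement une $\ao$-\emph{équivalence} : le noyau $K$ de $\uaut(M)\to\uaut(M_0)$ est un espace affine (donc $\ao$-contractile), et un argument de torseurs/quotients permet d'en déduire que $\bcl\uaut(M)\to\bcl\uaut(M_0)$ est une $\ao$-équivalence, puis on somme sur les gerbes résiduelles via la formule~(\ref{formvect}). Cela suffit pour la réalisation topologique grâce au théorème~\ref{di}. Ta proposition contourne précisément ce qui fait la substance de la preuve.
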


\begin{nota}
On travaille sur le site étale $\affc$ des $\C$-schémas affines de type fini. On a des foncteurs de localisations,
$$Ho(\spr)\lmo Ho(\spret) \lmo Ho(\spretao).$$
Le mot champ étale désigne un objet de la catégorie homotopique $Ho(\spret)$. Cette catégorie homotopique est appelée la catégorie homotopique des champs étales ou simplement catégorie homotopique des champs. En \ref{prestru}, on a défini le foncteur classifiant 
$$\mrm{B}:Gr(\spr)\lmo \spr,$$
des objets en groupes stricts dans $\spr$. Le foncteur champ classifiant est le foncteur 
$$\bcl : Gr(\spret) \lmo \spret,$$
défini par $\bcl(G)=\und{a} \mrm{B}G$ où $\und{a}$ est un foncteur de remplacement fibrant dans la catégorie de modèles $\spret$. Pour tout préfaisceau en groupe $G$ (constant dans la direction simpliciale) et tout $X\in \affc$, l'espace $\bcl G(X)$ est équivalent au nerf du groupoïde des $G$-torseurs sur $X$. Dans la suite on appelera champ algébrique un champ $1$-géométrique au sens de \cite[1.3.3.1]{hag2} pour le contexte standard du site étale $\affc$ et la classe de morphismes est celle des morphismes lisses. Il s'agit donc de champs algébriques d'Artin.  
On note toujours $\afflissc\hookrightarrow \affc$ l'inclusion des schémas lisses et $l^*:\spr\lmo \sprliss$ la restriction, de même que $l^*:\spaf\lmo \spafliss$ la restriction sur les préfaisceaux en spectres. 
\end{nota}

\paragraph{Différents champs associés à une algèbre de dimension finie.} 

Soit $B$ une $\C$-algèbre associative de dimension finie. On considère quatre champs associés à $B$ organisés dans le diagramme suivant. 

$$\xymatrix{ \vect^B \ar@{^{(}->}[r]^-{ } \ar@{^{(}->}[d]^-{ } & \M^B \ar@{^{(}->}[d]^-{ } \\ \vect_B \ar@{^{(}->}[r]^-{ } & \M_B }$$

Ces champs sont définis de la manière suivante. On définit d'abord quatre préfaisceaux à valeurs dans la catégorie $WCat$ des catégories de Waldhausen $\V$-petites. On obtient de tels foncteurs stricts en appliquant la strictification canonique aux pseudo-foncteurs définis. 
\begin{itemize}
\item $\uproj(B) : \spec(A)\longmapsto \proj(B\te_\C A)$, où $\proj(B\te_\C A)$ est la catégorie de Waldhausen des  $B\te_\C A$-modules à droite qui sont projectifs de type fini relativement à $B\te_\C A$.
\item $\upsproj(B) : \spec(A)\longmapsto \psproj(B\te_\C A)$, où $\psproj(B\te_\C A)$ est la catégorie de Waldhausen des $B\te_\C A$-modules à droite qui sont projectifs de type fini relativement à $A$ (c'est-à-dire en tant que $A$-modules).
\item $\uparf(B) : \spec(A)\longmapsto \parf(B\te_\C A)$, où $\parf(B\te_\C A)$ est la catégorie de Waldhausen des complexes cofibrants de $B\te_\C A$-modules à droite qui sont parfaits relativement à $B\te_\C A$. 
\item $\upspa(B) : \spec(A)\longmapsto \pspa(B\te_\C A)$, où $\pspa(B\te_\C A)$ est la catégorie de Waldhausen des complexes cofibrants de $B\te_\C A$-modules à droite qui sont parfaits relativement à $A$ (c'est-à-dire en tant que complexe de $A$-modules). 
\end{itemize}

Si $\mcal{C}$ est une catégorie avec une notion d'équivalence on note $NwC$ le nerf de la sous-catégorie des équivalences dans $\mcal{C}$. On a par définition, 
\begin{itemize}
\item $\vect^B=Nw\uproj(B) : \spec(A) \longmapsto Nw\proj(B\te_\C A)$. 
\item $\vect_B=Nw\upsproj(B) : \spec(A) \longmapsto Nw\psproj(B\te_\C A)$. 
\item $\M^B=Nw\uparf(B): \spec(A) \longmapsto Nw\parf(B\te_\C A)$. 
\item $\M_B=Nw\upspa(B) : \spec(A) \longmapsto Nw\pspa(B\te_\C A)$. 
\end{itemize}
Comme on travaille sur un corps, $B$ est localement cofibrante en tant que dg-catégorie et c'est pourquoi les produits tensoriels écrits précédemment sont en fait dérivés. Le champ $\M_B$ est le champ étudié par Toën--Vaquié dans \cite{modob}. On remarque que comme $B$ est de dimension finie, elle est propre en tant que dg-catégorie (au sens de \cite[Déf.2.4]{modob}). D'après \cite[Lem.2.8,1)]{modob}, un complexe parfait relativement à $B\te_\C A$ est donc parfait relativement à $A$. On en déduit les monomorphismes $\vect^B \hookrightarrow \vect_B$ et $\M^B\hookrightarrow \M_B$. Les monomorphismes $\vect^B\hookrightarrow \M^B$ et $\vect_B\hookrightarrow \M_B$ sont les inclusions des complexes concentrés en degré $0$. Ces champs ont une structure supplémentaire de monoïdes commutatifs à homotopie près donnée par la somme directe des dg-modules. On applique le foncteur $B_W$ (défini en \ref{gamstru}) et on obtient des $\gam$-objets spéciaux dans $\spr$, $\vect^B_\bul$, $\vect_B^\bul$, $\M^B_\bul$, $\M_B^\bul$ sont le niveau $1$ est respectivement $\vect^B,\vect_B,\M^B,\M_B$. 
\\

Le champ $\vect^B$ est un champ algébrique lisse localement de présentation finie (voir par exemple \cite[1.]{modob}). Il admet de plus la description suivante en termes de gerbes résiduelles aux points globaux ; il existe un isomorphisme dans $Ho(\spret)$, 
\begin{equation}\label{formvect}
\vect^B \simeq \coprod_{M\in \chi} \bcl \uaut(M)
\end{equation}
où $\chi=\pi_0\vect^B(\C)$ est l'ensemble des classes d'isomorphisme de $B$-modules projectifs de type fini, pour tout $M\in \chi$, $\uaut(M)$ est le schéma en groupe des $B$-automorphismes de $M$. Cette formule peut être dérivée d'une formule générale valable pour tout champ algébrique localement de présentation finie sur $\C$ dont le complexe tangent en chaque point vérifie une propriété de finitude. 

\begin{prop}\label{formgerb}
Soit $F$ un champ algébrique localement de présentation finie et lisse sur $\C$ tel que pour tout point global $E\in \pi_0F(\C)$ le complexe tangent $\tc_E F$ vérifie $\mrm{H}^0(\tc_E F)=0$. Alors le morphisme canonique de champs algébriques,
$$\coprod_{E\in \pi_0F(\C)} \mcal{G}_E\lmo F$$
est une équivalence de champs, où $\mcal{G}_E$ désigne la gerbe résiduelle de $F$ au point $E$. 
\end{prop}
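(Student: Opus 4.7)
The plan is to show that the natural morphism $u: \coprod_{E \in \pi_0 F(\C)} \mcal{G}_E \to F$ is an equivalence in $Ho(\spret)$ by proving three properties: it is representable étale, it is surjective on isomorphism classes of $\C$-points, and it induces an isomorphism on each automorphism group sheaf. Granted these, the combined assertions force $u$ to be an étale cover that is moreover a monomorphism, hence an equivalence of algebraic stacks.

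First, I would pin down the structure of the residual gerbes. Since $F$ is a smooth $1$-geometric stack locally of finite presentation, for each $E \in \pi_0 F(\C)$ the automorphism sheaf $\uaut_F(E)$ is representable by a smooth algebraic group of finite type over $\C$. The residual gerbe is then $\mcal{G}_E \simeq \bcl \uaut_F(E)$, with a canonical pointed morphism to $F$, and its tangent complex at the distinguished point is $\mathrm{Lie}(\uaut_F(E))[1]$, concentrated in degree $-1$. Second, I would exploit the tangent complex hypothesis to get étaleness. For a smooth $1$-geometric stack, $\tc_E F$ is concentrated in degrees $[-1,0]$, with $H^{-1}(\tc_E F) = \mathrm{Lie}(\uaut_F(E))$ and $H^0(\tc_E F)$ parametrizing first-order deformations of $E$ modulo automorphisms. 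The hypothesis $H^0(\tc_E F)=0$ therefore forces $\tc_E F \simeq \mathrm{Lie}(\uaut_F(E))[1]$, and the differential $\tc_E u: \tc_E \mcal{G}_E \to \tc_E F$ is an isomorphism in $Ho(C(\C))$. Applying the infinitesimal criterion for smoothness to both source and target, $u$ is formally étale at every $\C$-point of $\coprod_E \mcal{G}_E$. Passing from formal étaleness at $\C$-points to étaleness of stacks uses the smooth atlas $\spec \C \to \mcal{G}_E \simeq \bcl \uaut_F(E)$: the composite $\spec \C \to \mcal{G}_E \to F$ is smooth and formally étale at the closed point, hence étale in a neighborhood, and combined with the $1$-geometric structure of $F$ this produces a representable étale morphism $u$ of stacks.

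Third, I would check the surjectivity and monomorphism conditions. Surjectivity of $u$ on $\pi_0$ of $\C$-points is immediate from the choice of indexing set $\pi_0 F(\C)$. For distinct $E, E' \in \pi_0 F(\C)$, the gerbes $\mcal{G}_E$ and $\mcal{G}_{E'}$ share no $\C$-valued isomorphism class above $F$, so $u$ induces a bijection on isomorphism classes of $\C$-points. On automorphisms of $E$, the map $\mrm{Aut}_{\mcal{G}_E}(E) \to \mrm{Aut}_F(E)$ is the canonical identification by definition of the residual gerbe. A representable étale morphism of $1$-geometric stacks that is bijective on $\C$-points and an isomorphism on stabilizers is an equivalence in $Ho(\spret)$, which concludes the proof.

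The main obstacle will be justifying rigorously, within the framework of \cite{hag2}, the passage from the pointwise isomorphism $\tc_E u \simeq$ to the global étaleness of $u$. In particular, one must argue that étaleness propagates from formal neighborhoods of global $\C$-points to all of $\coprod_E \mcal{G}_E$; this uses that every $\C$-point of each gerbe $\mcal{G}_E$ maps into the single isomorphism class of $E$ in $F$, combined with the finite-type hypothesis to ensure the étale locus is open and exhausts the source.
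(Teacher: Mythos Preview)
Your proposal is correct and follows essentially the same route as the paper: use the tangent hypothesis to get étaleness, combine with the monomorphism property of residual gerbes and surjectivity on $\C$-points to conclude. The one difference worth noting is the order of the steps, and it directly addresses the obstacle you flag at the end. The paper observes \emph{first} that $u$ is a monomorphism of algebraic stacks (this is essentially the defining property of residual gerbes), hence representable; for a representable morphism locally of finite presentation between smooth stacks, étaleness is then equivalent to the relative cotangent complex vanishing, which follows at once from your computation $\tc_E u$ being an isomorphism at every $\C$-point. This sidesteps any need to propagate formal étaleness from points via open loci: once you have a representable étale monomorphism (i.e.\ an open immersion) that is surjective on $\pi_0$ of $\C$-points, it is an equivalence. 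Your argument via bijectivity on isomorphism classes plus isomorphism on stabilizers is just an unpacking of the monomorphism condition, so the content is the same.
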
 

\begin{proof} 
On vérifie directement que ce morphisme est localement de présentation finie. C'est un monorphisme de champs algébriques, et donc un morphisme représentable. La condition imposée sur le complexe tangent implique que ce morphisme induit une équivalence sur les complexes tangents et est donc un morphisme étale de champs algébriques. C'est donc une immersion ouverte de champs algébriques. C'est un épimorphisme sur le $\pi_0$ étant donné qu'il est surjectif sur les points complexes du $\pi_0$.  C'est donc un épimorphisme de champs. On en déduit que c'est une équivalence. 
\end{proof}

Le champ $\vect^B$ vérifie l'hypothèse de la proposition \ref{formgerb} vu que pour tout $M\in \chi$, on a un quasi-isomorphisme $\tc_M \vect^B\simeq \uend(M)[1]$ où $\uend(M)$ désigne ici le $\C$-espace vectoriel des endomorphismes de $M$. Ceci nous donne donc la formule (\ref{formvect}). 

\paragraph{K-théorie topologique d'une algèbre de dimension finie.}

Rappelons qu'en \ref{kstchamp}, on a montré que la K-théorie semi-topologique connective de $T$ est décrite par la réalisation topologique du champ $\M^T$ muni de sa structure de $\gam$-objet. En d'autres termes on a une équivalences naturelle en $T$, 
$$\kcst(T)\simeq \mcal{B}\regam{\M^T_\bul}.$$
Si maintenant $T=B$ est une algèbre associative, d'après la remarque \ref{algebra} la K-théorie algébrique connective peut être calculée avec le préfaisceau $\uproj(B)$ muni de sa structure de $\gam$-objet spécial induit par la somme directe des modules. En suivant les même étapes que la preuve de la proposition \ref{bu}, on a des équivalences naturelles de spectres,
\begin{equation}\label{kvect}
\kcst(B)\simeq \resp{\ukc(B)} \simeq \resp{\mcal{B} K^\gam (\uproj(B))}\simeq \mcal{B} \regam{K^\gam (\uproj(B))}\simeq \mcal{B} \regam{\vect^B_\bul}^+.
\end{equation}
Et donc des équivalences naturelles de spectres et de $\gam$-espaces respectivement, 
\begin{equation}\label{kcsttot}
\kcst(B)\simeq \mcal{B}\regam{\M^B_\bul}\simeq \mcal{B} \regam{\vect^B_\bul}^+,
\end{equation}
\begin{equation}\label{mbvectb}
\regam{\M^B_\bul}\simeq \regam{\vect^B_\bul}^+.
\end{equation}

\begin{proof} de la proposition \ref{cralgass}. ---
Si l'algèbre $B$ est semi-simple, alors $B$ vérifie la proposition \ref{cralgass} pour les raisons suivantes. En effet, par la théorie des représentations des algèbres semi-simples, $B$ est Morita-équivalente à un produit de copies de l'algèbre unité $\unit$ (c'est-à-dire de copie de $\C$). Il existe donc une équivalence Morita $B\lmos{\sim} \prod_{J} \unit$ dans $\dgcatc$ avec $J$ est un ensemble fini. La K-théorie topologique pseudo-connective et l'homologie périodique commutent tous deux aux produits finis, et on a des isomorphismes $\kctop(B)\lmos{\sim} \prod_{J} \kctop(\unit) \simeq \prod_{J} \BU$ et $\hp(B)\lmos{\sim} \prod_J \hp(\unit)\simeq \prod_J H\cuu$ dans $Ho(Sp)$. On a alors un carré commutatif dans $Ho(Sp)$, 
$$\xymatrix{ \kctop(B) \ar[r]^-{\chctop} \ar[d]^-{\wr} & \hp(B) \ar[d]^-{\wr}  \\ \prod_J \BU \ar[r]^-{\prod_J \chctop} & \prod_J H\cuu }$$
Le morphisme du bas étant un isomorphisme après $\sm_\s H\C$ on en déduit que le morphisme du haut est un isomorphisme après $\sm_\s H\C$. 

Si maintenant $B$ est une algèbre qui n'est pas semi-simple, son radical $rad(B)$ est un idéal bilatère nilpotent de $B$. On note $B_0=B/rad(B)$ l'algèbre semi-simplifiée de $B$. Pour montrer que $B$ vérifie la proposition \ref{cralgass} il suffit donc de montrer que le morphisme induit par le changement de base $\vect^B\lmo \vect^{B_0}$ est une $\ao$-équivalence dans $\spretao$. En effet une telle $\ao$-équivalence implique que le morphisme de $\gam$-objet spéciaux $\vect^B_\bul\lmo \vect^{B_0}_\bul$ est un isomorphisme dans $Ho(\gam-\spretao)$. On en déduit en prenant la réalisation topologique dérivée que le morphisme $B\lmo B_0$ induit un isomorphisme $\kctop(B)\simeq \kctop(B_0)$ dans $Ho(Sp)$. D'autre part Goodwillie a montré l'invariance de l'homologie périodique par extension infinitésimale (\cite[Thm.2.5.1]{goodwilliecyclic}) ; le morphisme $B\lmo B_0$ induit donc un isomorphisme $\hp(B)\simeq \hp(B_0)$ dans $Ho(Sp)$. On se ramène alors au cas d'une algèbre semi-simple. 

Il suffit donc de montrer que pour toute $\C$-algèbre $B$ de dimension finie et tout idéal bilatère nilpotent $I\subseteq B$ le morphisme de projection $B\lmo B/I$ induit une $\ao$-équivalence $\vect^B\lmo \vect^{B/I}$. Par une récurrence classique sur le degré de nilpotence de $I$ on se ramène à montrer la dernière assertion pour un idéal bilatère de carré nul. En effet supposons que pour toute algèbre $C$ et tout idéal bilatère nilpotent $J$, le morphisme de projection $C\lmo C/J$ induit une $\ao$-équivalence $\vect^C\lmo \vect^{C/J}$. Si $I$ est un idéal bilatère de $B$ de degré de nilpotence $n\geq 2$, on considère le triangle commutatif d'algèbres suivant. 
$$\xymatrix{B\ar[r] \ar[rd] & B/I^2=C \ar[d] \\ & B/I\simeq C/I   }$$
Comme $I^2$ est un idéal bilatère de $B$ de degré de nilpotence $\leq n-1$ et que l'idéal bilatère $I.C\subseteq C$ engendré par l'image de $I$ dans $C$ vérifie $I.C^2=0$ les flèches $B\lmo C$ et $C\lmo C/I$ induisent des $\ao$-équivalences sur $\vect$. On en déduit que $B\lmo B/I$ induit aussi une $\ao$-équivalence sur $\vect$. 

Soit maintenant $I\subseteq B$ un idéal de carré nul dans une $\C$-algèbre $B$ de dimension finie. Il s'agit de montrer que $B\lmo B/I=B_0$ induit une $\ao$-équivalence sur $\vect$. En utilisant la formule (\ref{formvect}), le morphisme $\vect^B\lmo \vect^{B_0}$ est équivalent au morphisme,
$$\coprod_{M\in \chi} \bcl \uaut(M)\lmo \coprod_{M\in \chi_0} \bcl \uaut(M_0),$$
où $\chi=\pi_0\vect^B(\C)$, $\chi_0=\pi_0\vect^{B_0}(\C)$, $M_0=M\te_B B_0$ pour tout $M\in \chi$. 
Tout d'abord, on observe que les ensembles $\chi$ et $\chi_0$ sont en bijection comme montrer dans \cite[Prop.2.12]{bass} (les hypothèses de complétude sont vérifiées puisque $I^2=0$). Comme les $\ao$-équivalences sont stables par somme quelconque dans $\spretao$, il s'agit donc de montrer que pour tout $M\in \chi$, le morphisme $\bcl\uaut(M)\lmo \bcl\uaut(M_0)$ est une $\ao$-équivalence. Ce dernier morphisme est l'image par le foncteur champ classifiant $\bcl$ du morphisme de schémas en groupe $\uaut(M)\lmo \uaut(M_0)$ dont on note $K$ le noyau. On écrit $M$ comme facteur direct d'un $B$-module libre $B^r=M\oplus N$, où $N$ est un sous-$B$-module de $B^r$ et $r$ un entier. On a $B_0^r=M_0\oplus N_0$. Dans le cas libre, le groupe des automorphismes est le groupe des matrices inversibles et on note $\uaut(B^r)=\ugl_r(B)$. Le noyau du morphisme $\ugl_r(B)\lmo \ugl_r(B_0)$ est donné par le groupe multiplicatif $I_r+\uma_r(I)$ où $I_r$ est la matrice identité de rang $r$ et $\uma_r(I)$ le schéma en groupe additif des matrices à coefficients dans $I$. Le schéma $I_r+\uma_r(I)$ est isomorphe à un espace affine (de dimension $dim_\C(I).r^2$) et est donc $\ao$-contractile. Dans le cas projectif général, on a donc un diagramme dont les lignes sont exactes, 

$$\xymatrix{ 1\ar[r] & K\ar[r]\ar[d]_-k  &\uaut(M)\ar[r]^-g \ar[d]_-i &\uaut(M_0) \ar[r]\ar[d]_-j &1 \\ 1\ar[r] & I_r+\uma_r(I)\ar[r] & \ugl_r(B)\ar[r]^-f & \ugl_r(B_0)\ar[r] & 1 }$$

Le morphisme $i$ envoie un automorphisme de $M$ sur l'automorphisme de $B^r$ qui est égal à l'identité sur le sous-module $N$. Le morphisme $j$ est défini de la même façon. Ces deux morphismes sont des immersions fermées, et le carré correspondant est commutatif : $fi=jg$. On en déduit l'existence du morphisme $k$, qui est lui-même une immersion fermée. Le noyau $K$ s'identifie donc au sous-schéma en groupe des $\C$-automorphismes de $B^r$ qui sont $B$-linéaires, égaux à l'identité sur $N$, et qui appartiennent à $I_r+\uma_r(I)$. Ces trois conditions se traduisent par des équations affines dans l'espace affine $I_r+\uma_r(I)$, et on en déduit que $K$ est lui-même isomorphe à un espace affine d'un certaine dimension, et est donc $\ao$-contractile. Ceci implique que le champ classifiant $\bcl K$ est aussi $\ao$-contractile, c'est-à-dire isomorphe au préfaisceau ponctuel $\ast$ dans $Ho(\spretao)$.

Le foncteur champ classifiant $\bcl$ envoie les suites exactes de groupes sur des suites de fibrations de champs. On a donc une suite de fibration dans $Ho(\spret_\ast)$, 
$$\bcl K\lmo \bcl \uaut(M)\lmo \bcl \uaut(M_0).$$

Etant donné un champ en groupe $G$, il est connu qu'il existe une équivalence de Quillen entre la catégorie de modèles des champs $G$-équivariants et celles des champs au dessus de $\bcl G$ (voir \cite[Lem.3.20]{kptalg}). Ce dernier résultat implique l'existence d'un isomorphisme de champs au dessus de $\bcl \uaut(M_0)$, 
$$\xymatrix{\bcl \uaut(M)\ar[rr]^-\sim \ar[rd] &&  [\bcl K/\uaut(M_0)]\ar[ld] \\ & \bcl \uaut(M_0) & }$$
dans $Ho(\spret/\bcl \uaut(M_0))$, pour une certaine action de $\uaut(M_0)$ sur $\bcl K$, et la notation $[-/-]$ réfère au champ quotient. Etant donné que $\bcl K$ est contractile dans $Ho(\spretao)$ et que le foncteur de localisation $Ho(\spret)\lmo Ho(\spretao)$ commute à la formation des quotients, en passant dans la catégorie $Ho(\spretao)$ ce dernier triangle est isomorphe à 
$$\xymatrix{\bcl \uaut(M)\ar[rr]^-\sim \ar[rd] &&  [\ast/\uaut(M_0)]  = \bcl \uaut(M_0) \ar[ld]^-{id}  \\ & \bcl \uaut(M_0) & }$$
Le morphisme $\bcl \uaut(M)\lmo \bcl \uaut(M_0)$ est donc un isomorphisme dans $Ho(\spretao)$. Ceci achève la preuve du fait que $\vect^B\lmo \vect^{B/I}$ est une $\ao$-équivalence pour tout idéal bilatère de carré nul $I$ et donc la preuve de la proposition \ref{cralgass}. 
\end{proof}

Notons qu'au cours de la preuve de la proposition \ref{cralgass}, on a montré l'invariance de la K-théorie semi-topologique connective par extension infinitésimale exprimée dans la proposition suivante. 

\begin{prop} 
Soit $B$ une algèbre associative de dimension finie sur $\C$ et $I$ un idéal à droite nilpotent de $B$. Alors le morphisme de projection $B\lmo B/I$ induit une équivalence $\kcst(B)\simeq \kcst(B/I)$ dans $Ho(Sp)$. 
\end{prop}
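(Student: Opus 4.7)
The plan is to observe that this invariance statement has already been established implicitly in the proof of Proposition~\ref{cralgass}, where one showed (after reduction to square-zero ideals) that the projection $B\lmo B/I$ induces an $\ao$-equivalence of stacks $\vect^B\lmo \vect^{B/I}$ in $\spretao$. I would therefore reorganize that argument as a direct derivation of the corollary. Since the statement requires $B/I$ to be an algebra, we interpret $I$ as a two-sided nilpotent ideal (the right-nilpotent case reduces to this via the fact that every nilpotent right ideal lies in the radical, which is two-sided).

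First, by the chain of natural equivalences~(\ref{kvect}), one has
$$\kcst(B)\simeq \mcal{B}\regam{\vect^B_\bul}^{+},$$
and similarly for $B/I$; so it suffices to show that the induced morphism of $\gam$-preasheaves $\vect^B_\bul\lmo \vect^{B/I}_\bul$ is an $\ao$-equivalence in $\gam-\spretao$. Indeed, the derived topological realization $\regam{-}$ preserves $\ao$-equivalences, $(-)^{+}$ preserves equivalences of special $\gam$-spaces, and $\mcal{B}$ sends equivalences of very special $\gam$-spaces to equivalences of connective spectra by Theorem~\ref{bf}. Level-by-level this reduces to showing that $\vect^B\lmo \vect^{B/I}$ is an $\ao$-equivalence of stacks.

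Next I would run the standard dévissage on the nilpotency degree of $I$. Given the factorization $B\lmo B/I^2\lmo B/I$, where $I^2$ has strictly smaller nilpotency degree and the kernel of $B/I^2\lmo B/I$ is the square-zero ideal $I/I^2$, an easy induction reduces the problem to the case $I^2=0$. In this square-zero situation, one invokes the gerbe decomposition~(\ref{formvect}): $\vect^B\simeq \coprod_{M\in\chi}\bcl\uaut(M)$ and similarly for $B/I$, together with the classical bijection $\chi\simeq\chi_0$ coming from lifting of idempotents modulo a nilpotent ideal (Bass~\cite[Prop.~2.12]{bass}). Hence it is enough to show, for each $M\in\chi$, that $\bcl\uaut(M)\lmo \bcl\uaut(M_0)$ is an $\ao$-equivalence.

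The final step, which is the only point requiring any argument, is to analyse the kernel $K$ of the affine group scheme homomorphism $\uaut(M)\lmo \uaut(M_0)$. Writing $M$ as a direct summand $B^{r}=M\oplus N$ and embedding $\uaut(M)$ into $\ugl_r(B)$ as the stabiliser of the decomposition, one identifies $K$ with the closed subscheme of $I_r+\uma_r(I)$ cut out by the (linear) $B$-linearity equations and the condition of acting as the identity on $N$. Because all these equations are linear and the ambient space $I_r+\uma_r(I)$ is an affine space, $K$ is itself an affine space over $\C$, hence $\ao$-contractible, and so $\bcl K\simeq \ast$ in $Ho(\spretao)$. From the fibration sequence $\bcl K\lmo \bcl\uaut(M)\lmo \bcl\uaut(M_0)$ together with the equivariant/quotient-stack equivalence $\bcl\uaut(M)\simeq [\bcl K/\uaut(M_0)]$ over $\bcl\uaut(M_0)$ (which behaves well under the $\ao$-localisation), one concludes the desired $\ao$-equivalence. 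There is no real obstacle here beyond repackaging: all the nontrivial work—-the affineness of $K$ and the quotient-stack presentation—-has already been carried out in the proof of Proposition~\ref{cralgass}.
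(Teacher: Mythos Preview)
Your proposal is correct and follows exactly the approach of the paper: the statement is presented there without a separate proof, merely with the remark that it was established in the course of proving Proposition~\ref{cralgass}, and your write-up is precisely that repackaging (reduction via~(\ref{kvect}) to an $\ao$-equivalence of $\vect^B$, d\'evissage to square-zero ideals, gerbe decomposition~(\ref{formvect}), Bass's bijection on $\chi$, affineness of the kernel $K$, and the quotient-stack presentation). Your observation about the ``id\'eal \`a droite'' wording is also apt: the argument in the paper is carried out for two-sided nilpotent ideals, and your reduction via the radical is the natural fix.
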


\begin{rema}\label{stab}
La proposition \ref{cralgass} ainsi que la formule (\ref{kcsttot}) permet donc d'exprimer l'homologie périodique d'une algèbre de dimension finie en fonction de l'espace de lacets infini $(\regam{\vect^B_\bul}^+)_1=:\re{\vect^B}^+$. Cette complétion en groupe admet la description suivante. Le monoïde $\pi_0\re{\vect^B}$ n'étant pas en général isomorphe à $\N$ munie de l'addition (comme c'est le cas pour $B$ commutative), le calcul de cette complétion en groupe est un peu plus compliqué que dans le cas commutatif, vu qu'on ne peut pas appliquer le résultat de Quillen qui calcule l'homologie de la complétion en groupe. Comme tout $B$-module projectif de type fini est facteur direct d'un $B$-module libre de type fini, pour compléter la structure de <<monoïde>> définie par la somme directe des modules sur l'espace $\re{\vect^B}$, il suffit d'inverser l'action du $B$-module régulier $B$. Il existe donc un isomorphisme, 
$$\re{\vect^B}^+ \simeq \re{\vect^B}[-B]$$
dans $Ho(SSet)$, où le dernier objet est le niveau $1$ de la localisation au sens des $\gam$-espaces du $\gam$-espace spécial $\re{\vect^B}$ par rapport au $B$-module régulier $B$. On souhaite maintenant dire que cette localisation peut se calculer en effectuant la colimite homotopique standard 
$$\xymatrix{hocolim (\re{\vect^B}\ar[r]^-{\oplus B} & \re{\vect^B}\ar[r]^-{\oplus B} & \re{\vect^B} \ar[r]^-{\oplus B} & \cdots ) }=:\re{\vect^B}^{ST}$$
où le morphisme $\oplus B$ est induit par l'endomorphisme $\uparf(B)$ qui envoie un $B$-dg-module $E$ sur $E\oplus B$ et la notation <<stable>> fait référence à l'homologie stable des algèbres de Lie de matrices de Loday--Quillen. Pour cela il faut utiliser le language des $\infty$-groupoïdes monoïdaux au sens de Lurie plutôt que celui des $\gam$-espaces, c'est pourquoi nous n'entrerons pas dans les détails et nous contenterons de rester volontairement vague sur les définitions utilisées. On considère donc $\re{\vect^B}$ comme un $\infty$-groupoïde monoïdal symétrique (dont la loi est donnée par la somme des modules) et on souhaite inverser l'objet $B$ respectivement à la somme. On applique pour cela \cite[Cor.4.24]{marco1} (qui fonctionne plus généralement pour toutes les $\infty$-catégories monoïdales symétriques présentables, voir aussi \cite[Rem.4.7]{marco1} et \cite[Rem.4.26]{marco1} pour le cas particulier des $\infty$-groupoïdes). Il existe alors un isomorphisme canonique
$$\re{\vect^B}[-B]\simeq \re{\vect^B}^{ST}$$
dans $Ho(SSet)$, pourvu que l'on montre que l'objet $B$ est un objet symétrique au sens de \cite[Déf.4.18]{marco1}. On doit donc montrer qu'il existe une homotopie entre le morphisme 
$$\xymatrix{B\oplus B\oplus B \ar[r]^-{(123)} & B\oplus B\oplus B }$$
et l'identité de $B\oplus B\oplus B$ dans l'espace des points complexes de $\ugl_3(B)$, où $(123)$ est l'automorphisme induit par la permutation cyclique $(123)$. Une telle homotopie est obtenue en composant une homotopie $(123)\Rightarrow id$ dans $\re{\ugl_3(\C)}$ avec le morphisme canonique $\re{\ugl_3(\C)}\lmo \re{\ugl_3(B)}$ induit par le morphisme structural $\C\lmo B$. 

En conclusion, il existe un isomorphisme 
$$\re{\vect^B}^+ \simeq \re{\vect^B}^{ST}$$
dans $Ho(SSet)$ entre la complétion en groupe de $\re{\vect^B}$ et la stabilisation de $\re{\vect^B}$ (où la colimite est comprise au sens homotopique). En prenant les groupes d'homotopie de la formule donnée par la proposition \ref{cralgass}, on obtient le corollaire suivant. 
\end{rema}

\begin{cor}\label{formhp1}
Soit $B$ une $\C$-algèbre associative de dimension finie. Alors pour tout $i\geq 0$ le caractère de Chern $\kctop(B)\lmo \hp(B)$ induit un isomorphisme de $\C$-espaces vectoriels, 
$$ colim_{k\geq 0} \pi_{i+2k} \re{\vect^B}^{ST}\te_\Z \C\simeq\hp_i(B)$$
où la colimite est induite par l'action de l'élément de Bott $\beta$ sur les groupes d'homotopie, $\pi_i\re{\vect^B}^{ST} \lmos{\times \beta} \pi_{i+2} \re{\vect^B}^{ST}$.
\end{cor}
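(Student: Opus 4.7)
The plan is to chain three ingredients already established: the lattice conjecture for $B$ proved in Proposition~\ref{cralgass}, the spectral model (\ref{kcsttot}) for $\kcst(B)$, and the identification of the homotopy group completion with the Bott-type stabilization recorded in Remark~\ref{stab}. Concretely, starting from $\hp_i(B)$ and running through the Chern character equivalence, I convert the left side of $\chctop$ into a colimit of homotopy groups of $\re{\vect^B}^{ST}$.

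First, Proposition~\ref{cralgass} furnishes an equivalence
$$\chctop \sm_\s H\C : \kctop(B) \sm_\s H\C \lmos{\sim} \hp(B)$$
in $Ho(Sp)$. Since $\kctop(B)$ is a $\BU$-module and $\hp(B)$ is a $H\cuu$-module with rational homotopy, smashing with $H\C$ at the level of homotopy groups computes $\pi_\ast\kctop(B) \te_\Z \C$; no $\mrm{Tor}$ correction appears because the target already has $\C$-vector space homotopy. Taking $\pi_i$ yields
$$\hp_i(B) \simeq \pi_i\kctop(B) \te_\Z \C.$$

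Next, unwind $\kctop(B) = \kcst(B)[\beta^{-1}]$. Inverting $\beta \in \pi_2\bu$ on a $\bu$-module spectrum is computed by the sequential colimit of multiplication-by-$\beta$, so
$$\pi_i\kctop(B) \simeq \mrm{colim}_{k\geq 0} \pi_{i+2k}\kcst(B).$$
For $i \geq 0$ every index $i+2k$ is non-negative, so one remains in the connective range of $\kcst(B)$ where the stable homotopy agrees with the homotopy of the zeroth infinite loop space. By formula (\ref{kcsttot}) this zeroth space is canonically $\re{\vect^B}^+$, and Remark~\ref{stab} identifies it with $\re{\vect^B}^{ST}$ in $Ho(SSet)$. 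Hence $\pi_{i+2k}\kcst(B) \simeq \pi_{i+2k}\re{\vect^B}^{ST}$, and passing to the colimit in $k$ and tensoring with $\C$ produces the stated formula.

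The only delicate point is to check that, under the chain of equivalences above, the transition map $\pi_{i+2k}\re{\vect^B}^{ST} \to \pi_{i+2(k+1)}\re{\vect^B}^{ST}$ realizing the colimit really is multiplication by $\beta$. This is built into the $\bu$-module structure on $\kcst(B)$ obtained from the Cisinski--Tabuada corepresentability of connective $K$-theory (Section~\ref{caracalg}) together with the identification $\kcst(\unit) \simeq \bu$ of Theorem~\ref{bu}: tracking $\beta$ through these equivalences is purely a matter of verifying functoriality of realizations and of the group-completion/stabilization comparison, and does not require new input.
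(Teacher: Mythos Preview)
Your proposal is correct and follows essentially the same route as the paper, which simply states that the corollary is obtained ``en prenant les groupes d'homotopie de la formule donnée par la proposition~\ref{cralgass}'' together with the identification $\re{\vect^B}^+ \simeq \re{\vect^B}^{ST}$ from Remark~\ref{stab}; you have merely unpacked these steps explicitly. One minor remark: the reason $\pi_*(\kctop(B)\sm_\s H\C) \simeq \pi_*\kctop(B)\te_\Z\C$ is that $\C$ is flat over $\Z$ (so the Tor term in the universal coefficient sequence for the Moore spectrum vanishes), not that the target has $\C$-linear homotopy --- but your conclusion is correct.
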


\paragraph{Conséquences dans le cas lisse.} Soit $B$ une $\C$-algèbre associative de dimension finie qui est de plus de dimension globale finie. Cette hypothèse supplémentaire signifie précisemment que $B$ est lisse en tant que dg-catégorie (\cite[Déf.2.4]{modob}). Si $\spec(A)\in \affc$ est supposé \emph{lisse}, alors par le théorème de résolution de Quillen \cite[§4.Cor.1]{qui}, la K-théorie algébrique connective de $\proj(B\te_\C A)$ est la même que la K-théorie algébrique connective de $\psproj(B\te_\C A)$ ou même que la K-théorie algébrique connective de tous les $B\te_\C A$-modules de type fini. En effet comme $B\te_\C A$ est lisse, tout $B\te_\C A$-module à droite de type fini possède une résolution projective finie relativement à $B\te_\C A$. On a donc une équivalence globale de préfaisceaux en spectres restreint à $\afflissc$, 
\begin{equation}\label{qresol}
l^*\kc(\uproj(B))\simeq l^*\kc(\upsproj(B)).
\end{equation}
Par la propriété de restriction aux lisses (théorème \ref{restliss}) on a donc un isomorphisme naturel en $B$ dans $Ho(Sp)$, 
\begin{equation}\label{kpsproj}
\kcst(B)\simeq \resp{\kc(\uproj(B))} \simeq \resp{\kc(\upsproj(B))}
\end{equation}

Par ce qui précède on en déduit la proposition suivante. 

\begin{prop}\label{cons1}
Soit $B$ une $\C$-algèbre associative de dimension finie et de dimension globale finie. Alors on a des isomorphismes dans $Ho(Sp)$, 
$$\kcst(B)\simeq \mcal{B}\regam{\vect^B_\bul}^+\simeq\mcal{B}\regam{\vect_B^\bul}^+\simeq \mcal{B}\regam{\M^B_\bul}\simeq \mcal{B}\regam{\M_B^\bul}.$$
Et donc des isomorphismes dans $Ho(SSet)$, 
$$\re{\vect^B}^+\simeq \re{\vect_B}^+\simeq \re{\M^B}\simeq \re{\M_B},$$
où les deux premiers objets sont les niveaux $1$ de la complétion en groupe. 
\end{prop}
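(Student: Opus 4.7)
Le plan consiste à établir les quatre équivalences en s'appuyant sur les résultats déjà démontrés, et à construire un nouvel argument d'$\ao$-homotopie pour la plus délicate. L'équivalence $\kcst(B)\simeq \mcal{B}\regam{\vect^B_\bul}^+$ est exactement la formule (\ref{kvect}), elle-même obtenue grâce au Lemme \ref{cofsc} appliqué à la catégorie de Waldhausen $\proj(B\tel_\C A)$ dont les cofibrations sont scindées. L'équivalence $\kcst(B)\simeq \mcal{B}\regam{\M^B_\bul}$ est le Théorème \ref{kstmt} appliqué à la dg-catégorie $T=B$. Pour $\kcst(B)\simeq \mcal{B}\regam{\M_B^\bul}$, j'invoquerais le Théorème \ref{mtps}, qui donne $\mcal{B}\regam{\M_B^\bul}\simeq \resp{\kc(\upspa(B))}$, puis j'observerais que les hypothèses sur $B$ (dimension finie $=$ propreté, dimension globale finie $=$ lissité) font de $B$ une dg-catégorie propre et lisse sur $\C$. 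Ces deux propriétés étant stables par changement de base arbitraire $A\in\calgc$, la dg-catégorie $B\tel_\C A$ est propre et lisse sur $A$ pour tout $A$. Il en résulte (cf. \cite{modob}) que les $B\tel_\C A$-dg-modules parfaits coïncident avec ceux qui sont pseudo-parfaits relativement à $A$, et donc que l'inclusion $\uparf(B)\hookrightarrow\upspa(B)$ est une équivalence niveau par niveau de préfaisceaux de catégories de Waldhausen. On en déduit $\resp{\kc(\upspa(B))}\simeq\resp{\ukc(B)}=\kcst(B)$, d'où la troisième équivalence.

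L'équivalence la plus délicate est $\kcst(B)\simeq\mcal{B}\regam{\vect_B^\bul}^+$. La formule (\ref{kpsproj}) fournit déjà $\kcst(B)\simeq\resp{\kc(\upsproj(B))}$, via le théorème de résolution de Quillen sur les affines lisses combiné à la propriété de restriction aux lisses (Théorème \ref{restliss}). En écrivant $\kc(\upsproj(B))=\mcal{B}K^\gam(\upsproj(B))$ et en appliquant la Proposition \ref{proprelb}, on obtient $\resp{\kc(\upsproj(B))}\simeq\mcal{B}\regam{K^\gam(\upsproj(B))}$. Il reste à comparer $\regam{\vect_B^\bul}^+$ et $\regam{K^\gam(\upsproj(B))}$. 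Le Lemme \ref{cofsc} ne s'applique pas directement, les cofibrations admissibles de $\psproj(B\tel_\C A)$ n'étant en général scindées que comme morphismes de $A$-modules. Je prouverai à la place un analogue de la Proposition \ref{mk} : le morphisme naturel de $\del$-préfaisceaux
$$\lambda_\bul:\vect_B^\bul\lmo NwS_\bul\upsproj(B)$$
est une $\ao$-équivalence dans $\del-\spretao$. En suivant le schéma inductif de la preuve de \ref{mk}, tout se ramène au cas $n=2$ : on pose $\mu_2(M'\hookrightarrow M):=(M',M/M')$, qui appartient bien à $\vect_B^\bul|_2$ car le conoyau d'une cofibration admissible dans $\psproj$ est encore pseudo-projectif, et l'identité $\mu_2\circ\lambda_2=id$ est immédiate. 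L'ingrédient central est une $\ao$-homotopie $id\Rightarrow \lambda_2\circ\mu_2$ obtenue en \emph{mettant à l'échelle} la classe d'extension, ce qui remplace la construction du cône utilisée dans \ref{mk} : étant donnée une cofibration admissible $M'\hookrightarrow M$ dans $\psproj(B\tel_\C A)$ de classe $\eta\in\mrm{Ext}^1_{B\te A}(M/M',M')$, le $B\tel_\C A[t]$-module $\widetilde{M}$ de $A[t]$-module sous-jacent $(M'\oplus M/M')\te_A A[t]$ et d'action de $B$ tordue par $t\cdot\eta$ est pseudo-projectif sur $A[t]$ et fournit une famille algébrique naturelle $\ao\times NwS_2\upsproj(B)\lmo NwS_2\upsproj(B)$ interpolant entre $id$ en $t=1$ et $\lambda_2\mu_2$ en $t=0$. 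Les niveaux supérieurs se traitent par le Lemme \ref{lemrec} appliqué à $\psproj$ exactement comme dans la preuve de \ref{mk}.

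La difficulté principale réside dans la fonctorialité de cette $\ao$-homotopie de mise à l'échelle : il s'agit de la produire comme \emph{morphisme naturel} de préfaisceaux de catégories, et non seulement point par point, ce qui revient à écrire la déformation de type Rees de la classe d'extension de façon compatible au changement de base $A\to A'$ et aux isomorphismes de suites exactes courtes. Une fois cela établi, la conclusion combine l'$\ao$-équivalence $\lambda_\bul$ avec le Théorème \ref{di}, passe aux $\gam$-structures via $\alpha^*$, complète en groupe la source (le but l'est déjà, par un analogue de la Proposition \ref{mttsp} pour $K^\gam(\upsproj(B))$), et applique enfin le foncteur $\mcal{B}$, ainsi que la Proposition \ref{proprelb}. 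L'équivalence d'espaces sous-jacents entre $\re{\vect^B}^+$, $\re{\vect_B}^+$, $\re{\M^B}$ et $\re{\M_B}$ annoncée dans la seconde partie de l'énoncé s'en déduit en prenant le niveau $1$ des $\gam$-préfaisceaux correspondants et en utilisant la Proposition \ref{proprelplus} pour commuter réalisation et complétion en groupe.
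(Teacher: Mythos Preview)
Your proposal is correct but diverges from the paper on two of the four equivalences.

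For $\mcal{B}\regam{\M_B^\bul}$, you use that $B$ propre et lisse forces $\uparf(B)\hookrightarrow\upspa(B)$ to be a levelwise equivalence, which is clean and direct. The paper instead keeps the two notions separate, invokes Gillet--Waldhausen to compare $\kc(\upsproj(B))$ with $\kc(\upspa(B))$, and then applies Théorème~\ref{mtps}. Your route is arguably simpler here.

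For $\mcal{B}\regam{\vect_B^\bul}^+$, the paper is far more economical than you are: it simply asserts that the Waldhausen structure on $\psproj(B\te_\C A)$ has \emph{split} cofibrations \emph{par définition}, so Lemme~\ref{cofsc} applies verbatim and yields $\resp{\kc(\upsproj(B))}\simeq\mcal{B}\regam{\vect_B^\bul}^+$ in one line, exactly as in the proof of Théorème~\ref{bu}. Your Rees-type $\ao$-homotopy that scales extension classes is therefore unnecessary under the paper's conventions. That said, your worry is not groundless: if one instead reads $\psproj$ with its natural exact structure (cofibrations only $A$-split), Lemme~\ref{cofsc} fails to apply, while Quillen's resolution theorem used in (\ref{qresol}) genuinely seems to want that richer exact structure. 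Your approach resolves this tension by reproving an analogue of Proposition~\ref{mk} for $\upsproj(B)$; the paper's approach is much shorter but rests on a Waldhausen convention whose compatibility with (\ref{qresol}) you might reasonably want to make explicit.
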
 

\begin{proof}   
On a déjà montré l'isomorphisme $\kcst(B)\simeq \mcal{B}\regam{\vect^B_\bul}^+$ dans $Ho(Sp)$ à la formule (\ref{kvect}). D'après la formule (\ref{kpsproj}), on a un isomorphisme  $\kcst(B)\simeq \resp{\kc(\upsproj(B))}$ dans $Ho(Sp)$. En procédant comme dans la preuve du théorème \ref{bu} et de la formule (\ref{kvect}) et en appliquant la proposition \ref{cofsc} à la catégorie de Waldhausen des pseudo-projectifs (laquelle a des cofibrations scindées par définition), on a un isomorphisme $\resp{\kc(\upsproj(B))} \simeq \mcal{B}\regam{\vect_B^\bul}^+$. On a donc un isomorphisme $\kcst(B)\simeq \regam{\vect_B^\bul}^+$. L'isomorphisme $\kcst(B)\simeq \mcal{B}\regam{\M^B_\bul}$ est le théorème \ref{kstmt}. Le théorème de Gillet--Waldhausen (\cite[Thm.1.11.7]{tt}, voir remarque \ref{algebra}) nous donne un isomorphisme au niveau des préfaisceaux de K-théorie algébrique $\kc(\upsproj(B))\simeq \kc(\upspa(B))$ dans $Ho(\spr)$. Par le théorème \ref{mtps}, on en déduit des isomorphismes $\kcst(B)\simeq \resp{\kc(\upsproj(B))} \simeq \resp{\kc(\upspa(B))} \simeq \mcal{B}\regam{\M_B^\bul}$. La deuxième partie de la proposition en découle directement car le foncteur $\mcal{B}$ est une équivalence entre la catégorie homotopique des $\gam$-espaces très spéciaux et la catégorie homotopique des spectres connectifs. 
\end{proof}

De même que dans le cas non nécessairement lisse on a un isomorphisme,
$$\xymatrix{ \re{\vect_B}^+ \simeq hocolim (\re{\vect_B}\ar[r]^-{\oplus B} & \re{\vect_B}\ar[r]^-{\oplus B} &\re{\vect_B}\ar[r]^-{\oplus B} & \cdots ) =: \re{\vect_B}^{ST} }$$
dans $Ho(SSet)$ (voir la remarque \ref{stab}). Nous avons donc le corollaire suivant de la proposition \ref{cralgass}. 

\begin{cor}
Soit $B$ une $\C$-algèbre associative de dimension finie et de dimension globale finie. Alors pour tout $i\geq 0$ le caractère de Chern $\kctop(B)\lmo \hp(B)$ induit un isomorphisme de $\C$-espaces vectoriels,
$$ colim_{k\geq 0} \pi_{i+2k} \re{\vect_B}^{ST}\te_\Z \C  \simeq \hp_i(B)$$
où la colimite est induite par l'action de l'élément de Bott $\beta$ sur les groupes d'homotopie, $\pi_i \re{\vect_B}^{ST}\lmos{\times \beta} \pi_{i+2} \re{\vect_B}^{ST}$. 
\end{cor}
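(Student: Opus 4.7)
Le plan est de déduire ce corollaire de \ref{formhp1} en utilisant la proposition \ref{cons1} pour remplacer $\vect^B$ par $\vect_B$. Comme $B$ est supposée de plus de dimension globale finie, on dispose en effet de l'isomorphisme $\re{\vect^B}^+\simeq\re{\vect_B}^+$ dans $Ho(SSet)$ fourni par \ref{cons1}. Le point restant à vérifier est que la complétion en groupe $\re{\vect_B}^+$ admet la description comme colimite homotopique stable $\re{\vect_B}^{ST}$, exactement comme pour $\re{\vect^B}^+$ dans la remarque \ref{stab}.

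Je commencerais donc par établir l'analogue de la remarque \ref{stab} pour le $\gam$-espace spécial $\re{\vect_B}$. Tout $B$-module pseudo-projectif est un facteur direct d'une puissance $B^r$ pour un certain entier $r$ (car les pseudo-projectifs sont projectifs de type fini en tant que $\C$-espaces vectoriels, et le $B$-module régulier $B$ engendre la catégorie). Par conséquent, pour compléter en groupe $\re{\vect_B}$ pour sa structure de monoïde donnée par la somme directe, il suffit d'inverser l'action du $B$-module régulier $B$. En appliquant \cite[Cor.4.24]{marco1} à l'$\infty$-groupoïde monoïdal symétrique $\re{\vect_B}$, on obtient un isomorphisme $\re{\vect_B}^+\simeq \re{\vect_B}^{ST}$ dans $Ho(SSet)$, pourvu que $B$ soit un objet symétrique au sens de \cite[Déf.4.18]{marco1}. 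Cette vérification est identique à celle de la remarque \ref{stab} : une homotopie entre la permutation cyclique $(123)$ et l'identité dans $\re{\ugl_3(\C)}$ s'envoie sur une homotopie analogue dans $\re{\uaut_B(B^3)}$ via le morphisme canonique induit par $\C\lmo B$.

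En combinant ces deux faits on obtient une chaîne d'isomorphismes dans $Ho(SSet)$,
$$\re{\vect^B}^{ST} \simeq \re{\vect^B}^+ \simeq \re{\vect_B}^+ \simeq \re{\vect_B}^{ST}.$$
En prenant les groupes d'homotopie en degré $i+2k$, en tensorisant par $\C$ et en passant à la colimite sur l'action de $\beta$, on en déduit que le membre de gauche de la formule de \ref{formhp1} est canoniquement isomorphe à $colim_{k\geq 0} \pi_{i+2k}\re{\vect_B}^{ST}\te_\Z\C$. La conclusion suit alors immédiatement de \ref{formhp1}, et la factorisation par le caractère de Chern $\kctop(B)\lmo \hp(B)$ est préservée par naturalité des équivalences en jeu (celles-ci provenant toutes de morphismes de $\gam$-préfaisceaux compatibles avec la structure de $\bu$-module).

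L'étape potentiellement la plus délicate est la vérification que $B$ est un objet symétrique dans $\re{\vect_B}$, mais elle est en réalité immédiate par transport via le morphisme d'unité $\C\lmo B$, exactement comme dans le cas de $\re{\vect^B}$. Le corollaire se déduit donc essentiellement formellement de \ref{formhp1} et \ref{cons1}.
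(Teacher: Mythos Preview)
Ton approche est exactement celle du papier : déduire l'énoncé de \ref{formhp1} via \ref{cons1} et un analogue de la remarque \ref{stab} pour $\vect_B$. Le papier ne donne d'ailleurs pas plus de détails que toi, se contentant d'écrire \og voir la remarque \ref{stab} \fg{} et d'invoquer \ref{cralgass}.

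Il y a toutefois une erreur dans ta justification de l'isomorphisme $\re{\vect_B}^+\simeq\re{\vect_B}^{ST}$. Tu affirmes que \og tout $B$-module pseudo-projectif est un facteur direct d'une puissance $B^r$ \fg{}, mais c'est faux : être facteur direct d'un libre est précisément ce qui caractérise les $B$-modules \emph{projectifs}, pas les pseudo-projectifs. Sur $\spec(\C)$, un $B$-module pseudo-projectif est simplement un $B$-module de dimension finie sur $\C$, et ceux-ci ne sont pas tous projectifs sur $B$ (prendre par exemple un module simple non projectif pour $B$ l'algèbre des chemins d'un carquois $A_2$). Le fait que $B$ engendre la catégorie ne dit rien de plus que l'existence d'une surjection $B^r\twoheadrightarrow M$.

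Le point qui sauve l'argument est que l'on travaille \emph{après réalisation topologique}. Dans le monoïde $\pi_0\re{\vect_B}$, les relations de connexité par courbes algébriques imposent $[M]=[M']+[M'']$ pour toute suite exacte courte $0\to M'\to M\to M''\to 0$ : il suffit de déformer la classe d'extension $e\in\mathrm{Ext}^1(M'',M')$ le long de $t\mapsto t\cdot e$ sur $\ao$ pour relier $M$ à $M'\oplus M''$. Le monoïde $\pi_0\re{\vect_B}$ est donc le monoïde commutatif libre sur les classes de $B$-modules simples, et comme chaque simple apparaît dans une suite de composition de $B$, toute classe est bien un \og facteur direct \fg{} d'un multiple de $[B]$ \emph{dans ce monoïde}. À partir de là, ton argument via \cite[Cor.4.24]{marco1} et la symétrie de $B$ s'applique sans changement.
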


 \newpage
\begin{small}
\bibliographystyle{amsalpha}
\bibliography{/Users/Anthony/Dropbox/MATH/Latex/Bibliographie/ref}
\end{small}

\end{document}